\numberwithin{equation}{section}
\theoremstyle{plain}
\newtheorem{theorem}[equation]{Theorem}
\newtheorem{proposition}[equation]{Proposition}
\newtheorem{lemma}[equation]{Lemma}
\newtheorem{corollary}[equation]{Corollary}
\newtheorem{conjecture}[equation]{Conjecture}
\newtheorem*{theorem*}{Theorem}
\theoremstyle{definition}
\newtheorem{notation}[equation]{Notation}
\newtheorem{definition}[equation]{Definition}
\newtheorem{remark}[equation]{Remark}
\newtheorem{example}[equation]{Example}
\theoremstyle{plain}
\theoremstyle{definition}
\newcommand{\be}{\begin{enumerate}}
\newcommand{\ee}{\end{enumerate}}
\newcommand{\bi}{\begin{itemize}}
\newcommand{\ei}{\end{itemize}}
\newcommand{\beq}{\begin{equation}}
\newcommand{\eeq}{\end{equation}}
\newcommand{\ra}{\rightarrow}
\newcommand{\dotcup}{\ensuremath{\mathaccent\cdot\cup}}
\newcommand{\floor}[1]{\lfloor {#1}\rfloor}
\newcommand{\onto}{\twoheadrightarrow}
\newcommand{\mf}[1]{\mathfrak{#1}}
\newcommand{\pF}{\mf{p}_F}
\newcommand{\tr}{\operatorname{tr}}
\newcommand{\trace}{\operatorname{tr}}
\newcommand{\Trace}{\operatorname{Tr}}
\newcommand{\Hom}{\operatorname{Hom}}
\newcommand{\End}{\operatorname{End}}
\newcommand{\Aut}{\operatorname{Aut}}
\newcommand{\im}{\operatorname{im}}
\newcommand{\diag}{\operatorname{diag}}
\newcommand{\proj}{\operatorname{proj}}
\newcommand{\id}{\operatorname{id}}
\newcommand{\GL}{\operatorname{GL}}
\newcommand{\U}{\operatorname{U}}
\newcommand{\bbZ}{\mathbb{Z}}
\newcommand{\bbN}{\mathbb{N}}
\newcommand{\Gal}{\mathop{\text{Gal}}}
\newcommand{\Gcd}[2]{\mathop{\text{gcd}(#1,#2)}}
\newcommand{\nc}{\newcommand}
\nc{\Build}[1]{{\cal B}^1} 
\nc{\build}[1]{{\cal b}} 
\def\B{\mathcal B}
\def\C{\mathcal C}
\title{Intertwining semisimple characters for~$p$-adic classical groups}
\date{\today}
\author{Daniel Skodlerack and Shaun Stevens}
\newcommand{\mfa}{\mf{a}}
\let\pi\varpi
\def\ignore#1{\relax}
\def\floor#1{{\left\lfloor{#1}\right\rfloor}}
\begin{document}
\maketitle
\begin{abstract}
\ignore{
Let~$G$ be a unitary group of an~$\epsilon$-hermitian form~$h$ given over a nonarchimedean local field~$F_0$ of odd residue 
characteristic. We introduce a geometric combinatoric condition under which we prove ''Intertwining implies Conjugacy'' for semisimple characters of~$G$ and the general linear group of the ambient vector space of~$G$. Further we prove a Skolem-Noether result for the action of~$G$ on its Lie algebra, more precisely two Lie algebra elements of~$G$ which have the same characteristic polynomial over~$F$ must be conjugate under an element of~$G$ if there are corresponding semisimple characters which intertwine over an element of~$G$ }
Let~$G$ be a unitary group over a nonarchimedean local field of odd residual characteristic. This paper concerns the study of the ``wild part'' of the irreducible smooth representations of~$G$, encoded in a so-called ``semisimple character''. We prove two fundamental results concerning them, which are crucial steps towards a classification of the cuspidal representations of~$G$. First we introduce a geometric combinatoric condition under which we prove an ''intertwining implies conjugacy'' theorem for semisimple characters, both in~$G$ and in the ambient general linear group. Second, we prove a Skolem--Noether theorem for the action of~$G$ on its Lie algebra; more precisely, two semisimple elements of the Lie algebra of~$G$ which have the same characteristic polynomial must be conjugate under an element of~$G$ if there are corresponding semisimple strata which are intertwined by an element of~$G$.
MSC2010 [11E57] [11E95] [20G05] [22E50]
\end{abstract}


\section{Introduction}
One of the reasons for studying the representation theory of~$p$-adic groups is, via the local Langlands correspondence, to understand Galois representations: eventually, one can hope to get explicit information on Galois representations from this correspondence. The arithmetic core, which is rather mysterious on the Galois side, is encoded in restriction to wild inertia. On the automorphic side, this restriction should correspond to looking at certain representations of certain pro-$p$-subgroups.

For~$p$-adic general linear groups, Bushnell--Kutzko~\cite{bushnellKutzko:93} constructed, and classified, all cuspidal irreducible representations. At the heart of this classification sit the so-called ``simple characters''; these are very particular arithmetically-defined characters of pro-$p$-subgroups, which exhibit remarkable rigidity properties (see below for details). These properties were exploited, and extended, by Bushnell--Henniart~\cite{bushnellHenniart:96}, who defined the notion of an ``endo-class'' and hence proved a Ramification Theorem~\cite{bushnellHenniart:03} for the local Langlands correspondence for general linear groups: there is a bijection between the set of endo-classes and the set of orbits (under the Weil group) of irreducible representations of the wild inertia group. More recently, they have extended this, using the fundamental structural properties of simple characters to prove a Higher Ramification Theorem~\cite{bushnellHenniart:16}.

For~$p$-adic classical groups -- that is, symplectic, special orthogonal and unitary groups -- in odd residual characteristic, analogous characters were constructed by the second author~\cite{stevens:05} as a fundamental step in his construction of all cuspidal irreducible representations~\cite{stevens:08}. This required first extending the theory of simple characters to the case of ``semisimple characters'' (see also the work of Dat~\cite{dat:09}). However, the rigidity results which allowed Bushnell--Kutzko to obtain a \emph{classification} were missing -- partly because some of them are false. 

In this paper, we prove many of these rigidity results for semisimple characters, which are new even in the case of general linear groups -- in particular, we prove ``intertwining implies conjugacy'' and Skolem--Noether results (see below for details). In a sequel~\cite{kurinczukskodlerackstevens:16}
, jointly with Kurinczuk, we are then able to put this together with other work of Kurinczuk and the second author~\cite{kurinczukstevens:15}, to turn the construction of cuspidal representations into a classification, both for complex representations and in the~$\ell$-modular case. 

We anticipate further work to come from these rigidity results. Semisimple characters (or, more precisely, their endo-classes) should give a decomposition of the category of smooth~$\ell$-modular representations of classical groups. The Glauberman correspondence leads to a conjecture of a certain form of Frobenius reciprocity between smooth representations of a classical group and certain self-dual representations of the ambient general linear group, which respects the decomposition parametrized by semisimple characters and allows one to transfer questions about classical groups directly to those about general linear groups. Finally, we expect that all can be generalized to proper inner forms of classical groups, where additional problems (analogous to those in the case of inner forms of general linear groups~\cite{broussousSecherreStevens:12}) arise.

\ignore{Let~$F$ be a nonarchimedean local field of residue characteristic not two.
In this article we study the main ingredient for the description of the smooth dual of classical groups. The cuspidal representations of a classical group, i.e. a form (here we consider non-proper inner forms, i.e.~$D=F$) are constructed using certain characters on open compact subgroups, the so called semisimple characters. 
These characters are highly algebraic but lead to give informations about representation theory. So they are useful in many contexts: 
They lead to an exhaustive construction of the cuspidal smooth for classical groups in~\cite{stevens:08}, and for~$\GL_m$ as given by Bushnell and Kutzko~\cite{bushnellKutzko:93}, and for inner forms of~$\GL_m$ by Secherre.  Their algebraic nature 
made it possible to study the rigidity of the exhaustive constructions in the~$\GL_m$ case using just simple characters. This is the theory which one summarize as 
''Intertwining implies conjugacy'' which leaded to the theory of Endo-classes. And the aim of a sequal of this and an upcomming second paper is to show that this is also possible to obtain for semisimple characters which is important since the simple characters are not enough  for the full study of the cuspidal smooth dual of classical groups~$G$. More precisely this paper is about 'Intertwining implies conjugacy'' and a next paper about Endo-classes and the rigidity of the Stevens construction 
Further it is expected that the semisimple characters  can be used to decompose the category of smooth representations of~$G$ and of~$\GL_m(F)$ and Glaubermann correspondence leads to a conjecture of a certain form of Frobenius reciprocity between smooth representations of~$G$ and of~$\GL_m(F)$, where for the latter a certain selfduality should be imposed, which respects the decomposition parametrized by semisimple characters, a property to transfer~$G$-questions to~$\GL_m$-theorems And it is expected that all can be generalized to proper inner forms of classical groups. }

\medskip

Now we state our results more precisely. Let~$F$ be a nonarchimedean local field of odd residual characteristic. Let~$G$ be the isometry group of an~$\epsilon$-hermitian space with respect to some automorphism of~$F$ of order at most two, so that~$G$ is the group of fixed points under an involution on the full automorphism group~$\tilde{G}$ of the underlying~$F$-vector space~$V$. We similarly regard the Lie algebra of~$G$ as the fixed points of an involution on~$A=\End_F(V)$. Note that, when~$\epsilon=1$ and the involution on~$F$ is trivial, we are working with the \emph{full} orthogonal group; however, the set of semisimple characters for the full orthogonal group and for the special orthogonal group coincide.

The starting point in the construction of semisimple characters is an algebraic combinatoric object, a so-called \emph{semisimple stratum}~$[\Lambda,q,r,\beta]$. The principal data here are: an element~$\beta\in A$ which generates a sum of field extensions~$E=F[\beta]=\bigoplus_{i\in I}E_i$; and a rational point~$\Lambda$ in the (enlarged) Bruhat--Tits building of the centraliser of~$\beta$ in~$G$, which we think of as a lattice sequence in~$V$ (see~\cite{broussousLemaire:02}). Associated to~$\Lambda$, we have a filtration~$(\mfa_{n})_{n\in\mathbb Z}$ of~$A$ (which is the Moy--Prasad filtration) and the integer~$q$ is defined by~$\beta\in\mfa_{-q}\setminus\mfa_{1-q}$; this is required to be positive. Finally,~$r$ is an integer between~$0$ and~$q$ which is small enough in the following approximate sense: the stratum~$[\Lambda,q,r,\beta]$ corresponds to the coset~$\beta+\mfa_{-r}$ and~$r$ must be small enough so that the \emph{formal intertwining} of the coset has a nice formula involving the centralizer of~$\beta$. (See section~\ref{secSemisimpleStrata} for more details, and a precise definition.) A semisimple stratum~$[\Lambda,q,r,\beta]$ as above splits according to the primitive idempotents~$1^i$ of~$E$, giving \emph{simple} strata~$[\Lambda^i,q_i,r,\beta_i]$ in~$V^i=1^i V$, which are studied in~\cite{bushnellKutzko:93}. In particular, a semisimple stratum is simple if and only if its indexing set~$I$ has cardinality one.

Associated to any semisimple stratum~$[\Lambda,q,r,\beta]$, and for any integer~$m\ge 0$, we have a family~$\mathcal C(\Lambda,m,\beta)$ of \emph{semisimple characters}. We do not recall the definition here (see again Section~\ref{secSemisimpleStrata}) but note only that, by applying the idempotents, we obtain from a semisimple character~$\theta$ a collection of \emph{simple} characters~$\theta_i$, for~$i\in I$. For simple characters, the fundamental rigidity property proved in~\cite{bushnellKutzko:93} for lattice chains (i.e. sequences without repetition), is the following:
\begin{quote}
Suppose~$\theta\in\mathcal C(\Lambda,m,\beta)$ and~$\theta'\in\mathcal C(\Lambda,m,\beta')$ are \emph{simple} characters which intertwine in~$\tilde G$. Then they are conjugate in the parahoric subgroup~$\tilde{\U}(\Lambda)$.
\end{quote}
In the case of semisimple characters, this result is false as soon as~$|I|>1$: the essential reason is that one can have two lattice sequences (or even chains)~$\Lambda,\Lambda'$ which are conjugate in~$\tilde G$ but such that the separate pieces~$\Lambda^i,\Lambda'^i$ are not (all) conjugate in~$\Aut_F(V^i)$. (Equivalently, there are points in the building of a proper Levi subgroup of~$\tilde G$ which are not conjugate under the Levi but are conjugate under~$\tilde G$.) For similar reasons, the result would remain false if one weakened the conclusion to only conjugacy under~$\tilde G$. Thus one needs an extra condition to ensure that intertwining implies conjugacy. In order to describe this condition, we need a ``matching theorem'' for semisimple characters which intertwine:

\begin{theorem*}[{Theorem~\ref{thmMatchingForChar}}]
Let~$\theta\in \C(\Lambda,m,\beta)$ and~$\theta'\in \C(\Lambda',m,\beta')$ be semisimple characters which intertwine. Then there is a unique bijection~$\zeta$ between the index sets~$I$ and~$I'$ such that the simple characters~$\theta_i$ and~$\theta'_{\zeta(i)}$ are intertwined by an isomorphism in~$\Hom_F(V^i,V'^{\zeta(i)})$.
\end{theorem*}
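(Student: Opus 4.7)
The plan is to reduce the matching of semisimple characters to the non-intertwining property of distinct simple constituents of a single semisimple character (which is built into the semisimplicity condition on strata), combined with a careful block-decomposition of an intertwining element.

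\emph{Uniqueness.} This is the easier direction and I would dispatch it first. Suppose $\zeta,\zeta'\colon I\to I'$ both satisfy the conclusion. For any $i\in I$, the simple characters $\theta'_{\zeta(i)}$ and $\theta'_{\zeta'(i)}$ are each intertwined, by an isomorphism, with $\theta_i$; composing these two isomorphisms, $\theta'_{\zeta(i)}$ and $\theta'_{\zeta'(i)}$ are themselves intertwined by an isomorphism in $\Hom_F(V'^{\zeta(i)},V'^{\zeta'(i)})$. However, the semisimplicity of the stratum $[\Lambda',q',r,\beta']$ forces distinct simple constituents $\theta'_j$ and $\theta'_{j'}$ of $\theta'$ to be non-intertwining by \emph{any} element of $\Hom_F(V'^j,V'^{j'})$ -- this is precisely the pairwise non-equivalence assumption built into the definition of a semisimple stratum. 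Therefore $\zeta(i)=\zeta'(i)$.

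\emph{Existence.} Let $g\in\tilde G$ be an element intertwining $\theta$ with $\theta'$. Decompose $g$ into blocks $g_{ji}:=1'^j g\,1^i\in\Hom_F(V^i,V'^j)$. The core intermediate claim is the following: for each $i\in I$ there is at least one $j\in I'$ such that $g_{ji}$ is an isomorphism intertwining $\theta_i$ with $\theta'_j$. Granting this, uniqueness (as just argued) pins down $\zeta(i):=j$, and applying the same argument to $g^{-1}$ (which intertwines $\theta'$ with $\theta$) furnishes an inverse map $I'\to I$, so $\zeta$ is bijective. To prove the intermediate claim, I would proceed by induction on $q-m$, using the formal intertwining formula for semisimple characters from Section~\ref{secSemisimpleStrata}. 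That formula describes the intertwining set modulo a deeper level of the Moy--Prasad filtration as essentially block-diagonal with respect to the idempotent systems: thus $g$ is congruent, modulo a lower piece of the filtration, to a sum $g_0=\sum_i g_{\zeta_0(i),i}$ for some partial matching $\zeta_0$. Promoting this congruence to a genuine intertwining of simple characters is done by repeating the argument at the next level, with the congruence terms absorbed into the $H^1$-groups.

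\emph{Main obstacle.} The delicate step is producing the partial matching $\zeta_0$ from the formal intertwining formula, because the two idempotent systems $\{1^i\}_{i\in I}$ and $\{1'^j\}_{j\in I'}$ are \emph{a priori} unrelated: neither $\beta$ nor $\beta'$ respects the decomposition from the other side. Extracting block-diagonal behaviour of $g$ therefore requires manipulating the Moy--Prasad filtrations of $\Lambda$ and $\Lambda'$ simultaneously via a common refinement, and carefully tracking how the idempotents behave under the successive reductions. This is the new ingredient going beyond the simple (lattice-chain) setting of~\cite{bushnellKutzko:93}, where one had a single system of idempotents and a single filtration to work with, and where it is the principal source of technical complication in the semisimple case.
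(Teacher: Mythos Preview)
Your uniqueness argument has a gap. You assert that semisimplicity ``forces distinct simple constituents~$\theta'_j$ and~$\theta'_{j'}$ of~$\theta'$ to be non-intertwining'', claiming this is ``precisely the pairwise non-equivalence assumption built into the definition of a semisimple stratum''. But the definition is a condition on \emph{strata}: it says~$[\Lambda'^j\oplus\Lambda'^{j'},\ldots,\beta'_j+\beta'_{j'}]$ is not equivalent to a simple stratum. Passing from this to non-intertwining of the simple \emph{characters}~$\theta'_j,\theta'_{j'}$ is not immediate (characters at level~$m$ carry more refined data than the underlying strata, and intertwining of characters is not obviously transitive along your composition of isomorphisms). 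The paper in fact deduces uniqueness \emph{from} existence: if~$\theta_{i_1}$ and~$\theta_{i_2}$ both match with~$\theta'_{i'}$, one transports~$\theta'_{i'}$ to both~$V^{i_1}$ and~$V^{i_2}$ and obtains a simple character~$\theta'_{i'}\otimes\theta'_{i'}$ (one block) intertwining the semisimple character~$\theta_{i_1}\otimes\theta_{i_2}$ (two blocks); the existence statement then forces equal cardinality of index sets, a contradiction.

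Your existence argument has a more serious gap. You invoke a ``formal intertwining formula for semisimple characters from Section~\ref{secSemisimpleStrata}'' which would render an intertwiner~$g$ essentially block-diagonal modulo a lower level. No such formula is available a priori for \emph{two different} semisimple characters: Proposition~\ref{propIntertwiningSemisimplCharacter} describes~$I(\theta)$ for a single~$\theta$ in terms of~$B_\beta^\times$, and the two-stratum analogue (Corollary~\ref{corIntertwiningtwosemisimplestrataSameLevel}) already requires the matching~$\zeta$ as input. The idempotent systems~$\{1^i\}$ and~$\{1'^j\}$ live in the unrelated centralisers~$B_\beta$ and~$B_{\beta'}$, and there is no ambient algebra in which to compare them directly; a ``common refinement of filtrations'' does not resolve this. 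The paper proceeds quite differently: it first reduces, via the~$\dagger$-construction and endo-equivalence of simple characters, to the case where~$\Lambda,\Lambda'$ are block-wise principal lattice chains. It then inducts on~$m$ through defining sequences: the induction hypothesis supplies a matching for the restrictions~$\theta_\gamma,\theta'_{\gamma'}$ at level~$m+1$; after conjugating one may assume these coincide and (crucially) apply the translation principle, Theorem~\ref{thmTranslationPrinciple}, to arrange that both strata share a common first approximation~$\gamma=\gamma'$. Proposition~\ref{propDerivedCharacters} then reduces the problem to intertwining of derived strata inside~$B_\gamma$, where the matching for semisimple \emph{strata} (Proposition~\ref{propMatching}) finishes the argument. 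The translation principle is the key device your sketch lacks.
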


This matching theorem allows us to describe a condition which is certainly necessary for conjugacy: if~$\theta,\theta'$ as in the Theorem are conjugate by an element of the parahoric subgroup~$\tilde{\U}(\Lambda)$ then, with~$\zeta:I\to I'$ the matching given by the previous Theorem, we have
\begin{equation}\label{eqn:condition}
\dim_{\kappa_F}\Lambda_l^i/\Lambda_{l+1}^i=
\dim_{\kappa_F} \Lambda'^{\zeta(i)}_l /\Lambda'^{\zeta(i)}_{l+1}, \qquad
\text{for all }i\in I\text{ and }l\in\mathbb Z.
\end{equation}
It turns out that this condition is also sufficient to obtain an ``intertwining implies conjugacy'' result:

\begin{theorem*}[{Theorem~\ref{thmintConjSemisimpleCharacters}}]
Let~$\theta\in \C(\Lambda,m,\beta)$ and~$\theta'\in \C(\Lambda',m,\beta')$ be semisimple characters which intertwine, let~$\zeta: I\ra I'$ be the matching given by Theorem~\ref{thmMatchingForChar}, and suppose that the condition~\eqref{eqn:condition} holds. Then~$\theta$ is conjugate to~$\theta'$~by an element of~$\tilde{\U}(\Lambda)$.
\end{theorem*}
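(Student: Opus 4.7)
The plan is to reduce, via the matching theorem, to the simple-character case already handled by Bushnell--Kutzko, and then to assemble the componentwise conjugators into a single element of $\tilde{\U}(\Lambda)$. First, I would use condition~\eqref{eqn:condition} together with the matching $\zeta$ to bring both characters onto a common geometric frame. The condition asserts that, block by block, the lattice sequences $\Lambda^i$ and $\Lambda'^{\zeta(i)}$ have identical Jordan--H\"older invariants; in particular $\dim_F V^i = \dim_F V'^{\zeta(i)}$ and there exist $F$-isomorphisms $V^i \iso V'^{\zeta(i)}$ carrying $\Lambda^i$ to $\Lambda'^{\zeta(i)}$ up to translation. Their direct sum yields an element $g_0 \in \tilde G$ identifying the two splittings of $V$ through $\zeta$ and carrying $\Lambda'$ to $\Lambda$. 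After replacing $\theta'$ by $g_0 \theta' g_0^{-1}$ one may assume $\Lambda=\Lambda'$, $I=I'$, $\zeta=\id$, and that the splittings of $V$ associated to $\beta$ and $\beta'$ coincide.

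Next, I would apply the primitive idempotents of $F[\beta]$ (equivalently, of $F[\beta']$) to obtain simple characters $\theta_i \in \C(\Lambda^i,m,\beta_i)$ and $\theta'_i \in \C(\Lambda^i,m,\beta'_i)$. By the matching theorem these simple characters intertwine in $\Aut_F(V^i)$, so Bushnell--Kutzko's intertwining-implies-conjugacy theorem for simple characters produces, for every $i\in I$, an element $u_i\in\tilde{\U}(\Lambda^i)$ with $u_i\theta'_iu_i^{-1}=\theta_i$. The block-diagonal element $u=\sum_i u_i$ lies in the Levi part $\prod_i\tilde{\U}(\Lambda^i)$ of $\tilde{\U}(\Lambda)$. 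To promote the componentwise identity $u\theta' u^{-1}|_{H^1(\beta_i,\Lambda^i)} = \theta|_{H^1(\beta_i,\Lambda^i)}$ to the global identity $u\theta' u^{-1}=\theta$ on all of $H^1(\beta,\Lambda)$, I would invoke the rigidity of semisimple characters: a semisimple character is determined by its simple components together with its stratum, because the values on the off-diagonal Hom-blocks $\Hom_F(V^i,V^j)\cap\mfa_{-r}$ are forced by the transfer formalism built into the definition of $\C(\Lambda,m,\beta)$.

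The most delicate point is restoring the reduction of Step~1 while keeping the conjugator inside $\tilde{\U}(\Lambda)$. The naive composition $ug_0$ yields conjugacy only in $\tilde G$; upgrading to parahoric conjugacy requires exploiting condition~\eqref{eqn:condition} in its full, block-by-block form to absorb $g_0$ into $\tilde{\U}(\Lambda)$ by an inductive approximation on the filtration level. In Bushnell--Kutzko's simple case the analogous step rests on the tame corestriction and a careful analysis of the centraliser $B_\beta$; here one must perform the same kind of approximation block by block, while respecting the matching $\zeta$ and the interaction of the several centralisers $B_{\beta_i}$ inside $\tilde{\U}(\Lambda)$. I expect this to be where the bulk of the technical work of the proof lives.
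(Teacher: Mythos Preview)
Your approach is genuinely different from the paper's. The paper does \emph{not} reduce to the simple case block by block. Instead it argues by induction on~$m$: the base case~$m=q-1$ is the strata-level result (Theorem~\ref{thmintConjSemisimple}); for the inductive step one conjugates so that~$\theta$ and~$\theta'$ agree on~$H^{m+2}$, applies the translation principle (Theorem~\ref{thmTranslationPrinciple}) to arrange a common first element~$[\Lambda,q,m+1,\gamma]$ in the defining sequences, and then uses Proposition~\ref{propDerivedCharacters} to pass to derived strata in~$B_\gamma$, which intertwine and hence are conjugate by Theorem~\ref{thmintConjSemisimple}; part~(iii) of Proposition~\ref{propDerivedCharacters} then produces the conjugating element in~$\tilde\U(\Lambda)$. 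The splitting of~$\beta$ (versus that of~$\beta'$) plays no direct role.

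Your blockwise strategy has a real gap, and it is not where you place it. Your last paragraph worries about forcing~$ug_0$ into~$\tilde\U(\Lambda)$, but this is automatic: by construction~$g_0(\Lambda_l)=\bigoplus_i g_0(\Lambda^i_l)=\bigoplus_i\Lambda'^{\zeta(i)}_l=\Lambda_l$, so~$g_0\in\tilde\U(\Lambda)$, and~$u$ is block-diagonal in~$\tilde\U(\Lambda)$. The genuine difficulty is in your third paragraph: you assert that once~$(u\theta'u^{-1})_i=\theta_i$ for every~$i$, the full characters coincide, appealing to ``rigidity'' and ``transfer formalism''. This is not a proof. The character~$u\theta'u^{-1}$ lives in~$\C(\Lambda,m,u\beta'u^{-1})$, not in~$\C(\Lambda,m,\beta)$; for the two characters even to have the same domain you need~$H^{m+1}(\beta,\Lambda)=H^{m+1}(u\beta'u^{-1},\Lambda)$, and the off-diagonal pieces of these orders are built recursively from the defining sequences of~$\beta$ and of~$u\beta'u^{-1}$, which you have not related. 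Knowing~$\theta_i\in\C(\Lambda^i,m,\beta_i)\cap\C(\Lambda^i,m,(u\beta'u^{-1})_i)$ for each~$i$ gives blockwise consequences via Proposition~\ref{propBK3.5.1}, but it does not by itself identify the off-diagonal filtrations of the two semisimple orders, nor the restrictions of the characters there. Establishing that is essentially as hard as the theorem itself, and is precisely what the paper's inductive machinery (translation principle plus derived characters) is designed to handle.
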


\medskip

Now we turn to our results for classical groups, so we assume that our underlying strata~$[\Lambda,q,r,\beta]$ are \emph{self-dual} -- that is,~$\beta$ is in the Lie algebra of~$G$ and~$\Lambda$ is in the building of the centralizer in~$G$ of~$\beta$ (see~\cite{broussousStevens:09}). Our first main result here is a Skolem--Noether theorem for semisimple strata, which is crucial in the sequel~\cite{kurinczukskodlerackstevens:16}:

\begin{theorem*}[{Theorem~\ref{thmSkolemNoetherSemisimple}}]
Let~$[\Lambda,q,r,\beta]$ and~$[\Lambda',q,r,\beta']$ be two self-dual semisimple strata which intertwine in~$G$, and suppose that~$\beta$ and~$\beta'$ have  the same characteristic polynomial. Then, there is an element~$g \in G$ such that~$g\beta g^{-1}=\beta'$.
\end{theorem*}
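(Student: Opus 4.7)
The plan is to first produce a conjugator in the ambient group $\tilde G$ via the classical Skolem--Noether theorem, and then modify it to lie in $G$ by exploiting the self-dual structure together with the intertwining hypothesis. Since $\beta$ and $\beta'$ are semisimple elements of $A=\End_F(V)$ with the same characteristic polynomial, ordinary Skolem--Noether in $A$ yields $\tilde g\in\tilde G$ with $\tilde g\beta\tilde g^{-1}=\beta'$; such a $\tilde g$ induces an $F$-algebra isomorphism $F[\beta]\to F[\beta']$ and hence a bijection between the primitive idempotents. A priori this bijection need not agree with the matching $\zeta:I\to I'$ supplied by Theorem~\ref{thmMatchingForChar} applied to the semisimple characters transferred from the two strata, but by post-multiplying $\tilde g$ with a suitable element of $C_{\tilde G}(\beta')\cong\prod_j\Aut_{E'_j}(V'^j)$ together with a permutation of isomorphic blocks (possible since matched blocks have equal $F$-dimensions), we may assume $\tilde g(V^i)=V'^{\zeta(i)}$ for every $i\in I$.

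Set $u:=\sigma(\tilde g)\tilde g$, where $\sigma$ is the involution on $A$ cutting out $G$ and $\Lie(G)$. Applying $\sigma$ to $\tilde g\beta\tilde g^{-1}=\beta'$ and using $\sigma(\beta)=-\beta$ and $\sigma(\beta')=-\beta'$, one checks that $u$ commutes with $\beta$, so $u\in B^\times$ with $B=C_A(\beta)$, and that $\sigma(u)=u$. For $g:=\tilde g c^{-1}$ with $c\in B^\times$ to lie in $G$, one needs exactly $u=\sigma(c)c$; thus it suffices to exhibit $u$ as a value of the twisted norm $N_\sigma(c)=\sigma(c)c$ on $B^\times$. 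Now $B=\bigoplus_{i\in I}B^i$ with $B^i=\End_{E_i}(V^i)$, and $\sigma$ permutes the summands according to an involution on $I$. For pairs $\{i,\sigma(i)\}$ of $\sigma$-swapped indices the factor $B^i\oplus B^{\sigma(i)}$ is of split type, so every self-adjoint invertible element is a norm; the genuine obstruction therefore lives only at the $\sigma$-fixed indices, where it is precisely the difference of isometry classes between the nondegenerate hermitian $E_i$-modules $(V^i,h|_{V^i})$ and $(V'^{\zeta(i)},h|_{V'^{\zeta(i)}})$.

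The intertwining hypothesis enters at this point. An element $g_0\in G$ intertwining the two strata carries $\Lambda^i$ approximately onto $\Lambda'^{\zeta(i)}$ while preserving $h$, which constrains the discriminants (the only invariants of a nondegenerate hermitian form over a local field once the dimension is fixed) to agree. The cleanest implementation is by successive approximation: starting from such a $g_0$, one writes $g_0\beta g_0^{-1}=\beta'+x$ with $x$ in a deep enough Moy--Prasad filtration; using semisimplicity of $[\Lambda',q,r,\beta']$ and equality of characteristic polynomials, one solves an infinitesimal version of the problem \emph{inside} $G$ to produce a correction $g_1\in G$ close to the identity with $g_1g_0\beta(g_1g_0)^{-1}$ closer to $\beta'$; iterating yields, in the limit, the desired $g\in G$ with $g\beta g^{-1}=\beta'$.

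The principal obstacle is this last step: translating the combinatorial intertwining of self-dual strata in $G$ into vanishing of the cohomological obstruction to descending $\tilde g$ from $\tilde G$ to $G$. Concretely, one must check carefully at each stage of the approximation that the infinitesimal correction respects $\sigma$ and that the semisimple structure of the strata---rather than merely simpleness---provides enough non-degeneracy to solve the step inside $G$, and not only in $\tilde G$ where Skolem--Noether was applied at the outset.
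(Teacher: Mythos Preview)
Your overall architecture is reasonable: reduce to showing that the blockwise hermitian spaces~$(V^i,h|_{V^i})$ and~$(V'^{\zeta(i)},h|_{V'^{\zeta(i)}})$ are isometric, after which the cohomological obstruction to descending~$\tilde g$ to~$G$ vanishes. This is indeed the content of the paper's Proposition~\ref{propIntertwiningtwoSkewsemisimplestrataSameLevel}(i), from which Theorem~\ref{thmSkolemNoetherSemisimple} follows immediately together with the simple case Theorem~\ref{thmSkolemNoether}.

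However, your justification of that key step is a genuine gap. First, the parenthetical claim that the discriminant is ``the only invariant of a nondegenerate hermitian form over a local field once the dimension is fixed'' is false in the symplectic/orthogonal situation ($F=F_0$, $\epsilon=\pm 1$): the Witt group of quadratic forms over a local field has order~$16$ (or~$8$), not~$2$. This is precisely why the paper devotes Section~\ref{secWittGroups} to analysing trace maps on Witt groups and why Corollary~\ref{corSkolemNoether1} explicitly excludes the symplectic case. Second, the ``successive approximation'' sketch does not work as written: an intertwiner~$g_0\in G$ only gives~$g_0\beta g_0^{-1}\in\beta'+g_0\mfa_{-r}g_0^{-1}+\mfa'_{-r}$, and there is no mechanism in your argument to push the error term to zero inside~$G$. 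Semisimplicity of the strata gives control over the formal intertwining set, not a Newton-type iteration converging to exact conjugacy.

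The paper's route to the blockwise isometry is substantially different. For the simple case (Theorem~\ref{thmSkolemNoether}), the symplectic obstruction is handled by the \emph{twist} construction: one passes from the symplectic form~$h$ to the orthogonal form~$h_{\phi_i(\beta)}$, uses Lemma~\ref{lemLiftOfAClass} to see that the intertwining hypothesis forces~$h_{\phi_1(\beta)}\cong h_{\phi_2(\beta)}$, and then applies the orthogonal case where Corollary~\ref{corSkolemNoether1} holds unconditionally via the Witt group computation of Theorem~\ref{thmMaximlaAnisotropicElement}. For the semisimple case (Proposition~\ref{propIntertwiningtwoSkewsemisimplestrataSameLevel}), the blockwise isometry is established by strata induction: the base case constructs, from a~$G$-intertwiner, a common symmetric idempotent~$e\equiv 1^i\pmod{\mfa_1}$ and~$e\equiv 1'^{\zeta(i)}\pmod{\mfa'_1}$ via Lemma~\ref{lemFindIdempotents}, and then lifts the residual isometry to an honest isometry using Proposition~\ref{propliftResidualIsometries}. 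None of this machinery appears in your proposal, and it is not replaceable by the approximation scheme you outline.
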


In order to prove this statement, in Section~\ref{secWittGroups} we analyse the Witt groups~$W_*(E)$ of finite field extensions~$E$ of~$F$ and trace-like maps from~$W_*(E)$ and~$W_*(F)$ .

\medskip

Given a self-dual semisimple stratum~$[\Lambda,q,r,\beta]$, the set~$\C_-(\Lambda,m,\beta)$ of semisimple characters for~$G$ is obtained by restricting the semisimple characters in~$\C(\Lambda,m,\beta)$. (Equivalently, one may just restrict those senisimple characters which are invariant under the involution defining~$G$.) Our final result is an ``intertwining implies conjugacy'' theorem for semisimple characters for~$G$.

\begin{theorem*}[{Theorem~\ref{thmintConjSemisimpleCharactersForG}}]
Let~$\theta_-\in \C_-(\Lambda,m,\beta)$ and~$\theta'_-\in \C_-(\Lambda,m,\beta')$ be two semi\-simple characters of~$G$, which intertwine over~$G$, and assume that their matching satisfies~\eqref{eqn:condition}. Then,~$\theta_-$ and~$\theta'_-$ are conjugate under~$\U(\Lambda)=\tilde\U(\Lambda)\cap G$. 
\end{theorem*}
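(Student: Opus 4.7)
The plan is to reduce the theorem to the analogue in~$\tilde G$ and then carry out a Galois-type cohomological descent. Throughout, let $\sigma$ denote the involution on~$\tilde G$ whose fixed points form~$G$.

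\emph{Step 1 (lifting to $\tilde G$).} By definition, $\C_-(\Lambda,m,\beta)$ is obtained by restricting $\sigma$-invariant elements of $\C(\Lambda,m,\beta)$ to $H^{m+1}\cap G$, so we choose $\sigma$-stable lifts $\theta\in\C(\Lambda,m,\beta)$ and $\theta'\in\C(\Lambda,m,\beta')$ of $\theta_-$ and $\theta'_-$. A self-duality argument shows that the $G$-intertwining set of $\theta_-$ coincides with $G$ intersected with the $\tilde G$-intertwining set of $\theta$, so $\theta$ and $\theta'$ are intertwined in~$\tilde G$. By the uniqueness in Theorem~\ref{thmMatchingForChar}, the matching of the lifts agrees with that of $\theta_-$ and $\theta'_-$, so the hypothesis~\eqref{eqn:condition} continues to hold for $\theta,\theta'$.

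\emph{Step 2 (applying the $\tilde G$-result).} Theorem~\ref{thmintConjSemisimpleCharacters} now yields an element $\tilde u\in\tilde\U(\Lambda)$ with $\tilde u\cdot\theta=\theta'$. Applying $\sigma$ and using $\sigma(\theta)=\theta$, $\sigma(\theta')=\theta'$, we see that $c:=\tilde u^{-1}\sigma(\tilde u)$ stabilises $\theta$ and satisfies the twisted cocycle identity $c\cdot\sigma(c)=1$. If we can write $c=v^{-1}\sigma(v)$ with $v$ in the stabiliser $S:=\operatorname{Stab}_{\tilde\U(\Lambda)}(\theta)$, then $u:=\tilde uv^{-1}$ is $\sigma$-fixed, lies in $\U(\Lambda)$, and satisfies $u\cdot\theta=\theta'$; restricting to $H^{m+1}\cap G$ gives $u\cdot\theta_-=\theta'_-$, as required.

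\emph{Step 3 (cohomological descent, the main obstacle).} The crux is therefore the surjectivity of the Lang-type map $v\mapsto v^{-1}\sigma(v)$ on $S$, equivalently the vanishing of the first $\sigma$-twisted cohomology of $S$. By the standard structure theory of semisimple characters, $S$ is an extension of a compact open subgroup of the centraliser~$B_\beta^\times$ of~$\beta$ by the pro-$p$ normal subgroup~$H^{m+1}(\beta,\Lambda)$, and the self-duality of the underlying stratum forces $\sigma$ to preserve both factors. On the pro-$p$ kernel the twisted cohomology vanishes by successive approximation along the Moy--Prasad filtration: each graded piece is a finite-dimensional $\mathbb F_p$-vector space on which $\operatorname{id}+\sigma$ is invertible, since the residual characteristic is odd. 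On the reductive quotient, $\sigma$ restricts to an involution of the type analysed in~\cite{broussousStevens:09}, and the relevant $H^1$ vanishes by Lang's theorem applied to each simple factor over its residue field. Patching the two vanishing statements via the extension produces the required $v\in S$, and completes the proof.
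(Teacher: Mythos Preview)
Your approach is genuinely different from the paper's: the paper simply re-runs the inductive proof of Theorem~\ref{thmintConjSemisimpleCharacters} with each ingredient replaced by its~$G$-analogue (Theorem~\ref{thmintConjSkewSemisimple}, Theorem~\ref{thmTranslationPrincipleForG}, Proposition~\ref{propDerivedCharactersForG}), whereas you lift once to~$\tilde G$, apply the~$\tilde G$-result, and then try a one-shot cohomological descent.

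There is, however, a genuine gap in Step~3. The claim that the twisted~$H^1$ of the reductive quotient vanishes ``by Lang's theorem'' is incorrect: Lang's theorem concerns the Frobenius endomorphism of a connected group over~$\overline{\kappa_F}$, not an order-two involution on a finite group. Here the reductive quotient~$J(\beta,\Lambda)/J^1(\beta,\Lambda)$ is a product of groups~$\GL_n(\kappa_{E_i})$ and the involution~$\alpha$ induced by~$\sigma$ is of orthogonal, symplectic, or unitary type on each factor. For such an involution,~$H^1(\langle\alpha\rangle,\GL_n(k))$ is the set of equivalence classes of~$\epsilon$-hermitian forms of rank~$n$ over~$k$, which has two elements in the orthogonal case over a finite field of odd characteristic. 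So the nonabelian cohomology does not vanish, and the Lang map~$v\mapsto v^{-1}\alpha(v)$ is not surjective on~$S$. (Incidentally, the stabiliser~$S$ of~$\theta$ in~$\tilde{\U}(\Lambda)$ is~$J(\beta,\Lambda)$, whose pro-$p$ radical is~$J^1(\beta,\Lambda)$, not~$H^{m+1}$; but this is a secondary issue.)

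What goes wrong structurally is that after Step~1 you never again use the hypothesis that~$\theta_-$ and~$\theta'_-$ intertwine \emph{in~$G$}, only in~$\tilde G$. The class of your cocycle~$c=\tilde u^{-1}\alpha(\tilde u)$ in~$H^1(\langle\alpha\rangle,S)$ is independent of the choice of~$\tilde u$, so to kill it you must feed in extra information, and the~$G$-intertwining is exactly that information. In the paper's approach this is used implicitly at every inductive step through Proposition~\ref{propDerivedCharactersForG}, which passes a~$G$-intertwiner down to the derived strata. If you want to salvage a descent argument, you would need to show that the~$G$-intertwining forces the image of~$c$ in the reductive quotient to be the trivial class, and this essentially unwinds to the same blockwise comparison of hermitian structures that the paper carries out.
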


\ignore{So for now let~$\tilde{G}$ be the general linear group over~$F$ and~$G$ be the isometry group of a~$\epsilon$-hermitian form~$h$ with respect to some 
automorphism~$\rho$ of~$F$ of order at most two. 
All irreducible cuspidal representations of~$G$ are constructed by compact induction starting from a representation~$\lambda$ of an compact open subgroup~$J$ of~$G$, the so called cuspidal types, and~$\lambda$ contains a semisimple characters~$\theta$. The main conjecture is 
\begin{conjecture}
Two cuspidal types are conjugate if and only if they compactly induce to isomorphic representations.
\end{conjecture}
This conjecture is postponed to an upcoming paper, but the analogue key theorem for semisimple characters is part of this paper.}

\ignore{Let us explain the strategy for semisimple characters: the starting point are algebraic combinatoric object, the so-called strata. For the sake of simplicity  let us 
explain pure strata: These are mainly cosets~$\beta+\mf{a}_{{-m}}$ consisting of an element of~$\mf{a}_{{-r}}$ of a Moy Prasad filtration of a point of the Bruhat-Tits building of~$G$ and an element~$\beta$ of the Lie algebra of~$G$ which generates a field over~$F$ normalizing the filtration. The construction of the types starts with 
the character 
\[\psi_\beta: (1+\mf{a}_{{l}})^\times\ra \mathbb{C}^\times.\]
where~$l$ is valuation of~$\beta$ under the extension of the valuation of~$F$ to~$F[\beta]$. To classify all 
types one needs  semisimple strata, i.e. direct sums of simple strata, which come with a direct sum decomposition of the vector space~$V$ and lattice sequence,
say~$\Lambda=\bigoplus_{i\in I}\Lambda^i$ and~$V=\bigoplus_{i\in I}V^i$.
Now two semisimple characters~$\theta\in \C(\Lambda,m\beta)$ and~$\theta\in \C(\Lambda,m,\beta')$ which intertwine have 
a bijective map~$\zeta$ between the index sets~$I$ and~$I'$ if the block-structure of the strata such that the restrictions~$\theta_i$ and~$\theta_{\zeta(i)}$
 the blocks intertwine by an isomorphism from~$V^i$ to~$V^{\zeta(i)}$, see Theorem~\ref{thmMatchingForChar}. If~$\theta$ and~$\theta'$ are conjugate over~$\tilde{G}$
 the lattice sequences satisfy
\[\dim_{\kappa_F}\Lambda_l^i/\Lambda_{l+1}^i=\dim_{\kappa_F}
\Lambda'^{\zeta(i)}_l /\Lambda'^{\zeta(i)}_{l+1}\]
for all~$i$ and~$l$, with~$\kappa_F$ the residue field of~$F$.
Two of our main results state that this condition is enough to have ''intertwining implies conjugacy'', 
Theorem~\ref{thmintConjSemisimpleCharacters}: 
\begin{itemize}
 \item[ ] If the lattice sequences satisfy the above condition,
 then two semisimple characters~$\theta\in \C(\Lambda,m,\beta)$ and~$\theta'\in \C(\Lambda,m,\beta')$ intertwine over 
$\tilde{G}$ if and only they are conjugate over~$\tilde{G}$.
\end{itemize}
and Theorem~\ref{thmintConjSemisimpleCharactersForG}:
\begin{itemize}
 \item[ ] If the lattice sequences satisfy the above condition,
 then two semisimple characters of~$G$:~$\theta_- \in \C_-(\Lambda,m,\beta)$ and~$\theta'_- \in \C_-(\Lambda,m,\beta')$ intertwine over 
$G$ if and only they are conjugate over~$G$.
\end{itemize}
On the way we prove our third main theorem, the Skolem-Noether for~$G$, Theorem~\ref{thmSkolemNoetherSemisimple}:
\begin{itemize}
 \item[ ]Let~$[\Lambda,q,r,\beta]$ and~$[\Lambda',q,r,\beta']$ be two non-zero~$G$-intertwining semisimple strata, and suppose that~$\beta$ and~$\beta'$ have  the same characteristic polynomial. Then, there is an element~$g \in G$ such that~$g\beta g^{-1}$ is equal to~$\beta'$.
\end{itemize}
To prove this statement we analyze in section~\ref{secWittGroups} Witt groups~$W_*(E)$ of finite field extensions~$E$ of~$F$ and trace maps from~$W_*(E)$ and~$W_*(F)$ .}


\section{Notation}
Let~$(F,\nu_F)$ be a nonarchimedean local field of odd residual characteristic, equipped with an involution~$\rho$ (which may be trivial) with fixed field~$F_0$. We write~$o_F, \mf{p}_F$ and~$\kappa_F$ for the valuation ring, its maximal ideal and the residue field of~$F$ respectively, and we assume that the image of the additive valuation~$\nu:=\nu_F$ is~$\bbZ\cup\{\infty\}$. We also denote by~$x\mapsto \bar{x}$ the reduction map~$o_F\onto\kappa_F=o_F/\mf{p}_F$. We fix a symmetric or skew-symmetric uniformizer~$\pi\in \mf{p}_F\setminus\mf{p}_F^2$. We use similar notation for other nonarchimedean local fields. If~$E|F$ is an algebraic field extension then we write
~$E^{ur}$ for the 
maximal unramified subextension of~$E|F$.

Let~$h$ be an~$\epsilon$-hermitian form (with~$\epsilon=\pm 1$) on an~$F$-vector space~$V$ of finite dimension~$m$,
i.e.\ for all~$v_1,v_2\in V$ and~$x,y\in F$ the bi-additive form~$h$ satisfies 
\[
h(v_1 x,v_2 y)=\rho(x)\epsilon \rho(h(v_2,v_1))y.
\]
We denote the ring of~$F$-endomorphisms of~$V$ by~$A$ and its group of units~$A^\times$ by~$\tilde{G}$.  Let~$G$ be the group of all elements~$g$ of~$\tilde{G}$ such that 
$h(gv_1,gv_2)$ is equal to~$h(v_1,v_2)$, for all vectors~$v_1,v_2$; this is the group of points of a reductive group over~$F_0$, which is connected unless~$F=F_0$ and~$\epsilon=+1$, in which case it is the full orthogonal group. Let~$\sigma=\sigma_h$ be the adjoint anti-involution of~$h$ on~$A$. For a~$\sigma$-stable subset~$M$ of~$A$, we write~$M_+$ for the set of symmetric elements and~$M_-$ for the set of skew-symmetric elements.

An~$o_F$-lattice in~$V$ is a free~$o_F$-module~$M$ of dimension~$m$. The dual~$M^\#$ of~$M$ with respect to~$h$ is the set of all vectors~$v$ of~$V$ such that~$h(v,M)$ is a subset of~$\pF$. A \emph{lattice sequence} in~$V$ is a map~$\Lambda$ from~$\bbZ$ to the set of~$o_F$-lattices of~$V$
satisfying 
\begin{enumerate}
\item~$\Lambda_s\subseteq\Lambda_t$, for all integers~$s>t$, and
\item~$\Lambda_s\pi =\Lambda_{s+e}$ for some (unique) integer~$e$ and all integers~$s$.
\end{enumerate} 
We call~$e=:e(\Lambda| o_F)$  the~$o_F$-period of~$\Lambda$. An injective lattice sequence is called a \emph{lattice chain}. For each integer~$s$, we denote by~$x\mapsto \bar{x}$ the reduction map~$\Lambda_s\onto \Lambda_s/\Lambda_{s+1}$. A lattice sequence~$\Lambda$ is called \emph{self-dual} 
if there is an integer~$u$ such that~$(\Lambda_s)^{\#}=\Lambda_{u-s}$.  

As usual, a lattice sequence~$\Lambda$ determines the following filtrations of~$A$ and~$A_-$ (if~$\Lambda$ is self-dual):~$\mf{a}_i(\Lambda)$ is the set of all elements of~$A$ which map~$\Lambda_s$ into~$\Lambda_{s+i}$ for all integers~$s$ and~$\mf{a}_{-,i}(\Lambda)$ is the intersection of 
$\mfa_i(\Lambda)$ with~$A_-$.
We skip the argument~$\Lambda$ if there is no cause of confusion and we write~$\mfa'_i$ if there is a second lattice sequence~$\Lambda'$ given. 

The sequence~$\Lambda$ also induces filtrations on~$\tilde{\U}(\Lambda):=\mf{a}_0^\times$ by~$\tilde{\U}^i(\Lambda)=1+\mf{a}_i$ and, when~$\Lambda$ is self-dual, on~$\U(\Lambda):=\tilde{\U}(\Lambda)\cap G$ by~$\U^i(\Lambda)=G\cap \tilde{\U}^i(\Lambda)$ for~$i\in\bbN$. The filtration on~$A$ defines a ``valuation map''~$\nu_\Lambda$ as follows: for~$\beta\in A$, we put~$\nu_{\Lambda}(\beta)=\sup\{i\mid\beta\in\mf{a}_i\}$, an integer or~$\infty$. The normalizer~$\mf{n}(\Lambda)$ of~$\Lambda$ is the set of elements of~$g\in A^\times$ such that~$\nu_\Lambda(g^{-1})=-\nu_\Lambda(g)$. 

The \emph{translation} of~$\Lambda$ by~$s\in \bbZ$ is the lattice 
sequence~$(\Lambda+s)_i:=\Lambda_{i-s}$. The lattice sequence 
\[
\Lambda\oplus(\Lambda+1)\oplus\cdots\oplus(\Lambda+e(\Lambda|o_F)-1)
\] 
is always a lattice chain. 
By this construction, many theorems in~\cite{bushnellKutzko:93} proven for lattice chains are valid for lattice sequences (cf.~\cite{stevens:05}, and also~\cite{kurinczukstevens:15}, where this is called a~$\dag$-construction). If this is the case, or the proof of a result for lattice chains is valid for lattice sequences without change, then, in the following, we just refer to the statement for lattice chains. 

Finally, for~$x$ a real number, we denote by~$\floor{x}$ the greatest integer not greater than~$x$.


\section{Lifting isometries}
The isomorphism type of the hermitian space~$(V,h)$ is encoded in any self-dual lattice sequence of~$V$, as explained in this section. The main results are Proposition~\ref{propliftResidualIsometries} and Corollary~\ref{corCohomologyArgument}, which explain how an approximate isometry (for example, one which induces an isometry at the level of residue fields) can be lifted to a genuine isometry. Let us state the main proposition:

\begin{proposition}\label{propliftResidualIsometries}
Let~$F|F'$ be a finite field extension.  Suppose we are given two finite-dimensional~$\epsilon$-hermitian spaces~$(V,h)$ and~$(V',h')$ with respect to~$(F,\rho)$, an~$F'$-linear isomorphism~$f:V\ra V'$ and two self-dual~$o_F$-lattice sequences~$\Lambda$ and~$\Lambda'$ of 
$(V,h)$ and~$(V',h')$, respectively, such that, for all~$i\in\bbZ$,
\bi
\item~$f(\Lambda_i)=\Lambda'_i$,
\item~$f((\Lambda_i)^{\#})=f(\Lambda_i)^{\#}$,
\item~$\overline{h'(f(v),f(w))}=\overline{h(v,w)}\in\kappa_F$, for all~$v\in \Lambda_i$,~$w\in (\Lambda_{i+1})^{\#}$and
\item~$\overline{f(vx)}=\overline{f(v)x}\in\Lambda'_{i+e(\Lambda'|o_F)\nu_k(x)}/\Lambda'_{1+i+e(\Lambda'|o_F)\nu_k(x)}$, for all~$v\in \Lambda_i$,~$x\in F^{\times}$.
\ei
Then
there is an~$F$-linear isometry~$g$ from~$(V,h)$ to~$(V',h')$ mapping~$\Lambda$ to~$\Lambda'$
such that~$(f-g)(\Lambda_i)\subseteq \Lambda'_{i+1}$ for all integers~$i$.
\end{proposition}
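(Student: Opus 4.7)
The plan is to build $g$ in two stages. First I refine $f$ to an $F$-linear map $f_1\colon V\to V'$ that still satisfies the four bullets; then I twist $f_1$ on the right by a small element of $\tilde\U^1(\Lambda)$ so that the composite becomes an isometry.

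\emph{Stage 1 ($F$-linearisation).} The fourth bullet says exactly that $f$ induces a $\kappa_F$-linear isomorphism $\Lambda_i/\Lambda_{i+1}\to\Lambda'_i/\Lambda'_{i+1}$ for every $i$. Pick an $F$-basis $v_1,\dots,v_m$ of $V$ adapted to $\Lambda$ (each $v_j$ lying in some $\Lambda_{s_j}\setminus\Lambda_{s_j+1}$), lift the residues $\overline{f(v_j)}$ to vectors $w_j\in\Lambda'_{s_j}$, and set $f_1(v_j):=w_j$, extended $F$-linearly. Then $f_1$ is $F$-linear, $f_1(\Lambda_i)=\Lambda'_i$ for every $i$, and $(f-f_1)(\Lambda_i)\subseteq\Lambda'_{i+1}$; in particular $f_1$ inherits the first three bullets from $f$ and satisfies the fourth bullet trivially.

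\emph{Stage 2 (isometrisation).} Pull $h'$ back through $f_1$ to the $\epsilon$-hermitian form $h_1(v,w):=h'(f_1 v,f_1 w)$ on $V$. Bullets one and two imply that $\Lambda$ is self-dual for $h_1$ with the same shift $u$ as for $h$, and bullet three (transferred from $f$ to $f_1$) says that $h_1$ and $h$ induce the same form on the graded module $\bigoplus_i\Lambda_i/\Lambda_{i+1}$. Writing $h_1(v,w)=h(v,Tw)$ for the unique $\sigma_h$-symmetric $T\in\Aut_F(V)$, this amounts to $T\in 1+\mfa_1(\Lambda)$. The task is to find $\phi\in\tilde\U^1(\Lambda)$ with $\sigma_h(\phi)\phi=T$; then $g:=f_1\circ\phi^{-1}$ is $F$-linear, maps $\Lambda$ onto $\Lambda'$, is an isometry from $(V,h)$ to $(V',h')$, and the condition $\phi\equiv\id\pmod{\mfa_1}$ yields $(f-g)(\Lambda_i)\subseteq\Lambda'_{i+1}$. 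Such a $\phi$ is produced by successive approximation: given $\phi_n\in\tilde\U^1(\Lambda)$ with $\sigma_h(\phi_n)\phi_n\equiv T\pmod{\mfa_n}$, the defect $T-\sigma_h(\phi_n)\phi_n$ is a $\sigma_h$-symmetric class in $\mfa_n/\mfa_{n+1}$, and setting $\phi_{n+1}:=\phi_n(1+\tfrac{1}{2}X)$ for any $\sigma_h$-symmetric lift $X$ of that defect improves the approximation by one level. The $\phi_n$ converge in $\tilde\U^1(\Lambda)$ because that group is a complete pro-$p$ group.

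The main obstacle is Stage 2. The decisive input is that $2\in o_F^{\times}$, which is what permits $\sigma_h$-symmetric elements of $\mfa_n/\mfa_{n+1}$ to be halved at every level, thereby solving the infinitesimal obstruction. This is presumably what is packaged into Corollary~\ref{corCohomologyArgument}, and is the only place where the odd-residue-characteristic assumption enters the argument; Stage 1 is essentially bookkeeping once one has noted that the fourth bullet is precisely a $\kappa_F$-linearity statement on the graded module.
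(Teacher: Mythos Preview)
Your argument is correct, and Stage~1 matches the paper's reduction exactly. Stage~2, however, is genuinely different from what the paper does.

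The paper, after the $F$-linearisation step, invokes a chain of lemmas (Lemmas~\ref{lemLiftWitt1}--\ref{lemWittLift}) that explicitly lift Witt bases from the graded pieces~$\Lambda_i/\Lambda_{i+1}$ to Witt bases of~$(V,h)$ splitting~$\Lambda$. One does the same on the~$(V',h')$ side, and the isometry~$g$ is obtained by matching the two lifted Witt bases. This is constructive but requires the case analysis of those lemmas (period~$1$ with~$\Lambda_0^{\#}=\Lambda_1$, period~$1$ with~$\Lambda_0^{\#}=\Lambda_0$, period~$2$, and then the general assembly).

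Your route bypasses all of that: you pull~$h'$ back along~$f_1$ to get~$h_1=h(\,\cdot\,,T\,\cdot\,)$ with~$T=\sigma_h(T)\in 1+\mfa_1(\Lambda)$, and then solve~$\sigma_h(\phi)\phi=T$ in~$\tilde\U^1(\Lambda)$ by a Hensel-type successive approximation, halving the symmetric defect at each step. This is shorter and works directly for lattice sequences without first reducing to chains. It is essentially the argument behind Corollary~\ref{corCohomologyArgument} (and the cohomology lemma \cite[Lemma~2.2]{stevens:01} it cites), run by hand; note that in the paper that corollary is \emph{deduced from} the proposition you are proving, so it is important that you have given the approximation argument explicitly rather than simply citing the corollary. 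The paper's Witt-basis approach, on the other hand, yields more: one actually gets compatible Witt bases splitting~$\Lambda$ and~$\Lambda'$, which can be useful elsewhere, whereas your argument only produces the isometry~$g$.
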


Later it will be useful to have a stronger approximation statement. For that we introduce a generalization of the adjoint anti-involution. 
For two finite-dimensional~$\epsilon$-hermitian spaces~$(V,h)$ and~$(V',h')$ with respect to~$(F,\rho)$ there is a map 
$\sigma_{h,h'}$ from~$\Hom_F(V,V')$ to~$\Hom_F(V',V)$
defined, for~$f\in\Hom_F(V,V')$, by the equation 
\[
h'(f(v),w)=h(v,\sigma_{h,h'}(f)(w)),\qquad\text{for }v\in V,\ w\in V'.
\]

\begin{corollary}\label{corCohomologyArgument}
Let~$n$ be a positive integer, let~$\Lambda$ be a self-dual lattice sequence and let~$f: V \rightarrow V'$ be an~$F$-linear isomorphism such that~$f(\Lambda)$ is self-dual and
\[
\sigma_{h,h'}(f)\in \tilde{\U}^n(\Lambda)f^{-1}\tilde{\U}^n(f(\Lambda)).
\]
Then there is an isomorphism~$\phi$ of~$\epsilon$-hermitian spaces from~$(V,h)$ to~$(V',h')$ contained in~$\tilde{\U}^n(f(\Lambda))f\tilde{\U}^n(\Lambda)$.
\end{corollary}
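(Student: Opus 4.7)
The plan is to transfer the form $h'$ to $V$ via $f$ and then reduce the problem to a Hensel-type construction on the single space $V$. Define the form
\[
h''(v,w) := h'(f(v),f(w)), \qquad v,w\in V,
\]
which is $\epsilon$-hermitian under $\rho$; tautologically $f$ is an isometry $(V,h'') \to (V',h')$, and the self-duality of $f(\Lambda)$ with respect to $h'$ transfers to self-duality of $\Lambda$ with respect to $h''$. It therefore suffices to exhibit an isometry $\phi_0 \in \tilde{\U}^n(\Lambda)$ from $(V,h)$ to $(V,h'')$: then $\phi := f \circ \phi_0$ lies in $f\tilde{\U}^n(\Lambda) \subseteq \tilde{\U}^n(f(\Lambda)) f \tilde{\U}^n(\Lambda)$ and is an isometry $(V,h) \to (V',h')$, as required.

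Unwinding the definition of $\sigma_{h,h'}$ gives $h''(v,w) = h(v, S(w))$ where $S := \sigma_{h,h'}(f) \circ f \in \End_F(V)$. Since conjugation by $f$ carries $\tilde{\U}^n(f(\Lambda))$ isomorphically onto $\tilde{\U}^n(\Lambda)$, the hypothesis $\sigma_{h,h'}(f) \in \tilde{\U}^n(\Lambda) f^{-1} \tilde{\U}^n(f(\Lambda))$ translates into $S \in \tilde{\U}^n(\Lambda)$; write $S = 1 + a$ with $a \in \mf{a}_n(\Lambda)$. Comparing $h''(w,v) = \epsilon \rho(h''(v,w))$ forces $\sigma_h(S) = S$, hence $a \in A_+$. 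With $\phi_0 = 1 + b$, the isometry condition $\sigma_h(\phi_0)(1+a)\phi_0 = 1$ becomes
\[
b + \sigma_h(b) + \sigma_h(b)\,b = c, \qquad c := (1+a)^{-1} - 1 \in A_+ \cap \mf{a}_n(\Lambda).
\]

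I solve this equation by iteration, using crucially that $2 \in o_F^\times$. Set $b^{(1)} := c/2$, which satisfies $b^{(1)} + \sigma_h(b^{(1)}) = c$ and leaves the symmetric defect $e_1 = -(c/2)^2 \in \mf{a}_{2n}(\Lambda)$. Inductively, given $b^{(k)} \in \mf{a}_n(\Lambda)$ with symmetric defect $e_k \in \mf{a}_{m_k}(\Lambda)$, put $b^{(k+1)} := b^{(k)} + e_k/2$; a direct expansion shows the new defect $e_{k+1}$ consists of the cross terms $-\bigl(\sigma_h(b^{(k)})(e_k/2) + \sigma_h(e_k/2) b^{(k)} + \sigma_h(e_k/2)(e_k/2)\bigr)$, which lie in $\mf{a}_{n + m_k}(\Lambda)$ and are again symmetric by $\sigma_h(xy) = \sigma_h(y)\sigma_h(x)$. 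Thus $m_k \to \infty$, and the Cauchy sequence $(b^{(k)})$ converges in the complete filtered algebra $A$ to some $b \in \mf{a}_n(\Lambda)$ solving the equation exactly.

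The main obstacle is really a bookkeeping one: at every step the defect $e_k$ must be $\sigma_h$-symmetric, for otherwise halving is not a legitimate correction ($e_k/2 + \sigma_h(e_k/2)$ would fail to equal $e_k$). This relies only on the oddness of the residue characteristic (invertibility of $2$) together with the fact that $\sigma_h$ is an anti-involution, and both inputs are readily available. Once $\phi_0 = 1 + b$ is produced, the composition $\phi = f\phi_0$ is the desired $\epsilon$-hermitian isomorphism contained in $\tilde{\U}^n(f(\Lambda)) f \tilde{\U}^n(\Lambda)$.
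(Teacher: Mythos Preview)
Your approach is correct in spirit and genuinely different from the paper's. The paper first invokes Proposition~\ref{propliftResidualIsometries} to establish that~$(V,h)$ and~$(V',h')$ are abstractly isometric (by some isometry carrying~$\Lambda$ to~$f(\Lambda)$), then reduces to the case~$(V,h)=(V',h')$ and~$f(\Lambda)=\Lambda$, and finally observes that the double coset~$\tilde{\U}^n(\Lambda)f\tilde{\U}^n(\Lambda)$ is stable under~$g\mapsto\sigma_h(g)^{-1}$ and cites~\cite[Lemma~2.2]{stevens:01} to produce a fixed point. You instead pull~$h'$ back along~$f$ and directly construct the isometry by a Hensel argument, which is more self-contained: it needs neither Proposition~\ref{propliftResidualIsometries} nor the external lemma, and your iteration essentially reproves the relevant special case of that lemma from scratch.

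There is, however, a computational slip. With~$\phi_0=1+b$, the condition~$\sigma_h(\phi_0)(1+a)\phi_0=1$ does \emph{not} reduce to~$b+\sigma_h(b)+\sigma_h(b)b=c$ with~$c=(1+a)^{-1}-1$: the latter is equivalent to~$\sigma_h(\phi_0)\phi_0=(1+a)^{-1}$, which differs from what you need unless~$b$ commutes with~$a$. Your iteration does happen to produce such a~$b$, since~$b^{(1)}=c/2$ is a power series in~$a$ and each correction~$e_k/2$ is built polynomially from~$b^{(k)}$ and~$e_k$, both inductively power series in~$a$; so the two equations coincide \emph{a posteriori} for your particular solution, but this should be said explicitly. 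The cleanest repair is simply to take~$\phi_0=(1+a)^{-1/2}$ via the binomial series (legitimate since~$2\in o_F^\times$ and~$a\in\mf{a}_n(\Lambda)$): then~$\phi_0$ is symmetric, commutes with~$1+a$, lies in~$\tilde{\U}^n(\Lambda)$, and~$\sigma_h(\phi_0)(1+a)\phi_0=\phi_0^2(1+a)=1$.
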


\begin{proof}
The~$\epsilon$-hermitian spaces~$(V,h)$ and~$(V',h')$ are isomorphic to each other by an isomorphism which maps~$\Lambda$ to~$f(\Lambda)$, by Proposition~\ref{propliftResidualIsometries}. Thus we can restrict to the case where~$(V,h)$ is equal to~$(V',h')$ and~$f(\Lambda)$ is equal to~$\Lambda$. By assumption, the double coset 
$\tilde{\U}^n(\Lambda)f\tilde{\U}^n(\Lambda)$ is invariant under the automorphism~$g\mapsto \sigma_h(g^{-1})$, and this double coset thus has a fixed point, by~\cite[Lemma~2.2]{stevens:01}. 
\end{proof}

We need a sequence of lemmas to prove Proposition~\ref{propliftResidualIsometries}.

\begin{lemma}\label{lemLiftWitt1}
Suppose that~$\Lambda$ is a self-dual lattice chain of period~$1$ such that~$\Lambda_0^{\#}=\Lambda_1$. Consider the form
\[
\bar{h}: \Lambda_0/\Lambda_1\times   \Lambda_0/\Lambda_1\ra \kappa_F 
\]
defined by~$\bar{h}(\bar{v},\bar{w})=\overline{h(v,w)}.$
Then every Witt basis of~$(\Lambda_0/\Lambda_1,\bar{h})$ lifts to a Witt basis of~$(V,h)$ contained in~$\Lambda_0$, under the projection~$\Lambda_0\onto \Lambda_0/\Lambda_1$.  
\end{lemma}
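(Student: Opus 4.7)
The plan is to proceed by induction on $\dim_F V$, peeling off one summand of the Witt decomposition at a time — either a hyperbolic plane or an anisotropic line — and iterating on the orthogonal complement. The hypothesis $\Lambda_0^\#=\Lambda_1$ together with period~$1$ guarantees that $h$ takes $\Lambda_0\times\Lambda_0$ into $o_F$, that $\bar h$ on $\Lambda_0/\Lambda_1$ is non-degenerate, and (by Nakayama) that any lift of a $\kappa_F$-basis of $\Lambda_0/\Lambda_1$ is an $o_F$-basis of $\Lambda_0$.

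For a hyperbolic pair $(\bar e,\bar f)$ of the residual Witt basis, I would pick any lifts $e_0,f_0\in\Lambda_0$. By construction $h(e_0,f_0)\in o_F^\times$ and $h(e_0,e_0),h(f_0,f_0)\in\mathfrak{p}_F$. After rescaling $f_0$ by $h(e_0,f_0)^{-1}$, I make the three successive adjustments: (i) replace $e_0$ by $e_0+\lambda f_0$ with $\lambda\in\mathfrak{p}_F$ so that $h(e_0,e_0)=0$; (ii) rescale $f_0$ again to restore $h(e_0,f_0)=1$; (iii) replace $f_0$ by $f_0+\mu e_0$ with $\mu\in\mathfrak{p}_F$ so that $h(f_0,f_0)=0$, noting that this last change does not disturb $h(e_0,f_0)$ because $h(e_0,e_0)=0$. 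Each correction is in $\mathfrak{p}_F\cdot\{e_0,f_0\}$, so the residues $\bar e,\bar f$ are preserved. The equation in (i) has shape $h(e_0,e_0)+\lambda+\epsilon\rho(\lambda)+\rho(\lambda)\lambda h(f_0,f_0)=0$ and is solved by a Newton/Hensel iteration converging in $\mathfrak{p}_F$ because $p$ is odd (so $2$ is a unit in the symmetric case) and because the appropriate symmetrisation map $\lambda\mapsto\lambda+\epsilon\rho(\lambda)$ is surjective from $\mathfrak{p}_F$ onto its $\rho$-fixed or $\rho$-antifixed part of $\mathfrak{p}_{F_0}$ in the tame quadratic situations that arise; (iii) is the linear version of the same equation.

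For an anisotropic vector $\bar v$, any lift $v\in\Lambda_0$ already has $h(v,v)\in o_F^\times$, so nothing has to be adjusted. In either case, once the leading summand $U$ (a hyperbolic plane or an anisotropic line) has been constructed, I set $W=U^\perp\subset V$ and $\Lambda_i^W=\Lambda_i\cap W$. Using the hyperbolic (resp.\ anisotropic) Gram matrix to express coordinates, one checks that $\Lambda_0 = U\cap\Lambda_0 \oplus \Lambda_0^W$ as $o_F$-modules, that $(\Lambda_i^W)_i$ is a self-dual lattice chain of period $1$ in $W$ with $(\Lambda_0^W)^\#=\Lambda_1^W$ (both $\subseteq$ and $\supseteq$ follow from the orthogonal decomposition of $\Lambda_0$ and the identity $\Lambda_0^\#=\Lambda_1$ in $V$), and that the residual space $\Lambda_0^W/\Lambda_1^W$ is the orthogonal complement of the image of $U$ in $\Lambda_0/\Lambda_1$. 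The remaining vectors of the given Witt basis therefore form a Witt basis of this orthogonal complement, to which the inductive hypothesis applies, producing the required Witt basis of $(W,h|_W)$ inside $\Lambda_0^W$. Concatenating with the lifted summand yields the desired Witt basis of $(V,h)$ inside $\Lambda_0$.

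The main obstacle will be the Hensel-style adjustment of the hyperbolic pair in the hermitian case with a ramified quadratic $F|F_0$: one has to verify that $\mathrm{tr}_{F/F_0}(\mathfrak{p}_F)=\mathfrak{p}_{F_0}$ (and the analogous statement for $\lambda\mapsto\lambda-\rho(\lambda)$ when $\epsilon=-1$) to a sufficient precision to start the Newton iteration and keep it inside $\mathfrak{p}_F$. This is where the odd-residue-characteristic assumption genuinely enters; all other bookkeeping is orthogonal-decomposition algebra.
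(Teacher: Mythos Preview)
Your proof is correct and follows essentially the same route as the paper: peel off one Witt summand (an anisotropic line or a hyperbolic plane) at a time, use an iterative adjustment to make a lifted hyperbolic pair genuinely isotropic, and pass to the orthogonal complement. The paper organises this slightly differently---it lifts the whole basis first and then uses the explicit orthogonal projections~$\proj_W$ to orthogonalise the remaining vectors, and it writes the isotropy correction as the concrete iteration~$w_{i+1}=w_i-v_{-1}\,h(w_i,w_i)/2$ rather than invoking Hensel---but the substance is identical.
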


\begin{proof}
Let~$\B$ be a Witt basis of~$\bar{h}$. We have 
\[
\B=\B_0\dotcup \B_{1,-1}\dotcup \B_{2,-2}\dotcup\ldots\dotcup \B_{r,-r},
\]
where~$\B_{i,-i}$ spans a hyperbolic space,~$\B_0$ spans an anisotropic space, and all these spaces are pairwise orthogonal to each other in~$\Lambda_0/\Lambda_1$. Further we have a decomposition 
\[
\B_0=\B_{0,1}\dotcup \B_{0,2}\dotcup\ldots\dotcup \B_{0,t}
\]
into pairwise orthogonal sets of cardinality one. Take an arbitrary lift~$\B'^{(0)}$ of~$\B$ to~$\Lambda_0$; for an element~$\bar{v}\in\B$, we write~$v\in\B'^{(0)}$ for its lift. 

\textbf{Step 1:} Consider~$\B_{0,1}=\{\bar{v}_0\}$; put~$W:=v_0^{\perp}$ and define
\[
\B'^{(1)}:=\{\proj_W(v)\mid v\in \B'^{(0)}\setminus\{v_0\}\}\cup \{v_0\},
\] 
where~$\proj_W$ denotes the orthogonal projection onto~$W$. We recall the formula
\[
\proj_W(v)=v-v_0\frac{h(v_0,v)}{h(v_0,v_0)}
\]
and conclude that~$\overline{h(\proj_W(v),\proj_W(v'))}$ is equal to
\[
\overline{h(v,v')}-\overline{\frac{h(v_0,v')h(v,v_0)}{h(v_0,v_0)}}-\overline{\frac{\rho(h(v_0,v))h(v_0,v')}{\rho(h(v_0,v_0))}}+\overline{\frac{\rho(h(v_0,v))h(v_0,v')}{\rho(h(v_0,v_0))}}
\]
and therefore equal to~$\overline{h(v,v')}$ for all~$v,v'\in B'^{(0)}.$ Thus, replacing~$(V,h)$ by~$(W,h_{|W})$ and~$\Lambda$ by its intersection with~$W$ and then repeating, we can assume that~$\B_0$ is empty.

\textbf{Step 2:} Consider~$\B_{1,-1}=\{\bar{v}_1,\bar{v}_{-1}\}$ and define now~$W:=\{v_1,v_{-1}\}^\perp$.
Then, as in Step~1, elements~$v$ and~$v'$ of 
\[
\B'^{(1)}:=\{\proj_W(v)\mid v\in \B'^{(0)}\setminus\{v_1,v_{-1}\}\}\cup\{v_1,v_{-1}\}
\] 
satisfy 
$\overline{h(\proj_w(v),\proj_W(v'))}=\overline{h(v,v')}$, because if~$v\in\B'^{(0)}\setminus\{v_1,v_{-1}\}$ then 
\[
v\equiv \proj_W(v) + v_{-1} h(v_1,v) + v_1 \epsilon h(v_{-1},v) \pmod{\Lambda_1}.
\] 
Thus we have reduced to the hyperbolic case that~$\B$ is equal to~$\B_{1,-1}$.

\textbf{Step 3:}~$\B=\B_{1,-1}=\{\bar{v}_1,\bar{v}_{-1}\}$.
The sequence~$(w_i)_{i\ge 1}$ ,defined by~$w_1:=v_1$
and 
\[
w_{i+1}:=w_i-v_{-1}\frac{h(w_i,w_i)}{2}, \qquad\text{for }i\ge 1,
 \]
has a limit~$v_1'$ which satisfies~$h(v'_1,v'_1)=0$ and~$\bar{v}'_1=\bar{v}_1$, and analogously we find~$v'_{-1}$
with similar properties. Then 
\[
\B'^{(1)}:=\left\{ \frac{1}{ v'_1 \rho(h(v'_1,v'_{-1}))} , v'_{-1}\right\}
\]
is a Witt basis of~$V$ which lifts~$\B$.
\end{proof}

\begin{lemma}\label{lemLiftWitt2}
Suppose that~$\Lambda$ is a self-dual lattice chain of period~$1$ such that~$\Lambda_0^{\#}=\Lambda_0$. Consider the form
\[
\bar{h}:  \Lambda_0/\Lambda_1\times  \Lambda_0/\Lambda_1\ra \kappa 
\]
defined by~$\bar{h}(\bar{v},\bar{w})=\overline{h(v,w)\pi^{-1}}.$
Then for every Witt basis~$\B=\B_0\dotcup \B^-\dotcup \B^+$  of~$(\Lambda_0/\Lambda_1,\bar{h})$,  with isotropic parts~$\B^-$ and~$\B^+$ and anisotropic part~$\B_0$, there is a Witt basis~$\B'=\B'_0\dotcup \B'^+\dotcup \B'^-\subseteq \Lambda_{-1}$ of~$(V,h)$ such that 
$\B'_0,\  \B'^+$ and~$\B'^-\pi$  are lifts of~$\B_0,\ \B^+$ and~$\B^-$ under the projection 
$\Lambda_0\onto \Lambda_0/\Lambda_1$, respectively.
\end{lemma}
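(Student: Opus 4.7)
The plan is to mimic the three-step strategy of Lemma~\ref{lemLiftWitt1}, adapted to the new duality condition~$\Lambda_0^\#=\Lambda_0$ and the~$\pi^{-1}$-rescaling in~$\bar h$. First one checks that~$\bar h$ is a well-defined non-degenerate~$\pm\epsilon$-hermitian form on~$\Lambda_0/\Lambda_1$: the hypothesis~$\Lambda_0^\#=\Lambda_0$ forces~$h(\Lambda_0,\Lambda_0)\subseteq\mf{p}_F$, so~$h(v,w)\pi^{-1}\in o_F$; the~$\pm$ sign depends on whether~$\pi$ is symmetric or skew-symmetric. For each~$\bar v\in\B_0\cup\B^+$ pick an arbitrary lift~$v\in\Lambda_0$, and for each~$\bar v\in\B^-$ pick a lift~$u\in\Lambda_0$ of~$\bar v$ and set~$v:=u\pi^{-1}\in\Lambda_{-1}$. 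The aim is to modify these lifts in three successive steps until they form a Witt basis of~$h$ in the required lattices.

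\textbf{Step~1} (\emph{eliminate the anisotropic part}): for~$\bar v_0\in\B_0$ with lift~$v_0\in\Lambda_0$, anisotropy of~$\bar v_0$ in~$\bar h$ gives~$h(v_0,v_0)\in\pi\,o_F^\times$. Replace each remaining lifted basis vector~$v$ by its orthogonal projection~$v-v_0\,h(v_0,v_0)^{-1}h(v_0,v)$ onto~$v_0^\perp$. The orthogonality~$\bar h(\bar v,\bar v_0)=0$ forces~$h(v_0,v)\in\mf{p}_F^2$ when~$v\in\Lambda_0$ and~$h(v_0,v)\in\mf{p}_F$ when~$v\in\Lambda_{-1}$, so the correction lies in~$\Lambda_1$ (resp.\ in~$\Lambda_0$), and the projection stays in the correct lattice with unchanged image under reduction by~$\Lambda_1$ (resp.\ under~$\overline{{\cdot}\pi}$). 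The identity
\[
h\bigl(v-v_0\mu,\,v'-v_0\mu'\bigr)=h(v,v')-h(v,v_0)h(v_0,v_0)^{-1}h(v_0,v'),\qquad\mu=h(v_0,v_0)^{-1}h(v_0,v),\ \mu'=h(v_0,v_0)^{-1}h(v_0,v'),
\]
produces a correction term in~$\mf{p}_F^3$ (or~$\mf{p}_F^2$ if one of~$v,v'$ lies in~$\Lambda_{-1}$), which vanishes after the~$\pi^{-1}$-rescaling defining~$\bar h$. Iterating reduces to~$\B_0=\emptyset$.

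\textbf{Step~2} (\emph{reduce to a single hyperbolic plane}): pick a paired~$\bar v^+,\bar v^-$ with lifts~$v^+\in\Lambda_0$,~$v^-\in\Lambda_{-1}$, normalized so that~$h(v^+,v^-)\in o_F^\times$. For any other lifted basis vector~$v$, define~$\proj_W(v)=v-v^-\alpha-v^+\beta$ with~$\alpha,\beta$ determined by orthogonality to~$v^\pm$; as in Step~2 of Lemma~\ref{lemLiftWitt1}, modulo higher-order corrections coming from~$h(v^\pm,v^\pm)\in\mf{p}_F$, one has~$\alpha\approx h(v^+,v^-)^{-1}h(v^+,v)$ and~$\beta\approx \epsilon\rho(h(v^+,v^-))^{-1}h(v^-,v)$. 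Witt orthogonality of~$\bar v$ with~$\bar v^\pm$ pins down the valuations of~$h(v^\pm,v)$ to precisely the powers of~$\mf{p}_F$ needed so that~$\proj_W(v)$ remains in the same lattice as~$v$ and has the same image in~$\bar h$. Iterating reduces to~$\B=\{\bar v^+,\bar v^-\}$.

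\textbf{Step~3} (\emph{isotropify the hyperbolic pair}): the binary~$\epsilon$-hermitian form~$h|_{\langle v^+,v^-\rangle}$ over~$F$ is non-degenerate and admits~$\bar v^+$ as an isotropic vector modulo~$\mf{p}_F$. By Hensel's lemma applied to the quadratic~$h(v^++v^-\mu,v^++v^-\mu)=0$ — equivalently, the Newton iteration
\[
w_{i+1}:=w_i-v^-\cdot\frac{\epsilon\,h(w_i,w_i)}{2\,\rho(h(v^-,w_i))},
\]
for which a direct expansion gives~$h(w_{i+1},w_{i+1})=\rho(\mu_i)\,h(v^-,v^-)\,\mu_i$ whose~$\pi$-valuation increases quadratically once~$\nu_F(h(w_i,w_i))\geq 2$ — together with a preliminary correction to reach that regime (or a prior iteration applied to~$v^-$ to make it isotropic first, trivializing the rest), one obtains an isotropic lift~$v'^+\in\Lambda_0$ of~$\bar v^+$. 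Here invertibility of~$2$ (odd residual characteristic) and of~$h(v^-,v^+)$ are used. Symmetrically construct~$v'^-\in\Lambda_{-1}$, then rescale one member of the pair by the unit~$h(v'^+,v'^-)^{-1}$ to match the Witt normalization. The main obstacle is the bookkeeping in Steps~1 and~2: the asymmetry~$\B'^+\subseteq\Lambda_0$,~$\B'^-\subseteq\Lambda_{-1}$ means each projection formula and each pairing must be tracked separately, with tight control of~$\pi$-valuations, to verify both that the projection stays in the correct lattice and that the~$\pi^{-1}$-rescaled form~$\bar h$ is preserved.
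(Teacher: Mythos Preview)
Your three-step plan is workable in principle, but you have taken a much longer route than necessary. The paper's proof is a single sentence: replace~$h$ by the twisted form~$h':=h\pi^{-1}$ (which is~$(\pm\epsilon)$-hermitian because~$\rho(\pi)=\pm\pi$) and apply Lemma~\ref{lemLiftWitt1} directly. Indeed, with respect to~$h'$ one computes~$(\Lambda_0)^{\#'}=\Lambda_1$, and the induced form on~$\Lambda_0/\Lambda_1$ is exactly the~$\bar h$ of the statement; Lemma~\ref{lemLiftWitt1} then produces a Witt basis~$\tilde\B\subseteq\Lambda_0$ for~$h'$, and setting~$\B'_0=\tilde\B_0$,~$\B'^+=\tilde\B^+$,~$\B'^-=\tilde\B^-\pi^{-1}$ gives the required Witt basis for~$h$. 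This twist absorbs the asymmetry between~$\Lambda_0$ and~$\Lambda_{-1}$ at the outset, so that no separate bookkeeping is needed.

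Your approach instead reproves Lemma~\ref{lemLiftWitt1} in the already-untwisted setting, carrying the two lattices~$\Lambda_0$ and~$\Lambda_{-1}$ through every projection. Apart from the extra labour, there is one concrete slip: in Step~2 you assert~$h(v^\pm,v^\pm)\in\mf{p}_F$, but for~$v^-=u\pi^{-1}$ with~$\bar u\in\B^-$ one only has~$h(u,u)\in\mf{p}_F^2$ and hence~$h(v^-,v^-)=\pm\pi^{-2}h(u,u)\in o_F$, not~$\mf{p}_F$. This does not ultimately ruin the argument (the exact projection formula still lands in the right lattice, and your Step~3 iteration converges because the correction~$\mu_1\in\mf{p}_F^2$ makes the quadratic remainder~$\rho(\mu_1)h(v^-,v^-)\mu_1\in\mf{p}_F^4$), but it shows that the bookkeeping you identify as ``the main obstacle'' is genuinely delicate when done by hand. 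The twist~$h\mapsto h\pi^{-1}$ sidesteps all of it.
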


Here we explicitly make use of the fact that~$\rho(\pi)\in\{\pi,-\pi\}$. 

\begin{proof}
This follows directly from Lemma~\ref{lemLiftWitt1} if we substitute~$h$ by~$h\pi^{-1}$.
\end{proof}

We need a third base case for period~$2$.

\begin{lemma}\label{lemLiftWitt3}
Suppose that~$\Lambda$ is a self-dual lattice chain of period~$2$ such that~$\Lambda_0^{\#}=\Lambda_0$. Then~$h$ has anisotropic dimension zero 
and for any basis~$\B_{0}$ of~$\Lambda_0/\Lambda_{1}$ there is a Witt basis for~$h$,
\[
\B'= \B'_{-1}\cup \B'_{0},
\]
such that~$\B'_i$ is a subset of~$\Lambda_i\setminus\Lambda_{i+1}$ for all~$i$ and such that~$\B'_{0}$ is a lift of~$\B_{0}$ under the projection~$\Lambda_0\onto \Lambda_0/\Lambda_1$. 
Further,~$h$ vanishes on~$\B'_0\times \B'_0 $.
\end{lemma}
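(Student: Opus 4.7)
My plan is to realise the period-$2$ hypothesis as a period-$1$ situation for the sub-chain $\Lambda'_i := \Lambda_{2i}$, which satisfies $(\Lambda'_0)^{\#} = \Lambda_0^{\#} = \Lambda_0 = \Lambda'_0$, and then to invoke Lemma~\ref{lemLiftWitt2}. The residue form given by that lemma is $\overline{h_M}(\bar v,\bar w) := \overline{h(v,w)\pi^{-1}}$ on $\Lambda'_0/\Lambda'_1 = \Lambda_0/\Lambda_2$. The real work will then be to exhibit a Witt basis of this residue form of the required shape, and to check that when one pulls it back through Lemma~\ref{lemLiftWitt2} the ``negative'' part lands in $\Lambda_{-1}$ rather than merely in $\Lambda'_{-1} = \Lambda_{-2}$.

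First I would collect the basic lattice-theoretic consequences of self-duality $\Lambda_0^{\#} = \Lambda_0$ combined with the period-two relation $\Lambda_1 = \pi\Lambda_{-1} = \pi\Lambda_1^{\#}$: one obtains $h(\Lambda_0,\Lambda_0)\subseteq\pF$ and, using $\rho(\pi)\in\{\pm\pi\}$, $h(\Lambda_1,\Lambda_1)\subseteq\pF^2$. Together these ensure that the pairing $\Lambda_0/\Lambda_1 \times \Lambda_1/\Lambda_2 \to \kappa_F$, $(\bar v,\bar w)\mapsto\overline{h(v,w)\pi^{-1}}$, is well-defined, and a direct check using $\Lambda_0^{\#}=\Lambda_0$ shows that it is perfect. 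Consequently $\Lambda_1/\Lambda_2$ is a totally isotropic subspace of $(\Lambda_0/\Lambda_2,\overline{h_M})$ of half its dimension, so the residue form is non-degenerate and hyperbolic; pulling back via Lemma~\ref{lemLiftWitt2} the same will be true of $(V,h)$, and in particular $h$ has anisotropic dimension zero.

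Next, using the perfect pairing above, I would take the basis $\{\bar u_i\}\subseteq\Lambda_1/\Lambda_2$ dual to the given $\B_0=\{e_i\}$. An arbitrary lift of the $e_i$ to $\Lambda_0/\Lambda_2$ produces a complement to the Lagrangian $\Lambda_1/\Lambda_2$ which in general is not Lagrangian; correcting by terms $\sum_j \bar u_j b_{ij}$ reduces the problem to solving a system of the shape $b_{ij}+\epsilon'\rho(b_{ji}) = -c_{ij}$ in $\kappa_F$, where the $c_{ij}$ record the initial residual pairings and $\epsilon'=\pm\epsilon$ is the hermiticity sign of $\overline{h_M}$. Since $c_{ij}=\epsilon'\rho(c_{ji})$ by $\epsilon$-hermiticity and the residue characteristic is odd, this system is solvable: off-diagonal entries are free in one of the two variables, and the diagonal equations are handled by surjectivity of $x\mapsto x+\epsilon'\rho(x)$ onto the appropriate subspace in odd residue characteristic. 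This yields a Witt basis $\B = \{\tilde e_i\}\dotcup\{\bar u_i\}$ of $(\Lambda_0/\Lambda_2,\overline{h_M})$ with no anisotropic part and with $\{\tilde e_i\}$ projecting onto $\B_0$ under $\Lambda_0/\Lambda_2\onto\Lambda_0/\Lambda_1$.

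Finally I would apply Lemma~\ref{lemLiftWitt2} to $\Lambda'$ and $\B$ to obtain a Witt basis $\B'=\B'^+\dotcup\B'^-$ of $(V,h)$, and set $\B'_0:=\B'^+$, $\B'_{-1}:=\B'^-$. Since the $\bar u_i$ live inside $\Lambda_1/\Lambda_2\subseteq\Lambda_0/\Lambda_2$, any lift to $\Lambda_0$ automatically lies in $\Lambda_1\setminus\Lambda_2$; tracing through the iterative correction in the proof of Lemma~\ref{lemLiftWitt2} one checks that the corrections multiply elements of $\Lambda_0$ by values in $o_F$ and elements of $\Lambda_1$ by values in $\pF$, so each intermediate lift of the $\bar u_i$ stays in $\Lambda_1\setminus\Lambda_2$; after the final rescaling by $\pi^{-1}$ this gives $\B'_{-1}\subseteq\Lambda_{-1}\setminus\Lambda_0$. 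The inclusion $\B'_0\subseteq \Lambda_0\setminus\Lambda_1$ follows because the image of $\B'^+$ in $\Lambda_0/\Lambda_1$ is the non-zero basis $\B_0$, and the vanishing of $h$ on $\B'_0\times\B'_0$ is just the isotropy of the $\B'^+$-block of the Witt basis. I expect the most delicate step to be this final tracking -- namely, guaranteeing that the ``$-$-block'' produced by Lemma~\ref{lemLiftWitt2} actually lands one rung higher in the chain, in $\Lambda_{-1}$ instead of $\Lambda_{-2}$; the residual linear-algebra step for the Lagrangian complement is the other place where the odd residue characteristic is used in an essential way.
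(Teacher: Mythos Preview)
Your approach is correct and takes a genuinely different route from the paper's. The paper works directly with the period-$2$ chain: it first shows $h$ is hyperbolic by a contradiction argument (an anisotropic line in a splitting Witt basis would force incompatible behaviour of the chain at consecutive levels), then takes a Witt basis $\B''$ splitting $\Lambda$, proves that $h$ vanishes on the span $W_0$ of those basis vectors whose lines jump at level $0$, and finally performs a base change inside $W_0$ (with the adjoint base change on the complement) so that $\B''_0$ becomes a lift of the prescribed $\B_0$. Your reduction to the period-$1$ sub-chain $\Lambda'_i=\Lambda_{2i}$ and Lemma~\ref{lemLiftWitt2} is more economical in that hyperbolicity of $h$ and the isotropy of $\B'_0$ drop out automatically from the shape of your residual Witt basis, at the cost of the small residual linear-algebra step (constructing the Lagrangian complement) where the odd residue characteristic enters.

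One simplification: your ``most delicate step'' is not delicate at all, and you already make the key observation. Since $\bar u_i\in\Lambda_1/\Lambda_2\subseteq\Lambda_0/\Lambda_2=\Lambda'_0/\Lambda'_1$, \emph{any} preimage in $\Lambda'_0=\Lambda_0$ lies in $\Lambda_1$. The statement of Lemma~\ref{lemLiftWitt2} guarantees that $\B'^-\pi$ is such a preimage of $\{\bar u_i\}$, so immediately $\B'^-\pi\subseteq\Lambda_1\setminus\Lambda_2$ and hence $\B'^-\subseteq\Lambda_{-1}\setminus\Lambda_0$. There is no need to trace through the iterative corrections inside the proof of Lemma~\ref{lemLiftWitt1}.
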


\begin{proof}
First we prove that~$h$ is hyperbolic. Suppose for contradiction that it has positive anisotropic dimension, i.e. let~$v$ be an anisotropic vector and part of a Witt basis for~$h$ which splits~$\Lambda$. We can multiply~$v$ by a scalar such that~$h(v,v)$ is a unit or a uniformizer of $F$. We treat only  the second case, because the first one is similar. There is an index~$i$ such that~$\Lambda_i\cap vF$ is equal to~$vo_F$, and then this is equal to~$\Lambda_i\cap vF$ because~$h(v,v)$ is uniformizer.  Since, for all lattices in the image of~$\Lambda$ the homothety class is invariant under dualization, we obtain that the index has to be zero. Thus,~$\Lambda_{-1}\cap vF=\mf{p}_F^{-1}$ is equal to~$\pi^{-2}(\Lambda_{1}\cap vF)$, which is a contradiction.  

Now let us construct the lift. We start with a Witt basis~$\B''$ for~$h$ which splits~$\Lambda$. Let~$\B''_0$ be the set of elements~$v$ of~$\B''$ such that
\[
vF\cap\Lambda_0\neq vF\cap\Lambda_{1},
\]
and let~$W_0$ be the span of~$\B''_0$. We prove that the restriction of~$h$ to~$W_0$ is zero. We define, for~$v\in\B''$, the element~$v^*$ to be the element of~$\B''$ such that~$h(v,v^*)$ is non-zero, i.e. equal to~$1$ or~$-1$.
If there is an element~$v\in W_0\cap \B''$ such that~$v^*\in W_0$ then~$\Lambda_{-1}\cap (vF+v^*F)=\Lambda_{0}\cap (vF+v^*F)$ and thus this coincides with
~$(\Lambda_{-1})^\# \cap (vF+v^*F)$. This is a contradiction because~$(\Lambda_{-1})^\#$ is equal to~$\Lambda_{1}$. This shows that~$h$ is zero on~$W_0$. 
Thus, multiplying elements of~$\B''_0$ by scalars if necessary, we can assume that $\B''_0$ is a subset of~$\Lambda_0\setminus\Lambda_1$. 
By the definition of~$W_0$ we have that, for all~$v\in\B''_0$, the intersection of~$vF$ with~$\Lambda_{-1}$ is~$vo_F$ for all~$v\in\B''_0$ and thus taking duals  we get
that the intersection of~$v^*F$ with~$\Lambda_{1}$ is~$v^* \mf{p}_F$, and thus~$\B''$ is a subset of~$\Lambda_{-1}\setminus\Lambda_0$.
Thus, we have now found a basis~$\B''$ satisfying all the conditions except that~$\B''_0$ need not be a lift of~$\B_0$. Now a base change 
from~$\B''_0$ to a lift of~$\B_0$ in~$W_0$, together with the adjoint base change on the span of~$\B'' \setminus\B''_0$, finishes the proof.  
\end{proof}

\begin{corollary}\label{corLemWittLift3}
 Under the assumptions of Lemma \ref{lemLiftWitt3} there is a unique~$\kappa$-basis~$\B_{-1}$ of~$\Lambda_{-1}/\Lambda_0$ such that, 
 for all elements~$x$ of~$\B_0$, there is exactly one element~$y$ of~$\B_{-1}$ such that
 \[\bar{h}(y,z)\left\{ \begin{array}{ll}1&, \text{ if }z=x \\0&, \text{ if }z\in\B_0\setminus\{x\}\end{array}\right.\]
 where~$\bar{h}: \Lambda_{-1}/\Lambda_0 \times \Lambda_0/\Lambda_1\rightarrow \kappa$ is the form induced from~$h$.
 Further, there is a Witt basis for~$h$ which lifts~$\B_0\cup\B_{-1}$. 
\end{corollary}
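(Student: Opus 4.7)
The strategy is (i) to verify that~$\bar h$ is a perfect pairing, which yields the uniqueness and existence of~$\B_{-1}$ as a dual basis, and then (ii) to extract the required Witt basis from Lemma~\ref{lemLiftWitt3}.

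For (i), since~$\Lambda$ is self-dual with~$\Lambda_0^\#=\Lambda_0$, the unique integer~$u$ with~$\Lambda_s^\#=\Lambda_{u-s}$ must be~$u=0$, so~$\Lambda_s^\#=\Lambda_{-s}$ for every~$s$. In particular~$h(\Lambda_0,\Lambda_0)\subseteq\mf p_F$ and~$h(\Lambda_{-1},\Lambda_1)\subseteq\mf p_F$, and from~$\Lambda_1=\Lambda_{-1}\pi$ one gets~$h(\Lambda_{-1},\Lambda_{-1})\subseteq\pi^{-1}\mf p_F=o_F$, whence~$h(\Lambda_{-1},\Lambda_0)\subseteq o_F$. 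Together these show that~$\bar h$ is well defined. The left radical is
\[
(\Lambda_0^\#\cap\Lambda_{-1})/\Lambda_0=(\Lambda_0\cap\Lambda_{-1})/\Lambda_0=0,
\]
and symmetrically the right radical is~$(\Lambda_{-1}^\#\cap\Lambda_0)/\Lambda_1=(\Lambda_1\cap\Lambda_0)/\Lambda_1=0$. Non-degeneracy on both sides forces~$\Lambda_{-1}/\Lambda_0$ and~$\Lambda_0/\Lambda_1$ to have the same~$\kappa$-dimension, so~$\bar h$ is perfect.

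For (ii), define~$\B_{-1}$ to be the unique~$\bar h$-dual basis to~$\B_0$; this gives the first assertion. To produce a lifting Witt basis, apply Lemma~\ref{lemLiftWitt3} to~$\B_0$ to obtain a Witt basis~$\B'=\B'_{-1}\cup\B'_0\subseteq\Lambda_{-1}$ with~$\B'_i\subseteq\Lambda_i\setminus\Lambda_{i+1}$, with~$\B'_0$ a lift of~$\B_0$, and with~$h$ vanishing on~$\B'_0\times\B'_0$. Since~$h$ is hyperbolic (Lemma~\ref{lemLiftWitt3}) and zero on the span of~$\B'_0$, each element of~$\B'_0$ is paired in the hyperbolic structure with a unique partner in~$\B'_{-1}$, giving a bijection~$\B'_0\leftrightarrow\B'_{-1}$. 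Rescaling the elements of~$\B'_{-1}$ by suitable units in~$o_F$ (which preserves~$\B'_{-1}\subseteq\Lambda_{-1}\setminus\Lambda_0$ and the vanishing of~$h$ on~$\B'_{-1}\times\B'_{-1}$) makes~$h(w,v)=1$ for each paired pair~$(v,w)$ and~$0$ otherwise. The reductions of~$\B'_{-1}$ modulo~$\Lambda_0$ then satisfy the dual-basis condition with respect to~$\B_0$, and by the uniqueness from (i) they form~$\B_{-1}$; hence~$\B'$ is the desired Witt basis.

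The main technical point is the verification that~$\bar h$ is perfect, which amounts to careful bookkeeping with the self-duality formula~$\Lambda_s^\#=\Lambda_{-s}$; the rest is formal, resting on the hyperbolic structure of~$h$ already established in Lemma~\ref{lemLiftWitt3}.
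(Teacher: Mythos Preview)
Your proof is correct and follows essentially the same route as the paper's: both deduce existence and uniqueness of~$\B_{-1}$ from the non-degeneracy of~$\bar h$, then invoke Lemma~\ref{lemLiftWitt3} for the lift. The only difference is cosmetic: you verify non-degeneracy of~$\bar h$ directly from the self-duality relation~$\Lambda_s^\#=\Lambda_{-s}$, whereas the paper reads it off from the Witt basis produced by Lemma~\ref{lemLiftWitt3}; and you spell out explicitly (via the rescaling step) why the~$\B'_{-1}$ coming from that lemma reduces to the dual basis~$\B_{-1}$, which the paper leaves implicit.
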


\begin{proof}
 By Lemma \ref{lemLiftWitt3} the form~$\bar{h}$ is non-degenerate and thus identifies the dual of~$\Lambda_0/\Lambda_1$ with~$\Lambda_{-1}/\Lambda_0$ with~$\sigma$-twisted
~$\kappa$-action. We take for~$\B_{-1}$ the basis dual to~$\B_0$. The remaining part follows from Lemma \ref{lemLiftWitt3}. 
\end{proof}

We put together the two previous results to treat the general case.

\begin{lemma}\label{lemWittLift}
Let~$\Lambda$ be a self-dual lattice chain of period~$e$ and let~$\B$ be a subset of~$V$ satisfying the following conditions:
\be
\item~$(\Lambda_0)^\#\in\{\Lambda_0,\Lambda_1\}$;
\item~$\B=\bigcup_{i=\floor{\frac{1-e}{2}}}^{\floor{\frac{e-1}{2}}} \B_i$, with~$\B_i\subseteq\Lambda_i\setminus \Lambda_{i+1}$;
\item~$\bar{\B}_i$, the image of~$\B_i$ in~$\Lambda_i/\Lambda_{i+1}$, is a basis of~$\Lambda_i/\Lambda_{i+1}$;
\item for all~$i\in \{0,1,\ldots ,\floor{\frac{e-1}{2}}\}$ with~$(\Lambda_i)^\#\not\in\{\Lambda_{i+1},\Lambda_{i+1-e}\}$ and all~$v\in \B_i$ there exists a unique~$v'\in \B\cap (\Lambda_{i+1})^\#\setminus (\Lambda_i)^\#$ such that 
$\overline{h(v,v')}=\bar{1}$;
\item  if~$(\Lambda_0)^\#=\Lambda_1$ then~$\bar{\B}_0$ is a Witt basis for~$(\Lambda_0/\Lambda_1,\bar{h})$;
\item if~$(\Lambda_{\floor{\frac{e-1}{2}}})^\#=\Lambda_{\floor{\frac{1-e}{2}}}$ then
$\bar{B}_{\floor{\frac{e-1}{2}}}$ is a Witt basis of~$(\Lambda_{\floor{\frac{e-1}{2}}}/\Lambda_{
{\frac{e+1}{2}}},\overline{h\pi^{-1}})$.
\ee
Then there is a basis~$\B'$ of~$(V,h)$ such that
\bi
\item[(a)]~$\B'=\bigcup_{i=\floor{\frac{1-e}{2}}}^{\floor{\frac{e-1}{2}}} \B'_i$, where~$\B'_i:=\B'\cap(\Lambda_i\setminus \Lambda_{i+1})$, for all~$i$,
\item[(b)]~$\bar{\B'}_i=\bar{\B}_i$, for all~$i$, and
\item[(c)]~$\B'$ is a Witt basis of~$(V,h)$ up to multiplication of some isotropic elements of~$\B'_{\floor{\frac{e-1}{2}}}$ by~$\pi^{-1}$. 
\ei
\end{lemma}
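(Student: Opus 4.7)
The plan is to induct on $\dim_F V$, with Lemmas~\ref{lemLiftWitt1},~\ref{lemLiftWitt2},~\ref{lemLiftWitt3} and Corollary~\ref{corLemWittLift3} as base cases. Write $u\in\{0,1\}$ for the integer with $(\Lambda_s)^\# = \Lambda_{u-s}$, as forced by~(1). The driving observation is that condition~(4) organises the ``non-exceptional'' part of $\B$ into dual pairs $(v,v')$ with $v\in\B_i$, $v'\in\B_{u-i-1}$ and $\overline{h(v,v')}=\bar 1$, and the inductive step peels off one such pair by upgrading it to an exact hyperbolic plane. What remains at the end is concentrated in at most one or two exceptional layers, which are handled by the three base-case lemmas.

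Inductive step: pick a non-exceptional index $i\ge 0$, an element $v\in\B_i$, and its partner $v'\in\B_{u-i-1}$ given by~(4). A successive-approximation procedure inside $vF+v'F$, modelled on Step~3 of the proof of Lemma~\ref{lemLiftWitt1}, produces
$v_1 \in v + (\Lambda_{i+1}\cap (vF+v'F))$ and $v'_1\in v' + (\Lambda_{u-i}\cap (vF+v'F))$
with $h(v_1,v_1)=h(v'_1,v'_1)=0$ and $h(v_1,v'_1)=1$. Thus $W := v_1F + v'_1F$ is a hyperbolic plane with $W\cap\Lambda$ a self-dual lattice chain, and the orthogonal complement $W^\perp$ inherits a self-dual lattice chain of the same period~$e$ and same $u$. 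Replace each $w\in\B\setminus\{v,v'\}$ by its orthogonal projection onto $W^\perp$,
\[
w_W := w - v_1\,h(v'_1,w) - v'_1\,\epsilon\rho(h(v_1,w)),
\]
(with the appropriate normalisation). Since $v_1\in\Lambda_i$, $v'_1\in\Lambda_{u-i-1}$ and $h(\Lambda_a,\Lambda_b)\subseteq\mf p_F$ whenever $a+b>u$, the correction term lies in $\Lambda_{j+1}$ whenever $w\in\Lambda_j$; in particular $\overline{w_W}=\overline w$ in $\Lambda_j/\Lambda_{j+1}$, and the partner structure of~(4) is preserved on the remaining vectors. Thus the projected configuration on $(W^\perp, h|_{W^\perp})$ satisfies the hypotheses of the lemma on a smaller space, and the induction hypothesis produces a Witt basis $\B'_W$ of $W^\perp$ satisfying (a),(b),(c); then $\B' := \B'_W \cup \{v_1,v'_1\}$ is the desired Witt basis of $V$.

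Base cases: when no non-exceptional index remains, the residual vectors lie in a single exceptional layer and the appropriate lemma applies directly. If $(\Lambda_0)^\# = \Lambda_1$, condition~(5) together with Lemma~\ref{lemLiftWitt1} lifts $\bar\B_0$ to a Witt basis in $\Lambda_0$. If $(\Lambda_{\floor{(e-1)/2}})^\# = \Lambda_{\floor{(1-e)/2}}$, condition~(6) together with Lemma~\ref{lemLiftWitt2} (applied to the rescaled form $h\pi^{-1}$) lifts $\bar\B_{\floor{(e-1)/2}}$, yielding exactly the $\pi^{-1}$-rescaling permitted in conclusion~(c). If $e=2$ and $(\Lambda_0)^\#=\Lambda_0$, Lemma~\ref{lemLiftWitt3} and Corollary~\ref{corLemWittLift3} deliver both $\B'_0$ and its companion $\B'_{-1}$ consistent with the given $\B_0,\B_{-1}$.

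The main obstacle is the bookkeeping of the inductive step: the successive-approximation argument must now correct $v$ and $v'$ \emph{simultaneously}, converging inside the congruence classes modulo $\Lambda_{i+1}$ and $\Lambda_{u-i}$ to an exact hyperbolic pair; and the projection $w\mapsto w_W$ must preserve the residues $\bar w$ as well as the partner pairings coming from~(4) for every remaining element. Both points ultimately reduce to the valuation estimates $v_1\cdot h(v'_1,w)\in\Lambda_{j+1}$ and the symmetric one, which follow from the duality $h(\Lambda_a,\Lambda_b)\subseteq\mf p_F$ for $a+b>u$ and the chosen heights of $v_1$ and $v'_1$.
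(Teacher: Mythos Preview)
Your inductive peeling argument is correct in outline and genuinely different from the paper's proof. The paper does not induct on $\dim_F V$; instead it invokes at the outset the structural fact that a self-dual lattice chain is split by a Witt decomposition $V=\bigoplus_i V^i$, with each $V^i$ contributing to exactly the jumps at indices~$i$ and~$u-i-1$. The restriction $\Lambda\cap V^i$ is then (after rescaling) a period-$1$ or period-$2$ chain, and the paper simply projects the relevant pieces of~$\B$ onto~$V^i$ and applies Lemma~\ref{lemLiftWitt1}, Lemma~\ref{lemLiftWitt2} or Corollary~\ref{corLemWittLift3} once in each block. Your approach builds this decomposition one hyperbolic plane at a time, which is more elementary in that it does not appeal to the existence of such a global Witt splitting, at the cost of considerably more bookkeeping.

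Two points to tighten. First, after you peel off $W$, the restricted sequence $\Lambda\cap W^\perp$ need not remain a lattice \emph{chain}: the quotients at indices~$i$ and~$u-i-1$ may collapse. This is harmless (the hypotheses make perfect sense with empty $\B_j$), but your induction hypothesis should be stated for self-dual lattice sequences of period~$e$, not chains. Second, your claim that the projection $w\mapsto w_W$ preserves residues requires, for $w\in\B_j$ with $j\in\{i,u-i-1\}$, the dual-basis reading of condition~(4), namely $\overline{h(v',w)}=0$ for $w\in\B_i\setminus\{v\}$; without it the correction $v_1\,h(v'_1,w)$ need not lie in $\Lambda_{i+1}$. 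Under that reading your valuation estimates go through (using $2i\le e-2+u$ for non-exceptional~$i$), and the argument is complete.
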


\begin{proof}
The lattice chain~$\Lambda$ is split by a Witt decomposition; that is, there are pairwise orthogonal~$\epsilon$-hermitian spaces~$V^i,\ i\in\{0,\ldots,\floor{\frac{e-1}{2}}\}$ whose sum is~$V$ such that 
\[
(V^i\cap\Lambda_i)+\Lambda_{i+1}=\Lambda_i\text{ and }V^i\cap (\Lambda_{i+1})^\#+(\Lambda_i)^\#=(\Lambda_{i+1})^\#.
\]
Counting dimensions we deduce that~$V^i\cap\Lambda_j$ is a subset of~$ \Lambda_{j+1}$ for all~$j$ with 
$(\Lambda_j)^\#\not\in\{\Lambda_i a, (\Lambda_{i+1})^\# a\mid a\in F^\times\}$.
Now consider, for~$1\le i\le \floor{\tfrac{e-1}{2}}$,
\[
\tilde{\B_i}:=\left\{\proj_{V^i}(v)\mid  v\in \B_j \text{ and }\Lambda_j\in\{\Lambda_i,(\Lambda_{i+1})^\#\}\right\}.
\]
For each~$i$, the lattice sequence~$\Lambda\cap V^i$ in~$V^i$ is a multiple of a lattice chain of period~$1$. Thus, after scaling, we can apply Lemma~\ref{lemLiftWitt1} or~\ref{lemLiftWitt2} or Corollary \ref{corLemWittLift3} on~$(V^i,\Lambda \cap V^i,\bar{\tilde{\B_i}})$ to obtain~$\B'_i$. 
\end{proof}


\begin{proof}[Proof of Proposition~\ref{propliftResidualIsometries}]
We only have to prove that we can replace~$f$ by an~$F$-linear isomorphism, i.e. that we can reduce the argument to~$F=F'$. The rest follows directly from Lemma~\ref{lemWittLift}.

Since the statement depends only on~$\im(\Lambda)$, without loss of generality assume that~$\Lambda$, and therefore~$\Lambda'$ also, is a chain. 
Take a~$\kappa_F$-basis~$(\bar{v}_{ij})_j$ of~$\Lambda_i/\Lambda_{i+1}$ and lift it to~$(v_{ij})_j$, for~$i=0,\ldots,e(\Lambda|o_F)$. Then~$(v_{ij})_{ij}$ is an~$F$-splitting basis of~$\Lambda$. 
Similarly we choose a lift~$(w_{ij})_{ij}$ for~$(\overline{f(v_{ij})})_{ij}$. The~$F$-linear map~$\tilde{f}$
which maps~$v_{ij}$ to~$w_{ij}$ satisfies the assumptions of the Proposition and 
$(f-\tilde{f})(\Lambda_i)\subseteq\Lambda_{i+1}$, for all~$i\in\bbZ$. Thus we can replace~$f$ by~$\tilde{f}$.
\end{proof}


\section{Witt groups}\label{secWittGroups}
In this section we fix a finite field extension~$E|F$ and an involution~$\rho'$ extending~$\rho$. We fix a non-zero~$\rho'$-$\rho$-equivariant~$F$-linear map 
\[
\lambda: E\ra F.
\]
We heavily use in this section that the residue characteristic of~$F$ is odd. 
We will see that the map~$\lambda$ induces in a natural way a map from the Witt group~$W_{\rho',\epsilon}(E)$ of~$(\rho',\epsilon)$-hermitian forms over~$E$ to the Witt group~$W_{\rho,\epsilon}(F)$.

We recall that the Witt group~$W_{\rho,\epsilon}(F)$ is the set of equivalence classes of~$(\rho,\epsilon)$-hermitian forms over~$F$, where we say two such forms are equivalent if their maximal anisotropic direct summands are isometric. We write~$\langle h\rangle$ for the class in~$W_{\rho,\epsilon}(F)$ of signed forms equivalent to~$h$; similarly, for a (skew-) symmetric matrix~$M$, we write~$\langle M\rangle$ for the class of signed hermitian forms equivalent to the form with Gram-matrix~$M$ under the standard basis. 

The group structure on~$W_{\rho,\epsilon}(F)$ is induced by the orthogonal sum. 
Let us recall its structure:

\begin{theorem}
The Witt group~$W_{\rho,\epsilon}(F)$ is isomorphic to 
\begin{enumerate}
\item the trivial group if~$\rho$ is trivial and~$\epsilon=-1$;
\item~$C_2\times C_2$ if~$-1\in (F^\times)^2$ and~$\rho$ is non-trivial;
\item~$C_4$ if~$\rho$ non-trivial and~$-1\not\in (F^\times)^2$;
\item~$C_2\times C_2\times C_2\times C_2$ if~$-1\in (F^\times)^2,$~$\epsilon=1$ and~$\rho$ is trivial;
\item~$C_4\times C_4$ if~$-1\not\in (F^\times)^2,$~$\epsilon=1$ and~$\rho$ is trivial.
\end{enumerate}
\end{theorem}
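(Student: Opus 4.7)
The plan is to verify each of the five cases in turn. Case (i), where $\rho$ is trivial and $\epsilon=-1$, is the classical fact that every non-degenerate alternating bilinear form over a field of characteristic $\neq 2$ admits a symplectic basis and is therefore hyperbolic; the Witt group is trivial.

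In cases (ii) and (iii), $\rho$ is non-trivial, so that $F_0:=F^\rho$ has $[F:F_0]=2$. First I would reduce to $\epsilon=+1$ by multiplying $h$ by a trace-zero element of $F$, which exchanges $\epsilon$ with $-\epsilon$. Then, by the classical rank-plus-discriminant classification of hermitian forms over a local field, a hermitian form is determined up to isometry by its rank and by its discriminant in $F_0^\times/N_{F/F_0}(F^\times)$; by local class field theory the latter group is cyclic of order $2$, so that $W_{\rho,\epsilon}(F)$ is generated by $\langle 1\rangle$ and $\langle\delta\rangle$ for a non-norm $\delta\in F_0^\times$, subject to $\langle 1\rangle+\langle -1\rangle=0$ (the hyperbolic plane). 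A Hensel's-lemma computation shows that the condition ``$-1$ is a norm from $F^\times$'' coincides with ``$-1\in(F^\times)^2$''. When $-1$ is a square the two generators are independent of order $2$, giving $C_2\times C_2$; when $-1$ is not a square, $\langle -1\rangle=\langle\delta\rangle$, so the two merge into a single generator of order $4$, yielding $C_4$.

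For cases (iv) and (v), where $\rho$ is trivial and $\epsilon=+1$, I would work from the standard generators-and-relations presentation of $W(F)$: generators $\langle a\rangle$ for $a$ in the four square classes $\{1,u,\pi,u\pi\}$ (with $u$ a non-square unit), subject to $\langle a\rangle+\langle -a\rangle=0$ and the Witt chain relation. In case (iv) the binary form $\langle 1,1\rangle$ is isotropic, hence hyperbolic, so $2\langle a\rangle=0$ for every $a$; a computation of rank mod $2$, discriminant, and Hasse invariant (where $(u,\pi)=-1$ via the tame Hilbert symbol) verifies that the four generators are $\mathbb{F}_2$-independent, yielding $(C_2)^4$. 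In case (v) one may take $u=-1$; the form $\langle 1,1\rangle$ is anisotropic but $\langle 1,1,1,1\rangle$ is hyperbolic (lift a non-trivial zero of $x^2+y^2+z^2+w^2$ modulo~$\pi$ via Hensel, using that every element of a finite field of odd characteristic is a sum of two squares), so $\langle 1\rangle$ has order exactly $4$, and similarly $\langle\pi\rangle$ has order $4$. The computation $(\pi,-1)=-1$ shows $\langle 1,1,\pi,\pi\rangle$ is not hyperbolic, so $\langle 1\rangle$ and $\langle\pi\rangle$ remain independent, giving $C_4\times C_4$.

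The main obstacle will be the independence checks in cases (iv) and (v): one must confirm that no further relations hold among the generators, which requires exhibiting anisotropic forms of small rank through Hilbert-symbol calculations over a non-dyadic local field. The preliminary reductions---the $\epsilon$-swap in the hermitian case, and the rank-and-discriminant classification of hermitian forms---are comparatively routine, as is case (i).
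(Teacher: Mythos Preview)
Your proposal is correct and, in fact, supplies considerably more detail than the paper itself, which simply records that ``the proof is an easy conclusion of the classification of the hermitian forms using Witt bases, given for example in \cite[1.14]{bruhatTitsIV:87}, and is left to the reader.'' Your case-by-case analysis via rank, discriminant and (in the orthogonal case) the Hasse symbol is exactly the standard route implicit in that reference.

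Two very small points you should make explicit. In case~(v) you exhibit two independent elements $\langle 1\rangle$ and $\langle\pi\rangle$ of order~$4$, but to conclude $W(F)\cong C_4\times C_4$ you also need that these two classes \emph{generate} $W(F)$; this follows at once because, taking $u=-1$, the remaining one-dimensional generators satisfy $\langle u\rangle=-\langle 1\rangle$ and $\langle u\pi\rangle=-\langle\pi\rangle$. Likewise in case~(iv) you should remark that the four one-dimensional classes generate $W(F)$ before checking independence. With these sentences added, the argument is complete.
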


\begin{proof}
The proof is an easy conclusion of the classification of the hermitian forms using Witt bases, given for example in \cite[1.14]{bruhatTitsIV:87}, and is left to the reader. 
\end{proof}

%
%
%

When it is non-trivial, the group~$W_{\rho,\epsilon}(F)$ is generated by the classes of one-dimensional anisotropic spaces. For example, if~$\epsilon=1$ then: in the case~$F\ne F_0$, the one-dimensional anisotropic spaces are~$\langle(1)\rangle$ and~$\langle(\delta)\rangle$, with~$\delta\in F_0^\times\setminus N_{F/F_0}(F^\times)$; in the case~$F=F_0$, the one-dimensional anisotropic spaces are~$\langle(1)\rangle$,~$\langle(\pi)\rangle$,~$\langle(\delta)\rangle$ and~$\langle(\delta\pi)\rangle$, with~$\delta$ a non-square unit in~$o_F$.

\begin{definition}
We define~$\Trace_{\lambda,\epsilon}$ from~$W_{\rho',\epsilon}(E)$ to~$W_{\rho,\epsilon}(F)$ by
\[
\langle\tilde{h}\rangle\mapsto \langle\lambda\circ \tilde{h}\rangle=:\Trace_{\lambda,\epsilon}(\langle\tilde{h}\rangle).
\]
If~$E|F$ is tamely ramified and~$\lambda=\trace_{E|F}$ then we write~$\Trace_{E|F,\rho',\epsilon}$ for~$\Trace_{\lambda,\epsilon}$.
\end{definition}

For the remainder of the section we often skip the subscripts in~$\Trace$.
\begin{example}
In general, the map~$\Trace_{E|F,\rho',\epsilon}$ is not injective, even if~$\epsilon=1$. For example consider~$E=\mathbb{Q}_3(\sqrt{3},\sqrt{5})$,~$F=\mathbb{Q}_3(\sqrt{5})$ and 
$\rho'(\sqrt{5})=-\sqrt{5}$.
Then 
\[
\Trace_{E|F,\rho',\epsilon}(\langle(\sqrt{3})\rangle)=\left\langle\begin{pmatrix}0& 6\\ 6& 0\end{pmatrix}\right\rangle=0,
\]
so that~$\Trace_{E|F,\rho',\epsilon}$ is not injective. On the other hand, we have that 
\[
\Trace_{E|F,\rho',\epsilon}(\langle(1)\rangle)=\left\langle\begin{pmatrix}2& 0\\ 0& 6\end{pmatrix}\right\rangle\neq 0,
\]
In particular,~$\Trace_{E|F,\rho',\epsilon}$ maps the class~$\langle (\sqrt 3)\oplus(1)\rangle$ of maximal anisotropic dimension to the class~$W_{\rho,\epsilon}(F)$ of maximal anisotropic dimension. We will see that this is always the case.
\end{example}

There is a unique element~$X$ in~$W_{\rho,\epsilon}(F)$ with maximal anisotropic dimension, which we denote by~$X_{\rho,\epsilon,F}$. The main result of this section is the following theorem:

\begin{theorem}\label{thmMaximlaAnisotropicElement}
$
\Trace_\lambda(X_{\rho',\epsilon,E})=X_{\rho,\epsilon,F}.
$
\end{theorem}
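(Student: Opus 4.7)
The plan is to use the uniqueness of the maximal anisotropic element $X_{\rho,\epsilon,F}$ in $W_{\rho,\epsilon}(F)$ stated just before the theorem: it suffices to show that $\Trace_\lambda(X_{\rho',\epsilon,E})$ has maximal anisotropic dimension in $W_{\rho,\epsilon}(F)$. The first reduction is to the case $\lambda=\trace_{E|F}$. Since the trace pairing is non-degenerate, every non-zero $F$-linear map $E\to F$ has the form $x\mapsto\trace_{E|F}(cx)$ for a unique $c\in E^\times$, and the $\rho'$-$\rho$-equivariance of $\lambda$ forces $c$ to lie in $E^{\rho'}$. If $\tilde h$ is any anisotropic representative of $X_{\rho',\epsilon,E}$, then $c\tilde h$ is again $(\rho',\epsilon)$-hermitian (as $c$ is $\rho'$-fixed) and anisotropic of the same maximal dimension, hence represents $X_{\rho',\epsilon,E}$ by uniqueness; we obtain $\Trace_\lambda\langle\tilde h\rangle=\Trace_{\trace_{E|F}}\langle c\tilde h\rangle$ and may assume $\lambda=\trace_{E|F}$.

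Next, I would decompose $E|F$ via its maximal unramified subextension $E^{ur}$. Since $E^{ur}$ is characteristic in $E|F$ it is $\rho'$-stable, and transitivity of the trace, $\trace_{E|F}=\trace_{E^{ur}|F}\circ\trace_{E|E^{ur}}$, yields $\Trace_{\trace_{E|F}}=\Trace_{\trace_{E^{ur}|F}}\circ\Trace_{\trace_{E|E^{ur}}}$. Consequently, if the theorem holds for $E|E^{ur}$ (totally ramified) and for $E^{ur}|F$ (unramified), then it holds for $E|F$. In the unramified case, a Springer-type decomposition of $W_{\rho,\epsilon}(F)$ into a ``unit part'' and a ``uniformizer part'' (available because the residual characteristic is odd and a symmetric uniformizer $\pi$ is fixed) identifies each part with a Witt group of the residue field, and the trace respects this decomposition. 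The problem reduces to the analogous one for finite fields of odd characteristic, which is a direct verification from the classification in each of cases~(ii)--(v).

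In the totally ramified case, the approach is to exhibit explicitly an anisotropic subform of $\lambda\circ\tilde h$ of dimension equal to the $u$-invariant of $W_{\rho,\epsilon}(F)$, using the Witt basis constructions of Lemmas~\ref{lemLiftWitt1}--\ref{lemWittLift}. The tame totally ramified subcase is tractable: with a uniformizer $\pi_E$ of $E$ satisfying $\pi_E^e\in o_F^\times\cdot\pi$, one has $\trace_{E|F}(\pi_E^j)=0$ for $0<j<e$, and a direct computation on a Witt basis of an anisotropic representative of $X_{\rho',\epsilon,E}$ produces the required subspace. The main obstacle is the wildly totally ramified subcase: there the different of $E|F$ is non-trivial and the interaction of the trace with the lattice filtrations becomes delicate. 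The natural strategy is to decompose further into a tower of prime-degree subextensions and invoke the Scharlau transfer formulas for discriminant and Hasse invariant, combined with the classification of $(\rho,\epsilon)$-hermitian forms over $F$ by dimension, discriminant and Hasse invariant, to pin down $\Trace_{\trace_{E|F}}(X_{\rho',\epsilon,E})$ and identify it with $X_{\rho,\epsilon,F}$.
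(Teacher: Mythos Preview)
Your reduction to a single $\lambda$ is fine and matches the paper's first step. But from there the approaches diverge, and yours has a genuine gap.

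The paper does not decompose along $E^{ur}$. Its key observation is that $\Trace_\lambda(X_{\rho',\epsilon,E})$ is invariant under every symmetric twist by an element of $F_0^\times$ (because $X_{\rho',\epsilon,E}$ is, and twisting commutes with $\Trace_\lambda$), so it must equal either $0$ or $X_{\rho,\epsilon,F}$. Hence one only has to show \emph{non-vanishing}. For this the paper uses transitivity in towers together with Proposition~\ref{propBijectivityForEFodd} (injectivity in odd degree) to reduce to the case $[E:F]=2$; since the residual characteristic is odd, a quadratic extension is automatically tame, and Lemma~\ref{lemEFDeg2} then shows $X_{\rho',1,E}$ is not in the kernel. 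No wild case ever arises.

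In your argument the wildly totally ramified case is the crux, and you yourself describe it as ``the main obstacle'' and give only a sketch (Scharlau transfer, prime-degree towers, classification by discriminant and Hasse invariant). That sketch is not a proof: the Scharlau transfer formulas for discriminant and Hasse invariant in wild degree, especially in positive characteristic, are delicate, and you have not verified that they pin down the image as the maximal class rather than, say, a hyperbolic one. Moreover your appeal to Lemmas~\ref{lemLiftWitt1}--\ref{lemWittLift} is misplaced: those lift Witt bases of residual forms to $V$ along a self-dual lattice sequence and say nothing about the image of $\Trace_\lambda$ on Witt groups. Finally, taking $\lambda=\trace_{E|F}$ globally is unsafe if $F$ has positive characteristic and $E|F$ has an inseparable (purely wild) constituent, since then the field trace vanishes identically and is not an admissible $\lambda$; the paper avoids this by only invoking $\trace_{E|F}$ after reducing to the tame quadratic case.

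The fix is to adopt the paper's two-line reduction: use symmetric-twist invariance to reduce to non-vanishing, then use odd-degree injectivity plus transitivity to reduce to $[E:F]=2$, where everything is tame and a direct Gram-matrix computation suffices.
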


The following definition will be useful both in the proof of Theorem~\ref{thmMaximlaAnisotropicElement} and in several other proofs later.

\begin{definition}\label{def:twist}
Let~$\gamma$ be a (skew-)symmetric element of~$\Aut_F(V).$
We define the signed hermitian form 
\[
h_{\gamma}: V\times V\ra F
\]
via 
\[
h_{\gamma}(v,w):=h(v,\gamma w),\ v,w\in V.
\]
We call~$h_\gamma$ the \emph{(skew-) symmetric twist of~$h$ by~$\gamma$}. 
\end{definition}

Note that, if~$h$ is an~$\epsilon$-hermitian form, then~$h_\gamma$ is~$\epsilon$-hermitian when~$\gamma$ is symmetric, and~$(-\epsilon)$-hermitian when~$\gamma$ is skew-symmetric. Twisting by a symmetric element~$\gamma$ induces a permutation of~$W_{\rho,\epsilon}(F)$ and we observe that, by an easy check, the only classes in~$W_{\rho,\epsilon}(F)$ which are preserved by every symmetric twist are the trivial class and the class~$X_{\rho,\epsilon,F}$ of maximal anisotropic dimension. Indeed, twisting by all symmetric elements gives a transitive action on the classes of spaces of fixed odd (anisotropic) dimension.

\begin{proposition}\label{propBijectivityForEFodd}
If~$E|F$ has odd degree, then,~$\Trace_{\lambda}$ is injective.
\end{proposition}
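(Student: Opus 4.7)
The plan is two-step: first reduce to the case $\lambda = \trace_{E|F}$ by a twisting argument, and then verify injectivity using the explicit classification of Witt groups of local fields combined with the fact that odd degree extensions preserve the relevant square-class and norm-group structures.

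\textbf{Reduction to the trace.} Since $\Hom_F(E,F)$ is a free $E$-module of rank one under the action $(a\cdot\mu)(x):=\mu(ax)$, one can write $\lambda = \trace_{E|F}\circ m_\alpha$ for a unique $\alpha\in E^\times$, and the $\rho'$-$\rho$-equivariance of $\lambda$ forces $\rho'(\alpha)=\alpha$. For any $(\rho',\epsilon)$-hermitian form~$\tilde h$ this gives $\lambda\circ\tilde h=\trace_{E|F}\circ\tilde h_\alpha$, where $\tilde h_\alpha$ is the symmetric twist by $\alpha$ (Definition~\ref{def:twist}). Since twisting by a non-zero symmetric element induces a bijection on Witt classes, it suffices to prove injectivity of $\Trace_{\trace_{E|F}}$.

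\textbf{Structural comparison.} The residue characteristic of $F$ is odd and $[E:F]$ is odd, so both the ramification index and the residue degree of $E|F$ are odd. It follows that $-1$ is a square in $F$ iff in $E$, and that the inclusion $F\hookrightarrow E$ induces isomorphisms $F^\times/(F^\times)^2\simeq E^\times/(E^\times)^2$ and (when $\rho$ is non-trivial) $F_0^\times/N_{F|F_0}(F^\times)\simeq E_0^\times/N_{E|E_0}(E^\times)$. These isomorphisms match generators of the Witt groups, and in all cases except ($\rho$ trivial, $\rho'$ non-trivial) the groups $W_{\rho',\epsilon}(E)$ and $W_{\rho,\epsilon}(F)$ have the same cardinality, so injectivity is equivalent to surjectivity.

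\textbf{Case-by-case verification.} The case $\rho$ trivial, $\epsilon=-1$ is vacuous since $W_{\rho',\epsilon}(E)=0$. When $\rho$ (and hence $\rho'$) is non-trivial, every class is represented by a one-dimensional form $\langle a\rangle$ with $a\in E_0^\times$, and one checks that $\trace_{E|F}\circ\langle a\rangle$ has discriminant tracking $a$ in $E_0^\times/N_{E|E_0}(E^\times)\simeq F_0^\times/N_{F|F_0}(F^\times)$, which suffices to separate classes. In the case $\rho$ trivial but $\rho'$ non-trivial, the same discriminant computation taking values in $F^\times/(F^\times)^2$ already separates the four classes of one-dimensional anisotropic forms. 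In the classical quadratic case $\rho=\rho'$ trivial, $\epsilon=1$, one combines Theorem~\ref{thmMaximlaAnisotropicElement} (pinning down the image of the maximal anisotropic class) with a direct computation of $\Trace_{\trace_{E|F}}$ on one-dimensional generators, tracking both discriminant and Hasse--Witt invariant, to conclude surjectivity.

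The main obstacle is this last case, the classical quadratic Witt transfer for odd-degree extensions of non-dyadic local fields, where one must verify that the Hasse--Witt invariant of the trace form detects non-trivial classes; this is a finite verification using the explicit shape of trace forms of odd-degree extensions, and it is here that the oddness of the ramification index is essential, since it guarantees that a uniformizer of $F$ remains (up to a unit) a uniformizer class in $E^\times/(E^\times)^2$.
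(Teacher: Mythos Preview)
Your proposal has a genuine circularity problem. In the quadratic case ($\rho=\rho'$ trivial, $\epsilon=1$) you invoke Theorem~\ref{thmMaximlaAnisotropicElement} to pin down the image of the maximal anisotropic class, but the paper's proof of Theorem~\ref{thmMaximlaAnisotropicElement} begins by reducing the odd-degree case directly to Proposition~\ref{propBijectivityForEFodd}. So you cannot appeal to that theorem here. Beyond this, your treatment of the quadratic case is only a sketch (``a finite verification''), and the Hasse--Witt computation you allude to is exactly the delicate point. Also note that the case ``$\rho$ trivial, $\rho'$ non-trivial'' is vacuous: if $\rho'$ is non-trivial then $[E:E_0]=2$ divides $[E:F]$, contradicting oddness.

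The paper's argument is much shorter and avoids all of this. After the easy reduction to $\epsilon=1$, it observes that $\Trace_\lambda(\langle(1)\rangle)$ is the class of an $[E:F]$-dimensional form, hence has \emph{odd} anisotropic dimension. Since symmetric twists by elements of $F_0^\times$ commute with $\Trace_\lambda$ and act transitively on the classes of each fixed odd anisotropic dimension, and since those classes generate $W_{\rho,1}(F)$, the map is surjective. Equality of cardinalities (which follows, as you noted, from the Witt-group classification together with the odd-degree facts about $-1$ being a square) then gives injectivity. No reduction to $\lambda=\trace_{E|F}$ is needed, no Hasse--Witt invariants, and no forward reference.
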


\begin{proof}
There is nothing to say in the symplectic case, so we assume~$\epsilon=1$ or~$F\neq F_0$. Moreover, we can assume that~$\epsilon=1$ because, if~$F\neq F_0$ then a twist by a skew-symmetric element of~$F^\times$ induces bijections~$W_{\rho',1}(E)\to W_{\rho',-1}(E)$ and~$W_{\rho,1}(F)\to W_{\rho,-1}(F)$, commuting with~$\Trace_{\lambda}$. Now~$\Trace_\lambda(\langle(1)\rangle)$ is a class of odd anisotropic dimension, all classes of this anisotropic dimension are symmetric twists of~$\Trace_\lambda(\langle(1)\rangle)$ and they generate~$W_{\rho,1}(F)$. Thus~$\Trace_\lambda$ is surjective and, moreover, bijective, since~$W_{\rho,1}(F)$ is isomorphic to~$W_{\rho',1}(E)$ as groups. 
\end{proof}

\begin{lemma}\label{lemEFDeg2}
Suppose~$E|F$ is of degree~$2$ and~$F=F_0$. Then~$\im(\Trace_{E|F,\rho',1})$ has at least four elements. Further:
\begin{enumerate}
 \item\label{lemEFDeg2.i} If~$E\neq E_0$ then~$\Trace_{E|F,\rho',1}$ is injective.
 \item If~$E=E_0$ then the kernel of~$\Trace_{E|F,\rho',1}$ has exactly~$4$ elements and they have anisotropic dimension at most~$2$. 
\end{enumerate} 
\end{lemma}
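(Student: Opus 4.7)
The strategy is to reduce to $\lambda = \trace_{E|F}$ and compute explicitly in a basis. Every $\rho'$-$\rho$-equivariant nonzero $F$-linear functional is of the form $\trace_{E|F} \circ m_\alpha$ for a symmetric $\alpha \in E^\times$, and twisting by $\alpha$ (Definition~\ref{def:twist}) is an automorphism of $W_{\rho',1}(E)$ intertwining $\Trace_\lambda$ with $\Trace_{\trace_{E|F}}$; hence we may assume $\lambda = \trace_{E|F}$. Writing $E = F(\sqrt d)$ and working in the $F$-basis $\{1, \sqrt d\}$ of $E$, the trace of the $\rho'$-hermitian form $\langle \gamma\rangle_E$ on $E$ can be written as an explicit $2 \times 2$ matrix.

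For case (i), $\rho'$ is the non-trivial Galois involution, and for symmetric $\gamma \in F^\times$ the computation gives $\Trace_\lambda(\langle \gamma\rangle_E) = \langle 2\gamma, -2\gamma d\rangle_F$ of discriminant $-d$ modulo squares, which is non-zero since $d$ is a non-square. Hasse-invariant arithmetic via the key identity $(u, d)_F = -1$ for $u \in F^\times \setminus N_{E|F}(E^\times)$ distinguishes $\Trace_\lambda(\langle 1\rangle_E)$ from $\Trace_\lambda(\langle u\rangle_E)$. The proof then splits by whether $-1 \in (E^\times)^2$: if so, $W_{\rho',1}(E) \cong C_2 \times C_2$ and injectivity follows from verifying that all four images are pairwise distinct, again via the Hilbert symbol; if not, $W_{\rho',1}(E) \cong C_4$ with generator $\langle 1\rangle_E$, and injectivity reduces to showing $2\,\Trace_\lambda(\langle 1\rangle_E) = \langle 2\rangle \cdot \langle\!\langle -1, d\rangle\!\rangle \neq 0$, equivalently the Pfister-form criterion $(-1, d)_F = -1$. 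The latter holds in this sub-case: $-1 \notin (E^\times)^2$ combined with $[E:F] = 2$ forces $E|F$ ramified with $-1 \notin (F^\times)^2$, whence $-1 \notin N_{E|F}(E^\times)$. Injectivity gives $|\im \Trace_\lambda| = 4$, so the image bound also holds.

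For case (ii), $\rho' = \mathrm{id}$ and $\Trace_\lambda$ is the classical Scharlau transfer $W(E) \to W(F)$ between Witt groups of order $16$, so $|\ker \Trace_\lambda| \cdot |\im \Trace_\lambda| = 16$. To see $|\im \Trace_\lambda| \geq 4$, compute the trace forms of $\langle \gamma\rangle_E$ for a family of representatives such as $\{1, u, 1+\sqrt d, u(1+\sqrt d)\}$; these lie in two distinct discriminant classes (determined by $N_{E|F}(\gamma)$ modulo squares) and are further distinguished within each by Hasse invariants via the Hilbert symbol, giving four distinct image classes. This yields $|\ker \Trace_\lambda| \leq 4$. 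For the matching lower bound, a direct matrix computation shows $\langle \alpha \sqrt d\rangle_E \in \ker \Trace_\lambda$ for every $\alpha \in F^\times$: its trace-form matrix has zero diagonal and off-diagonal entries $2\alpha d$, a hyperbolic plane. The equality $F^\times \cap (E^\times)^2 = \langle (F^\times)^2, d\rangle$ gives index $2$, so $\langle \sqrt d\rangle_E$ and $\langle u \sqrt d\rangle_E$ (for $u$ representing the non-trivial coset) are two distinct one-dimensional kernel classes; the subgroup they generate in $W(E)$, together with $0$, contains exactly four elements of anisotropic dimension at most $2$ --- either the Klein four-group $\{0, \langle\sqrt d\rangle_E, \langle u\sqrt d\rangle_E, \langle\sqrt d, u\sqrt d\rangle_E\}$ when $-1 \in (E^\times)^2$, or the cyclic group of order four generated by $\langle\sqrt d\rangle_E$ (containing $2\langle\sqrt d\rangle_E = \langle\sqrt d, \sqrt d\rangle_E$) otherwise. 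Combined with the upper bound, this forces $|\ker \Trace_\lambda| = 4$. The main obstacle throughout is the case-by-case Hilbert-symbol analysis, whose outcome depends on the class of $-1$ in $F^\times/(F^\times)^2$ and on whether $E|F$ is ramified.
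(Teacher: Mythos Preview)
Your overall architecture matches the paper's: compute the Gram matrix of $\Trace(\langle\gamma\rangle_E)$ in the basis $\{1,\sqrt d\}$, exhibit elements in the kernel, and use the group structure to pin down $|\ker|$ and $|\im|$. The kernel analysis in~(ii) is correct and essentially identical to the paper's (the paper writes~$(\alpha)$ and~$(y\alpha)$ for your~$\langle\sqrt d\rangle_E$ and~$\langle u\sqrt d\rangle_E$).

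However, there is a genuine gap in your argument for $|\im\Trace_\lambda|\ge 4$ in case~(ii). Your chosen family $\{1,u,1+\sqrt d,u(1+\sqrt d)\}$ need not give four distinct images. When~$E|F$ is ramified one may take~$d$ to be a uniformizer of~$F$; then $N_{E|F}(1+\sqrt d)=1-d\equiv 1\pmod{\mf p_F}$ is a square in~$F$, so the discriminant $dN_{E|F}(1+\sqrt d)$ of $\Trace(\langle 1+\sqrt d\rangle_E)$ lies in the \emph{same} square class as~$d$, contrary to your claim of ``two distinct discriminant classes''. Concretely, for $F=\mathbb Q_3$, $d=3$, $u=2$ one computes $\Trace(\langle 1\rangle_E)\cong\Trace(\langle 1+\sqrt 3\rangle_E)\cong\langle 2,6\rangle$ and $\Trace(\langle 2\rangle_E)\cong\Trace(\langle 2(1+\sqrt 3)\rangle_E)\cong\langle 1,3\rangle$, so the family yields only two images. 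With only $|\im|\ge 2$ and $|\ker|\ge 4$ you cannot exclude $(|\ker|,|\im|)=(8,2)$.

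The paper avoids this by a cleaner device that you could adopt: find a single~$\delta\in E^\times$ with $\Trace(\langle\delta\rangle_E)$ anisotropic, then note that for an anisotropic binary form $\langle a,b\rangle$ over~$F$ one has $c\langle a,b\rangle\not\cong\langle a,b\rangle$ whenever $(c,-ab)_F=-1$, and such~$c\in F^\times$ exists because~$-ab$ is a non-square. Twisting~$\delta$ by such~$c$ produces a second non-trivial image, so the image subgroup has at least four elements. This also makes your treatment of~(i) unnecessary: once $|\im|\ge 4$ and $|W_{\rho',1}(E)|=4$, injectivity is immediate, without the Pfister-form case analysis (which is correct but superfluous).
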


\begin{proof}
Take an element~$\delta\in E_0^\times$ and a uniformizer~$\alpha$ of~$E$ which is skew-symmetric with respect to the generator~$\tau$ of~$\Gal(E|F)$.
 Then~$\Trace_{E|F,\rho',1}(\langle\delta\rangle)$ has Gram matrix 
  \[
 \begin{pmatrix} 
 \delta+\tau(\delta)& \alpha(\delta-\tau(\delta))\\ 
 \pm \alpha(\delta-\tau(\delta))& \pm \alpha^2 (\delta+\tau(\delta))
 \end{pmatrix}
 \]
with respect to the~$F$-basis~$\{1,\alpha\}$, where we have~$+$ if~$\rho'$ is trivial and~$-$ if not. 
Its determinant is~$d:=\pm 4\alpha^2 N_{E|F}(\delta)$ and we only have to choose~$\delta$ such that~$-d$ is not a square in~$F$ to get that~$\Trace_{E|F,\rho',1}(\langle(\delta)\rangle)$ is 
non-zero. 

If~$-1\in (E^\times)^2$ then, since~$p$ is odd, also~$-1\in N_{E|F}(E^\times)$ and thus we can find~$\delta\in E_0^\times$ such that~$-d=4\alpha^2$; this is not a square in~$F^\times$ because 
$\alpha\not\in F$. If~$-1\not\in (E^\times)^2$ then~$E|F$ is ramified and~$\nu_F(\alpha^2)=1$ so we can take~$\delta=1$ to get~$-d\not\in (F^\times)^2$.

In either case, we have that~$\Trace_{E|F,\rho',1}(\langle(\delta)\rangle)$ is non-zero for a suitable~$\delta$, and thus of anisotropic dimension~$2$. Taking symmetric twists of~$\Trace_{E|F,\rho',1}(\langle(\delta)\rangle)$ by elements of~$F$ (which commute with~$\Trace_{E|F,\rho',1}$), we see that the image of~$\Trace_{E|F,\rho',1}$ has at least two non-trivial elements and thus, as a subgroup of a~$2$-group, at least four elements in total. This also shows~\ref{lemEFDeg2.i}. 

We consider now the case~$E=E_0$. Take~$y\in o_F$ to be a non-square unit if~$E|F$ is ramified and a uniformizer of~$F$ if~$E|F$ is unramified. Then~$(\alpha)$ and~$(y\alpha)$ are not isomorphic and both are in the kernel of~$\Trace_{E|F,\id,1}$. Since the kernel consists of at most four elements, it is the subgroup generated by~$\langle(\alpha)\rangle$ and~$\langle(y\alpha)\rangle$, which is of order four and consists of classes of spaces of anisotropic dimensions~$0,1,1,2$.
\end{proof}

\begin{proof}[Proof of Theorem~\ref{thmMaximlaAnisotropicElement}]
As in the proof of Proposition~\ref{propBijectivityForEFodd}, we may assume that~$\epsilon=1$.
We only need to prove the statement for one~$\lambda$, because given two such maps~$\lambda_1,\lambda_2$ there is a symmetric element~$z$ of~$E$ such that 
$\lambda_1(zx)=\lambda_2(x)$ for all~$x\in E$. (We thank R.~Kurinczuk for pointing this out.)
Moreover, we only have to prove that~$\Trace_\lambda(X_{\rho',E,1})$ is non-zero for a suitable~$\lambda$, since its image is invariant under any symmetric twist with an element of~$F_0^\times$, so must be trivial or~$X_{\rho,1,F}$. 

If~$E/F$ is of odd degree then the result follows immediately from Proposition~\ref{propBijectivityForEFodd}. Since the result is transitive in towers of extensions, this means we can reduce to the case that~$E/F$ is quadratic; in particular,~$E|F$ is at worst tamely ramified and we can take~$\lambda=\trace_{E|F}$. Moreover, we may replace~$E|F$ by~$E|F_0$ since, if~$\Trace_{E|F_0,\rho',1}(X_{\rho',1,E})$ is non-zero then~$\Trace_{E|F,\rho',1}(X_{\rho',1,E})$ is non-zero also. But then, by transitivity again and considering the extensions~$E|E_0$ and~$E_0|F_0$, we reduce to the case~$E|F$ quadratic with~$F=F_0$. Now Lemma~\ref{lemEFDeg2} implies that~$X_{\rho',1,E}$ is not in the kernel of~$\Trace_{E|F,\rho',1}$, as required.
\end{proof}


\section{Skolem--Noether}

In this section we consider Skolem--Noether-like theorems for classical groups.
We take the notation~$E,\rho',\lambda$ from Section~\ref{secWittGroups}.
We fix two~$\rho'$-$\sigma$-equivariant~$F$-algebra embeddings
\[
\phi_i: (E,\rho')\rightarrow (A,\sigma),\ i=1,2.
\]
We attach to each~$\phi_i$ an~$\epsilon$-hermitian form 
\[
h^{\phi_i}:V\times V\ra E
\] 
with respect to~$\rho'$ such that 
\[
h=\lambda\circ h^{\phi_i}.
\]
For the proof that such a form exists and is unique, see~\cite{broussousStevens:09}. Note that the~$\epsilon$-hermitian forms~$h^{\phi_i}$ differ because the maps~$\phi_i$ induce different~$E$-actions on~$V$. In particular, two such embeddings~$\phi_1,\phi_2$ are conjugate by an element of~$G$ if and only if~$(V,h^{\phi_1})$ is isomorphic to~$(V,h^{\phi_2})$ as an hermitian~$E$-space. 

We then get the following corollary of Theorem~\ref{thmMaximlaAnisotropicElement}.

\begin{corollary}\label{corSkolemNoether1}
Suppose that~$\rho'$ is non-trivial and that either~$\epsilon=1$ or~$F\neq F_0$. Then~$\phi_1,\phi_2$  are conjugate by an element of~$g\in G$, that is
\[
g\phi_1(x)g^{-1}=\phi_2(x), \text{for all }x\in E.
\]
\end{corollary}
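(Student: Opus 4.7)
The plan is to translate the $G$-conjugacy question into the equality of Witt classes in~$W_{\rho',\epsilon}(E)$, and then to combine Theorem~\ref{thmMaximlaAnisotropicElement} with a dimension-parity argument to force that equality. First, by the remarks just before the corollary, $\phi_1$ and $\phi_2$ will be $G$-conjugate if and only if the $\rho'$-$\epsilon$-hermitian $E$-spaces $(V,h^{\phi_1})$ and $(V,h^{\phi_2})$ are isomorphic; since these have the same $E$-dimension $n:=\dim_F V/[E:F]$, this is equivalent to the equality of Witt classes $\langle h^{\phi_1}\rangle=\langle h^{\phi_2}\rangle$ in $W_{\rho',\epsilon}(E)$. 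If $\epsilon=-1$, the hypothesis forces $F\neq F_0$, so I would first pick a skew-symmetric element $\delta\in F^\times$ and twist both $h^{\phi_i}$ and $h$ by $\delta$ to reduce to the case $\epsilon=1$.

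Next I would note that, because $\lambda\circ h^{\phi_1}=h=\lambda\circ h^{\phi_2}$, the difference $c:=\langle h^{\phi_1}\rangle-\langle h^{\phi_2}\rangle$ lies in $\ker(\Trace_\lambda)\subseteq W_{\rho',1}(E)$. By Theorem~\ref{thmMaximlaAnisotropicElement}, the class $X_{\rho',1,E}$ of maximal anisotropic dimension~$2$ is sent to $X_{\rho,1,F}\neq 0$, hence does not lie in the kernel. Since $\rho'$ is non-trivial, $W_{\rho',1}(E)$ has order~$4$ (either $C_4$ or $C_2\times C_2$), so any subgroup not containing $X_{\rho',1,E}$ has order at most~$2$; in particular every non-zero element of $\ker(\Trace_\lambda)$ has anisotropic dimension~$1$.

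Finally, I would finish with a parity argument. The forms $h^{\phi_1}$ and $h^{\phi_2}$ share the common $E$-dimension $n$, so the anisotropic dimensions of their Witt classes share the common parity $n\bmod 2$. A short direct inspection of the four classes of $W_{\rho',1}(E)$ shows that adding a non-zero class of anisotropic dimension~$1$ shifts the anisotropic dimension of any class by exactly~$\pm 1$. Hence if $c\neq 0$ the anisotropic dimensions of $\langle h^{\phi_1}\rangle$ and $\langle h^{\phi_2}\rangle$ would have opposite parities, a contradiction. Therefore $c=0$, and $\phi_1,\phi_2$ are $G$-conjugate.

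The hardest step will be verifying that addition of a non-zero, non-maximal class in the four-element Witt group $W_{\rho',1}(E)$ indeed alters the anisotropic dimension by~$\pm 1$ in every case. This is where Theorem~\ref{thmMaximlaAnisotropicElement} really bites: it excludes~$X_{\rho',1,E}$ from the kernel and thereby forces any non-trivial kernel element to be of odd anisotropic dimension, which is what allows the parity argument to close.
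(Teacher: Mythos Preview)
Your argument is correct and follows essentially the same route as the paper's proof. Both proofs reduce to showing that the difference $c=\langle h^{\phi_1}\rangle-\langle h^{\phi_2}\rangle\in\ker(\Trace_\lambda)$ vanishes, use Theorem~\ref{thmMaximlaAnisotropicElement} to exclude $X_{\rho',\epsilon,E}$ from the kernel, and invoke the parity of anisotropic dimensions to finish. The paper packages the last step slightly differently, by observing that $W_{\rho',\epsilon}(E)^0=\{0,X_{\rho',\epsilon,E}\}$ is a subgroup on which $\Trace_\lambda$ is injective, and that the two classes $\langle h^{\phi_i}\rangle$ lie in the same coset of this subgroup (because their anisotropic parts have the same parity as the common $E$-dimension); your formulation via ``adding a class of anisotropic dimension~$1$ flips parity'' is the same observation in contrapositive form, and in fact you have made this parity step more explicit than the paper does.
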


\begin{proof}
We write~$W_{\rho',\epsilon}(E)^0$ for the set of classes of~$W_{\rho',\epsilon}(E)$ with even-dimensional anisotropic part. Then~$W_{\rho',\epsilon}(E)^0$ only consists of the trivial element and~$X_{\rho',\epsilon,E}$ so, by Theorem~\ref{thmMaximlaAnisotropicElement},
there is a map~$\lambda$ such that~$\Trace_\lambda$ is injective on~$W_{\rho',\epsilon}(E)^0$. Since~$\Trace_\lambda(\langle h^{\phi_i}\rangle)=\langle h\rangle$, we deduce that~$(V,h^{\phi_1})$ and~$(V,h^{\phi_2})$ are isomorphic as hermitian~$E$-spaces and the result follows.
\end{proof}

In the symplectic case, the analogous result is false without further hypotheses. The following theorem gives a sufficient additional condition which will be useful. 

\begin{theorem}\label{thmSkolemNoether}
For~$i=1,2$, let~$\Lambda^i$ be a self-dual lattice sequence in~$V$ normalized by~$\phi_i(E)^\times$. 
Let~$\beta$ be a non-zero skew-symmetric element generating~$E$ over~$F$ and write~$r_i:=1+\nu_{\Lambda^i}(\phi_i(\beta))$. Suppose that there is an element~$g$ of~$G$ such that 
\[
g^{-1}\left(\phi_1(\beta)+\mf{a}_{r_1}(\Lambda^1)\right)g\cap
\left(\phi_2(\beta)+\mf{a}_{r_2}(\Lambda^2)\right)\ne 
\emptyset.
\] 
Then~$\phi_1,\phi_2$  are conjugate by an element of~$G$.
\end{theorem}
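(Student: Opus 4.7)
The plan is first to reduce to the case $g=1$, then to translate the statement into an isometry question for hermitian $E$-spaces, and finally to exploit the extra intertwining hypothesis via the Witt-group machinery of Section~\ref{secWittGroups} together with the lifting results of Section~\ref{corCohomologyArgument}.

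To reduce, I replace $\phi_1$ by $\Int(g^{-1})\circ\phi_1$ and $\Lambda^1$ by $g^{-1}\Lambda^1$; these remain $\rho'$-$\sigma$-equivariant data since $g\in G$, and the hypothesis becomes that the cosets $\phi_1(\beta)+\mfa_{r_1}(\Lambda^1)$ and $\phi_2(\beta)+\mfa_{r_2}(\Lambda^2)$ share a common element $\gamma$. Both cosets are $\sigma$-stable (since $\phi_i(\beta)$ is skew-symmetric and $\Lambda^i$ is self-dual), and the residue characteristic is odd, so after replacing $\gamma$ by $\frac{1}{2}(\gamma-\sigma(\gamma))$ we may assume $\gamma$ is skew-symmetric.

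Next, following~\cite{broussousStevens:09}, I attach to each $\phi_i$ the hermitian $E$-space $(V,h^{\phi_i})$ with $h=\lambda\circ h^{\phi_i}$. A $G$-conjugation between $\phi_1$ and $\phi_2$ is exactly an $E$-hermitian isometry $(V,h^{\phi_1})\to(V,h^{\phi_2})$, where the source carries its $\phi_1$-action and the target its $\phi_2$-action; since the $E$-dimensions coincide, this is equivalent to $\langle h^{\phi_1}\rangle=\langle h^{\phi_2}\rangle$ in $W_{\rho',\epsilon}(E)$. Both classes lie in the fibre $\Trace_\lambda^{-1}(\langle h\rangle)$, so in those cases where this fibre is a singleton the conclusion follows directly from Corollary~\ref{corSkolemNoether1} without using the extra hypothesis.

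The main obstacle, and the heart of the proof, lies in the remaining cases, where $\Trace_\lambda$ has a non-trivial kernel concentrated on classes of low anisotropic dimension (cf.~Lemma~\ref{lemEFDeg2}). There the skew-symmetric element $\gamma$ provides a uniform approximation to both $\phi_i(\beta)$, inducing on each graded quotient $\Lambda^i_l/\Lambda^i_{l+1}$ compatible residual $\kappa_E$-hermitian structures. Using Witt-basis lifts (Lemmas~\ref{lemLiftWitt1}--\ref{lemWittLift}) applied to these matching residual data, I would construct an $F$-linear isomorphism $f:V\to V$ with $f(\Lambda^1)=\Lambda^2$ that approximately intertwines $\phi_1$ with $\phi_2$ and approximately preserves $h$. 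Corollary~\ref{corCohomologyArgument} then lifts $f$ to a genuine isometry $g\in G$, which by construction satisfies $g\phi_1(x)g^{-1}=\phi_2(x)$ for all $x\in E$.
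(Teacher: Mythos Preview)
Your reduction to $g=1$ and the invocation of Corollary~\ref{corSkolemNoether1} for the easy cases are fine, and match the paper. The problem is in your treatment of the residual case ($F=F_0$, $\epsilon=-1$), where your final two sentences contain a genuine gap.

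You propose to build an $F$-linear $f$ that ``approximately intertwines $\phi_1$ with $\phi_2$'' and approximately preserves $h$, then invoke Corollary~\ref{corCohomologyArgument} to lift $f$ to a genuine isometry $g\in G$, and finally assert that $g$ conjugates $\phi_1$ to $\phi_2$ ``by construction''. But Corollary~\ref{corCohomologyArgument} only produces an isometry in the double coset $\tilde{\U}^n(f(\Lambda))\,f\,\tilde{\U}^n(\Lambda)$, and these $\tilde{\U}^n$-factors are $o_F$-objects with no reason to respect the $E$-action. So even if $f$ approximately intertwines the embeddings, the perturbed $g$ need not do so exactly. You have not explained any mechanism forcing the lifted isometry to conjugate $\phi_1$ to $\phi_2$.

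The paper avoids this difficulty by a different idea: it \emph{twists} the symplectic form $h$ by the skew-symmetric element $\phi_i(\beta)$ to obtain \emph{orthogonal} forms $h_{\phi_i(\beta)}$ (Definition~\ref{def:twist}). The intertwining hypothesis, via Lemma~\ref{lemLiftOfAClass}, forces $h_{\phi_1(\beta)}\cong h_{c_1}\cong h_{c_2}\cong h_{\phi_2(\beta)}$. Since $\beta\neq 0$ is skew, $\rho'$ is necessarily non-trivial on $E$, so Corollary~\ref{corSkolemNoether1} now applies to the orthogonal forms $h_{\phi_i(\beta)}$ and yields an isometry of these twisted forms conjugating $\phi_1$ to $\phi_2$. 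Any such isometry is automatically an isometry of the original $h$, because $h$ is recovered from $h_{\phi_i(\beta)}$ by twisting back by $\phi_i(\beta)^{-1}$. This twist trick is what packages the $E$-structure into the form itself, so that the conjugacy of embeddings and the isometry condition are handled simultaneously; your approach keeps them separate and never closes the gap between them.
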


In the language of strata below (Section~\ref{secSemisimpleStrata}), the hypotheses say that the pure skew strata~$[\Lambda^i,r_i+1,r_i,\phi_i(\beta)]$ intertwine. We will need the following lemma, where we recall that~$h_\gamma$ denotes the twist of~$h$ by a (skew-)symmetric element~$\gamma$ (see Definition~\ref{def:twist})

\begin{lemma}\label{lemLiftOfAClass}
Let~$\Lambda$ be a self-dual lattice sequence and let~$a_1,a_2$ be two non-zero symmetric or skew-symmetric elements of the normalizer of~$\Lambda$ such that~$a_1a_2^{-1}\in\tilde{U}^s(\Lambda)$, for some~$s>0$.
Then there is an~$F$-linear isometry from~$(V,h_{a_1})$ to~$(V,h_{a_2})$ in~$\tilde{U}^{s}(\Lambda)$.
\end{lemma}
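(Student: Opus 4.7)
The plan is to apply Corollary~\ref{corCohomologyArgument} to the identity map $\id_V\colon V\to V$, viewed as an $F$-linear isomorphism from $(V,h_{a_1})$ to $(V,h_{a_2})$. Recall that an $F$-linear $g\in\tilde G$ is an isometry from $h_{a_1}$ to $h_{a_2}$ precisely when $\sigma(g)\,a_2\,g=a_1$, so what we are looking for is a solution of this ``twisted norm'' equation lying in $\tilde U^s(\Lambda)$.

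Before invoking the corollary I would settle three preparatory points. (a)~The elements $a_1$ and $a_2$ must have the same symmetry type: if $a_1$ were symmetric and $a_2$ skew (or vice versa), then writing $a_1=(1+x)a_2$ with $x\in\mfa_s$ and applying $\sigma$ would yield $2a_2=-xa_2-a_2\sigma(x)$, forcing $2\in\mfa_s$, which is impossible since $F$ has odd residual characteristic. Hence $h_{a_1}$ and $h_{a_2}$ are both $\epsilon'$-hermitian for the same sign $\epsilon'\in\{\pm\epsilon\}$, and Corollary~\ref{corCohomologyArgument} is meaningful in this setting. (b)~Since each $a_i$ normalises $\Lambda$, if $\Lambda_t^{\#,h}=\Lambda_{u-t}$ then $\Lambda_t^{\#,h_{a_i}}=\Lambda_{u-t-\nu_\Lambda(a_i)}$, so $\Lambda$ is self-dual with respect to both $h_{a_1}$ and $h_{a_2}$. (c)~Directly from the defining relation $h_{a_2}(v,w)=h(v,a_2w)=h_{a_1}(v,a_1^{-1}a_2w)$, the generalised adjoint is $\sigma_{h_{a_1},h_{a_2}}(\id_V)=a_1^{-1}a_2$.

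The hypothesis of Corollary~\ref{corCohomologyArgument} then reduces to checking that $a_1^{-1}a_2$ lies in $\tilde U^s(\Lambda)\cdot\id_V\cdot\tilde U^s(\Lambda)=\tilde U^s(\Lambda)$. From $a_1a_2^{-1}\in 1+\mfa_s$ and the fact that $a_2$ normalises the filtration, one obtains $a_2^{-1}a_1=a_2^{-1}(a_1a_2^{-1})a_2\in 1+\mfa_s$; inverting yields $a_1^{-1}a_2\in\tilde U^s(\Lambda)$. The corollary then produces an isomorphism of $\epsilon'$-hermitian spaces $\phi\in\tilde U^s(\Lambda)\cdot\id_V\cdot\tilde U^s(\Lambda)=\tilde U^s(\Lambda)$ from $(V,h_{a_1})$ to $(V,h_{a_2})$, which is the desired isometry. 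The real work has already been carried out in Corollary~\ref{corCohomologyArgument} and in the Witt-basis lifting of Lemmas~\ref{lemLiftWitt1}--\ref{lemWittLift}; the only genuinely new ingredient here is the parity argument in~(a) that rules out the mixed-symmetry possibility.
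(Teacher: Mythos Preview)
Your proof is correct and follows essentially the same route as the paper: both apply Corollary~\ref{corCohomologyArgument} with~$f=\id_V$, after checking that~$\Lambda$ is self-dual for the twisted forms and that~$\sigma_{h_{a_1},h_{a_2}}(\id_V)=a_1^{-1}a_2\in\tilde U^s(\Lambda)$. The paper additionally invokes Proposition~\ref{propliftResidualIsometries} explicitly (this is what Corollary~\ref{corCohomologyArgument} uses internally), while you make explicit the parity check in~(a) and the self-duality verification in~(b); these are complementary levels of detail on the same argument.
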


\begin{proof}
We apply Proposition~\ref{propliftResidualIsometries} for~$f=\id_V$ to see that the spaces~$(V,h_{a_1})$ and~$(V,h_{a_2})$ are isometric. Now we apply Corollary~\ref{corCohomologyArgument}, with~$f=\id_V$ again, to finish the proof. 
\end{proof}

\begin{proof}[Proof of Theorem~\ref{thmSkolemNoether}]
By Corollary~\ref{corSkolemNoether1} we only need to treat the case that~$F=F_0$ and~$\epsilon=-1$. 
By hypothesis, there are elements~$g\in G$ and~$c_i\in\phi_i(\beta)+\mf{a}_{r_i,-}(\Lambda^i)$ such that~$gc_1g^{-1}=c_2$. Thus, by Lemma~\ref{lemLiftOfAClass}, we have isometries
\[
h_{\phi_1(\beta)}\cong h_{c_1}\cong h_{c_2}\cong h_{\phi_2(\beta)},
\]
where the middle isomorphism is given by~$g$. Let~$f$ be an isomorphism from~$h_{\phi_1(\beta)}$ to~$h_{\phi_2(\beta)}$. Since~$h_{\phi_i(\beta)}$ are orthogonal forms, Corollary~\ref{corSkolemNoether1} applied to the embeddings~$x\mapsto f\phi_1(x)f^{-1}$ and~$\phi_2$ implies that there is an isomorphism from~$h_{\phi_1(\beta)}$ to~$h_{\phi_2(\beta)}$ which conjugates~$\phi_1$ to~$\phi_2$. But any such isomorphism is an isometry of~$(V,h)$, as required. 
\end{proof}

We will also need the following integral version of the Skolem--Noether theorem:

\begin{proposition}[{{\cite[Theorem~1.2]{skodlerack:14}}}]\label{propConjug}
Let~$\phi_i: (E,\rho')\rightarrow (A,\sigma)$ be a~$\rho'$-$\sigma$-equivariant~$F$-algebra embedding, for~$i=1,2$.
Suppose further that~$(V,h^{\phi_1})$ is isomorphic to~$(V,h^{\phi_2})$ as hermitian
$E$-spaces and that there is a self-dual lattice chain~$\Lambda$ normalized by~$\phi_i(E)^\times$,~$i=1,2$. Then~$\phi_1,\phi_2$ are conjugate by an element of~$U(\Lambda)$.
\end{proposition}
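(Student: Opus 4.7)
The plan is to produce in $G$ an element that both preserves $\Lambda$ and carries $\phi_1$ to $\phi_2$, in two stages: first supply any $F$-hermitian isometry doing the conjugation, then adjust it by an element of the centraliser of $\phi_2(E)$ so that it also stabilises $\Lambda$.

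First I would turn the assumed $E$-hermitian isomorphism $(V,h^{\phi_1})\cong(V,h^{\phi_2})$ into an $F$-linear map $f:V\to V$. By construction $f\phi_1(x)=\phi_2(x)f$ for all $x\in E$, and since $h=\lambda\circ h^{\phi_i}$ the map $f$ preserves $h$; thus $f\in G$ and $f\phi_1 f^{-1}=\phi_2$.

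Next I would transport $\Lambda$. Because $f\phi_1(E)^\times f^{-1}=\phi_2(E)^\times$ and $f$ is an $h$-isometry, the lattice chain $f(\Lambda)$ is self-dual and normalised by $\phi_2(E)^\times$. Viewing $V$ as an $E$-vector space via $\phi_2$, both $\Lambda$ and $f(\Lambda)$ then become self-dual $o_E$-lattice chains in the $E$-hermitian space $(V,h^{\phi_2})$. The heart of the proof is to show that these two $o_E$-chains are conjugate under $U(V,h^{\phi_2})$, which sits inside $G$ as the centraliser of $\phi_2(E)$. Granted such an element $u$ with $uf(\Lambda)=\Lambda$, the product $uf$ lies in $\tilde{U}(\Lambda)\cap G=U(\Lambda)$ and still satisfies $(uf)\phi_1(uf)^{-1}=u\phi_2 u^{-1}=\phi_2$, completing the argument.

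The main obstacle is precisely the transitivity statement above: that two self-dual $o_E$-lattice chains in $(V,h^{\phi_2})$ lying in the same $G$-orbit must also lie in the same $U(V,h^{\phi_2})$-orbit. The natural route is via the embedding of the Bruhat--Tits building of $U(V,h^{\phi_2})$ into that of $G$ as the fixed points of $\phi_2(E^\times)$: the chains $\Lambda$ and $f(\Lambda)$ give facets in this embedded building, and one must check that they have the same type, that is, that the combinatorial data (the $o_E$-periods, and the $\kappa_E$-hermitian forms induced on the successive quotients $\Lambda_i/\Lambda_{i+1}$) agree. This in turn reduces to a lift-of-residual-isometries argument of the kind formalised in Proposition~\ref{propliftResidualIsometries} and the Witt-basis lemmas of Section~3; the statement we need is precisely \cite[Theorem~1.2]{skodlerack:14}, on which we rely.
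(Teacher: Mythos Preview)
The paper gives no proof of this proposition: it is simply quoted as \cite[Theorem~1.2]{skodlerack:14}, so there is nothing in the paper to compare your argument against.

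Your reduction is the right shape: produce an $h$-isometry $f\in G$ with $f\phi_1 f^{-1}=\phi_2$ from the $E$-hermitian isomorphism, then correct $f$ by an element of the centraliser $U(V,h^{\phi_2})$ so that it fixes $\Lambda$. The remaining task---showing that the two self-dual $o_E$-lattice chains $\Lambda$ and $f(\Lambda)$ in $(V,h^{\phi_2})$ are $U(V,h^{\phi_2})$-conjugate---is indeed the substantive content.

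However, your last sentence is circular. The proposition you are asked to prove \emph{is} \cite[Theorem~1.2]{skodlerack:14}; you cannot then close the argument by saying ``the statement we need is precisely \cite[Theorem~1.2]{skodlerack:14}, on which we rely''. What you have written is a correct reduction followed by an appeal to the very result under discussion. If you want a self-contained proof you must actually carry out the step you flag as the ``main obstacle'': verify that $\Lambda$ and $f(\Lambda)$ have the same invariants as self-dual $o_E$-lattice chains (same $o_E$-period, same $\kappa_E$-dimensions and residual hermitian forms on successive quotients) and then apply a lifting argument over $E$. Note in particular that $f$ identifies $\Lambda$ \emph{with its $\phi_1$-structure} with $f(\Lambda)$ \emph{with its $\phi_2$-structure}; you still need to compare $\Lambda$ under the two $E$-structures $\phi_1$ and $\phi_2$, which is where the real work in \cite{skodlerack:14} lies.
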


\section{Semisimple strata}\label{secSemisimpleStrata}

We now turn to the notion of semisimple stratum for~$G$. The background can be found in~\cite{bushnellKutzko:93,stevens:02,stevens:05}, whose notation we adopt. However, many of the results in the literature are only available for lattice chains, while other results on semisimple strata were omitted in~\cite{stevens:05} (jumping directly to semisimple \emph{characters}). Thus we gather together here various results which we will need in our work.

A \emph{stratum} is a quadruple~$[\Lambda,q,r,\beta]$ consisting of an~$o_F$-lattice sequence~$\Lambda$, non-negative integers~$q\ge r$ and an element~$b\in\mf{a}_{-q}(\Lambda)$.
This stratum is called \emph{strict} if~$\Lambda$ is a lattice chain. 
The stratum is \emph{skew} if~$\beta\in A_-$ and~$\Lambda$ is self-dual, and it is called \emph{zero} if~$\beta=0$ and~$q=r$. 

Two strata~$[\Lambda,q,r,\beta]$ and~$[\Lambda',q',r',\beta']$ are \emph{equivalent} if 
$\beta+\mf{a}_{-r-j}=\beta'+\mf{a}'_{-r'-j}$,
for all non-negative integers~$j$. They \emph{intertwine} under a subgroup~$H$ of~$\tilde{G}$ if there is an element 
$g$ of~$H$ such that~$g(\beta+\mf{a}_{-r})g^{-1}$ intersects~$\beta'+\mf{a}'_{-r'}$. We denote the set of such elements
by~$I_H([\Lambda,q,r,\beta],[\Lambda',q',r',\beta'])$. If both strata are equal we skip the second argument and if~$H$ is~$\tilde{G}$ we skip~$H$ in the notation.
The two strata are conjugate under~$H$ if there is a~$g\in H$
such that~$[g\Lambda,q,r,g\beta g^{-1}]$ is equivalent to~$[\Lambda',q',r',\beta']$. 

\begin{definition}[Simple stratum]\label{defDegreeSimple}
A stratum~$[\Lambda,q,r,\beta]$ is called
\be
\item \emph{pure} if~$F[\beta]$ is a field such that~$F[\beta]^\times\subseteq\mf{n}(\Lambda)$ and~$\nu_\Lambda(\beta)=-q<-r$;
\item \emph{simple} if either it is zero, or it is pure and the degree~$[F[\beta]:F]$ is minimal among all equivalent pure strata.
\ee
\end{definition}

This is equivalent to~\cite[Definition~1.5]{stevens:05}, or~\cite{bushnellKutzko:93} in the case of lattice chains (see Proposition~\ref{propDegreeSimpleAndk0Simple} below). 

We now want to consider strata where~$F[\beta]$ is semisimple and for that we need to consider direct sums in~$V$. Given a decomposition~$V=\bigoplus_i V^i$ we write~$A^{i,j}$ for~$\Hom_F(V^j,V^i)$ and~$1^i$ for the projection onto~$V^i$ with kernel~$\bigoplus_{j\ne i}V^j$. A stratum~$[\Lambda,q,r,\beta]$ is \emph{split} by the decomposition if~$1^i\beta 1^j=0$ for~$i\ne j$ and if it splits~$\Lambda$, i.e.~$\Lambda$ is the direct sum of the lattice sequences~$\Lambda^i:=\Lambda\cap V^i$. We write~$\beta_i:=1^i\beta 1^i$ and~$q_i:=-\min\{\nu_\Lambda(\beta_i),-r\}$. We are now in a position to define a semisimple stratum.

\begin{definition}[{{\cite[Definition~3.2]{stevens:05}}}]
A stratum~$[\Lambda,q,r,\beta]$ is called \emph{semisimple} if either it is zero or~$\nu_\Lambda(\beta)=-q<-r$ and there is a splitting~$V=\bigoplus_iV^i$ such that 
\be
\item\label{def:ss.ii} for every~$i$ the stratum~$[\Lambda^i,q_i,r,\beta_i]$ in~$A^{i,i}$ is simple,
\item for all~$i\neq j$ the stratum~$[\Lambda^i\oplus\Lambda^j,\max\{q_i,q_j\},r,\beta_i+\beta_j]$ is not equivalent to any simple stratum.
\ee
A semisimple stratum is called \emph{skew-semisimple} if the decomposition of~$V$ is orthogonal and all strata occurring in~\ref{def:ss.ii} are skew. 
\end{definition}

For later, to describe the intertwining of~$[\Lambda,q,r,\beta]$, we need an integer~$k_0(\beta,\Lambda)$ which characterizes the semisimplicity of a stratum. Denote by~$a_\beta: A\ra A$ the map~$a_\beta(x)=x\beta-\beta x$ and put~$\mf{n}_l=a^{-1}_\beta(\mfa_l)\cap \mfa_0$. If~$F[\beta]$ is a field we define, as in Definition~\cite[1.4]{stevens:05}: 
\[
k_0(\beta,\Lambda):=\max\{-q,\max\{l\in \bbZ\mid \mf{n}_l\not\subseteq \mf{b}_0+\mfa_1\}\},
\]
and one writes~$k_F(\beta)$ for~$k_0(\beta,\mf{p}_E^\bbZ)$, where~$\mf{p}_E^\bbZ$ denotes the lattice sequence~$i\mapsto\mf{p}_E^i$, the unique~$o_F$-lattice chain in the~$F$-vector space~$E$ whose normalizer contains~$E^\times$. We have that 
\begin{equation}\label{eqk0kF}
 k_0(\beta,\Lambda)=e(\Lambda|o_E)k_F(\beta),
\end{equation}
by the remark after~\cite[Lemma~5.6]{stevens:01}. 
We now prove that Definition~\ref{defDegreeSimple} is equivalent to that in~\cite[1.5]{stevens:05} 

\begin{proposition}\label{propDegreeSimpleAndk0Simple}
Given a non-negative integer~$s$, a pure stratum~$[\Lambda,q,s,\beta]$ is simple if and only if~$-s<k_0(\beta,\Lambda)$. Further, writing~$\tilde{\Lambda}=\bigoplus_{l=0}^{e-1}(\Lambda-l)$, with~$e=e(\Lambda|o_F)$, we have~$k_0(\beta^{\oplus e},\tilde{\Lambda})=k_0(\beta,\Lambda)$.
\end{proposition}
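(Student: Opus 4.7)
The plan is to prove part~(2) first and then combine it with the equivalence for lattice chains established in~\cite{bushnellKutzko:93} to obtain part~(1). For part~(2), I would invoke the identity~\eqref{eqk0kF}, which expresses $k_0(\beta,\Lambda)=e(\Lambda|o_E)k_F(\beta)$ for any $o_F$-lattice sequence $\Lambda$ normalised by $E^\times$. Applying this to both $\Lambda$ and $\tilde{\Lambda}$, the claim reduces to the equality of $o_E$-periods $e(\tilde{\Lambda}|o_E)=e(\Lambda|o_E)$. This is a direct computation from $\tilde{\Lambda}_i=\bigoplus_{l=0}^{e-1}\Lambda_{i-l}$: multiplication by $\pi$ shifts indices by $e$, giving $e(\tilde{\Lambda}|o_F)=e$, and dividing by the ramification index $e(E|F)$ yields the desired equality.

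For part~(1), apply the Bushnell--Kutzko theorem for lattice chains to the pure stratum $[\tilde{\Lambda},q,s,\beta^{\oplus e}]$: together with part~(2), this gives
\[
[\tilde{\Lambda},q,s,\beta^{\oplus e}]\text{ is minimal-degree simple}\iff -s<k_0(\beta,\Lambda).
\]
Hence it remains to show that $[\Lambda,q,s,\beta]$ is minimal-degree simple if and only if $[\tilde{\Lambda},q,s,\beta^{\oplus e}]$ is. The $(\Leftarrow)$ direction is immediate: any pure $[\Lambda,q,s,\beta']$ of strictly smaller degree equivalent to $[\Lambda,q,s,\beta]$ produces the pure equivalent $[\tilde{\Lambda},q,s,\beta'^{\oplus e}]$ of the same smaller degree, contradicting minimality on $\tilde{\Lambda}$.

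The $(\Rightarrow)$ direction is the main obstacle: starting from $\tilde{\beta}\in\End_F(\tilde{V})$ pure equivalent to $\beta^{\oplus e}$ on $\tilde{\Lambda}$ with $[F[\tilde{\beta}]:F]<[F[\beta]:F]$, one must produce $\beta'\in A$ pure equivalent to $\beta$ on $\Lambda$ of strictly smaller degree. My preferred route is to bypass the descent altogether: inspecting the Bushnell--Kutzko proof, the construction of a smaller-degree witness from the condition $-s\geq k_0(\beta,\Lambda)$ uses only the filtrations $\mf{a}_\bullet(\Lambda)$, the centralising algebra $\mf{b}_0$, and the commutator $a_\beta$, all of which are defined in the same way for lattice sequences; hence the argument applies verbatim and supplies $\beta'$ directly on $\Lambda$. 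A more direct but more delicate descent would average $\tilde{\beta}$ over the cyclic shift $\sigma\in\mf{n}(\tilde{\Lambda})$ (which centralises $\beta^{\oplus e}$) to bring it into the diagonal subalgebra $A\hookrightarrow\tilde{A}$, at the cost of controlling the minimal polynomial through the averaging.
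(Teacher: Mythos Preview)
Your proof of part~(2) and of the implication ``$k_0$-simple $\Rightarrow$ degree-simple'' (your $(\Leftarrow)$) are correct and coincide with the paper's argument. The gap is in the converse. Your route~(a) simply asserts that the Bushnell--Kutzko construction of a smaller-degree approximation from the condition $-s\geq k_0(\beta,\Lambda)$ carries over verbatim to lattice sequences; this may well be true, but it is exactly the point at issue and cannot be dismissed by inspection---the paper's whole premise in this section is that one must justify such transfers carefully. Your route~(b) is, as you yourself note, problematic: averaging $\tilde\beta$ over the cyclic shift need not preserve the degree of $F[\tilde\beta]$ (it can raise it, or collapse the field to something non-integral), so you lose control of the very quantity you need.

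The paper sidesteps the descent from $\tilde\Lambda$ entirely. For the direction ``not $k_0$-simple $\Rightarrow$ not degree-simple'' it passes instead to the lattice chain $\mf{p}_E^{\bbZ}$ in the $F$-vector space $E$: by~\eqref{eqk0kF}, $-s\geq k_0(\beta,\Lambda)$ forces $-\floor{s/e(\Lambda|o_E)}\geq k_F(\beta)$, so the chain case gives a pure stratum on $\mf{p}_E^{\bbZ}$ equivalent to $\beta$ and of strictly smaller degree. One then transports this witness back to $V$ via a $(W,E)$-decomposition of $\Lambda$ as in~\cite[Lemma~5.3]{bushnellKutzko:99}, which writes $V$ as a sum of copies of $E$ compatibly with $\Lambda$ and lets the smaller-degree element act diagonally. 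This is the missing ingredient: rather than descending from the large space $\tilde V=V^{\oplus e}$, go down to the smallest model $E$ and come back up.
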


Note that the lattice sequence~$\tilde{\Lambda}$ in the statement is in fact a lattice chain, with the same period as~$\Lambda$.

\begin{proof}
The second assertion follows directly from~\eqref{eqk0kF}, and we thus only concentrate on the first, which is true if~$\Lambda$ is a lattice chain by~\cite[Theorem~2.4.1]{bushnellKutzko:93}. 
We compare the two notions of simple: a stratum which is simple in the sense of Definition~\ref{defDegreeSimple} is called \emph{degree-simple},
and a stratum which is either zero or pure satisfying~$-s<k_0(\beta,\Lambda)$ is called \emph{$k_0$-simple}. 

If~$[\Lambda,q,s,\beta]$ is~$k_0$-simple then so is~$[\tilde{\Lambda},q,s,\beta^{\oplus e}]$, by the second assertion, and thus it is degree-simple, because~$\tilde{\Lambda}$ is a lattice chain. Thus~$[\Lambda,q,s,\beta]$ is degree-simple. 

If~$[\Lambda,q,s,\beta]$ is degree- but not~$k_0$-simple, then~$[\mf{p}_E^\bbZ,-\nu_E(\beta),\floor{\frac{s}{e(\Lambda|o_E)}} ,\beta]$ is not~$k_0$-simple. 
But then, the latter is not degree-simple, because~$\mf{p}_E^\bbZ$ is a lattice chain, and thus~$[\Lambda,q,s,\beta]$ is not degree-simple, using a~$(W,E)$-decomposition as in~\cite[Lemma~5.3]{bushnellKutzko:99}.
\end{proof} 

\begin{corollary}
$[\bigoplus_{l=0}^{e-1}(\Lambda-l),q,s,\beta^{\oplus e}]$ is simple if and only if~$[\Lambda,q,s,\beta]$ is simple.
\end{corollary}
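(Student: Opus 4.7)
The corollary is essentially a direct unpacking of Proposition~\ref{propDegreeSimpleAndk0Simple}, so my plan is to reduce both sides of the equivalence to the characterization via $k_0$ provided there.

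First I would dispose of the zero case: $\beta=0$ if and only if $\beta^{\oplus e}=0$, and the stratum~$[\Lambda,q,s,0]$ is simple precisely when $q=s$, which is a condition on the outer data~$(q,s)$ and so transfers verbatim to~$[\tilde{\Lambda},q,s,0]$. Thus I may assume $\beta\neq 0$ and reduce to the pure case.

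Next I would verify that $[\Lambda,q,s,\beta]$ is pure if and only if $[\tilde{\Lambda},q,s,\beta^{\oplus e}]$ is pure, where $\tilde{\Lambda}=\bigoplus_{l=0}^{e-1}(\Lambda-l)$. The field $F[\beta^{\oplus e}]$ is (diagonally) isomorphic to $F[\beta]$, hence a field in one case iff in the other. Since $(\Lambda-l)_i=\Lambda_{i+l}$, the lattice chain~$\tilde{\Lambda}$ has the same $o_F$-period as $\Lambda$, and $\beta^{\oplus e}$ acts blockwise as $\beta$ on each summand; an immediate check gives $\nu_{\tilde{\Lambda}}(\beta^{\oplus e})=\nu_{\Lambda}(\beta)=-q$, so the valuation condition $-q<-s$ passes between the two strata. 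Likewise, the normalizer condition $F[\beta]^{\times}\subseteq \mathfrak{n}(\Lambda)$ is equivalent to $F[\beta^{\oplus e}]^{\times}\subseteq\mathfrak{n}(\tilde{\Lambda})$ because the blockwise action preserves each summand $(\Lambda-l)$ iff it preserves $\Lambda$.

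Having established purity on both sides, I would apply Proposition~\ref{propDegreeSimpleAndk0Simple} twice. Since $\tilde{\Lambda}$ is in particular a lattice sequence, the proposition tells me that $[\Lambda,q,s,\beta]$ is simple iff $-s<k_0(\beta,\Lambda)$, and that $[\tilde{\Lambda},q,s,\beta^{\oplus e}]$ is simple iff $-s<k_0(\beta^{\oplus e},\tilde{\Lambda})$. The second part of the same proposition gives the equality $k_0(\beta^{\oplus e},\tilde{\Lambda})=k_0(\beta,\Lambda)$, so the two conditions coincide.

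There is no genuine obstacle here; the statement is a formal consequence of Proposition~\ref{propDegreeSimpleAndk0Simple}. The only minor care needed is in confirming that the purity data ($F[\beta]$ being a field, the normalizer condition, and the valuation of $\beta$) transfers between $\Lambda$ and $\tilde{\Lambda}$, which follows from the fact that $\tilde{\Lambda}$ is constructed as a direct sum of translates of $\Lambda$ and so every relevant invariant is preserved.
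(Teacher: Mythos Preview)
Your proof is correct and follows exactly the approach the paper intends: the corollary is stated without proof precisely because it is an immediate consequence of the two parts of Proposition~\ref{propDegreeSimpleAndk0Simple}, and you have unpacked that consequence carefully, including the bookkeeping for the zero case and the transfer of purity between~$\Lambda$ and~$\tilde{\Lambda}$.
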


If~$F[\beta]$ is not a field we define for a semisimple stratum, as in~\cite[(3.6)]{stevens:05},
\[
k_0(\beta,\Lambda):=-\min\{s\in\bbZ\mid [\Lambda,q,s,\beta]\text{ is not semisimple}\}.
\]
This integer is negative because~$[\Lambda,q,r,\beta]$ is semisimple and~$r\geq 0$.

\subsubsection*{Minimal strata}

We begin now with an analysis of semisimple strata of the form~$[\Lambda,q,q-1,\beta]$. For the simple case, we recall that an element~$\beta$ of an extension~$E|F$ is called \emph{minimal} if it satisfies the following two conditions:
\be
\item~$\Gcd{\nu_{E}(\beta)}{e(E|F)}=1$;
\item~$\beta^{e(E|F)}\pi^{\nu_E(\beta)}+\mf{p}_E$ generates the extension~$\kappa_E|\kappa_F$.
\ee
Then, by~\cite[1.4.13(ii),1.4.15]{bushnellKutzko:93}, a pure stratum~$[\Lambda,q,q-1,\beta]$ is simple if and only if~$\beta$ is minimal. By a slight abuse, we call a semisimple stratum of the form~$[\Lambda,q,q-1,\beta]$ a \emph{minimal semisimple stratum}.

For minimal semisimple strata, the \emph{characteristic polynomial} is very important for distinguishing the summands. For~$b$ an element of a finite dimensional semisimple algebra~$B$ over some field~$K$, we denote the reduced characteristic polynomial of~$b$ in~$B|K$, defined in~\cite[(9.20)]{reiner:03}, by~$\chi_{x,B|K}$, and the minimal polynomial by~$\mu_{x,B|K}$.

\begin{definition}
Let~$[\Lambda,q,q-1,\beta]$ be a stratum with~$\nu_\Lambda(\beta)=-q$ and set~$y_\beta:=\beta^\frac{e}{g}\pi^{\frac{q}{g}}$, where~$g=\Gcd{e}{q}$, with characteristic polynomial~$\Phi(X)=\chi_{y_\beta,A|F}\in o_F[X]$. We define the \emph{characteristic polynomial} of the stratum
$[\Lambda,q,q-1,\beta]$ to be the reduction~$\phi_\beta:=\bar{\Phi}\in\kappa_F[X]$. It depends only on the equivalence class of the stratum.

For a zero stratum we define~$y_0:=0$ and~$\phi_0(X):=X^{N}$, where~$N=\dim_F(V)$.
\end{definition}

\begin{remark}
If~$[\Lambda,q,q-1,\beta]$ and~$[\Lambda,q,q-1,\gamma]$ intertwine then~$\phi_{\beta}=\phi_{\gamma}$.
\end{remark}




\begin{proposition}\label{propSemisimplePolynomial}
If~$[\Lambda,q,q-1,\beta]$ is semisimple with associated splitting~$V=\bigoplus_{i\in I} V^i$, then we have the following:
\be
\item~$\phi_\beta$ is the product of the polynomials~$\phi_{\beta_i}$, which are pairwise coprime polynomials;
\item each polynomial~$\phi_{\beta_i}$ is a power of an irreducible polynomial;  
\item the~$F$-algebra homomorphism induced by~$\beta\mapsto \sum_{i\in I}\beta_i$ is a bijection from~$F[\beta]$ to the product of the~$E_i:=F[\beta_i]$;
\item~$\kappa_F[\bar{y}_\beta]$ is canonically isomorphic to~$\prod_{i\in I}\kappa_F[\bar{y}_{\beta_i}]$. 
\ee
\end{proposition}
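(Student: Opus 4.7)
The plan is to exploit the block-diagonal structure of $\beta$ with respect to the splitting, reducing matters to the simple case (handled by the Bushnell--Kutzko theory of minimal simple strata) together with the defining non-simplicity property of semisimple strata. Since $V=\bigoplus_i V^i$ splits the stratum, both $\beta$ and $y_\beta=\beta^{e/g}\pi^{q/g}$ are block-diagonal, so $\chi_{y_\beta,A|F}=\prod_i\chi_i$ where $\chi_i$ is the characteristic polynomial of $y_\beta|_{V^i}$. To identify $\bar\chi_i$ with $\phi_{\beta_i}$, I distinguish two cases. If $\nu_{\Lambda^i}(\beta_i)=-q$ then $q_i=q$, the periods and $\gcd$s match, and $y_\beta|_{V^i}=y_{\beta_i}$. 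If $\nu_{\Lambda^i}(\beta_i)\geq 1-q$, then $[\Lambda^i,q_i,q-1,\beta_i]$ is equivalent to a zero stratum, so $\phi_{\beta_i}=X^{N_i}$ by definition, while $\bar y_\beta|_{V^i}=0$ also gives $\bar\chi_i=X^{N_i}$. This yields the factorization claimed in~(i).

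For (ii), the zero-block case is trivial. In the non-zero block case, $[\Lambda^i,q_i,q_i-1,\beta_i]$ is minimal simple, so by~\cite[1.4.13,1.4.15]{bushnellKutzko:93} (extended to lattice sequences via the $\dagger$-construction as in Proposition~\ref{propDegreeSimpleAndk0Simple}) the element $\beta_i$ is minimal over $F$, hence $\bar y_{\beta_i}$ generates the residue field extension $\kappa_{E_i}/\kappa_F$; since $y_{\beta_i}$ is central in its action through~$E_i$, its characteristic polynomial is the appropriate power of the irreducible minimal polynomial of $\bar y_{\beta_i}$ over $\kappa_F$.

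The principal obstacle is the pairwise coprimality of the $\phi_{\beta_i}$ in~(i), which I would prove by contrapositive. If $\phi_{\beta_i}$ and $\phi_{\beta_j}$ shared a common irreducible factor (including the case where both are powers of $X$, corresponding to two zero blocks), then using the irreducible-power structure from~(ii) one would have a common image of $\bar y_{\beta_i}$ and $\bar y_{\beta_j}$ in $\bar\kappa_F$, i.e.\ a matching of residue field generators. A Hensel-type lift along the lines of the approximation lemmas of~\cite{bushnellKutzko:93,stevens:05} then produces an element $\gamma\in A^{ii}\oplus A^{jj}$ such that $F[\gamma]$ is a field normalizing $\Lambda^i\oplus\Lambda^j$ and $\gamma\equiv\beta_i+\beta_j\pmod{\mf{a}_{1-\max(q_i,q_j)}}$. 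The resulting equivalence of $[\Lambda^i\oplus\Lambda^j,\max(q_i,q_j),q-1,\beta_i+\beta_j]$ with a pure, hence simple, stratum contradicts the definition of semisimple; the two-zero-blocks case is ruled out the same way, producing an equivalent zero stratum.

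Finally, (iii) and (iv) are formal consequences of (i) and (ii) via the Chinese Remainder Theorem. For (iii), the evaluation map $F[X]\to\prod_i E_i$, $p\mapsto(p(\beta_i))_i$, factors through $F[\beta]$ with kernel~$(\mu_\beta)$. Since the $\mu_{\beta_i}$ divide the pairwise coprime $\phi_{\beta_i}$, they are pairwise coprime themselves, so CRT gives $\mu_\beta=\prod_i\mu_{\beta_i}$ and a bijection $F[\beta]\cong\prod_i E_i$. An identical argument at the residue level, applied to the pairwise coprime $\phi_{\beta_i}$ and to $\bar y_\beta$, yields~(iv).
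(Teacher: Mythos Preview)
Your approach is essentially the same as the paper's: exploit the block-diagonal decomposition to factor~$\phi_\beta$, invoke minimality of each~$\beta_i$ for~(ii), and derive~(iii) and~(iv) from the coprimality. The paper is terser, recording only that~$e(\Lambda^i|o_F)=e$ and~$q_i=q$ for all non-zero blocks (whence~$y_\beta=\sum_i y_{\beta_i}$), and then deferring the coprimality together with~(iii) and~(iv) to~\cite[Remark~3.3]{stevens:05}. Your CRT arguments for~(iii) and~(iv) are a correct unpacking of that citation.

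Two points deserve tightening. First, in your case split for the factorisation, the possibility~$\nu_{\Lambda^i}(\beta_i)\ge 1-q$ with~$\beta_i\ne 0$ cannot occur: the block stratum~$[\Lambda^i,q_i,q-1,\beta_i]$ is required to \emph{be} simple (not merely equivalent to simple), and a non-zero stratum with~$q_i=r$ is neither zero nor pure. So in fact~$\beta_i=0$ in that case; your conclusion is right, but the reasoning should say so. Second, and more substantively, the coprimality step is where the real content lies, and your ``Hensel-type lift'' is not a proof. What is actually needed is the statement (contained in~\cite[Remark~3.3]{stevens:05}) that, for two minimal simple strata~$[\Lambda^i,q,q-1,\beta_i]$ and~$[\Lambda^j,q,q-1,\beta_j]$ with~$\phi_{\beta_i},\phi_{\beta_j}$ powers of the same irreducible, the direct sum~$[\Lambda^i\oplus\Lambda^j,q,q-1,\beta_i+\beta_j]$ is equivalent to a simple stratum. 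One route is via Proposition~\ref{propSimplCritForFundStratum}: the sum is non-split fundamental (its characteristic polynomial is a prime power), and one checks that~$\mathcal{R}$ of the sum is semisimple using that each~$\mathcal{R}([\Lambda^i,q,q-1,\beta_i])$ is. Either cite the reference as the paper does, or spell this out; the phrase ``Hensel-type lift'' does not by itself produce the required~$\gamma$.
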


\begin{proof}
For all indices~$i$, we have~$e=e(\Lambda|o_F)=e(\Lambda^i|o_F)$ and~$q=q_i$ for all indices~$i$ with~$\beta_i\neq 0$. Since also~$\beta=\sum_i\beta_i$ with~$\beta_i\in A^{i,i}$, we get
\[
y_\beta=\sum_i\beta_i^\frac{e}{g}\pi^{\frac{q}{g}}=\sum_iy_{\beta_i},
\]
and~$\phi_\beta$ is equal to the product of the~$\phi_{\beta_i}$. That~$\phi_{\beta_i}$ is primary now follows from the fact that~$[\Lambda^i,q_i,r,\beta_i]$ is a simple stratum and the remaining assertions are a consequence of~\cite[Remark~3.3]{stevens:05}.
\end{proof}

%
%
%

It will also be useful to have another criterion by which to recognise a minimal semisimple stratum. Recall that a stratum~$[\Lambda,q,q-1,\beta]$ is called \emph{fundamental} if the coset~$\beta+\mfa_{1-q}$ contains no nilpotent elements; in this case, the rational number~$\frac{q}{e}$ is called the \emph{level} of the stratum, where~$e=e(\Lambda|o_F)$. We also define the level of the zero stratum~$[\Lambda,q,q,0]$ to be~$\frac{q}{e}$.

\begin{proposition}\label{propFundamental}
A stratum~$[\Lambda,q,q-1,\beta]$ is fundamental if and only if its characteristic polynomial is not a power of~$X$. Two fundamental strata which intertwine have the same level. If a zero stratum intertwines a fundamental stratum, then they have different levels. 
\end{proposition}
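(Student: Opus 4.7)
The plan is to handle all three assertions via a uniform analysis of the element~$y_\beta=\beta^{e/g}\pi^{q/g}\in\mfa_0$ and its image~$\bar y_\beta$ in the semisimple quotient~$\mfa_0/\mfa_1$, which is a product of matrix algebras over the residue fields of the~$\Lambda_i/\Lambda_{i+1}$.

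For Claim~1, one direction is direct: if~$\gamma=\beta+a$ with~$a\in\mfa_{1-q}$ is nilpotent, then~$y_\gamma$ is nilpotent in~$A$ (since~$\gamma^n=0$ forces~$y_\gamma^n=\gamma^{ne/g}\pi^{nq/g}=0$), hence~$\chi_{y_\gamma,A|F}=X^N$; expanding~$(\beta+a)^{e/g}$ term-by-term shows that the cross terms contribute to~$\mfa_1$ after multiplication by~$\pi^{q/g}$, so~$y_\beta\equiv y_\gamma\pmod{\mfa_1}$ and~$\phi_\beta=\phi_\gamma=X^N$. For the converse, $\phi_\beta=X^N$ makes~$\bar y_\beta\in\mfa_0/\mfa_1$ nilpotent; one lifts it via a Jordan-type flag to a nilpotent~$m\in\mfa_0$ and then adjusts to produce a nilpotent lying inside the coset~$\beta+\mfa_{1-q}$. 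This follows the classical argument of~\cite[Proposition~2.3.4]{bushnellKutzko:93} (applied to lattice sequences via the~$\dagger$-construction from the Notation section).

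For Claims~2 and~3, the crux is an eigenvalue dichotomy for any stratum~$[\Lambda,q,q-1,\gamma]$ with~$\gamma\in\mfa_{-q}$: every eigenvalue~$\lambda$ of~$\gamma$ in~$\bar F$ satisfies~$\nu_F(\lambda)\ge -q/e$, with equality attained for some~$\lambda$ if and only if~$\bar y_\gamma$ is non-nilpotent. The lower bound follows from~$\gamma\in\mfa_{-q}$ by extending scalars to~$K=F(\lambda)$ and using~$\nu_\Lambda(\gamma v)\ge\nu_\Lambda(v)-q$ on an eigenvector in the~$o_K$-extended lattice sequence; attainment follows because a unit eigenvalue~$\mu$ of~$y_\gamma$ produces~$\lambda$ of~$\gamma$ with~$\lambda^{e/g}\pi^{q/g}=\mu$, forcing~$\nu_F(\lambda)=-q/e$. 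Consequently, for a fundamental stratum the minimum eigenvalue valuation equals~$-q/e$ exactly, whereas for the zero stratum (whose characteristic polynomial is~$X^N$, so~$\bar y$ is nilpotent on the relevant coset) it is strictly greater. Since intertwining preserves spectra, Claim~2 follows by equating the common minimum valuations~$-q/e=-q'/e'$, and Claim~3 follows because the non-fundamental character of the zero side forces its minimum strictly above~$-q/e$, while the fundamental side pins the common minimum to~$-q'/e'$, giving~$q'/e'<q/e$.

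The principal technical obstacle is the converse direction of Claim~1: a Jordan-form lift of~$\bar y_\beta$ into~$\mfa_0$ yields a nilpotent with correct image modulo~$\mfa_1$, but one must further modify it by an element of~$\mfa_{1-q}$ to land \emph{inside} the prescribed coset~$\beta+\mfa_{1-q}$, rather than merely inside~$\mfa_0$ with the correct image.
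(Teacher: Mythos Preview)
Your treatment of Claim~1 is essentially the same as the paper's: both defer the hard direction (producing a nilpotent in the coset when~$\phi_\beta=X^N$) to a standard external result---the paper cites~\cite[Lemma~2.1]{bushnell:87}, you cite~\cite[2.3.4]{bushnellKutzko:93} via the~$\dagger$-construction---and both handle the easy direction by noting that equivalent strata share the same characteristic polynomial. Your eigenvalue dichotomy for Claim~2 is also correct and amounts to a reformulation of the paper's one-line argument (``if false, there would be a fundamental stratum with characteristic polynomial a power of~$X$''): the two are equivalent, since~$\phi_\gamma=X^N$ is precisely the statement that every eigenvalue of~$\gamma$ has valuation strictly above~$-q/e$.

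There is, however, a genuine gap in your argument for Claim~3. You write that ``the non-fundamental character of the zero side forces its minimum strictly above~$-q/e$'', but this does not follow. The zero stratum~$[\Lambda,q,q,0]$ has~$r=q$, so its coset is~$0+\mfa_{-q}=\mfa_{-q}$, \emph{not}~$0+\mfa_{1-q}$. An intertwining element~$\gamma\in\mfa_{-q}$ can perfectly well have~$\bar y_\gamma$ non-nilpotent, and hence (by your own dichotomy) an eigenvalue of valuation exactly~$-q/e$. The fact that~$\phi_0=X^N$ is a statement about~$y_0=0$, not about an arbitrary~$\gamma$ in the coset. Consequently your argument yields only the weak inequality~$q'/e'\le q/e$ (level of the fundamental stratum at most that of the zero stratum), not the strict inequality you claim. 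Concretely, take~$V=F$ with~$\Lambda_i=\mf{p}_F^i$: the zero stratum~$[\Lambda,1,1,0]$ and the fundamental stratum~$[\Lambda,1,0,\pi^{-1}]$ intertwine (via the identity, since~$\pi^{-1}\in\mfa_{-1}\cap(\pi^{-1}+\mfa_0)$) and both have level~$1$. So strictness is simply false in general, and no eigenvalue argument of this shape will produce it.
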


\begin{proof}
Suppose~$[\Lambda,q,q-1,\beta]$ has characteristic polynomial~$X^m$ and put~$e=e(\Lambda|o_F)$; then the element~$\beta$ satisfies 
\[
\beta^{em}\in \pi^{-qm}\mfa_1=\mfa_{1-qme}.
\]
Then, by~\cite[Lemma~2.1]{bushnell:87}, there is a nilpotent element in~$\beta+\mfa_{1-q}$, so the stratum is not fundamental. (The proof of that Lemma is valid for lattice sequences if one allows block matrices with block sizes~$0\times l$ or~$l\times 0$.) Conversely, if~$[\Lambda,q,q-1,\beta]$ is not fundamental, then~$y_{\beta}$ is congruent to a nilpotent element modulo~$\mfa_1$, and thus the characteristic polynomial of the stratum is a power of~$X$. The remaining assertions now follow easily, because if one of them were false, then there would be a fundamental stratum whose characteristic polynomial is a power of~$X$.
\end{proof}

We now give criteria for a fundamental stratum to be simple or semisimple. We recall that a fundamental stratum is called \emph{non-split} if the characteristic polynomial of the stratum is a power of an irreducible polynomial. Given a fundamental stratum~$[\Lambda,q,q-1,b]$ we define the following~$\kappa_F$-algebra 
\[
\mathcal{R}([\Lambda,q,q-1,b]):=\{\bar{x}\in \mf{a}_0/\mf{a}_1\mid xb\equiv bx \pmod{\mf{a}_{1-q}}\}.
\]
The following result is stated in~\cite[2.4.13]{bushnellKutzko:93} for strict strata but, because the quotient~$\mf{a}_0/\mf{a}_1$ depends only on the image of~$\Lambda$, is also valid for arbitrary lattice sequences.
 
\begin{proposition}[{\cite[2.4.13]{bushnellKutzko:93}}]\label{propSimplCritForFundStratum}
A non-split fundamental stratum~$[\Lambda,q,q-1,b]$ is equivalent to a simple stratum if and only if~$\mathcal{R}([\Lambda,q,q-1,b])$ is semisimple.
\end{proposition}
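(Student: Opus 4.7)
My plan is to reduce to the known case of lattice chains, where this is Bushnell--Kutzko~2.4.13, via the $\dagger$-construction. Set $e := e(\Lambda|o_F)$ and form the lattice chain $\tilde\Lambda := \bigoplus_{l=0}^{e-1}(\Lambda-l)$ of period $e$, together with $\tilde b := b^{\oplus e} \in \End_F(\tilde V)$ where $\tilde V := V^{\oplus e}$. Then $[\tilde\Lambda, q, q-1, \tilde b]$ is a stratum on a chain, to which \cite[2.4.13]{bushnellKutzko:93} applies directly, so the proposition will follow from three transfer statements between $[\Lambda, q, q-1, b]$ and $[\tilde\Lambda, q, q-1, \tilde b]$.

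First, since $y_{\tilde b} = y_b^{\oplus e}$, the characteristic polynomials satisfy $\phi_{\tilde b} = (\phi_b)^e$, so $[\tilde\Lambda, q, q-1, \tilde b]$ is fundamental (Proposition~\ref{propFundamental}) and non-split if and only if $[\Lambda, q, q-1, b]$ is. Second, I claim that $[\Lambda, q, q-1, b]$ is equivalent to a simple stratum iff $[\tilde\Lambda, q, q-1, \tilde b]$ is: if $b - \beta \in \mathfrak{a}_{1-q}(\Lambda)$ with $[\Lambda, q, q-1, \beta]$ simple, then $\tilde b - \beta^{\oplus e} \in \mathfrak{a}_{1-q}(\tilde\Lambda)$, and the Corollary to Proposition~\ref{propDegreeSimpleAndk0Simple} shows that $[\tilde\Lambda, q, q-1, \beta^{\oplus e}]$ is simple; for the converse one uses the idempotents $1^l$ of the decomposition $\tilde V = V^{\oplus e}$ (which commute with $\tilde b$) to project an arbitrary simple equivalent of $[\tilde\Lambda,q,q-1,\tilde b]$ onto diagonal form.

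Third, $\mathcal{R}([\tilde\Lambda, q, q-1, \tilde b])$ is Morita equivalent to $\mathcal{R}([\Lambda, q, q-1, b])$, so one is semisimple iff the other is. The Morita equivalence comes from writing $\mathfrak{a}_0(\tilde\Lambda)/\mathfrak{a}_1(\tilde\Lambda)$ in the $e \times e$ block form given by the $1^l$ and observing that, since $\tilde b$ is scalar across the blocks, the commutation condition modulo $\mathfrak{a}_{1-q}(\tilde\Lambda)$ reduces block-wise to the commutation condition for $b$ modulo $\mathfrak{a}_{1-q}(\Lambda)$; the resulting centralizer is a matrix algebra over $\mathcal{R}([\Lambda, q, q-1, b])$. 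Combining the three transfer statements with BK~2.4.13 applied to $[\tilde\Lambda, q, q-1, \tilde b]$ then yields the proposition.

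The main obstacle I anticipate is the converse direction of the second step --- producing a block-diagonal simple equivalent of $[\tilde\Lambda,q,q-1,\tilde b]$ starting from an arbitrary one --- together with the precise bookkeeping for the Morita identification in the third step, both of which come down to a careful analysis of how the idempotents $1^l$ interact with the filtration $\mathfrak{a}_\bullet(\tilde\Lambda)$ and with the equivalence relation on strata.
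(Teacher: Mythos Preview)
Your $\dagger$-construction route is a genuine alternative to what the paper does, but it is considerably more circuitous. The paper gives no proof at all: it simply remarks that BK's argument for strict strata applies unchanged because the quotient $\mathfrak{a}_0/\mathfrak{a}_1$ depends only on the image of $\Lambda$. In other words, one stays inside the \emph{same} vector space $V$ and either re-runs BK's proof or passes to the lattice chain given by the image of $\Lambda$; there is no need to enlarge to $\tilde V = V^{\oplus e}$ and then transfer back. This buys a great deal: all three of your transfer steps become vacuous, and in particular the obstacle you flag disappears.

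That obstacle --- the converse direction of your step~2 --- is real in your framework and is not handled by the sketch you give. If $[\tilde\Lambda,q,q-1,\tilde\gamma]$ is simple with $\tilde\gamma \equiv \tilde b \pmod{\mathfrak{a}_{1-q}(\tilde\Lambda)}$, the compression $1^0\tilde\gamma 1^0$ is congruent to $b$ but there is no reason for it to generate a field, so you do not get a pure (let alone simple) stratum on $\Lambda$ this way. Theorem~\ref{thmDiagonalizedSimple} does not help either, since its hypothesis requires each block stratum to be pure, which is exactly what you do not know. One can rescue the argument (for instance by appealing to the minimality criterion for simple strata of depth $q{-}1$ together with $\phi_{\tilde b}=\phi_b^{e}$ and a $(W,E)$-decomposition as in \cite[Lemma~5.3]{bushnellKutzko:99}), but by the time you have done so you have essentially rebuilt the ``image of $\Lambda$'' reduction inside a larger space. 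Your Morita step~3 is plausible but would likewise need care with the off-diagonal blocks of $\mathfrak{a}_0(\tilde\Lambda)/\mathfrak{a}_1(\tilde\Lambda)$, which are not simply copies of $\mathfrak{a}_0(\Lambda)/\mathfrak{a}_1(\Lambda)$.
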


To get a similar result for semisimple strata we need, for an element~$b\in \mf{a}_{-q}(\Lambda)$ and an integer~$n$, the map
\[
m_{n,q,b}: \mf{a}_{-nq}/\mf{a}_{1-nq}\ra\mf{a}_{-(n+1)q}/\mf{a}_{1-(n+1)q}
\]
induced by multiplication by~$b$.

\begin{proposition}\label{propSemiSimplCritForFundStratum}
A fundamental stratum~$[\Lambda,q,q-1,b]$ is equivalent to a semisimple stratum if and only if~$\mathcal{R}([\Lambda,q,q-1,b])$ is semisimple and, for all non-negative integers~$n$, the kernel of~$m_{n+1,q,b}$ and the image of~$m_{n,q,b}$ intersect trivially. 
\end{proposition}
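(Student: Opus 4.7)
The strategy is to extend Proposition~\ref{propSimplCritForFundStratum} from the non-split (simple) case to the semisimple case by reducing via a splitting of $V$. For the necessity direction, one replaces $b$ by an equivalent semisimple $\beta$ and reads off both conditions from the block decomposition; for the sufficiency direction, one constructs a splitting of $V$ from the primary factorisation of $\phi_b$ and applies Proposition~\ref{propSimplCritForFundStratum} to each non-zero block.

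For $(\Rightarrow)$, I would replace $b$ by an equivalent~$\beta=\sum_i \beta_i$ defining a semisimple stratum split by $V=\bigoplus V^i$, with $\phi_\beta=\prod_i \phi_{\beta_i}$ a product of pairwise coprime primary polynomials by Proposition~\ref{propSemisimplePolynomial}. Because $\beta$ is block-diagonal and the $\phi_{\beta_i}$ are pairwise coprime, any $x\in\mfa_0$ with $[x,\beta]\in\mfa_{1-q}$ must have off-diagonal components in $\mfa_1$: distinct residual minimal polynomials force the equation $\beta_i x_{ij}\equiv x_{ij}\beta_j\pmod{\mfa_{1-q}\cap A^{i,j}}$ to have only trivial solutions. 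Hence $\mathcal{R}\cong\prod_i \mathcal{R}_i$ and each factor is semisimple by Proposition~\ref{propSimplCritForFundStratum}, giving condition~(1). For condition~(2), analyse $m_{n,q,\beta}$ on each block $A^{i,j}\cap\mfa_{-nq}/\mfa_{1-nq}$: when $\beta_i\neq 0$ the action is injective by minimality of $\beta_i$, while when $\beta_i=0$ (so $q_i=q-1$) the image in those rows is zero; in either case $\ker\cap\im=0$.

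For $(\Leftarrow)$, factor $\phi_b=\phi_0\prod_{j\ge 1}\phi_j^{n_j}$ with $\phi_0$ a power of $X$ and the $\phi_j$ distinct non-$X$ irreducibles, and lift the resulting orthogonal idempotents of $\kappa_F[\bar y_b]$ to orthogonal idempotents $e_j\in\mfa_0$, using that $\mfa_1$ is the Jacobson radical of the $o_F$-order $\mfa_0$; these idempotents split $V=\bigoplus V^j$ compatibly with $\Lambda$. On each block $V^j$ with $j\ge 1$ the induced stratum is non-split fundamental with $\mathcal{R}_j$ a direct factor of $\mathcal{R}$, hence semisimple, so Proposition~\ref{propSimplCritForFundStratum} provides an equivalent simple stratum $[\Lambda^j,q,q-1,\beta_j]$. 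For the zero-block $V^0$, I would show that condition~(2) forces $b|_{V^0}\in\mfa_{1-q}(\Lambda^0)$: since $\bar y_b$ acts nilpotently on $V^0$, any non-zero residual element in $\mfa_{-q}(\Lambda^0)/\mfa_{1-q}(\Lambda^0)$ would, after iterating multiplication by $b$, produce a non-trivial element in some $\ker m_{n+1,q,b}\cap\im m_{n,q,b}$, a contradiction. Thus the restriction to $V^0$ is equivalent to a zero stratum and $\beta:=\sum_{j\ge 1}\beta_j$ defines a stratum equivalent to $[\Lambda,q,q-1,b]$. The splitting $V=\bigoplus V^j$ verifies condition~\ref{def:ss.ii} of the semisimple definition, while condition~(ii) follows from the pairwise coprimality of the $\phi_j$, which prevents any mixed two-block substratum from being equivalent to a simple one.

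The principal obstacle is the sufficiency direction for the zero-block: the hypothesis (2) is formulated in terms of iterated multiplication by $b$, whereas $\phi_b$ controls only $y_b=b^{e/g}\pi^{q/g}$, so one has to bootstrap from nilpotence of $\bar y_b$ on $V^0$ back to $b|_{V^0}\in\mfa_{1-q}$ via the chain of quotients $\mfa_{-nq}/\mfa_{1-nq}$. The lifting of idempotents and the block-decomposition of $\mathcal{R}$ are standard once one knows that commutation with $\beta$ modulo $\mfa_{1-q}$ forces block-diagonality, which itself rests on the coprimality of the residual minimal polynomials coming from distinct simple summands.
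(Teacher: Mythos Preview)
Your proposal is correct and follows essentially the same route as the paper: split $V$ according to the primary factorisation of $\phi_b$, apply Proposition~\ref{propSimplCritForFundStratum} on each non-zero block, and use the kernel/image condition to force the remaining block to be a zero stratum; conversely, read off both conditions from the block decomposition of a semisimple $\beta$.

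One small point of difference in the sufficiency direction is worth noting. The paper builds the splitting idempotents explicitly as polynomials in $y_b$: it Hensel-lifts the coprime factorisation of $\phi_b$ to a factorisation $\Phi=f_0f_1$ in $o_F[X]$ and then takes $1_i=a_i(y_b)f_i(y_b)$ from a B\'ezout identity. These idempotents commute with $b$ on the nose, so the resulting decomposition splits the stratum exactly. Your abstract idempotent lift in $\mfa_0$ (via $\mfa_1$ being the Jacobson radical) only produces idempotents congruent to polynomials in $y_b$ modulo $\mfa_1$; they need not commute with $b$, so you must add the easy check that $e_jbe_k\in\mfa_{1-q}$ for $j\neq k$ (which follows because $e_j\equiv P_j(y_b)\pmod{\mfa_1}$ and $P_j(y_b)P_k(y_b)\in\mfa_1$), and likewise that the maps $m_{n,q,b}$ are block-diagonal on the graded pieces so that condition~(2) restricts to each block. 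This is harmless, but the paper's explicit construction avoids it. For the necessity direction on the off-diagonal blocks $A^{i,j}$, the paper cites \cite[3.7~Lemma~4]{bushnellKutzko:99}, whereas your direct observation that left multiplication by $\beta_i$ is either a bijection (when $\beta_i\neq 0$, since $\beta_i$ normalises $\Lambda^i$) or zero is equally valid and arguably more transparent.
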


\begin{proof}
Since the algebra~$\mathcal{R}([\Lambda,q,q-1,b])$ and the maps~$m_{n,q,b}$ depend only on the equivalence class of the stratum, we are free to move to an equivalent stratum at any point. 

Suppose first that~$\mathcal{R}([\Lambda,q,q-1,b])$ is semisimple and, for all non-negative integers~$n$, the kernel of~$m_{n+1,q,b}$ and the image of~$m_{n,q,b}$ intersect trivially. We inductively find a splitting. For this, assume that~$\phi_b$ is a product of two coprime monic factors~$\bar{f_0}$ and~$\bar{f_1}$. Let~$\Phi$ be the characteristic polynomial of~$y_b=\pi^{\frac{q}{g}}b^{\frac{e}{g}}$, where~$g$ is the greatest common divisor of~$e=e(\Lambda|o_F)$ and~$q$. Hensel's Lemma implies that we can factorize~$\Phi$ as~$f_0f_1$ where~$f_i$ is a monic lift of~$\bar{f_i}$. By B\'ezout's Lemma, there are polynomials~$a_0,a_1\in o_F[X]$ such that~$a_0f_0+a_1f_1=1$. The map~$1_i=a_i(y_b)f_i(y_b)$ is the projection onto the kernel of~$f_{1-i}$, and the sum~$\ker(f_0)\oplus \ker(f_1)=V$ splits the stratum~$[\Lambda,q,q-1,b]$. Moreover, we have~$\mathcal{R}([\Lambda,q,q-1,b])\simeq\mathcal{R}([\Lambda^0,q_0,q-1,b_0])\oplus\mathcal{R}([\Lambda^1,q_1,q-1,b_1])$, by the coprimality of~$f_0,f_1$, so that both~$\mathcal{R}([\Lambda^i,q_i,q-1,b_i])$ are semisimple.

Thus, by Proposition~\ref{propSimplCritForFundStratum} and~\ref{propSemisimplePolynomial}, we only have to show that strata equivalent to zero strata are the only non-fundamental strata for which the kernel of~$m_{n+1,q,b}$ and the image of~$m_{n,q,b}$ intersect trivially. Now let us assume that~$[\Lambda,q,q-1,b]$ is non-fundamental. Then without loss of generality we can assume that~$b$ is nilpotent. The conditions on the maps imply that~$m_{n,q,b}\circ m_{(n-1),q,b}\circ\cdots\circ m_{0,q,b}$ is injective on the image of~$m_{0,q,b}$. If~$n$ is big enough, the first product is the zero map, so the image of~$m_{0,q,b}$ is zero, i.e.~$[\Lambda,q,q-1,b]$ is equivalent to a zero stratum. 

For the converse, suppose that~$[\Lambda,q,q-1,b]$ is a semisimple stratum with associated splitting~$V=\bigoplus_{i\in I}V^i$. Since the characteristic polynomials~$\phi_{b_i}$ are pairwise coprime, we have~$\mathcal{R}([\Lambda,q,q-1,b])\simeq\bigoplus_{i\in I}\mathcal{R}([\Lambda^i,q_i,q-1,b_i])$ and, since each stratum~$[\Lambda^i,q_i,q-1,b_i])$ is simple, this algebra is semisimple by Proposition~\ref{propSimplCritForFundStratum}. (Note that the algebra is clearly semisimple for the zero simple stratum.) 

The maps~$m_{n,q,b}$ preserve the decomposition~$A=\bigoplus{i,j} A^{i,j}$ so we may work blockwise. On the diagonal blocks~$A^{i,i}$, the map~$m_{n,q,b}$ is either zero (in the case~$b_i=0$) or bijective. On the non-diagonal blocks~$A^{i,j}$, with~$i\ne j$, the map is bijective or zero by~\cite[3.7~Lemma~4]{bushnellKutzko:99}.
\end{proof}

\subsubsection*{Semisimple strata}

Now we turn to the case of general semisimple strata~$[\Lambda,q,r,\beta]$. A very important tool to prove properties of semisimple strata by an inductive procedure is the tame corestriction map, which was introduced in~\cite[1.3.3]{bushnellKutzko:93} in the simple case. 

\begin{definition}\label{def:tamecor}
Let~$E|F$ be a field extension and~$B$ be the centralizer of~$E$ in~$A$. A non-zero~$B$-$B$-bimodule map~$s:A\to B$ is called a \emph{tame corestriction (relative to~$E|F$)} if, for all~$\mf{o}_F$-lattice sequences~$\Lambda$ normalized by~$E^{\times}$, we have
\[
s(\mf{a}_j(\Lambda))=\mf{a}_j(\Lambda)\cap B,
\]
for all integers~$j$. 

If~$E=F[\gamma]$ we often write~$s_\gamma$ for a (choice of) tame corestriction relative to~$E|F$. 
\end{definition}

\begin{remark}\label{rmk:tamecor}
\begin{enumerate}
\item By~\cite[1.3.4]{bushnellKutzko:93}, tame corestrictions exist: if~$\psi_F$ and~$\psi_E$ are additive characters of~$F$ and~$E$ respectively then there is a unique map~$s:A\ra B$ such that 
\[
\psi_F\circ\tr_{A|F}(ab)=\psi_ E\circ \tr_{B|E}(s(a)b),\qquad a\in A, \ b\in B.
\]
This map is a tame corestriction and every tame corestriction arises in this way. Moreover, tame corestrictions are unique up to multiplication by an element of~$o_E^\times$.
\item If~$\gamma$ generates the extension~$E|F$ then, by~\cite[1.3.2 (i)]{bushnellKutzko:93}, the kernel of~$s_\gamma$ is equal to the image of the adjoint map~$a_\gamma: A\ra A$.
\item If~$E$ is~$\sigma$-invariant, we can arrange the additive characters~$\psi_F$ and~$\psi_E$ in~(i) to be~$\sigma$-invariant also, and then the tame corestriction~$s$ is~$\sigma$-equivariant.
\end{enumerate}
\end{remark}

Given a simple stratum~$[\Lambda,q,r+1,\gamma]$ in~$A$ and element~$c\in\mfa_{-r}$, the tame corestriction map allows us to define a \emph{derived stratum}~$[\Lambda,r+1,r,s_\gamma(c)]$ in~$B_\gamma$, the centralizer in~$A$ of~$\gamma$, and we can ask whether this derived stratum is (equivalent to) a fundamental or simple stratum. The following theorem is particularly useful.

\begin{theorem}[{\cite[Theorems~2.2.8,~2.4.1]{bushnellKutzko:93}}]\label{thmSimplederived}
Let~$[\Lambda,q,r+1,\beta]$ be a stratum equivalent to a simple stratum~$[\Lambda,q,r+1,\gamma]$.
Then~$[\Lambda,q,r,\beta]$ is equivalent to a simple stratum if and only if the derived stratum~$[\Lambda,r+1,r,s_\gamma(\gamma-\beta)]$ is equivalent to a simple stratum.
\end{theorem}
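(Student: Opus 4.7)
The plan is to follow the strategy of~\cite[Theorems~2.2.8 and 2.4.1]{bushnellKutzko:93}, which establish the theorem in the lattice-chain case, and to reduce the lattice-sequence statement to that case via the $\dagger$-construction already used in Proposition~\ref{propDegreeSimpleAndk0Simple}. Set $e=e(\Lambda|o_F)$ and
\[
\tilde\Lambda \;:=\; \bigoplus_{l=0}^{e-1}(\Lambda-l),
\]
a lattice chain of period $e$ in $\tilde V:=V^{\oplus e}$. Writing $\tilde A:=\End_F(\tilde V)$, the diagonal embedding $\Delta:A\hookrightarrow \tilde A$, $b\mapsto b^{\oplus e}$, sends $\mf{a}_j(\Lambda)$ into $\mf{a}_j(\tilde\Lambda)$ and preserves equivalence of strata. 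By the Corollary to Proposition~\ref{propDegreeSimpleAndk0Simple}, equivalence to a simple stratum is preserved under this passage in both directions, so $[\tilde\Lambda,q,r+1,\Delta(\beta)]$ and $[\tilde\Lambda,q,r+1,\Delta(\gamma)]$ are equivalent simple strata on the lattice chain $\tilde\Lambda$.

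Next, I would verify that tame corestrictions are compatible with $\Delta$. Using the characterization in Remark~\ref{rmk:tamecor}(i), if $s_\gamma:A\to B_\gamma$ is a tame corestriction relative to $F[\gamma]|F$, then the entry-wise extension to $\tilde A \simeq M_e(A)$ is a tame corestriction $\tilde s$ relative to $F[\Delta(\gamma)]|F$, because the defining $F$-bilinear duality between $\tilde A$ and the centralizer $\tilde B_{\Delta(\gamma)}$ is an orthogonal sum of $e$ copies of the corresponding duality for $A$. This yields the key identity
\[
\tilde s\bigl(\Delta(\gamma)-\Delta(\beta)\bigr) \;=\; \Delta\bigl(s_\gamma(\gamma-\beta)\bigr),
\]
so the derived stratum $[\tilde\Lambda,r+1,r,\tilde s(\Delta(\gamma-\beta))]$ is simply the image under $\Delta$ (applied now inside $\tilde B_{\Delta(\gamma)}$) of the derived stratum $[\Lambda,r+1,r,s_\gamma(\gamma-\beta)]$. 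A second application of the Corollary to Proposition~\ref{propDegreeSimpleAndk0Simple}, inside the centralizer algebra $B_\gamma$, shows that the derived stratum on $\Lambda$ is equivalent to a simple one precisely when its lift to $\tilde\Lambda$ is.

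With these two compatibilities in place, the theorem for $[\Lambda,q,r,\beta]$ reduces to the lattice-chain assertion~\cite[Theorems~2.2.8 and 2.4.1]{bushnellKutzko:93} applied to $[\tilde\Lambda,q,r,\Delta(\beta)]$ and its approximating simple stratum $[\tilde\Lambda,q,r+1,\Delta(\gamma)]$. The main technical obstacle I anticipate is the verification that $\tilde s$, although defined entry-wise, genuinely satisfies the filtration identity $\tilde s(\mf{a}_j(\tilde\Lambda))=\mf{a}_j(\tilde\Lambda)\cap \tilde B_{\Delta(\gamma)}$ of Definition~\ref{def:tamecor}; this ultimately follows from the analogous identity for $s_\gamma$ together with the explicit block structure of $\tilde\Lambda$, but has to be checked carefully because $\tilde B_{\Delta(\gamma)}$ is not simply $\Delta(B_\gamma)$ but the full matrix algebra $M_e(B_\gamma)$. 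Once this is settled, the rest of the argument is bookkeeping. An alternative to the reduction would be to copy the Bushnell--Kutzko proofs verbatim, replacing ``lattice chain'' by ``lattice sequence'' throughout; the only substantive input needed is that Proposition~\ref{propSimplCritForFundStratum} and the approximation arguments for fundamental strata remain valid in this generality, which is the content of Propositions~\ref{propSimplCritForFundStratum} and~\ref{propSemiSimplCritForFundStratum} above.
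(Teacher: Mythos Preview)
Your proposal is correct and in fact more detailed than what the paper does: the paper gives no proof for this theorem at all, simply citing~\cite[Theorems~2.2.8,~2.4.1]{bushnellKutzko:93} and relying on the blanket principle stated in the Notation section that results proved for lattice chains extend to lattice sequences via the~$\dagger$-construction. Your sketch makes that reduction explicit and identifies the one nontrivial check (compatibility of tame corestriction with~$\Delta$), so it is essentially an elaboration of the paper's implicit argument.
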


As an immediate corollary, we get the following result on semisimple strata.

\begin{corollary}\label{corDerivedSplitting}
Let~$[\Lambda,q,r+1,\beta]$ be a stratum equivalent to a simple stratum~$[\Lambda,q,r+1,\gamma]$. Assume that we have a decomposition~$V=\bigoplus_i V^i$ into~$\beta$- and~$\gamma$-invariant~$F$-subspaces. Then~$[\Lambda,r+1,r,s_\gamma(\gamma-\beta)]$ is equivalent to a semisimple stratum with associated splitting~$V=\bigoplus_i V^i$ if and only if~$[\Lambda,q,r,\beta]$ is equivalent to a semisimple stratum with associated splitting~$V=\bigoplus_i V^i$.
\end{corollary}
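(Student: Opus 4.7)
The plan is a blockwise application of Theorem~\ref{thmSimplederived}. The central observation is that the tame corestriction $s_\gamma$ respects the decomposition $V = \bigoplus_i V^i$. Indeed, since each $V^i$ is $\gamma$-invariant, the projection $1^i$ commutes with $\gamma$ and so lies in the centraliser $B_\gamma$; as $s_\gamma$ is a $B_\gamma$-bimodule map, we obtain $s_\gamma(1^i a 1^j) = 1^i s_\gamma(a) 1^j$ for all $a\in A$ and all $i,j$. Writing $\gamma - \beta = \sum_i(\gamma_i - \beta_i)$, which is block-diagonal since both $\gamma$ and $\beta$ are, it follows that $s_\gamma(\gamma - \beta)$ is also block-diagonal with diagonal entries $s_\gamma(\gamma_i - \beta_i)$.

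Observe that in both directions of the claimed equivalence, the existence of an equivalent semisimple stratum with splitting $V = \bigoplus V^i$ forces $\Lambda = \bigoplus_i \Lambda^i$, by the definition of ``split''. With this in hand, the restriction $s_\gamma|_{A^{i,i}}$ takes values in $\mathrm{End}_E(V^i)$, where $E = F[\gamma]$, and by the uniqueness of tame corestriction up to units (Remark~\ref{rmk:tamecor}) coincides with a tame corestriction $s_{\gamma_i}$ up to an element of $o_E^\times$. Hence the derived stratum decomposes as
\[
[\Lambda, r+1, r, s_\gamma(\gamma-\beta)] \;=\; \bigoplus_i [\Lambda^i, r+1, r, s_{\gamma_i}(\gamma_i - \beta_i)],
\]
up to unit rescaling on each summand.

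Next, each $[\Lambda^i, q_i, r+1, \gamma_i]$ is simple (when $V^i\ne 0$ the field $F[\gamma_i]$ is isomorphic to $F[\gamma]$, and the lattice and valuation conditions restrict cleanly), and $[\Lambda^i, q_i, r+1, \beta_i]$ is equivalent to it by projecting the equivalence $[\Lambda, q, r+1, \beta] \sim [\Lambda, q, r+1, \gamma]$ to the $i$th block. Theorem~\ref{thmSimplederived} applied block-by-block then gives: $[\Lambda^i, q_i, r, \beta_i]$ is equivalent to a simple stratum if and only if its derived stratum $[\Lambda^i, r+1, r, s_{\gamma_i}(\gamma_i - \beta_i)]$ is. This handles condition~(i) in the definition of semisimplicity.

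The off-diagonal non-equivalence condition (condition~(ii)) I would treat by the same argument applied to each pair $V^i \oplus V^j$: the restriction of $\gamma$ to $V^i \oplus V^j$ is again simple, so Theorem~\ref{thmSimplederived} transfers the property of being (not) equivalent to a simple stratum from $[\Lambda^i\oplus\Lambda^j,\max\{q_i,q_j\},r,\beta_i+\beta_j]$ to the corresponding pair of blocks of the derived stratum. The main delicacy I foresee is keeping the bookkeeping of tame corestrictions -- relative to $E|F$ on $A$, on each $A^{i,i}$, and on each pair $A^{i,i}\oplus A^{j,j}$ -- consistent, since each is only canonical up to a unit; but this is a minor point once the identification of $s_\gamma|_{A^{i,i}}$ with $s_{\gamma_i}$ is in place.
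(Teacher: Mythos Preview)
Your approach is correct and is precisely what the paper intends: the corollary is stated as an immediate consequence of Theorem~\ref{thmSimplederived}, and the intended argument is exactly the blockwise application you describe, using that the idempotents~$1^i$ lie in~$B_\gamma$ so that~$s_\gamma$ respects the decomposition. The paper gives no further detail, so your write-up is simply a fleshing out of the same route.
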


\medskip

Suppose now that~$[\Lambda,q,0,\beta]$ is semisimple so that, for any~$0\le r< q$, the stratum~$[\Lambda,q,r+1,\beta]$ is equivalent to a semisimple stratum~$[\Lambda,q,r+1,\gamma]$. Then we can realize the assumption on~$\gamma$ in the previous corollary (that is, we can find~$\gamma$ such that the splitting associated to~$[\Lambda,q,0,\beta]$ is preserved by~$\gamma$) by the following theorem.

\begin{theorem}[{\cite[3.4]{stevens:05}}]\label{thmDiagonalizedSimple}
Let~$[\Lambda,q,r,\beta]$ be a (skew)-stratum split by~$V=\bigoplus_iV^i$ such that every stratum~$[\Lambda,q_i,r,\beta_i]$ is pure, and such that~$[\Lambda,q,r+1,\beta]$ is equivalent to a simple stratum. Then~$[\Lambda,q,r+1,\beta]$ is equivalent to a (skew)-simple stratum~$[\Lambda,q,r+1,\gamma]$ split by the same direct sum.
\end{theorem}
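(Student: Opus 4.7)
My plan is to proceed by induction on~$q-r-1\geq 0$. In the base case~$q-r-1=0$ (so~$r+1=q$), the stratum~$[\Lambda,q,q,\beta]$ is equivalent to the zero stratum, which is a simple stratum trivially split by any decomposition; taking~$\gamma=0$ works.

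For the inductive step, I suppose~$q-r-1\geq 1$ and that the result holds for smaller values. The given simple stratum~$[\Lambda,q,r+1,\gamma_0]$ equivalent to~$[\Lambda,q,r+1,\beta]$ also gives a simple stratum~$[\Lambda,q,r+2,\gamma_0]$ equivalent to~$[\Lambda,q,r+2,\beta]$. After handling separately those indices~$i$ with~$q_i=r+1$ (whose blocks are supported in~$\mfa_{-r-1}$ and contribute only an adjustable correction at the next level), the inductive hypothesis applied at level~$r+1$ produces a (skew-)simple~$\gamma_1$, split by the same direct sum, with~$[\Lambda,q,r+2,\gamma_1]\sim[\Lambda,q,r+2,\beta]$.

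To lift~$\gamma_1$ to a level-$(r+1)$ approximation, I choose a tame corestriction~$s_{\gamma_1}: A\to B_{\gamma_1}$ relative to the field extension~$F[\gamma_1]|F$ (Definition~\ref{def:tamecor}), which in the skew case can be chosen~$\sigma$-equivariant by Remark~\ref{rmk:tamecor}(iii). Since the block idempotents~$1^i$ commute with~$\gamma_1$ and~$s_{\gamma_1}$ is a~$B_{\gamma_1}$-bimodule map, the derived stratum~$[\Lambda,r+2,r+1,s_{\gamma_1}(\beta-\gamma_1)]$ is block-diagonal in~$B_{\gamma_1}$. By Theorem~\ref{thmSimplederived} this derived stratum is equivalent to a simple stratum in~$B_{\gamma_1}$, since~$[\Lambda,q,r+1,\beta]$ is equivalent to a simple stratum by hypothesis. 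The core subroutine is then to find a block-diagonal simple replacement for this minimal derived stratum: by Proposition~\ref{propSemisimplePolynomial} its characteristic polynomial decomposes as a product of pairwise coprime powers of irreducibles, and equivalence with a simple stratum forces all the nontrivial blockwise contributions to share a common irreducible factor. This permits the assembly of a block-diagonal simple element~$c'\in B_{\gamma_1}$ agreeing with~$s_{\gamma_1}(\beta-\gamma_1)$ at the residual level. Setting~$\gamma:=\gamma_1+c''$ for a suitable lift~$c''$ of~$c'$, the converse direction of Theorem~\ref{thmSimplederived} then produces the desired block-diagonal simple stratum~$[\Lambda,q,r+1,\gamma]$ equivalent to~$[\Lambda,q,r+1,\beta]$.

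The main obstacle is the minimal-stratum assembly step inside~$B_{\gamma_1}$: one must carefully construct~$c'$ so that~$F[\gamma_1][c']$ remains a field (not merely a product of fields), which requires matching a common minimal element across blocks of possibly varying dimensions. In the self-dual/skew setting, additional care is needed at each stage to preserve~$\sigma$-equivariance: the tame corestrictions are chosen~$\sigma$-equivariant, and~$\sigma$-fixed lifts are produced via a cohomological argument of the type used in Corollary~\ref{corCohomologyArgument}, which is valid thanks to the odd residual characteristic of~$F$.
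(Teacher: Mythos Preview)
The paper does not give its own proof of this theorem; it simply cites~\cite[3.4]{stevens:05}. Your proposal is therefore not a comparison target but an independent attempt, and while the inductive skeleton (reduce to a derived minimal stratum via tame corestriction, then reassemble) is indeed how the argument in~\cite{stevens:05} proceeds, your write-up has two genuine gaps.

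First, your final step misapplies Theorem~\ref{thmSimplederived}. That theorem tells you only that~$[\Lambda,q,r+1,\gamma_1+c'']$ is \emph{equivalent to} a simple stratum; it does not tell you that~$\gamma:=\gamma_1+c''$ itself generates a field over~$F$ or that~$[\Lambda,q,r+1,\gamma]$ is simple. If you then pass to an equivalent simple stratum, the new defining element need not be block-diagonal, and you are back where you started. Relatedly, you have not verified the equivalence~$\gamma\equiv\beta\pmod{\mfa_{-r-1}}$: your~$c''$ is only controlled through~$s_{\gamma_1}$, so you know~$s_{\gamma_1}(c'')\equiv s_{\gamma_1}(\beta-\gamma_1)$ in~$B_{\gamma_1}$, not~$c''\equiv\beta-\gamma_1$ in~$A$.

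Second, the step you flag as ``the main obstacle'' is exactly where the content lies, and your paragraph does not resolve it. Knowing that the blockwise characteristic polynomials share an irreducible factor is not enough to produce a block-diagonal~$c'$ with~$F[\gamma_1][c']$ a field: you must realise the \emph{same} minimal element of a fixed extension of~$F[\gamma_1]$ inside each~$B_{\gamma_1}^{i,i}$, compatibly with the lattice filtrations~$\Lambda^i$, and then check that the resulting direct-sum element still defines a simple (not merely semisimple) stratum. The proof in~\cite{stevens:05} handles this by working blockwise from the outset, using the purity of each~$[\Lambda^i,q_i,r+1,\beta_i]$ to produce simple approximations~$\gamma_i$ individually and then arguing that the hypothesis of global equivalence to a simple stratum forces all the~$F[\gamma_i]$ to be isomorphic, after which a common~$\gamma$ can be assembled. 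Your reference to Corollary~\ref{corCohomologyArgument} for the skew refinement is also off target: that corollary concerns hermitian isometries, whereas what is needed here is the skew approximation result~\cite[1.10]{stevens:01} (used elsewhere in this paper, e.g.\ in the proof of Proposition~\ref{propAnalogueBK359}).
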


In particular, if~$[\Lambda,q,r,\beta]$ is a semisimple stratum with splitting~$V=\bigoplus_iV^i$ and~$[\Lambda,q,r+1,\beta]$ is equivalent to a simple stratum~$[\Lambda,q,r+1,\gamma]$ such that~$\gamma V^i\subseteq V^i$ for each~$i$, then Corollary~\ref{corDerivedSplitting} implies that the derived stratum~$[\Lambda,r+1,r,s_\gamma(\gamma-\beta)]$ is equivalent to a semisimple stratum with the same splitting~$V=\bigoplus_iV^i$.

\begin{notation}
For the rest of the article we use the following notation:~$[\Lambda,q,r,\beta]$ always denotes a stratum, and~$B$ the centralizer of~$\beta$ in~$A$. If~$[\Lambda,q,r,\beta]$ is semisimple then~$V=\bigoplus_{i\in I}V^i$ is the associated splitting and we have~$A=\bigoplus_{i,j} A^{i,j}$ and~$B=\bigoplus_{i\in I} B^{i,i}$, where~$B^{i,i}$ is the centralizer of~$E_i=F[\beta_i]$ in~$A^{i,i}$. Further, we write~$\mf{b}_l$ for the intersection of~$\mfa_l$ with~$B$. We use analogous notations for a second stratum~$[\Lambda',q',r',\beta']$ but all with~$()'$. If we want to specify the centralizer of~$\gamma$ in~$A$, for an arbitrary element~$\gamma$, we write~$B_\gamma$. 
\end{notation}

Let~$[\Lambda,q,r,\beta]$ be a semisimple stratum. We define a \emph{tame corestriction}~$s_\beta: A\rightarrow B$ for~$\beta$ by~$s_\beta(a):=\sum_{i} s_i(a_{ii})$, where 
$s_i$ is a tame corestriction for~$\beta_i$ as in Definition~\ref{def:tamecor}. If~$s_i$ is defined relative to additive characters~$\psi_F,\psi_{E^i}$ as in Remark~\ref{rmk:tamecor}(i), then we put~$\psi_{B^{i,i}}=\psi_{E^i}\circ\tr_{B^{i,i}|E^i}$ and define an additive character of~$B$ by
\[
\psi_B(b)=\prod_{i\in I}\psi_{B^{i,i}}(b_i),\qquad b=\sum_{i\in I}b_i,\quad b_i\in B^{i,i}.
\] 
Writing~$\psi_A=\psi_F\circ\tr_{A|F}$, the map~$s_\beta$ is then a non-zero~$(B,B)$-bimodule homomorphism satisfying 
\[
\psi_A(ab)=\psi_B(s_\beta(a)b), \qquad a\in A,\ b\in B,
\]
and
\[
 s_\beta(\mfa'_l)=\mf{b}'_l
\] 
for all lattice sequences~$\Lambda'$ which are split by~$V=\bigoplus_iV^i$ into a direct sum of~$o_{E^i}$-lattice sequences. 

\begin{lemma}\label{lemKernels}
The sequence~$A\stackrel{a_\beta}{\rightarrow}A\stackrel{s_\beta}{\rightarrow}B$ is exact and the kernel of~$s_\beta$ is split by the decomposition~$A=\bigoplus A^{i,j}$. 
\end{lemma}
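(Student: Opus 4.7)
The plan is to prove the two claims together by exploiting the block structure. Write any $a \in A$ as $a = \sum_{i,j} a_{ij}$ with $a_{ij} \in A^{i,j}$, and similarly decompose $\beta = \sum_i \beta_i$. The definition $s_\beta(a) = \sum_i s_i(a_{ii})$ together with the fact that $s_i(A^{i,i}) \subseteq B^{i,i}$ and $B = \bigoplus_i B^{i,i}$ (so the $s_i(a_{ii})$ lie in distinct summands) gives immediately
\[
\ker(s_\beta) = \Bigl(\bigoplus_{i \neq j} A^{i,j}\Bigr) \oplus \Bigl(\bigoplus_i \ker(s_i)\Bigr),
\]
which settles the splitting claim and reduces the exactness to treating diagonal and off-diagonal blocks of $\operatorname{im}(a_\beta)$ separately.

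For the inclusion $\operatorname{im}(a_\beta) \subseteq \ker(s_\beta)$, I would observe that the $(i,i)$-component of $a_\beta(x) = x\beta - \beta x$ is simply $a_{\beta_i}(x_{ii})$, so applying $s_\beta$ gives $\sum_i s_i(a_{\beta_i}(x_{ii})) = 0$ by the simple case of the exact sequence (Remark~\ref{rmk:tamecor}(ii)) applied to each $\beta_i$.

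For the reverse inclusion, fix $a \in \ker(s_\beta)$. First, for each diagonal block, $a_{ii} \in \ker(s_i) = \operatorname{im}(a_{\beta_i})$, so there exists $y_{ii} \in A^{i,i}$ with $a_{\beta_i}(y_{ii}) = a_{ii}$. Second, for each off-diagonal block with $i \neq j$, I need to show that the Sylvester map
\[
T_{ij} \colon A^{i,j} \to A^{i,j}, \qquad x \mapsto x\beta_j - \beta_i x,
\]
is surjective, so that I can solve $T_{ij}(y_{ij}) = a_{ij}$. Then $y = \sum_{i,j} y_{ij}$ satisfies $a_\beta(y) = a$, completing the proof.

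The one step deserving care is the surjectivity of $T_{ij}$ for $i \neq j$, and this is the main point to get right. The key is that, by the very structure of a semisimple stratum (as recorded in the Notation preceding the lemma), the centralizer $B$ of $\beta$ has no off-diagonal components: $B = \bigoplus_i B^{i,i}$. Since $\ker(T_{ij}) = \{x \in A^{i,j} : \beta_i x = x \beta_j\} \subseteq B \cap A^{i,j} = 0$, the map $T_{ij}$ is injective; it is then bijective by finite-dimensionality of $A^{i,j}$ over $F$. (Conceptually, the vanishing of $B^{i,j}$ for $i \neq j$ reflects the coprimality of the minimal polynomials of $\beta_i$ and $\beta_j$, which is built into the semisimple stratum hypothesis via Proposition~\ref{propSemisimplePolynomial} and its inductive consequences.)
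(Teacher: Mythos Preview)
Your proof is correct and follows essentially the same approach as the paper's: compute $\ker(s_\beta)$ blockwise, invoke the simple case on the diagonal, and show that $a_\beta$ is bijective on each off-diagonal block $A^{i,j}$. The only cosmetic difference is that for the injectivity of $T_{ij}$ you appeal to the stated fact $B=\bigoplus_i B^{i,i}$ from the Notation, whereas the paper argues directly that $\beta_i$ and $\beta_j$ have coprime minimal polynomials (hence no common eigenvalue) from the defining property that $[\Lambda^i\oplus\Lambda^j,\max\{q_i,q_j\},r,\beta_i+\beta_j]$ is not equivalent to a simple stratum; since $\ker T_{ij}=B\cap A^{i,j}$, these are the same fact, and your parenthetical remark acknowledges this.
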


\begin{proof}
By definition, the kernel of~$s_\beta$ is the direct sum of the~$A^{i,j}$, for~$i\neq j$, and of the kernels of~$s_i$, for~$i\in I$. 
The sequence is exact on the~$(i,i)$ components, by~\cite[1.3.2]{bushnellKutzko:93}, and it is therefore enough to prove that for~$j\neq i$ the restriction of~$a_\beta$ on~$A^{i,j}$ is bijective onto~$A^{i,j}$. It has the form~$a_\beta(a_{ij})=\beta_i a_{ij}-a_{ij}\beta_j$, which is injective because~$\beta_i$ and~$\beta_j$ have no common eigenvalue, because their minimal polynomials are coprime since~$[\Lambda^i\oplus \Lambda^j,\max\{q_i,q_j\},r,\beta_i+\beta_j]$ is not equivalent to a simple stratum.
\end{proof}

To describe the intertwining of a semisimple stratum~$[\Lambda,q,r,\beta]$, recall that we have defined the integer~$k_0=k_0(\beta,\Lambda)$ and the lattices~$\mf{n}_l=a_\beta^{-1}(\mfa_l)\cap\mfa_0$, for~$l$ an integer. We will also need the unit subgroups~$1+\mf{m}_l$, where
$\mf{m}_l=\mf{n}_{l+k_0}\cap \mfa_l$, for integers~$l\geq 1$. As the first of several intertwining results we have:

\begin{theorem}[{see~\cite[4.4]{stevens:05},~\cite[1.5.8]{bushnellKutzko:93} for simple strata}]\label{thmIntertwiningOneSemisimpleStratum} Let~$[\Lambda,q,r,\beta]$ be a semisimple stratum.
\begin{enumerate}
 \item~$I([\Lambda,q,r,\beta])=(1+\mf{m}_{-(k_0+r)})B^\times (1+\mf{m}_{-(k_0+r)}).$
 \item If the stratum is skew then
 \[
 I_G([\Lambda,q,r,\beta])=(G\cap (1+\mf{m}_{-(k_0+r)}))(G\cap B^\times)(G\cap (1+\mf{m}_{-(k_0+r)})).
 \]
\end{enumerate}
\end{theorem}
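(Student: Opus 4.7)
The plan is to reduce both statements to the lattice-chain case, where they are essentially~\cite[Theorem~4.4]{stevens:05} (with~\cite[1.5.8]{bushnellKutzko:93} as the model in the simple case), via the~$\dag$-construction of Section~2.

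For part~(i), set~$e=e(\Lambda|o_F)$ and form the lattice chain~$\tilde\Lambda=\bigoplus_{l=0}^{e-1}(\Lambda-l)$ on~$\tilde V=V^{\oplus e}$ with the diagonal element~$\tilde\beta=\beta^{\oplus e}$. The stratum~$[\tilde\Lambda,q,r,\tilde\beta]$ is semisimple on a lattice chain: for the simple components use the corollary following Proposition~\ref{propDegreeSimpleAndk0Simple}, and the non-equivalence condition between summands passes through summand-wise. A direct computation gives~$\tilde{\mf m}_l=\mf m_l(\Lambda,\beta)^{\oplus e}$,~$\tilde B^\times=(B^\times)^{\oplus e}$, and~$k_0(\tilde\beta,\tilde\Lambda)=k_0(\beta,\Lambda)$ by combining~\eqref{eqk0kF} block by block with the inductive definition of~$k_0$ for semisimple strata. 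Apply the lattice-chain case of~\cite[Theorem~4.4]{stevens:05} on~$\tilde V$ and intersect with the diagonal embedding~$\Delta\colon\tilde G\hookrightarrow\Aut_F(\tilde V)$: since~$\Delta(\tilde G)\cap (1+\tilde{\mf m}_{-(k_0+r)})\tilde B^\times(1+\tilde{\mf m}_{-(k_0+r)})$ equals~$\Delta\bigl((1+\mf m_{-(k_0+r)})B^\times (1+\mf m_{-(k_0+r)})\bigr)$ by block-diagonality, and intertwining of the original stratum corresponds exactly to intertwining of its diagonal copy, the formula in~(i) follows.

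For part~(ii), given~$g\in I_G([\Lambda,q,r,\beta])$, part~(i) yields a factorisation~$g=u_1bu_2$ with~$u_j\in U:=1+\mf m_{-(k_0+r)}$ and~$b\in B^\times$. Since the stratum is skew, the splitting~$V=\bigoplus_i V^i$ is orthogonal, each~$B^{i,i}$ is~$\sigma$-stable, and both~$U$ and~$B^\times$ are~$\sigma$-stable. The anti-involution~$x\mapsto\sigma(x^{-1})$ fixes~$g$ and preserves~$UbU$, so~$\sigma(b^{-1})\in UbU$; reducing via~$s_\beta$ (which is~$\sigma$-equivariant by Remark~\ref{rmk:tamecor}(iii)) shows~$\sigma(b^{-1})\in(U\cap B^\times)\,b\,(U\cap B^\times)$. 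Proposition~\ref{propConjug}, applied blockwise through~$B^\times=\prod_i(B^{i,i})^\times$, then produces a~$\sigma$-fixed representative~$b'\in G\cap B^\times$ of the~$(U\cap B^\times)$-double coset of~$b$. Finally,~\cite[Lemma~2.2]{stevens:01} applied to the~$\sigma$-stable pro-$p$ double coset~$Ub'U$ containing~$g$ rewrites~$g=u_1'b'u_2'$ with~$u_j'\in G\cap U$, which is the desired factorisation.

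The main obstacle will be controlling the middle factor~$b$: the group~$B^\times$ is not pro-$p$, so~\cite[Lemma~2.2]{stevens:01} does not apply to it directly, and one cannot simply average over cohomology to produce a~$\sigma$-fixed representative. The resolution is precisely the block-diagonal structure~$B=\bigoplus_i B^{i,i}$ supplied by semisimplicity: on each block one is dealing with the full automorphism group of a hermitian space over an extension field~$E_i$, where the integral Skolem--Noether theorem Proposition~\ref{propConjug} (after verifying that some self-dual lattice chain in~$V^i$ is normalised by~$E_i^\times$) gives the required~$\sigma$-fixed element in the prescribed double coset. Once this middle factor is under control, the pro-$p$ factors~$u_1,u_2$ descend routinely by the standard fixed-point argument of Stevens.
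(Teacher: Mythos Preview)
Both parts of your proposal differ from the paper's proof and each has a genuine gap.

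For part~(i), the paper does not use the~$\dag$-construction; it adapts~\cite[1.5.8]{bushnellKutzko:93} directly to semisimple strata on lattice sequences, the work being to verify the exactness of~$M\stackrel{a_\beta}{\ra}L\stackrel{s_\beta}{\ra}B$ (the analogue of~\cite[1.4.16]{bushnellKutzko:93}) blockwise, using Lemmas~\ref{lemDescrmfn},~\ref{lemabetaHomoeforineqj} and~\ref{lemKernels} on the off-diagonal blocks and the simple case on the diagonal. Your identity~$\tilde{\mf m}_l=\mf m_l^{\oplus e}$ is false: the~$(i,j)$-block of~$\tilde{\mfa}_l$ is~$\mfa_{l+j-i}$, so the filtration on~$\tilde V$ has nonzero off-diagonal pieces. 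A factorisation~$\Delta(g)=\tilde u_1\tilde b\tilde u_2$ therefore only yields~$g=\sum_{j,k}(\tilde u_1)_{0j}\tilde b_{jk}(\tilde u_2)_{k0}$, which is not a factorisation of~$g$ in~$(1+\mf m_{-(k_0+r)})B^\times(1+\mf m_{-(k_0+r)})$. The assertion that the double-coset decomposition on~$\tilde V$ descends to one on the diagonal ``by block-diagonality'' is exactly the nontrivial step and is left unjustified.

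For part~(ii), the paper simply invokes the cohomology argument of~\cite[Corollary~4.14]{stevens:05}. Your version has two problems. First, from~$\sigma(b^{-1})\in UbU$ you claim~$\sigma(b^{-1})\in(U\cap B^\times)\,b\,(U\cap B^\times)$ by ``reducing via~$s_\beta$'', but~$s_\beta$ is only a~$(B,B)$-bimodule map, not multiplicative, and~$B^\times$ does not normalise~$U=1+\mf m_{-(k_0+r)}$; establishing this intersection property genuinely requires the exact-sequence machinery behind~(i), not~$s_\beta$ alone. Second, Proposition~\ref{propConjug} concerns conjugating two~$\rho'$-$\sigma$-equivariant field embeddings that normalise a common self-dual chain; you have not exhibited two such embeddings, and the proposition does not produce a~$\sigma$-fixed representative of a prescribed~$(U\cap B^\times)$-double coset in~$B^\times$, which is what you need before~\cite[Lemma~2.2]{stevens:01} can handle the outer pro-$p$ factors.
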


The crucial ingredient for the proof is:

\begin{lemma}[{\cite[3.7]{stevens:05}}]\label{lemDescrmfn}
For all integers~$s$ we have 
\begin{itemize}
 \item[(i)]~$\mf{n}_{-s}^{i,j}\subseteq \mfa_{-(k_0+s)}$ for~$i\neq j$.
 \item[(ii)]~$\mf{n}_{-s}=\mf{b}_0+\mf{n}_{-s}\cap \mfa_{-(k_0+s)}$
\end{itemize}
\end{lemma}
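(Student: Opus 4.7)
The plan is to prove (i) first and derive (ii) from it together with the analogous statement in the simple case (essentially~\cite[1.4.10,~1.4.11]{bushnellKutzko:93}).

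For part (i): Let $x \in \mf{n}_{-s}$, so $a_\beta(x) \in \mfa_{-s}$. Since $\beta=\sum_i \beta_i$ is block-diagonal, $a_\beta$ preserves the block decomposition $A=\bigoplus A^{i,j}$, and hence $a_\beta(x^{i,j})\in\mfa_{-s}^{i,j}$ for each $(i,j)$. It therefore suffices to show the off-diagonal estimate: for $i\neq j$ and $y\in A^{i,j}$,
\[
a_\beta(y)\in\mfa_l^{i,j}\ \Longrightarrow\ y\in\mfa_{l-k_0}^{i,j}.
\]
By Lemma~\ref{lemKernels}, $a_\beta$ is already known to be a bijection on $A^{i,j}$, so this is a quantitative version of that bijectivity, bounding the filtration shift of $a_\beta^{-1}|_{A^{i,j}}$. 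The natural approach is by (downward) induction on $l$ and on the depth of $\beta$, reducing to the simple case via the tame corestriction~$s_\gamma$ relative to an approximating simple stratum (Theorem~\ref{thmSimplederived}) and then using Corollary~\ref{corDerivedSplitting} to inherit the block structure. The key input is the \emph{definition} of $k_0(\beta,\Lambda)$: by construction, $[\Lambda,q,-k_0,\beta]$ is semisimple while $[\Lambda,q,-k_0-1,\beta]$ is not, which, combined with the coprimality of the characteristic polynomials $\phi_{\beta_i}$ and $\phi_{\beta_j}$ (Proposition~\ref{propSemisimplePolynomial}), provides the precise input needed to control the off-diagonal inverse of $a_\beta$ at the threshold $k_0$ and no further.

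For part (ii): Given (i), let $x \in \mf{n}_{-s}$ and write $x=\sum_{i,j}x^{i,j}$. For $i\neq j$, part~(i) already gives $x^{i,j}\in\mfa_{-(k_0+s)}^{i,j}$, so $x^{i,j}\in\mf{n}_{-s}\cap\mfa_{-(k_0+s)}$. For the diagonal blocks, since $\beta$ acts on $A^{i,i}$ as $\beta_i$, we have $x^{i,i}\in\mf{n}_{-s}(\beta_i,\Lambda^i)$ in the sense of the simple stratum $[\Lambda^i,q_i,r,\beta_i]$. The simple case of the statement (proved in~\cite{bushnellKutzko:93}) yields a decomposition $x^{i,i}=b_i+y_i$ with $b_i\in B^{i,i}\cap\mfa_0$ and $y_i\in\mf{n}_{-s}(\beta_i,\Lambda^i)\cap\mfa_{-(k_0(\beta_i,\Lambda^i)+s)}^{i,i}$. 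Since the simplicity of each $[\Lambda^i,q_i,s,\beta_i]$ is part of the semisimplicity of $[\Lambda,q,s,\beta]$, we have $k_0(\beta,\Lambda)\ge k_0(\beta_i,\Lambda^i)$ for all $i$, so $\mfa_{-(k_0(\beta_i,\Lambda^i)+s)}^{i,i}\subseteq\mfa_{-(k_0+s)}^{i,i}$, whence $y_i\in\mf{n}_{-s}\cap\mfa_{-(k_0+s)}$. Summing over $i$, together with the off-diagonal terms, yields the claimed equality.

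The main obstacle will be part~(i): translating the qualitative definition of $k_0$ (as the threshold at which semisimplicity fails) into a precise, quantitative statement about how much the adjoint map $a_\beta$ shifts the filtration on off-diagonal blocks. The inductive reduction via tame corestrictions and approximating simple strata is standard in spirit but delicate, since one must propagate the off-diagonal estimate through the induction while keeping track of the exact value of $k_0$ at each step.
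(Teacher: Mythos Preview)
The paper does not prove this lemma; it cites it from~\cite[3.7]{stevens:05} and only adds the one-line remark that the case~$s\geq -k_0$ is trivial (since then~$-(k_0+s)\leq 0$ and~$\mf{n}_{-s}\subseteq\mfa_0\subseteq\mfa_{-(k_0+s)}$). So there is no in-paper argument to compare your proposal against beyond that.

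Your derivation of~(ii) from~(i) together with the simple-case statement is correct in outline: the off-diagonal pieces are handled by~(i), and on each diagonal block~$A^{i,i}$ the map~$a_\beta$ agrees with~$a_{\beta_i}$, so the simple-stratum result from~\cite{bushnellKutzko:93} applies and one uses that the blockwise critical exponents are bounded by the global~$k_0$.

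Your sketch for~(i), however, has a genuine gap. You propose to ``reduce to the simple case via the tame corestriction~$s_\gamma$'', but for a \emph{simple} stratum there are no off-diagonal blocks, so part~(i) is vacuous there---there is nothing to reduce to. The approximating element~$\gamma$ furnished by Theorem~\ref{thmDiagonalizedSimple} is itself block-diagonal with respect to the splitting, so on~$A^{i,j}$ the map~$a_\gamma$ has the form~$y\mapsto\gamma_i y-y\gamma_j$, not the adjoint of a field-generating element on the whole space; the simple-stratum machinery of~\cite{bushnellKutzko:93} does not directly control this map. The coprimality of the~$\phi_{\beta_i}$ that you invoke gives only bijectivity of~$a_\beta|_{A^{i,j}}$ (as in Lemma~\ref{lemKernels}), not the quantitative filtration shift by exactly~$k_0$. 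What is actually needed (and what the argument in~\cite{stevens:05} supplies) is to work two blocks at a time and exploit the second axiom of semisimplicity---that~$[\Lambda^i\oplus\Lambda^j,\max\{q_i,q_j\},r,\beta_i+\beta_j]$ is \emph{not} equivalent to a simple stratum---to pin down the precise threshold at which the off-diagonal estimate holds. Your proposal gestures at the right ingredients but does not provide this step.
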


In~\cite{stevens:05}, this lemma was formulated for~$s\leq -k_0$, but the case~$s\geq -k_0$ is trivial. From this we deduce 

\begin{lemma}\label{lemabetaHomoeforineqj}
Take~$i\neq j$. The restriction of~$a_\beta$ to~$A^{i,j}$ is an~$F$-linear homeomorphism and~$a^{-1}_\beta(\mfa_s)^{i,j}$ is equal to 
$\mf{n}_s^{i,j}$ for all integers~$s\geq k_0$.
\end{lemma}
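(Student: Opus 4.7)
The plan is to establish the lemma in two stages: first the bijectivity (hence homeomorphism) of $a_\beta|_{A^{i,j}}$, then the description of preimages of~$\mfa_s$.

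The first part is essentially contained in the proof of Lemma~\ref{lemKernels}. Since $a_\beta(a) = \beta_i a - a\beta_j \in A^{i,j}$ for $a \in A^{i,j}$, the space $A^{i,j}$ is $a_\beta$-stable. Injectivity was shown there using that $\beta_i, \beta_j$ have coprime minimal polynomials (a consequence of the condition in the definition of a semisimple stratum that $[\Lambda^i\oplus\Lambda^j,\max\{q_i,q_j\},r,\beta_i+\beta_j]$ is not equivalent to a simple stratum). As $A^{i,j}$ is finite-dimensional over $F$, injectivity gives bijectivity, and the uniqueness of the topology on finite-dimensional $F$-vector spaces then makes $a_\beta|_{A^{i,j}}$ automatically a homeomorphism.

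For the identification $a_\beta^{-1}(\mfa_s)^{i,j} = \mf{n}_s^{i,j}$ when $s \geq k_0$, the inclusion ``$\supseteq$'' is immediate from the definition $\mf{n}_s = a_\beta^{-1}(\mfa_s)\cap\mfa_0$, so the task is to show that any $a \in A^{i,j}$ with $a_\beta(a) \in \mfa_s$ automatically satisfies $a \in \mfa_0$. The key observation is that $a_\beta$ commutes with multiplication by the central element $\pi \in F$. Assuming $a \neq 0$, let $m$ be the smallest integer with $\pi^m a \in \mfa_0$; then by minimality $\nu_\Lambda(\pi^m a) < e(\Lambda|o_F)$, while $a_\beta(\pi^m a) = \pi^m a_\beta(a) \in \mfa_{s + m e(\Lambda|o_F)}$. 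Applying Lemma~\ref{lemDescrmfn}(i), rewritten as $\mf{n}_l^{i,j} \subseteq \mfa_{l - k_0}$ for $i \neq j$, to the element $\pi^m a \in \mfa_0^{i,j}$ yields $\pi^m a \in \mfa_{s + me(\Lambda|o_F) - k_0}$. Combining the two valuation estimates with the hypothesis $s \geq k_0$ forces $s + me(\Lambda|o_F) - k_0 < e(\Lambda|o_F)$, i.e., $s < k_0 + (1-m)e(\Lambda|o_F)$, which is incompatible with $s \geq k_0$ unless $m \leq 0$; thus $a \in \mfa_0$.

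The only real subtlety is the $\pi$-equivariance trick, which reduces the problem for arbitrary elements of $A^{i,j}$ to the statement of Lemma~\ref{lemDescrmfn}(i), whose hypothesis is only about elements of $\mfa_0$. Once this reduction is in place, the remainder is purely filtration bookkeeping, so I do not anticipate any further obstacle.
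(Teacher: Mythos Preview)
Your proof is correct and follows essentially the same approach as the paper: bijectivity from Lemma~\ref{lemKernels}, homeomorphism from finite-dimensionality over~$F$, and the preimage identity via Lemma~\ref{lemDescrmfn}(i) combined with~$\pi$-periodicity. The only cosmetic difference is that the paper first establishes~$a_\beta^{-1}(\mfa_s)^{i,j}\subseteq\mfa_{s-k_0}^{i,j}$ for large~$s$ (by continuity) and then invokes periodicity to extend to all~$s$, whereas you fix~$a$ and shift it by~$\pi^m$ into~$\mfa_0$; these are two phrasings of the same periodicity argument.
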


\begin{proof}
The map~$a_\beta$ is a linear automorphism on~$A^{i,j}$ by Lemma~\ref{lemKernels}; thus the image of an~$o_F$-lattice contains an~$o_F$-lattice and the restriction of~$a_\beta$ to~$A^{i,j}$ is a homeomorphism. It follows that, for~$s$ big enough, we have that~$a^{-1}_\beta(\mfa_s)^{i,j}:=a^{-1}_\beta(\mfa_s)\cap A^{i,j}$ is contained in~$\mfa_0^{i,j}$ and is therefore equal to~$\mf{n}_s^{i,j}$; in particular, it is contained in~$\mfa_{-k_0+s}^{i,j}$ by Lemma~\ref{lemDescrmfn}(i). By periodicity we have that~$a^{-1}_\beta(\mfa_s)^{i,j}$ is contained in~$\mfa_{-k_0+s}^{i,j}$ for all integers~$s$ and thus~$\mf{n}_s^{i,j}$ is equal to~$a^{-1}_\beta(\mfa_s)^{i,j}$ for all integers~$s$ with~$s\geq k_0$. 
\end{proof}

\begin{proof}[Proof of Theorem~\ref{thmIntertwiningOneSemisimpleStratum}]
We follow the proof of~\cite[1.5.8]{bushnellKutzko:93}. For a zero stratum is nothing to prove, so we assume the stratum is non-zero. The main ingredients which have to be verified are the exact sequences of~\cite[1.4.10]{bushnellKutzko:93}, which hold by Lemma~\ref{lemDescrmfn}, and the analogue of~\cite[1.4.16]{bushnellKutzko:93}, which we prove now. We write~$d$ for~$-(r+k_0)$ and put
\begin{itemize}
 \item~$M=\mf{n}_{t+jd+k_0}\cap \mfa_{t+jd}\cap (\gamma_1 (\mf{n}_{-r}\cap\mfa_d)+(\mf{n}_{-r}\cap\mfa_d)\gamma_2 + (\mf{n}_{t+jd-r}\cap\mfa_{t+(j+1)d}))$,
 \item~$L=\mfa_{t+jd+k_0}\cap (\gamma_1\mfa_{-r}+\mfa_{-r}\gamma_2+\mfa_{t+jd-r})$,
\end{itemize}
for integers~$t\geq 0,\ j\geq 1$ and elements~$\gamma_1,\gamma_2$ of~$B^\times$. 
The sequence~$M\stackrel{a_\beta}{\ra}L\stackrel{s_\beta}{\ra} B$ is exact if all its restrictions on the~$A^{i,j}$ are exact. For~$i=j$ the proof is done in~\cite[(5.2)]{stevens:01} and for~$i\neq j$ it follows from Lemma~\ref{lemDescrmfn}(i),~\ref{lemabetaHomoeforineqj} and~\ref{lemKernels}.
The same cohomology argument as in~\cite[Corollary~4.14]{stevens:05} proves~(ii).
\end{proof}

A completely analogous proof using~\cite[1.5.12]{bushnellKutzko:93} provides:

\begin{theorem}\label{thmIntertwiningTwoSemisimpleStratumSameBeta}
Let~$[\Lambda,q,r,\beta]$ and~$[\Lambda',q',r',\beta]$ be semisimple strata in~$A$. Then 
\begin{enumerate}
 \item~$I([\Lambda,q,r,\beta],[\Lambda',q',r',\beta])=(1+\mf{m}'_{-(k'_0+r')})B^\times (1+\mf{m}_{-(k_0+r)}).$
 \item If both strata are skew then
 \[
 I_G([\Lambda,q,r,\beta],[\Lambda',q',r',\beta])=(G\cap (1+\mf{m}'_{-(k'_0+r')}))(G\cap B^\times)(G\cap (1+\mf{m}_{-(k_0+r)})).
 \]
\end{enumerate}
\end{theorem}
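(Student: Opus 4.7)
The plan is to mirror the proof of Theorem~\ref{thmIntertwiningOneSemisimpleStratum} given in the excerpt, but replacing the role of \cite[1.5.8]{bushnellKutzko:93} by its two-lattice analogue \cite[1.5.12]{bushnellKutzko:93}. The easy inclusion (that the right-hand side is contained in the intertwining set) is immediate, since $(1+\mf{m}'_{-(k'_0+r')})$ stabilizes $\beta+\mfa'_{-r'}$ on the left, $(1+\mf{m}_{-(k_0+r)})$ stabilizes $\beta+\mfa_{-r}$ on the right, and $B^\times$ centralizes $\beta$. So the whole content is the reverse inclusion.

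For that reverse inclusion I would run an induction on $r$ (together with a parallel decreasing parameter as in Bushnell--Kutzko 1.5.12), successively factoring an intertwiner $g$ as $u\,b\,v$ modulo deeper terms, with $u\in 1+\mf{m}'_{-(k'_0+r')}$, $v\in 1+\mf{m}_{-(k_0+r)}$ and $b\in B^\times$. The crux is an exact-sequence lemma of the same shape as in the proof of Theorem~\ref{thmIntertwiningOneSemisimpleStratum}, but now involving both filtrations. Concretely, for $t\ge 0$, $j\ge 1$ and $\gamma_1,\gamma_2\in B^\times$, one puts
\[
L \;=\; \mfa_{t+jd+k_0}\cap\bigl(\gamma_1\mfa'_{-r'}+\mfa_{-r}\gamma_2+\mfa_{t+jd-r}\bigr),
\]
with $d=-(r+k_0)$, and defines $M$ analogously using $\mf{n}$, $\mf{n}'$ intersected with the appropriate terms; one then has to check exactness of
\[
M \xrightarrow{\;a_\beta\;} L \xrightarrow{\;s_\beta\;} B.
\]
This is checked blockwise against the decomposition $A=\bigoplus_{i,j}A^{i,j}$. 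On the diagonal block $A^{i,i}$ the verification is exactly the simple-stratum case, handled as in \cite[(5.2)]{stevens:01} together with the modifications from \cite[1.5.12]{bushnellKutzko:93} that accommodate a second lattice sequence. On the off-diagonal blocks $A^{i,j}$, $i\ne j$, the map $a_\beta$ is an $F$-linear homeomorphism with explicit description of preimages of $\mfa_s$-lattices by Lemmas~\ref{lemDescrmfn} and~\ref{lemabetaHomoeforineqj}, while $s_\beta$ vanishes identically there by Lemma~\ref{lemKernels}; hence exactness reduces to a bijectivity statement which is immediate.

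Part~(ii) then follows from the $\tilde G$-statement by the Galois-cohomology/double-coset argument of \cite[Corollary~4.14]{stevens:05}: the double coset $(1+\mf{m}'_{-(k'_0+r')})\,b\,(1+\mf{m}_{-(k_0+r)})$ containing $g$ is stable under $x\mapsto \sigma(x^{-1})$ (because both strata are skew and $\beta$ is skew-symmetric), and hence, by the cohomology lemma \cite[Lemma~2.2]{stevens:01}, it contains a fixed point, i.e.\ an element of $G$.

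The main obstacle is the technical bookkeeping required to verify exactness of $M\xrightarrow{a_\beta}L\xrightarrow{s_\beta}B$ on the diagonal blocks when two different lattice sequences are simultaneously in play: one must transport Bushnell--Kutzko's simple two-lattice argument \cite[1.5.12]{bushnellKutzko:93} through the derived-stratum/tame-corestriction machinery developed earlier in this section, keeping careful track of the mismatch between the filtrations $\mfa_\bullet$ and $\mfa'_\bullet$ and their intersections with $B$. Once that exactness is in place, the rest of the inductive factorization and the passage to $G$ are formally identical to the single-lattice case already handled in Theorem~\ref{thmIntertwiningOneSemisimpleStratum}.
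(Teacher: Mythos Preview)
Your proposal is correct and follows exactly the approach the paper intends: the paper simply says ``A completely analogous proof using~\cite[1.5.12]{bushnellKutzko:93} provides'' this result, and you have spelled out precisely that analogy, including the blockwise exactness check (diagonal via~\cite[(5.2)]{stevens:01}, off-diagonal via Lemmas~\ref{lemDescrmfn},~\ref{lemabetaHomoeforineqj},~\ref{lemKernels}) and the cohomology argument for part~(ii).
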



\section{Matching for intertwining strata}

In this chapter we show that, if we have semisimple strata which intertwine, then there is a canonical bijection between their associated splittings. This will then allow us to deduce a Skolem--Noether theorem for skew-semisimple strata which intertwine.

\subsection{For general linear groups}
We fix a pair of semisimple strata~$[\Lambda,q,r,\beta]$ and~$[\Lambda',q',r',\beta']$, with splittings~$\bigoplus_{i\in I}V^i$ and~$\bigoplus_{j\in I'}V'^j$ respectively. The main result of this subsection is:

\begin{proposition}\label{propMatching}
Suppose that~$[\Lambda,q,r,\beta]$ intertwines~$[\Lambda',q',r,\beta']$ and that~$\Lambda,\Lambda'$ have the same period. Then 
\be
\item If one stratum is zero and~$q=q'$ then the other stratum is zero. 
\item If both strata are non-zero then~$q=q'$ and there is a unique bijection~$\zeta:I\to I'$ such that~$[\Lambda^i\oplus \Lambda'^{\zeta(i)},\max\{q_i,q'_{\zeta(i)}\},r,\beta_i+\beta'_{\zeta(i)}]$ is equivalent to a simple stratum, for all indices~$i\in I$. Moreover,~$V^i$ and~$V'^{\zeta(i)}$ have the same~$F$-dimension. 
\ee
\end{proposition}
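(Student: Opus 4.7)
My plan is to dispose of (i) from the definition and to handle (ii) by reducing to minimal semisimple strata, where the characteristic polynomial supplies the matching.

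For (i), I will observe that any non-zero semisimple stratum satisfies $\nu_\Lambda(\beta) = -q < -r$ by definition, so in particular $r < q$; equivalently, a semisimple stratum with $r = q$ is necessarily zero. As both strata share the same cutoff $r$, if one is zero then $r = q$, and combined with $q = q'$ this forces the other stratum to have $r = q' = q$, hence also be zero.

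For (ii), the first step is to reduce to the minimal case and simultaneously establish $q = q'$. Since $r \le q-1$ and $r \le q'-1$, we have $\mfa_{-r} \supseteq \mfa_{1-q}$ and analogously for $\Lambda'$, so any intertwining of $[\Lambda,q,r,\beta]$ with $[\Lambda',q',r,\beta']$ also intertwines the minimal strata $[\Lambda,q,q-1,\beta]$ and $[\Lambda',q',q'-1,\beta']$. These are fundamental (since semisimplicity forces $\beta \notin \mfa_{1-q}$), so Proposition~\ref{propFundamental} gives equal levels; together with the equality of periods this yields $q = q'$. Next, intertwining of the minimal strata implies $\phi_\beta = \phi_{\beta'}$: conjugation preserves the characteristic polynomial of $y_\beta$ in $A$, and a direct valuation estimate shows that perturbing $\beta$ by $\mfa_{1-q}$ only alters $y_\beta$ by elements of $\mfa_1$, so the reductions modulo $\pi$ coincide. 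By Proposition~\ref{propSemisimplePolynomial}, both $\phi_\beta$ and $\phi_{\beta'}$ factor into pairwise coprime primary polynomials indexed by~$I$ and~$I'$; unique factorization in $\kappa_F[X]$ then gives the unique bijection $\zeta:I\to I'$ with $\phi_{\beta_i} = \phi_{\beta'_{\zeta(i)}}$, and $\dim_F V^i = \deg \phi_{\beta_i} = \dim_F V'^{\zeta(i)}$.

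The main obstacle will be to show that each combined stratum $[\Lambda^i \oplus \Lambda'^{\zeta(i)}, \max\{q_i, q'_{\zeta(i)}\}, r, \beta_i + \beta'_{\zeta(i)}]$ is equivalent to a simple stratum. At the top cutoff $q-1$ its characteristic polynomial is $\phi_{\beta_i}\phi_{\beta'_{\zeta(i)}} = \phi_{\beta_i}^2$, a power of a single irreducible, so the stratum is non-split fundamental; by Proposition~\ref{propSimplCritForFundStratum} this reduces to showing that the associated $\mathcal{R}$ is semisimple. The diagonal blocks come from the analogous semisimple algebras of the simple strata $[\Lambda^i,q_i,q-1,\beta_i]$ and $[\Lambda'^{\zeta(i)},q'_{\zeta(i)},q-1,\beta'_{\zeta(i)}]$ (by Proposition~\ref{propSimplCritForFundStratum} again); the coincidence of characteristic polynomials should control the off-diagonal blocks enough that the full $\mathcal{R}$ remains semisimple. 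To descend from cutoff $q-1$ to the original cutoff $r$, I plan to induct on $q-r$: given the matching result at cutoff $r+1$, use Theorem~\ref{thmDiagonalizedSimple} to replace $[\Lambda,q,r+1,\beta]$ by an equivalent simple stratum respecting the splitting, then apply a tame corestriction (Theorem~\ref{thmSimplederived}, Corollary~\ref{corDerivedSplitting}) to transfer the problem to level $r$ inside the appropriate centralizer, where one matches the derived strata. The hardest technical point will be checking that the matching produced at the level of derived strata is compatible with~$\zeta$ and that simple-equivalence propagates through the corestriction.
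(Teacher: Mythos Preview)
Your overall architecture is right and matches the paper: dispose of (i) directly, get $q=q'$ from Proposition~\ref{propFundamental}, handle the base case via the characteristic polynomial and the primary factorisation of Proposition~\ref{propSemisimplePolynomial}, and then run a strata induction down to level~$r$. The base case is essentially as in the paper (which cites~\cite[3.3]{stevens:05} rather than going through~$\mathcal{R}$, but your route is reasonable). Your uniqueness argument via unique primary factorisation is also sound, since any~$\zeta$ satisfying the simple-equivalence condition at level~$r$ forces the same condition at level~$q-1$ and hence the same primary matching.

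The genuine gap is in the inductive step. You write ``replace~$[\Lambda,q,r+1,\beta]$ by an equivalent \emph{simple} stratum respecting the splitting, then apply a tame corestriction''. Two problems. First,~$[\Lambda,q,r+1,\beta]$ is in general only equivalent to a \emph{semisimple} stratum~$[\Lambda,q,r+1,\gamma]$ with a coarser splitting; Theorem~\ref{thmDiagonalizedSimple} lets you choose~$\gamma\in\prod_i A^{i,i}$ but does not make it simple. Second, and more seriously, you only treat one side: on the~$\beta'$ side you get a different approximation~$\gamma'$, and the derived strata then live in \emph{different} centralisers~$B_\gamma$ and~$B_{\gamma'}$, so there is nothing to ``match''. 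The paper's missing ingredient is Lemma~\ref{lemConjblocksSemisimple} (the auxiliary step (S1)): from the inductive hypothesis at level~$r+1$ one first conjugates~$[\Lambda',q,r+1,\gamma']$ so that~$\gamma'=\gamma$. Only then does Proposition~\ref{propS2} (the auxiliary step (S2), which you do not mention) convert the intertwining of the original strata into intertwining of the derived strata inside the common~$B_\gamma$; the base case then supplies~$\zeta:I\to I'$ on the derived strata, and Theorem~\ref{thmSimplederived} lifts it back. Without the conjugation to a common~$\gamma$ and the calculation of Proposition~\ref{propS2}, the induction does not close.
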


Note that, in case~(i), both splittings are trivial so we trivially have a (unique) bijection~$\zeta$ as in~(ii). We call the bijection~$\zeta$ a \emph{matching} of~$[\Lambda',q',r,\beta']$ and~$[\Lambda,q,r,\beta]$. 

\begin{remark}
If~$\Lambda,\Lambda'$ do not have the same period then we can scale them so that they do. In particular, we only require the intertwining hypothesis in Proposition~\ref{propMatching} in order to get a matching~$\zeta$.
\end{remark}

%

To prove Proposition~\ref{propMatching} (and, later, other results on semisimple strata), we introduce the notion of a defining sequence for a semisimple stratum, which allows us to prove properties of semisimple strata by an inductive process (cf.~\cite{bushnellKutzko:93} for the simple case). 
\begin{itemize}
 \item A defining sequence for a minimal semisimple stratum is the stratum itself.
 \item Let~$[\Lambda,q,r,\beta]$ be a (skew)-semisimple stratum with~$r<q-1$ and associated splitting~$V=\bigoplus_{i\in I}V^i$. Then there are a partition~$I=\bigcup I_j$ and an element~$\gamma\in \prod_i A^{i,i}$ such that~$[\Lambda,q,r+1,\gamma]$ is a (skew)-semisimple stratum equivalent to~$[\Lambda,q,r+1,\beta]$ with associated splitting~$V=\bigoplus_{j}V^{I_j}$ (see Theorem~\ref{thmDiagonalizedSimple}). A defining sequence for~$[\Lambda,q,r,\beta]$ is the stratum itself together with a defining sequence for~$[\Lambda,q,r+1,\gamma]$. We call~$[\Lambda,q,r+1,\gamma]$ the \emph{first element} in the chosen defining sequence. 
\end{itemize}
Note that there is a significant degree of choice in producing a defining sequence for a semisimple stratum.

Suppose now we want to prove a statement~$P(\Delta,\Delta')$ for all pairs of semisimple strata~$\Delta,\Delta'$. The inductive procedure, which we call \emph{strata induction}, to prove~$P$ is given by the following steps. 
\begin{itemize}
 \item The base case: Here one proves~$P$ for all minimal semisimple strata. 
 \item The induction step: 
 \begin{enumerate}
  \item The step~$r+1$ to~$r$: From the induction hypothesis and possibly an auxiliary statement~{\bf (S1)} we restrict to the case where the first elements~$\Delta^{(1)}$ and~$\Delta'^{(1)}$ of defining sequences of~$\Delta$ and~$\Delta'$ have the same element~$\gamma$, and hence the same associated splitting. 
  \item Taking a second auxiliary statement~{\bf (S2)}, we show that the derived strata~$s_\gamma(\Delta)$ and~$s_\gamma(\Delta')$ satisfy the assumptions of~$P$. In this article,~(S2) will always be the description of the intertwining of~$\Delta^{(1)}$ with~$\Delta'^{(1)}$ (see Proposition~\ref{propS2} below). 
  \item The base case shows~$P(s_\gamma(\Delta),s_\gamma(\Delta'))$ and, together with a third auxiliary statement~{\bf (S3)}, provides~$P(\Delta,\Delta')$. For~(S3) we will use Theorem~\ref{thmSimplederived}. 
 \end{enumerate}
Strata induction can be restricted to simple strata by substituting the word semisimple by simple.
\end{itemize}

In the following, we use the notation for tame corestrictions as in the previous section.

\begin{proposition}\label{propS2}
Let~$[\Lambda,q,r,\beta]$ and~$[\Lambda',q',r',\beta]$ be semisimple strata with splitting~$V=\bigoplus_{i\in I}V^i$. Suppose we are given elements~$a\in \mfa_{-r}$ and
$a'\in \mfa'_{-r'}$ such that there is an element~$g$ of~$\tilde{G}$ which intertwines~$[\Lambda,q,r-1,\beta+a]$ with~$[\Lambda',q',r'-1,\beta+a']$. Using Theorem~\ref{thmIntertwiningTwoSemisimpleStratumSameBeta}, write~$g=(1+u')b(1+v)$, with~$b\in B^\times$. Then the component~$b^{i,i}$ intertwines~$[\Lambda,r,r-1,s_i(a^{i,i})]$ with~$[\Lambda',r',r'-1,s_i(a'^{i,i})]$, for all~$i\in I$. 
\end{proposition}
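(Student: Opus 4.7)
The plan is to exploit the intertwining to produce an explicit equation, peel off the unipotent factors $1+u'$ and $1+v$ by using that $b$ centralizes $\beta$, apply the tame corestriction $s_\beta$ to eliminate $\beta$, and finally project to the $(i,i)$-block. Concretely, the intertwining of $[\Lambda,q,r-1,\beta+a]$ with $[\Lambda',q',r'-1,\beta+a']$ by $g=(1+u')b(1+v)$ gives $x\in\mfa_{1-r}$ and $x'\in\mfa'_{1-r'}$ such that $g(\beta+a+x)g^{-1}=\beta+a'+x'$. Since $b\in B^\times$ commutes with $\beta$, this rearranges to $bcb^{-1}=c'$, where
\[
c:=(1+v)(\beta+a+x)(1+v)^{-1}-\beta\qquad\text{and}\qquad c':=(1+u')^{-1}(\beta+a'+x')(1+u')-\beta.
\]

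Next I would approximate $c$ and $c'$ modulo $\mfa_{1-r}$ and $\mfa'_{1-r'}$ respectively. A direct expansion gives $c=a+x+(a_\beta(v)+[v,a+x])(1+v)^{-1}$. Recalling that $v\in\mf{m}_{-(k_0+r)}=\mf{n}_{-r}\cap\mfa_{-(k_0+r)}$, the semisimplicity of $[\Lambda,q,r,\beta]$ forces $-k_0\geq r+1$; this is exactly what is needed to guarantee that $[v,a+x]$ (with valuation $\geq -(k_0+2r)$) and the correction $(1+v)^{-1}-1\in\mfa_{-(k_0+r)}$ contribute only to $\mfa_{1-r}$. Hence $c\equiv a+a_\beta(v)\pmod{\mfa_{1-r}}$, and the analogous computation produces $c'\equiv a'-a_\beta(u')\pmod{\mfa'_{1-r'}}$.

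Finally I would apply the tame corestriction $s_\beta=\sum_i s_i$ introduced just before Lemma~\ref{lemKernels}. Because $b\in B^\times$ and $s_\beta$ is a $(B,B)$-bimodule map, $bcb^{-1}=c'$ yields $bs_\beta(c)b^{-1}=s_\beta(c')$. The exactness $s_\beta\circ a_\beta=0$ from Lemma~\ref{lemKernels}, combined with the filtration compatibility $s_\beta(\mfa_{1-r})=\mf{b}_{1-r}$ and $s_\beta(\mfa'_{1-r'})=\mf{b}'_{1-r'}$ (applicable since both $\Lambda,\Lambda'$ are split by $V=\bigoplus V^i$ into $o_{E^i}$-lattice sequences), then gives $s_\beta(c)\equiv s_\beta(a)\pmod{\mf{b}_{1-r}}$ and $s_\beta(c')\equiv s_\beta(a')\pmod{\mf{b}'_{1-r'}}$. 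Passing to the $(i,i)$-block using $(a_\beta(v))^{i,i}=a_{\beta_i}(v^{i,i})$, which is killed by $s_i$ (Remark~\ref{rmk:tamecor}), produces the required intertwining
\[
b^{i,i}\bigl(s_i(a^{i,i})+\mf{b}^{i,i}_{1-r}\bigr)(b^{i,i})^{-1}\cap\bigl(s_i(a'^{i,i})+\mf{b}'^{i,i}_{1-r'}\bigr)\neq\emptyset.
\]

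The main obstacle is the filtration-level accounting in the second step: the cross and higher-order contributions such as $v(a+x)$ and $a_\beta(v)\cdot((1+v)^{-1}-1)$ have valuations governed by $-(k_0+2r)$ and $-(2k_0+3r)$, and one needs all of them to lie in $\mfa_{1-r}$. This hinges precisely on the strict inequality $-k_0>r$ arising from semisimplicity of $[\Lambda,q,r,\beta]$ at depth $r$, and on the symmetric inequality for $u'$. Once the bookkeeping is in place, the remainder of the proof is purely formal, flowing from the bimodule properties of $s_\beta$ and the fact that $s_\beta$ respects the block decomposition $A=\bigoplus_{j,k}A^{j,k}$ on the diagonal.
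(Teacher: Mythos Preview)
Your proposal is correct and follows essentially the same route as the paper: unwind the intertwining equation, strip off the unipotent factors~$1+u'$ and~$1+v$ using the commutator identity~$[\beta,v]=-a_\beta(v)$, reduce modulo~$\mfa_{1-r}$ (resp.~$\mfa'_{1-r'}$), and apply~$s_\beta$ together with the bimodule property and the exactness~$s_\beta\circ a_\beta=0$. The only cosmetic difference is that you work with~$g(\cdot)g^{-1}$ and subtract~$\beta$ early to isolate~$c,c'$, whereas the paper keeps the format~$g(\cdot)\equiv(\cdot)g$ and cancels~$b\beta=\beta b$ at the end; also, your filtration bookkeeping can be simplified by observing directly that~$v\in\mf{m}_{-(k_0+r)}\subseteq\mfa_1$ (since~$-(k_0+r)\ge 1$), which immediately gives~$[v,a+x]\in\mfa_{1-r}$ without tracking the finer valuation~$-(k_0+2r)$.
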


\begin{proof}
Again we only  have to consider a non-zero element~$\beta$. This is essentially the calculation in~\cite[2.6.1]{bushnellKutzko:93} which we want to recall, to show that its validity for different semisimple strata. Note that the hypotheses imply that~$g$ certainly intertwines~$[\Lambda,q,r,\beta]$ and~$[\Lambda',q',r',\beta]$ so that, by Theorem~\ref{thmIntertwiningTwoSemisimpleStratumSameBeta}, we can write~$g=(1+u')b(1+v)$, with~$u'\in \mf{m}'_{-(k'_0+r')}$, $b\in B^\times$ and~$v\in\mf{m}_{-(k_0+r)}$.

Let~$(1+w')$ be the inverse of~$(1+u')$. By the intertwining property of~$g$, we have
\[
g(\beta+a)\equiv (\beta+a')g \pmod{g\mfa_{1-r}+\mfa'_{1-r'}g}.
\] 
Multiplying by~$(1+w')$ on the left and $(1+v)^{-1}$ on the right we obtain
\beq\label{eqstartIntertwining}
b(1+v)(\beta+a)(1+v)^{-1}\equiv (1+w')(\beta+a')(1+w')^{-1}b \pmod{b\mfa_{1-r}+\mfa'_{1-r'}b}.
\eeq
We firstly consider the right hand side. 
\begin{eqnarray*}
(1+w')(\beta+a')&=& \beta-a_\beta(w')+a'+w'a'+\beta w'\\
&=& (\beta-a_\beta(w')+a')(1+w') + w'a' - a'w' + a_\beta(w')w'\\
&\equiv & (\beta-a_\beta(w')+a')(1+w') \pmod{\mfa'_{1-r'}},
\end{eqnarray*}
because~$a'\in \mfa'_{-r'},\ w'\in\mfa'_1$ and~$a_\beta(w')\in \mfa'_{-r'}$.
A similar calculation for the left hand side and equation (\ref{eqstartIntertwining}) leads to 
\beq\label{eqMiddleIntertwining}
b(\beta-a_\beta(v)+a)\equiv (\beta-a_\beta(w')+a')b\pmod{b\mfa_{1-r}+\mfa'_{1-r'}b}.
\eeq
We apply~$s_\beta$ to get
\[
bs_\beta(a)\equiv s_\beta(a')b \pmod{b\mf{b}_{1-r}+\mf{b}'_{1-r'}b}
\] 
and thus~$b^{i,i}$ intertwines the derived strata~$[\Lambda,r,r-1,s_i(a^{i,i})]$ and~$[\Lambda',r',r'-1,s_i(a'^{i,i})]$, for all~$i\in I$.
\end{proof}

\begin{proposition}[{cf.~\cite[2.2.1]{bushnellKutzko:93}}]\label{propS3}
Suppose that~$[\Lambda,q,r,\beta]$ and~$[\Lambda',q',r',\beta]$ are simple and that there are~$a\in\mfa_{-r}$ and~$a'\in\mfa'_{-r'}$ such that~$s_\beta(a)\equiv s_\beta(a') \pmod{\mf{b}_{1-r}+\mf{b}'_{1-r'}}$. Then, there are elements~$w'\in \mf{m}'_{-(k'_0+r')}$ and~$v\in \mf{m}_{-(k_0+r)}$ such that
\[
(1+w')(\beta+a')(1+w')^{-1}\equiv (1+v)(\beta+a)(1+v)^{-1}\pmod{\mfa_{1-r}+\mfa'_{1-r'}}.
\]
Moreover, if the strata and the elements~$a$ and~$a'$ are skew and the strata intertwine in~$G$, then we can choose~$1+v$ and~$1+w'$ in~$G$. 
\end{proposition}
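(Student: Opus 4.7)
The plan is to reverse the chain of computations carried out in the proof of Proposition~\ref{propS2}. First I would record the first-order expansion: since the hypotheses force $-(k_0+r),-(k'_0+r')\geq 1$, any $v\in\mf{m}_{-(k_0+r)}$ lies in $\mfa_1$ and any $w'\in\mf{m}'_{-(k'_0+r')}$ lies in $\mfa'_1$, so expanding $(1+v)^{-1}=1-v+v^2-\cdots$ yields
\[
(1+v)(\beta+a)(1+v)^{-1}\equiv \beta+a-a_\beta(v)\pmod{\mfa_{1-r}}
\]
and analogously on the primed side. Thus the desired congruence reduces to finding $v$ and $w'$ in the stated lattices with
\[
(a-a')-a_\beta(v-w')\in\mfa_{1-r}+\mfa'_{1-r'}.
\]

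The heart of the proof is then a bilateral, filtered version of the exact sequence $A\xrightarrow{a_\beta}A\xrightarrow{s_\beta}B$ of Lemma~\ref{lemKernels}. Concretely, I would prove the following refined exactness statement: given $y\in\mfa_{-r}+\mfa'_{-r'}$ with $s_\beta(y)\in\mf{b}_{1-r}+\mf{b}'_{1-r'}$, there exists $x\in\mf{m}_{-(k_0+r)}+\mf{m}'_{-(k'_0+r')}$ such that $a_\beta(x)\equiv y\pmod{\mfa_{1-r}+\mfa'_{1-r'}}$. The argument follows the cohomology-style template used in the proof of Theorem~\ref{thmIntertwiningOneSemisimpleStratum} (essentially~\cite[(1.4.16)]{bushnellKutzko:93}, applied symmetrically in the two filtrations), invoking Lemma~\ref{lemDescrmfn} to replace $\mf{n}_{-r}$ by $\mf{b}_0+\mf{m}_{-(r+k_0)}$ (and similarly for the primed side) and using that $a_\beta$ vanishes on $B$. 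Applying this to $y=a-a'$ and writing $x=v-w'$ then produces the required pair.

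For the ``moreover'' clause in the skew-intertwining case, I would use a fixed-point argument in the spirit of Corollary~\ref{corCohomologyArgument}. When $\beta$, $a$, $a'$ are skew, applying $\sigma$ to a solution $(1+v,1+w')$ and inverting each factor yields another solution of the same congruence, so the solution set inside $(1+\mf{m}_{-(k_0+r)})\times(1+\mf{m}'_{-(k'_0+r')})$ is stable under the involution $(g,h)\mapsto(\sigma(g)^{-1},\sigma(h)^{-1})$. This set is non-empty by the previous step, so~\cite[Lemma~2.2]{stevens:01} yields a fixed point, i.e.\ a pair in $G\times G$.

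The main obstacle I anticipate is the bilateral filtered exact sequence. The subtlety is that $x$ must land in $\mf{m}_{-(k_0+r)}+\mf{m}'_{-(k'_0+r')}$, not merely in $\mf{n}_{-r}+\mf{n}'_{-r'}$; it is the $\mf{b}_0$-correction in Lemma~\ref{lemDescrmfn}, together with the fact that $B$-elements are killed by $a_\beta$, that lets us discard the $\mf{b}_0+\mf{b}'_0$ ``noise'' and retain only the small part in $\mf{m}_{-(k_0+r)}+\mf{m}'_{-(k'_0+r')}$. Fitting the two filtrations together cleanly here is, I expect, the most delicate part of the argument.
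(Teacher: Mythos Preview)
Your linearization and the plan to solve the reduced congruence via a bilateral filtered exactness statement is exactly the paper's approach. The paper makes your ``delicate part'' precise in one stroke: after adjusting~$a$ by an element of~$\mfa_{1-r}$ so that~$s_\beta(a)=s_\beta(a')$ exactly, the difference~$a-a'$ lies in~$C=\ker s_\beta$, and then~\cite[1.3.17]{bushnellKutzko:93} gives the splitting
\[
C\cap(\mfa_{-r}+\mfa'_{-r'})=(C\cap\mfa_{-r})+(C\cap\mfa'_{-r'}),
\]
after which Lemma~\ref{lemDescrmfn}(ii) applied separately to each summand produces the required~$v\in\mf{m}_{-(k_0+r)}$ and~$w'\in\mf{m}'_{-(k'_0+r')}$. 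This is precisely the mechanism you anticipate for discarding the~$\mf{b}_0$-noise and fitting the two filtrations together, so there is no real obstacle once you invoke~\cite[1.3.17]{bushnellKutzko:93}.

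For the skew clause your route diverges from the paper's, and your citation does not quite land. The solution set of pairs~$(1+v,1+w')$ is not a double coset of pro-$p$ groups, so~\cite[Lemma~2.2]{stevens:01} does not apply verbatim; one would have to first identify the solution set as a coset of some subgroup and check that the involution genuinely preserves it at the group level, which is fiddly because~$g\mapsto\sigma(g)^{-1}$ is not linear in~$v$. The paper instead stays at the Lie-algebra level: since the linearized equation~$a_\beta(v)-a_\beta(w')\equiv a-a'$ has skew right-hand side and~$a_\beta$ is~$\sigma$-equivariant, one may replace any solution~$(v,w')$ by its skew-symmetric part~$(\tilde v,\tilde w')$ (odd residue characteristic), and then pass to~$G$ via the Cayley transform~$(1+\tfrac{\tilde v}{2})(1-\tfrac{\tilde v}{2})^{-1}$. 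This is both shorter and avoids the coset-identification issue.
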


Before the proof let us recall that the \emph{Cayley transform} of an element~$v$ of~$(\mfa_1)_-$ is the element~$\left(1+\frac{v}{2}\right)\left(1-\frac{v}{2}\right)^{-1}$. It is an element of~$U^1(\Lambda)\subset G$.  

\begin{proof}
Let~$\beta$ be non-zero and write~$C$ for the kernel of~$s_\beta$. Without loss of generality we can assume that~$s_\beta(a)$ and~$s_\beta(a')$ are equal, since the map~$s_\beta:\mfa_{1-r}\to\mf{b}_{1-r}$ is surjective. The map 
\[
\mf{m}_{-r-k_0}+\mf{m}'_{-r'-k'_0}\stackrel{a_\beta}{\ra} C\cap (\mfa_{-r}+\mfa'_{-r'})
\] 
is surjective because~$C\cap (\mfa_{-r}+\mfa'_{-r'})$ is equal to~$(C\cap \mfa_{-r})+(C\cap\mfa'_{-r'})$ by~\cite[1.3.17]{bushnellKutzko:93} and~$a_\beta^{-1}(\mfa_{-r})$ is a subset of~$B+\mf{m}_{-r-k_0}$ by Lemma~\ref{lemDescrmfn}(ii). Thus, we can find~$w'\in\mf{m}_{-r'-k'_0}$ and~$v\in\mf{m}_{-r-k_0}$ to satisfy~\eqref{eqMiddleIntertwining} for~$b=1$. We now follow the calculation in the proof of Proposition~\ref{propS2} backwards to show the desired congruence. In the skew situation we can find skew-symmetric elements, say~$\tilde{v}$ and~$\tilde{w'}$, which satisfy~\eqref{eqMiddleIntertwining} and we define~$1+v$ and~$1+w'$ to be the Cayley transforms of~$\tilde{v}$ and~$\tilde{w'}$ respectively. 
\end{proof}

We need one final lemma before we can prove Proposition~\ref{propMatching}, which will play the role of~(S1).

\begin{lemma}\label{lemConjblocksSemisimple}
Let~$[\Lambda,q,r,\beta]$ and~$[\Lambda',q,r,\beta']$ be semisimple strata for which there is a (unique) bijection~$\zeta:I\to I'$ such that~$[\Lambda^i\oplus \Lambda'^{\zeta(i)},\max\{q_i,q'_{\zeta(i)}\},r,\beta_i+\beta'_{\zeta(i)}]$ is equivalent to a simple stratum, for all indices~$i\in I$, and~$\dim_F V^i=\dim_F V'^{\zeta(i)}$. Then there are an element~$g$ of~$\tilde{G}$ and an element~$\gamma\in \prod_i A^{i,i}$ such that~$V^i=gV'^{\zeta(i)}$, for all indices~$i\in I$, and:
\begin{itemize}
 \item~$[g\Lambda',q,r,g\beta'g^{-1}]$ is equivalent to~$[g\Lambda',q,r,\gamma]$;
 \item~$[\Lambda,q,r,\beta]$ is equivalent to~$[\Lambda,q,r,\gamma]$; and
 \item~$[g\Lambda',q,r,\gamma]$ and~$[\Lambda,q,r,\gamma]$ are semisimple strata with the same associated splitting~$V=\bigoplus_{i\in I}V^i$.
\end{itemize}
\end{lemma}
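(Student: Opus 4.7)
The plan is to first diagonalize each stratum $[\Lambda^i\oplus\Lambda'^{\zeta(i)},\max\{q_i,q'_{\zeta(i)}\},r,\beta_i+\beta'_{\zeta(i)}]$ into a block-diagonal simple stratum, and then to glue the resulting diagonal entries together using an $F$-linear isomorphism $g\in\tilde G$ chosen to identify the two sides field-by-field.

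First I apply Theorem~\ref{thmDiagonalizedSimple} to each stratum $[\Lambda^i\oplus\Lambda'^{\zeta(i)},\max\{q_i,q'_{\zeta(i)}\},r,\beta_i+\beta'_{\zeta(i)}]$, regarded as split by the decomposition $V^i\oplus V'^{\zeta(i)}$ (the summands $[\Lambda^i,q_i,r,\beta_i]$ and $[\Lambda'^{\zeta(i)},q'_{\zeta(i)},r,\beta'_{\zeta(i)}]$ are simple, hence pure or zero; the zero case is handled directly using the hypothesis that the sum is equivalent to a simple stratum). This produces a block-diagonal simple stratum $[\Lambda^i\oplus\Lambda'^{\zeta(i)},\max\{q_i,q'_{\zeta(i)}\},r,\gamma_{i,1}+\gamma_{i,2}]$, equivalent to the original, with $\gamma_{i,1}\in A^{i,i}$ and $\gamma_{i,2}\in\End_F(V'^{\zeta(i)})$, satisfying $\gamma_{i,1}\equiv\beta_i\pmod{\mfa_{-r}(\Lambda^i)}$ and $\gamma_{i,2}\equiv\beta'_{\zeta(i)}\pmod{\mfa_{-r}(\Lambda'^{\zeta(i)})}$. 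Since the sum stratum is simple, the element $\gamma_{i,1}+\gamma_{i,2}$ generates a field over $F$, which forces the minimal polynomials of $\gamma_{i,1}$ and $\gamma_{i,2}$ to be equal and irreducible; hence there is a unique $F$-algebra isomorphism $\phi_i:F[\gamma_{i,1}]\to F[\gamma_{i,2}]$ sending $\gamma_{i,1}\mapsto\gamma_{i,2}$.

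Next I construct $g$. Using $\phi_i$, view $V'^{\zeta(i)}$ as an $F[\gamma_{i,1}]$-module (where $\gamma_{i,1}$ acts as $\gamma_{i,2}$). The $F$-dimension equality $\dim_F V^i=\dim_F V'^{\zeta(i)}$ forces equality of $F[\gamma_{i,1}]$-dimensions, so I can pick an $F[\gamma_{i,1}]$-linear isomorphism $g_i:V'^{\zeta(i)}\to V^i$; this automatically satisfies $g_i\gamma_{i,2}g_i^{-1}=\gamma_{i,1}$. Setting $g=\bigoplus_i g_i\in\tilde G$ gives $gV'^{\zeta(i)}=V^i$ for all $i$, as required. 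Now define $\gamma=\sum_i\gamma_{i,1}\in\bigoplus_i A^{i,i}$. By construction $\gamma\equiv\beta\pmod{\mfa_{-r}(\Lambda)}$, which gives the second bullet, and $g\beta'_{\zeta(i)}g^{-1}\equiv g\gamma_{i,2}g^{-1}=\gamma_{i,1}\pmod{\mfa_{-r}(g\Lambda'^{\zeta(i)})}$, which after summation gives $g\beta'g^{-1}\equiv\gamma\pmod{\mfa_{-r}(g\Lambda')}$ and yields the first bullet.

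For the last bullet I check that $[\Lambda,q,r,\gamma]$ is itself semisimple with splitting $V=\bigoplus V^i$ (the statement for $[g\Lambda',q,r,\gamma]$ is symmetric). Each piece $[\Lambda^i,q_i,r,\gamma_{i,1}]$ is equivalent to $[\Lambda^i,q_i,r,\beta_i]$, so in particular $\gamma_{i,1}$ is pure and generates a field of the same degree as $\beta_i$ (by the simplicity of the latter and of $\gamma_{i,1}+\gamma_{i,2}$), so it is simple. The non-equivalence condition for pairs of distinct blocks passes through the equivalence with the semisimple stratum $[\Lambda,q,r,\beta]$. The main technical obstacle is the first step: verifying that Theorem~\ref{thmDiagonalizedSimple} produces the block-diagonal simple approximation even when one of the block strata is zero, and extracting from the resulting simple stratum the canonical field isomorphism $\phi_i$ which underlies the choice of $g$.
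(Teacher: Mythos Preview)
Your proof is correct and follows essentially the same approach as the paper: apply Theorem~\ref{thmDiagonalizedSimple} blockwise to obtain diagonal simple approximations~$\gamma_{i,1}+\gamma_{i,2}$ with a common irreducible minimal polynomial, use the dimension hypothesis to build~$g$ conjugating~$\gamma_{i,2}$ to~$\gamma_{i,1}$, and then verify semisimplicity of~$[\Lambda,q,r,\gamma]$ via equivalence with~$[\Lambda,q,r,\beta]$. The paper's argument is the same, phrased in terms of characteristic polynomials; your flagged concern about the zero-block case in Theorem~\ref{thmDiagonalizedSimple} is reasonable but is not addressed in the paper either.
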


\begin{proof}
Applying Theorem~\ref{thmDiagonalizedSimple} to the strata~$[\Lambda^i\oplus \Lambda'^{\zeta(i)},\max\{q_i,q_{\zeta(i)}\},r,\beta_i+\beta'_{\zeta(i)}]$, for each~$i$, we find an equivalent simple stratum~$[\Lambda^i\oplus \Lambda'^{\zeta(i)},\max\{q_i,q_{\zeta(i)}\},r,\gamma_i+\gamma'_{\zeta(i)}]$; in particular,~$\gamma_i$ and~$\gamma'_{\zeta(i)}$ have the same irreducible minimal polynomial, and the same characteristic polynomial since~$\dim_F V^i=\dim_F V'^{\zeta(i)}$. 

Further, for~$i\ne j$ in~$I$, the stratum~$[\Lambda^{i}\oplus \Lambda^{j}, \max\{q_{i},q_{j}\},r,\gamma_{i}+\gamma_{j}]$ is equivalent to~$[\Lambda^{i}\oplus \Lambda^{j}, \max\{q_{i},q_{j}\},r,\beta_{i}+\beta_{j}]$, which is not equivalent to a simple stratum, so that the stratum~$[\Lambda,q,r,\gamma]$ is semisimple, where~$\gamma=\sum_{i\in I}\gamma_i$. The same applies to~$[\Lambda',q,r,\gamma']$, where~$\gamma'=\sum_{i\in I}\gamma'_{\zeta{i}}$. Finally, since~$\gamma_i$ and~$\gamma'_{\zeta(i)}$ have the same characteristic polynomial, we can find~$g\in G$ such that~$V^i=gV'^{\zeta(i)}$ and~$g\gamma' g^{-1}=\gamma$, and the result follows.
\end{proof}

\begin{proof}[Proof of Proposition~\ref{propMatching}]
(i) and the equality~$q=q'$ in~(ii) follow from the results on level in Proposition~\ref{propFundamental}. The existence of~$\zeta$ in~(ii) is proved by strata induction, where we take Lemma~\ref{lemConjblocksSemisimple} for~(S1), Proposition~\ref{propS2} for~(S2), and Theorem~\ref{thmSimplederived} for~(S3). The base case follows because the characteristic polynomials are equal, so we match the primary factors using~\cite[3.3(ii)]{stevens:05}. The equality of dimensions follows from the fact that the degree of the~$i$th primary factor is the dimension of~$V^i$. 

For the inductive step, suppose that~$\Delta=[\Lambda,q,r,\beta]$ and~$\Delta'=[\Lambda',q,r,\beta']$ are semisimple strata as in the proposition which intertwine. Then the stratum~$[\Lambda,q,r+1,\beta]$ is equivalent to a semisimple stratum~$\Delta_\gamma=[\Lambda,q,r+1,\gamma]$ whose splitting is a coarsening of that of~$\Delta$, by Theorem~\ref{thmDiagonalizedSimple}; similarly we have a semisimple stratum~$\Delta'_\gamma=[\Lambda',q,r+1,\gamma']$. Since the strata~$\Delta_\gamma$,~$\Delta'_\gamma$ intertwine, we may apply the inductive hypothesis to them. In particular, they satisfy the hypotheses of Lemma~\ref{lemConjblocksSemisimple} and, replacing~$\Delta'$ by its conjugate~$g\Delta'$, we may assume~$\gamma'=\gamma$.

Now we apply~(S2) -- Proposition~\ref{propS2} -- to the strata~$\Delta_\gamma$ and~$\Delta'_\gamma$, with~$a=\beta-\gamma$ and~$a'=\beta'-\gamma$. The conclusion is that the derived strata intertwine so that the base case gives us a bijection between the index sets~$\zeta: I\ra I'$ such that, for each~$i\in I$, the stratum~$[\Lambda^i\oplus \Lambda'^{\zeta(i)},r+1,r,s_{\gamma_i}(\beta_i-\gamma_i)+s_{\gamma_{\zeta(i)}}(\beta'_{\zeta(i)}-\gamma_{\zeta(i)})]$ is equivalent to a simple stratum. (Here~$\gamma_i=1^i\gamma 1^i$, where~$1^i$ is the idempotent corresponding to~$V^i$, and similarly~$\gamma'_{\zeta(i)}=1'^{\zeta(i)}\gamma 1'^{\zeta(i)}$, corresponding to~$V'^{\zeta(i)}$; note also that~$\gamma_i$ and~$\gamma_{\zeta(i)}$ have the same characteristic polynomial so that we can view both~$V^i$ and~$V'^{\zeta(i)}$ and~$F[\gamma_i]$-vector spaces.) But then~$[\Lambda^i\oplus \Lambda'^{\zeta(i)},\max\{q_i,q'_{\zeta(i)}\},r,\beta_i+\beta'_{\zeta(i)}]$ is equivalent to a simple stratum, by~(S3) -- Theorem~\ref{thmSimplederived}.

The existence of~$\zeta$ implies, in particular, that both strata have the same number of blocks, i.e. the sets~$I$ and~$I'$ have the same cardinality. Finally, we prove the uniqueness of~$\zeta$. Assume, for contradiction, that there are two distinct indices~$i,j\in I$ and an index~$i'\in I'$ such that~$[\Lambda^{i}\oplus \Lambda'^{i'},\max\{q_{i},q'_{i'}\},r,\beta_{i}+\beta'_{i'}]$ and~$[\Lambda^{j}\oplus \Lambda'^{i'},\max\{q_{j},q'_{i'}\},r,\beta_{j}+\beta'_{i'}]$ are both equivalent to simple strata. From this (and the equality of periods) it follows that the integers~$q_{i}$,~$q_{j}$ and~$q'_{i'}$ are all equal; we denote this integer by~$q$. 

By the proof of the existence, the spaces~$V^{i}$ and~$V^{i'}$ have the same dimension, and thus, by conjugating, we can assume that they are the equal. Then, by Theorem~\ref{thmDiagonalizedSimple}, the strata~$[\Lambda^{i},q,r,\beta_{i}]$ and~$[\Lambda'^{i'},q,r,\beta_{i'}]$ intertwine. Then the stratum~$[\Lambda^{i}\oplus\Lambda^{j},q,r,\beta_{i}+\beta_{j}]$ intertwines with~$[\Lambda'^{i'}\oplus\Lambda^{j},q,r,\beta_{i'}+\beta_{j}]$ and the latter is equivalent to a simple stratum. Thus the semisimple stratum~$[\Lambda^{i}\oplus\Lambda^{j},q,r,\beta_{i}+\beta_{j}]$, which has two blocks, is intertwined with a simple stratum, which has only one block. This is a contradiction since the existence shows that semisimple strata which intertwine have the same number of blocks. 
\end{proof}

As a useful consequence, we see that, given two semisimple strata which intertwine, we can find equivalent semisimple strata with elements which are conjugate. 

\begin{corollary}\label{corSameCharPolynomialForBetas}
Suppose that the semisimple strata~$[\Lambda,q,r,\beta]$ and~$[\Lambda',q,r,\beta']$ intertwine and that~$\Lambda,\Lambda'$ have the same period, and let~$\zeta: I\ra I'$ be the matching between their index sets. Then there are semisimple strata~$[\Lambda,q,r,\tilde{\beta}]$ and~$[\Lambda',q,r,\tilde{\beta}']$, equivalent to, and with the same associated splitting as,~$[\Lambda,q,r,\beta]$ and~$[\Lambda',q,r,\beta']$ respectively, such that~$\tilde{\beta}'_{\zeta(i)}$ has the same characteristic polynomial as~$\tilde{\beta}_i$, for all indices~$i\in I$. 
\end{corollary}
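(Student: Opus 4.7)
The plan is to produce $\tilde\beta$ and $\tilde\beta'$ blockwise by running through the matching $\zeta$ from Proposition~\ref{propMatching}. For each $i\in I$, the matching condition says that $[\Lambda^i\oplus\Lambda'^{\zeta(i)},\max\{q_i,q'_{\zeta(i)}\},r,\beta_i+\beta'_{\zeta(i)}]$ is equivalent to a simple stratum, so I would invoke Theorem~\ref{thmDiagonalizedSimple} (in exactly the same way as the first paragraph of the proof of Lemma~\ref{lemConjblocksSemisimple}) to replace it with an \emph{equivalent} simple stratum $[\Lambda^i\oplus\Lambda'^{\zeta(i)},\max\{q_i,q'_{\zeta(i)}\},r,\tilde\beta_i+\tilde\beta'_{\zeta(i)}]$ which is split by the decomposition $V^i\oplus V'^{\zeta(i)}$, so that $\tilde\beta_i\in\End_F(V^i)$ and $\tilde\beta'_{\zeta(i)}\in\End_F(V'^{\zeta(i)})$. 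Setting $\tilde\beta:=\sum_i\tilde\beta_i$ and $\tilde\beta':=\sum_i\tilde\beta'_{\zeta(i)}$ would then be the candidate strata, and three things remain to verify: equivalence with the original strata, semisimplicity with the original splittings, and matching of the blockwise characteristic polynomials.

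For equivalence, the block-restrictions $[\Lambda^i,q_i,r,\tilde\beta_i]$ and $[\Lambda^i,q_i,r,\beta_i]$ are equivalent (both being the $V^i$-restrictions of equivalent split strata), so summing over $i$ gives $[\Lambda,q,r,\tilde\beta]\sim[\Lambda,q,r,\beta]$, and identically on the primed side. For semisimplicity of $[\Lambda,q,r,\tilde\beta]$ with splitting $\bigoplus_i V^i$, each block $[\Lambda^i,q_i,r,\tilde\beta_i]$ is simple (as a block-restriction of a split simple stratum), and for $i\ne j$ the pair-stratum $[\Lambda^i\oplus\Lambda^j,\max\{q_i,q_j\},r,\tilde\beta_i+\tilde\beta_j]$ is equivalent to $[\Lambda^i\oplus\Lambda^j,\max\{q_i,q_j\},r,\beta_i+\beta_j]$, which by the semisimplicity of $[\Lambda,q,r,\beta]$ is not equivalent to a simple stratum. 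The primed side is handled identically.

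For the characteristic polynomials: because $[\Lambda^i\oplus\Lambda'^{\zeta(i)},\max\{q_i,q'_{\zeta(i)}\},r,\tilde\beta_i+\tilde\beta'_{\zeta(i)}]$ is a genuine simple stratum, the element $\tilde\beta_i+\tilde\beta'_{\zeta(i)}$ generates a field extension of $F$, and so the two block components $\tilde\beta_i$ and $\tilde\beta'_{\zeta(i)}$ satisfy a common irreducible minimal polynomial; the equality $\dim_F V^i=\dim_F V'^{\zeta(i)}$ furnished by Proposition~\ref{propMatching} then forces their characteristic polynomials to coincide, each being this common minimal polynomial to the power $\dim_F V^i/[F[\tilde\beta_i]:F]$. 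The main obstacle throughout is essentially bookkeeping around the zero-block cases (some $\beta_i$ possibly zero on its summand), which are handled by taking $\tilde\beta_i=0$ directly on that block without invoking Theorem~\ref{thmDiagonalizedSimple}; the remainder of the construction then goes through unchanged.
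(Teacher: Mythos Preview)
Your proof is correct and is essentially the same as the paper's: the paper simply cites Lemma~\ref{lemConjblocksSemisimple} (whose hypotheses hold by Proposition~\ref{propMatching}) and sets~$\tilde\beta=\gamma$,~$\tilde\beta'=g^{-1}\gamma g$, while you have inlined precisely the first two paragraphs of that lemma's proof---the same use of Theorem~\ref{thmDiagonalizedSimple} blockwise, the same semisimplicity check, and the same minimal-polynomial-plus-dimension argument for equality of characteristic polynomials. The only cosmetic difference is that the paper records the conjugating element~$g$ as part of the packaging, whereas you stop once the~$\tilde\beta_i,\tilde\beta'_{\zeta(i)}$ are found.
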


\begin{proof}
This follows immediately from Lemma~\ref{lemConjblocksSemisimple} (whose hypotheses are satisfied, thanks to Proposition~\ref{propMatching}) by putting, in the notation of the Lemma,~$\tilde\beta=\gamma$ and~$\tilde{\beta}'=g^{-1}\gamma g$.
\end{proof}

If~$[\Lambda,q,r,\beta]$ and~$[\Lambda',q',r',\beta']$ are strata in spaces~$V$ and~$V'$ respectively, then we put
\[
I([\Lambda^i,q,r,\beta],[\Lambda^{\zeta (i)},q,r,\beta'])) =\left \{g\mid V\xrightarrow{\sim} V' \mid g(\beta+\mfa_{-r})g^{-1} \cap (\beta'+\mfa'_{-r'}\ne\emptyset\right\}.
\]
This generalises the notion of intertwining and we say that any element of this set intertwines~$[\Lambda,q,r,\beta]$ with~$[\Lambda',q',r',\beta']$.

\begin{corollary}\label{corIntertwiningtwosemisimplestrataSameLevel}
Suppose that the semisimple strata~$[\Lambda,q,r,\beta]$ and~$[\Lambda',q,r,\beta']$ intertwine and that~$\Lambda,\Lambda'$ have the same period, and let~$\zeta: I\ra I'$ be the matching between their index sets. Then the intertwining set~$I([\Lambda,q,r,\beta],[\Lambda',q,r,\beta'])$ is equal to 
\[
(1+\mf{m}'_{-(r+k'_0)})(\prod_i I([\Lambda^i,q,r,\beta],[\Lambda^{\zeta (i)},q,r,\beta']))(1+\mf{m}_{-(r+k_0)}).
\]
\end{corollary}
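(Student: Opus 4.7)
The plan is to reduce the assertion to the single-element case of Theorem~\ref{thmIntertwiningTwoSemisimpleStratumSameBeta} by matching the two strata onto a common element via Lemma~\ref{lemConjblocksSemisimple}, and then to read off the blockwise description by applying the same theorem on each block.

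First I would invoke Proposition~\ref{propMatching} to obtain $\zeta$ and verify that the hypotheses of Lemma~\ref{lemConjblocksSemisimple} are satisfied (after checking, via Corollary~\ref{corSameCharPolynomialForBetas}, that corresponding blocks already have the same characteristic polynomial). The lemma then produces $g_0\in\tilde G$ with $g_0V'^{\zeta(i)}=V^i$ for every $i$, and an element $\gamma=\sum_i\gamma_i$ such that both $[\Lambda,q,r,\beta]$ and $[g_0\Lambda',q,r,g_0\beta'g_0^{-1}]$ are equivalent to semisimple strata with common element $\gamma$ and common splitting $V=\bigoplus_iV^i$. Right-multiplication by $g_0$ then yields
\[
I([\Lambda,q,r,\beta],[\Lambda',q,r,\beta'])=I([\Lambda,q,r,\gamma],[g_0\Lambda',q,r,\gamma])\,g_0,
\]
where the second equality uses that the intertwining set depends only on the cosets $\beta+\mfa_{-r}$ and $g_0\beta'g_0^{-1}+g_0\mfa'_{-r}g_0^{-1}$.

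Next I would apply Theorem~\ref{thmIntertwiningTwoSemisimpleStratumSameBeta} to $\gamma$ to obtain the global description $(1+\mf{m}''_{-(r+k''_0)})B_\gamma^\times(1+\mf{m}_{-(r+k_0)})$, where $\mf{m}''$ and $k''_0$ refer to $(\gamma,g_0\Lambda')$. Since $\gamma$ respects the splitting, $B_\gamma^\times=\prod_i (B_{\gamma_i}^{i,i})^\times$. Applying the same theorem blockwise to the simple strata $[\Lambda^i,q,r,\gamma_i]$ and $[g_0\Lambda'^{\zeta(i)},q,r,\gamma_i]$ gives
\[
I([\Lambda^i,q,r,\gamma_i],[g_0\Lambda'^{\zeta(i)},q,r,\gamma_i])=(1+\mf{m}''^i_{-(r+k''^i_0)})(B_{\gamma_i}^{i,i})^\times(1+\mf{m}^i_{-(r+k^i_0)}),
\]
and unwinding by $g_0|_{V'^{\zeta(i)}}$ and the equivalences of strata identifies this, after right-multiplication by $g_0|_{V'^{\zeta(i)}}$, with $I([\Lambda^i,q,r,\beta_i],[\Lambda'^{\zeta(i)},q,r,\beta'_{\zeta(i)}])$.

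To finish, I would combine the two descriptions: the product $\prod_i I([\Lambda^i,\ldots],[\Lambda'^{\zeta(i)},\ldots])$, viewed inside $\bigoplus_i\Hom_F(V'^{\zeta(i)},V^i)\subset A$, contains $B_\gamma^\times g_0$ and is contained in $(1+\mf{m}''_{-(r+k''_0)})B_\gamma^\times(1+\mf{m}_{-(r+k_0)})g_0$ because each blockwise factor of $\mf{m}^i$ sits inside the global $\mf{m}$ (and similarly on the primed side). Hence the two double-coset expressions coincide. The main obstacle is precisely this compatibility between the global filtration $\mf{m}_{-(r+k_0)}$ and the blockwise ones $\mf{m}^i_{-(r+k^i_0)}$: concretely, one must check that $k_0(\gamma,\Lambda)\le k_0(\gamma_i,\Lambda^i)$ and that off-diagonal contributions to $\mf{n}^i$ already lie in the global $\mf{n}_{-r}\cap\mfa_{-(r+k_0)}$, which is handled by Lemma~\ref{lemDescrmfn}(i). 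A further small point is that for the equivalent semisimple strata $\beta,\gamma$ (resp.\ $\beta',\gamma$ after the twist by $g_0$), the invariants $k_0$ and the subgroups $1+\mf{m}_{-(r+k_0)}$ coincide; this follows because $\beta-\gamma\in\mfa_{-r}$, so $a_\beta$ and $a_\gamma$ agree on $\mfa_0$ modulo $\mfa_{-r}$ and the relevant thresholds are deep enough.
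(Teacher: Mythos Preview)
Your approach is essentially the paper's: reduce, via Lemma~\ref{lemConjblocksSemisimple} (equivalently Corollary~\ref{corSameCharPolynomialForBetas}), to two strata with a common element and common splitting, then apply Theorem~\ref{thmIntertwiningTwoSemisimpleStratumSameBeta}. The paper's proof is two lines and leaves the blockwise unwinding implicit; you have spelled out exactly that unwinding.

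Two minor remarks. First, your translation identity has the side wrong: with the paper's convention for $I(\cdot,\cdot)$, conjugating the second stratum by $g_0$ gives
\[
I([\Lambda,q,r,\beta],[\Lambda',q,r,\beta'])=g_0^{-1}\,I([\Lambda,q,r,\gamma],[g_0\Lambda',q,r,\gamma]),
\]
i.e.\ left-multiplication by $g_0^{-1}$ rather than right-multiplication by $g_0$; this is cosmetic and does not affect the argument. Second, your closing worries are valid but harmless: since $\beta-\gamma\in\mfa_{-r}$ one has $\mf{n}_{-r}(\beta,\Lambda)=\mf{n}_{-r}(\gamma,\Lambda)$, and the intertwining set depends only on the equivalence class, so the double-coset descriptions for $\beta$ and $\gamma$ must agree regardless of the precise value of $k_0$; the off-diagonal pieces are absorbed into $\mf{m}_{-(r+k_0)}$ exactly by Lemma~\ref{lemDescrmfn}(i), as you say.
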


\begin{proof}
By Corollary~\ref{corSameCharPolynomialForBetas} we may replace the strata with equivalent strata such that there is~$g\in\tilde G$ such that~$\beta'=g^{-1}\beta g$. The result now follows by applying Theorem~\ref{thmIntertwiningTwoSemisimpleStratumSameBeta} to the strata~$[\Lambda,q,r,\beta]$ and~$[g\Lambda',q,r,\beta]$.
\end{proof}

\subsection{For classical groups}

We continue with the notation from the previous section but assume now that all our strata are skew. We will prove the following strengthening of Proposition~\ref{propMatching} in this case.

\begin{proposition}\label{propIntertwiningtwoSkewsemisimplestrataSameLevel}
Suppose that~$[\Lambda,q,r,\beta]$ and~$[\Lambda',q,r,\beta']$ are two skew-semisimple strata which intertwine in~$G$ and let~$\zeta:I\to I'$ be the matching given by Proposition~\ref{propMatching}. Then:
\be
\item~$(V^i,h|_{V^i})\cong (V'^{\zeta(i)},h|_{V'^{\zeta(i)}})$, for all~$i\in I$; 
\item~the intertwining set~$I_G([\Lambda,q,r,\beta],[\Lambda',q,r,\beta'])$ is equal to 
\[
(G\cap (1+\mf{m}'_{-(r+k'_0)}))(G\cap \prod_i I_i)(G\cap (1+\mf{m}_{-(r+k_0)})),
\]
where~$I_i=I([\Lambda^i,q,r,\beta],[\Lambda'^{\zeta (i)},q,r,\beta']))$.
\ee
\end{proposition}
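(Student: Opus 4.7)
I establish Part~(i) first, by analysing the block structure of a $G$-intertwining element, and then derive Part~(ii) by reducing to the same-$\beta$ situation of Theorem~\ref{thmIntertwiningTwoSemisimpleStratumSameBeta}(ii) via a blockwise Skolem--Noether argument.

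\emph{Part~(i).} Take a $G$-intertwining element $g$; by Corollary~\ref{corIntertwiningtwosemisimplestrataSameLevel}, write $g=(1+u')\,b\,(1+u)$ with $u\in\mf{m}_{-(r+k_0)}$, $u'\in\mf{m}'_{-(r+k'_0)}$, and $b=\sum_i b_i$ where each $b_i\in I_i$ sends $V^i$ bijectively onto $V'^{\zeta(i)}$. Since the pro-$p$ corrections $1+u,\,1+u'$ are close to the identity, and both splittings are $h$-orthogonal, expanding $h(v,w)=h(gv,gw)$ for $v,w\in V^i$ shows $h(v,w)\equiv h(b_iv,b_iw)$ modulo a small lattice: the cross terms $h(1'^{i'}gv,1'^{i'}gw)$ for $i'\neq\zeta(i)$ are products of two pro-$p$-small factors and vanish at the residual level. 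This identifies $b_i$ as a residual $F$-isometry $(V^i,h|_{V^i})\to(V'^{\zeta(i)},h|_{V'^{\zeta(i)}})$; after modifying $b_i$ within its intertwining coset to absorb the lattice mismatch into the pro-$p$ factors, Proposition~\ref{propliftResidualIsometries} lifts it to an honest $F$-isometry, giving~(i).

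\emph{Part~(ii).} Using Part~(i), and replacing the strata by equivalent skew-semisimple strata with matched block characteristic polynomials via Corollary~\ref{corSameCharPolynomialForBetas} in its skew variant (obtained by running the proof of Lemma~\ref{lemConjblocksSemisimple} with the skew case of Theorem~\ref{thmDiagonalizedSimple}), a blockwise application of Corollary~\ref{corSkolemNoether1} (when its hypotheses $\rho'$-nontrivial and either $\epsilon=1$ or $F\neq F_0$ hold) or Theorem~\ref{thmSkolemNoether} (in the remaining cases, notably the symplectic one) yields $g_0\in G$ satisfying $g_0V'^{\zeta(i)}=V^i$ and $g_0\beta' g_0^{-1}=\beta$. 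The strata $[\Lambda,q,r,\beta]$ and $[g_0\Lambda',q,r,\beta]$ then share the same $\beta$, and Theorem~\ref{thmIntertwiningTwoSemisimpleStratumSameBeta}(ii) describes their $G$-intertwining set as $(G\cap(1+\mf{m}'_{-(r+k'_0)}))(G\cap B^\times)(G\cap(1+\mf{m}_{-(r+k_0)}))$; translating back by $g_0$ and using the blockwise decomposition $B^\times=\prod_i(B^{i,i})^\times$, with each $(B^{i,i})^\times$ composed with $g_0|_{V'^{\zeta(i)}}$ landing inside $I_i$, yields the formula in~(ii).

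\emph{Main obstacle.} The principal difficulty is twofold. First, in Part~(i), the hypotheses of Proposition~\ref{propliftResidualIsometries} require $b_i$ to carry $\Lambda^i$ onto $\Lambda'^{\zeta(i)}$ and respect the lattice filtrations at the residual level, neither of which is automatic from membership in~$I_i$; this necessitates a careful modification of $b_i$ within its intertwining coset, absorbing the defect into the pro-$p$ factors $1+u,\,1+u'$. Second, in Part~(ii), deriving the intertwining hypothesis of Theorem~\ref{thmSkolemNoether} in the symplectic case from the ambient $G$-intertwining requires refining the block extraction procedure, exploiting the $\sigma$-stability of the three factors in Corollary~\ref{corIntertwiningtwosemisimplestrataSameLevel}---itself a consequence of the self-duality of $\Lambda,\Lambda'$ and the $h$-orthogonality of both splittings, both inherent to skew-semisimplicity.
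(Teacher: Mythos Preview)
Your direct attack on Part~(i) has a genuine gap that cannot be repaired along the lines you sketch. The decomposition $g=(1+u')\,b\,(1+u)$ from Corollary~\ref{corIntertwiningtwosemisimplestrataSameLevel} gives you $1+u\in\tilde U^1(\Lambda)$ and $1+u'\in\tilde U^1(\Lambda')$, but the block component $b_i\in I_i$ carries \emph{no} lattice compatibility whatsoever: membership in $I_i$ only says $b_i$ intertwines the block strata, not that $b_i(\Lambda^i_l)$ bears any relation to $\Lambda'^{\zeta(i)}_l$. Consequently your residual computation collapses: for $v\in\Lambda^i_l$ you have $uv\in\Lambda_{l+1}$, but after applying $b$ (which need not send $\Lambda_{l+1}$ into any lattice of $\Lambda'$) the ``small'' correction $b(uv)$ is uncontrolled, so neither the main term $h(b_iv,b_iw)$ nor the cross terms $h(1'^{i'}gv,1'^{i'}gw)$ can be compared residually to anything. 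The modification you gesture at---absorbing the lattice mismatch into the pro-$p$ factors---would require producing some $b_i'\in I_i$ with $b_i'(\Lambda^i)=\Lambda'^{\zeta(i)}$, but no such $b_i'$ need exist (the lattice sequences $\Lambda^i$ and $\Lambda'^{\zeta(i)}$ need not even be conjugate). Proposition~\ref{propliftResidualIsometries} therefore cannot be invoked on $b_i$ or any modification of it.

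The paper's proof of Part~(i) proceeds entirely differently, by \emph{strata induction}. In the base case $r=q-1$ it does not use $b_i$ at all: after conjugating so that $1\in G$ intertwines, it invokes the idempotent-lifting Lemma~\ref{lemFindIdempotents} (applied to $\mf{k}_r=\mfa_r\cap\mfa'_r$) to produce a \emph{symmetric} idempotent $e$ simultaneously congruent to $1^i\pmod{\mfa_1}$ and to $1'^{\zeta(i)}\pmod{\mfa'_1}$. The map $\psi=e|_{V^i}:V^i\to eV$ then \emph{does} satisfy the lattice hypotheses of Proposition~\ref{propliftResidualIsometries} by construction, yielding $(V^i,h|_{V^i})\cong(eV,h|_{eV})\cong(V'^{\zeta(i)},h|_{V'^{\zeta(i)}})$. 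The inductive step passes to derived strata via Proposition~\ref{propS2} after using Lemma~\ref{lemConjblocksSkewSemisimple} (which needs the inductive hypothesis at level $r+1$) to align the approximating $\gamma$'s. For Part~(ii), the paper's route (Remark~\ref{rempropIntertwiningtwoSkewsemisimplestrataSameLevelifollowsii}) is also much shorter than yours: once~(i) is known, conjugate in~$G$ so that $V^i=V'^{\zeta(i)}$, and then the formula follows from Corollary~\ref{corIntertwiningtwosemisimplestrataSameLevel} plus the standard cohomology argument of~\cite[Corollary~4.14]{stevens:05}; no blockwise Skolem--Noether is needed.
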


\begin{remark}\label{rempropIntertwiningtwoSkewsemisimplestrataSameLevelifollowsii}
Part~(ii) of Proposition~\ref{propIntertwiningtwoSkewsemisimplestrataSameLevel} part~(i) because
if~(i) is true then, by conjugating, we can assume that~$V^i=V'^{\zeta(i)}$, for all~$i\in I$. Then~(ii) follows from Corollary~\ref{corIntertwiningtwosemisimplestrataSameLevel} and a simple cohomology argument as in~\cite[Corollary~4.14]{stevens:05}. 
\end{remark}

As an immediate consequence of Proposition~\ref{propIntertwiningtwoSkewsemisimplestrataSameLevel} and the simple Skolem--Noether Theorem~\ref{thmSkolemNoether}, we get a Skolem--Noether Theorem for semisimple strata.

\begin{theorem}\label{thmSkolemNoetherSemisimple}
Let~$[\Lambda,q,r,\beta]$ and~$[\Lambda',q,r,\beta']$ be two skew-semisimple strata which intertwine in~$G$, and suppose that~$\beta$ and~$\beta'$ have the same characteristic
polynomial. Then there is an element~$g \in G$ such that~$g\beta g^{-1}$ is equal to~$\beta'$.
\end{theorem}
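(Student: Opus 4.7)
My approach is to reduce to the simple case and then invoke Theorem~\ref{thmSkolemNoether} block-by-block.

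First, Proposition~\ref{propIntertwiningtwoSkewsemisimplestrataSameLevel}(i) gives a matching $\zeta:I\to I'$ together with isometries $(V^i,h|_{V^i})\cong(V'^{\zeta(i)},h|_{V'^{\zeta(i)}})$. Because both decompositions are orthogonal for $h$, these block isometries assemble into a single element $g_0\in G$. Conjugating $\beta'$ by $g_0$ and translating $\Lambda'$ by $g_0$ preserves every hypothesis of the theorem---intertwining in $G$, the common characteristic polynomial, and skew-semisimplicity---and reduces me to the case $V^i=V'^{\zeta(i)}$ for all~$i$. After relabelling the target index set I may assume $\zeta=\id$, so that $\beta$ and $\beta'$ are both block-diagonal with respect to the common orthogonal decomposition $V=\bigoplus_{i\in I}V^i$.

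Next I would establish the block-wise identity $\chi_{\beta_i}=\chi_{\beta'_i}$ for all~$i$. Each $\chi_{\beta_i}$ is a power of the irreducible minimal polynomial of $\beta_i$ (since $F[\beta_i]$ is a field, the block being simple), so $\chi_\beta=\prod_i\chi_{\beta_i}=\prod_i\chi_{\beta'_i}=\chi_{\beta'}$ expresses the same polynomial as two factorisations into primary polynomials. Unique factorisation produces a bijection $\tau$ of $I$ with $\chi_{\beta_i}=\chi_{\beta'_{\tau(i)}}$. I would argue $\tau=\id$ by combining Corollary~\ref{corSameCharPolynomialForBetas} and Lemma~\ref{lemConjblocksSemisimple}: these provide semisimple strata $[\Lambda,q,r,\tilde\beta]$ and $[\Lambda',q,r,\tilde\beta']$ equivalent to the original ones, with the same splittings, and with $\chi_{\tilde\beta_i}=\chi_{\tilde\beta'_i}$. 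Reconciling this residual matching with the arithmetic matching $\tau$---using the coprimality of the primary factors at the level of the genuine characteristic polynomials---should force the two to agree. I expect the comparison between this geometric matching $\zeta$ and the arithmetic matching $\tau$ to be the main technical hurdle of the proof.

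Finally, Proposition~\ref{propIntertwiningtwoSkewsemisimplestrataSameLevel}(ii) allows me to write every $G$-intertwiner as $u'bu$ with $b\in G\cap\prod_i I_i$ block-diagonal, so each component $b_i$ is an isometry of $(V^i,h|_{V^i})$ intertwining $[\Lambda^i,q_i,r,\beta_i]$ with $[\Lambda'^i,q_i,r,\beta'_i]$. Since $r\le q_i-1$, the inclusion $\mfa_{-r}(\Lambda^i)\subseteq\mfa_{1-q_i}(\Lambda^i)$ shows that $b_i$ also intertwines the pure skew strata $[\Lambda^i,q_i,q_i-1,\beta_i]$ and $[\Lambda'^i,q_i,q_i-1,\beta'_i]$ in $G_i:=G\cap A^{i,i}$. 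By the previous step, $\beta_i$ and $\beta'_i$ share an irreducible minimal polynomial, hence arise as two $\rho'$-$\sigma$-equivariant embeddings of a common field $E_i$ into $A^{i,i}$. Theorem~\ref{thmSkolemNoether} then supplies $g_i\in G_i$ with $g_i\beta_ig_i^{-1}=\beta'_i$, and the direct sum $g=\bigoplus_i g_i$ lies in $G$ and satisfies $g\beta g^{-1}=\beta'$, as required.
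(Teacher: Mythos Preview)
Your approach is correct and is essentially the paper's proof: the paper records the theorem as an immediate consequence of Proposition~\ref{propIntertwiningtwoSkewsemisimplestrataSameLevel} and Theorem~\ref{thmSkolemNoether}, and your three steps are precisely the way to unpack that sentence.

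The only place where your argument is incomplete is your step~2, which you flag yourself. Your proposed route through Corollary~\ref{corSameCharPolynomialForBetas} and Lemma~\ref{lemConjblocksSemisimple} is unnecessarily indirect, because those results only control the characteristic polynomials of \emph{equivalent} strata~$\tilde\beta,\tilde\beta'$, not of~$\beta,\beta'$ themselves. There is a cleaner argument. First observe that in any semisimple stratum the minimal polynomials~$\mu_{\beta_i}$ are pairwise distinct: if~$\mu_{\beta_i}=\mu_{\beta_j}$ for~$i\ne j$ (both non-zero), then~$F[\beta_i+\beta_j]$ is a field, the lattice sequence~$\Lambda^i\oplus\Lambda^j$ is normalized by it, and the identity~$k_0(\beta,\Lambda)=e(\Lambda|o_E)k_F(\beta)$ from~\eqref{eqk0kF} gives~$k_0(\beta_i+\beta_j,\Lambda^i\oplus\Lambda^j)=k_0(\beta_i,\Lambda^i)$, so the sum stratum is itself simple, contradicting semisimplicity. (The case of two zero blocks is excluded directly by the definition.) Now unique factorisation of~$\chi_\beta=\chi_{\beta'}$ into pairwise coprime primary factors produces a bijection~$\tau$ with~$\mu_{\beta_i}=\mu_{\beta'_{\tau(i)}}$; the same~$k_0$ computation shows that~$[\Lambda^i\oplus\Lambda'^{\tau(i)},\max\{q_i,q'_{\tau(i)}\},r,\beta_i+\beta'_{\tau(i)}]$ is simple, so~$\tau$ satisfies the defining property of the matching in Proposition~\ref{propMatching}, and uniqueness forces~$\tau=\zeta$. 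Thus~$\chi_{\beta_i}=\chi_{\beta'_{\zeta(i)}}$ for every~$i$, and your step~3 goes through (treating a possible zero block separately, where the conjugation is trivial).
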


For the proof of Proposition~\ref{propIntertwiningtwoSkewsemisimplestrataSameLevel} we need the following idempotent lifting lemma.

\begin{lemma}\label{lemFindIdempotents}
Let~$(\mf{k}_r)_{r\ge 0}$ be a decreasing sequence of~$o_F$-lattices in~$A$ such that~$\mf{k}_r\mf{k}_s\subseteq\mf{k}_{r+s}$, for all~$r,s\ge 0$, and~$\bigcap_{r\ge 1}\mf{k}_r = \{0\}$. Suppose there is an element~$\alpha$ of~$\mf{k}_0$ which satisfies~$\alpha^2-\alpha\in \mf{k}_r$. Then there is an idempotent~$\tilde{\alpha}\in \mf{k}_0$ such that~$\tilde{\alpha}-\alpha\in \mf{k}_r$. Moreover, if~$\sigma(\alpha)=\alpha$ then we can choose~$\tilde{\alpha}$ such that~$\sigma(\tilde\alpha)=\tilde\alpha$. 
\end{lemma}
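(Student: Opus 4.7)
The plan is to execute a Hensel-style lift of $\alpha$ to an idempotent using the polynomial $p(x) := 3x^2 - 2x^3$, which fixes both $0$ and $1$ and has a particularly clean behaviour modulo $x^2-x$. Set $\alpha_0 := \alpha$ and $\alpha_{n+1} := p(\alpha_n)$. Since $\mf{k}_0$ is closed under multiplication (by $\mf{k}_r \mf{k}_s \subseteq \mf{k}_{r+s}$) and under the $o_F$-action (as a lattice), each $\alpha_n$ lies in $\mf{k}_0$.

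The core computational step is to verify the two polynomial identities
\begin{align*}
p(x) - x &= (x^2 - x)(1 - 2x), \\
p(x)^2 - p(x) &= -(x^2 - x)^2 (3 - 2x)(1 + 2x),
\end{align*}
which are immediate expansions. Starting from $\alpha^2 - \alpha \in \mf{k}_r$, the second identity together with the filtration property $\mf{k}_a \mf{k}_b \subseteq \mf{k}_{a+b}$ and the $o_F$-module structure of each $\mf{k}_s$ gives by induction that $\alpha_n^2 - \alpha_n \in \mf{k}_{2^n r}$; the first identity then yields $\alpha_{n+1} - \alpha_n \in \mf{k}_{2^n r}$, so $(\alpha_n)$ is Cauchy for the filtration. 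Each $\mf{k}_s$ is compact in the natural topology of the finite-dimensional $F$-vector space $A$, and $\bigcap_s \mf{k}_s = 0$; a standard compactness (finite-intersection) argument then shows that any natural neighbourhood of $0$ eventually contains $\mf{k}_s$. Hence $(\alpha_n)$ is Cauchy in the natural topology and converges in $A$ (which is complete) to a limit $\tilde\alpha$. Closedness of $\mf{k}_0$ and of $\mf{k}_r$, combined with telescoping $\alpha_N - \alpha = \sum_{n=0}^{N-1}(\alpha_{n+1} - \alpha_n)$, place $\tilde\alpha \in \mf{k}_0$ and $\tilde\alpha - \alpha \in \mf{k}_r$; continuity of multiplication with $\alpha_n^2 - \alpha_n \to 0$ forces $\tilde\alpha^2 = \tilde\alpha$.

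For the symmetry refinement, assume $\sigma(\alpha) = \alpha$. Because $\sigma$ is an anti-involution, $\sigma(\alpha_n \cdot \alpha_n) = \sigma(\alpha_n)\sigma(\alpha_n)$, so $\sigma$ commutes with every polynomial expression in the single element $\alpha_n$ (the order of multiplication is immaterial for powers of one element); together with the $\rho$-semilinearity of $\sigma$ and the fact that $\rho$ fixes $2, 3 \in \bbZ$, an immediate induction gives $\sigma(\alpha_n) = \alpha_n$ for every $n$. Continuity of $\sigma$ on the finite-dimensional space $A$ then transfers this to the limit, yielding $\sigma(\tilde\alpha) = \tilde\alpha$. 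The only book-keeping point—not a genuine obstacle—is that the constants $1, 2, 3$ appearing in the identities need not lie in $\mf{k}_0$, but they act as elements of $o_F$ on the lattices $\mf{k}_s$, which is exactly what the filtration and $o_F$-module hypotheses provide.
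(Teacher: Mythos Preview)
Your proof is correct and follows essentially the same approach as the paper: both iterate the polynomial $p(x)=3x^2-2x^3$ starting from~$\alpha$, use the identity $p(x)^2-p(x)=4(x^2-x)^3-3(x^2-x)^2$ (which is just your factored identity expanded) to double the order of idempotency at each step, and pass to the limit, noting that the construction is manifestly $\sigma$-stable. Your version is a touch more explicit about the convergence argument, but the idea is identical.
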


\begin{proof}
We define~$e_1:=\alpha$, and put~$e_2:=3e_1^2-2e_1^3\in\mf{k}_0$. A straightforward calculation shows that
\[
e_2^2-e_2 = 4(e_1^2-e_1)^3-3(e_1^2-e_1)^2 \in \mf{k}_{2r}.
\]
Continuing this process, we construct a sequence~$(e_n)_{n\ge 1}$ in~$\mf{h}_0$ which satisfies
\be
\item~$e_n\equiv e_i\pmod{\mf{k}_{2^ir}}$ and 
\item~$e_n\equiv e_n^2\pmod{\mf{k}_{2^nr}}$, 
\ee
for all positive integers~$i<n$. This sequence has a limit~$\tilde{\alpha}$ in~$\mf{b}_0$ which is, by construction, an idempotent congruent to~$\alpha$ modulo~$\mf{k}_r$. Moreover, by construction the sequence~$(e_n)$ is symmetric if~$\alpha$ is, in which case the limit~$\tilde{\alpha}$ is also symmetric. 
\end{proof}

We also need the classical group analogue of Lemma~\ref{lemConjblocksSemisimple}.

\begin{lemma}\label{lemConjblocksSkewSemisimple}
Let~$[\Lambda,q,r,\beta]$ and~$[\Lambda',q,r,\beta']$ be skew-semisimple strata which intertwine in~$G$ and let~$\zeta:I\to I'$ be the matching given by Proposition~\ref{propMatching}. Suppose moreover that~$(V^i,h|_{V^i})\cong (V'^{\zeta(i)},h|_{V'^{\zeta(i)}})$, for all~$i\in I$. Then there are an element~$g\in G$ and a skew element~$\gamma\in \prod_i A^{i,i}$ such that~$V^i=gV'^{\zeta(i)}$, for all indices~$i\in I$, and:
\begin{itemize}
 \item~$[g\Lambda',q,r,g\beta'g^{-1}]$ is equivalent to~$[g\Lambda',q,r,\gamma]$;
 \item~$[\Lambda,q,r,\beta]$ is equivalent to~$[\Lambda,q,r,\gamma]$; and
 \item~$[g\Lambda',q,r,\gamma]$ and~$[\Lambda,q,r,\gamma]$ are skew-semisimple strata with the same associated splitting~$V=\bigoplus_{i\in I}V^i$.
\end{itemize}
\end{lemma}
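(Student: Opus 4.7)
The plan is to mimic the proof of Lemma~\ref{lemConjblocksSemisimple}, keeping all conjugating elements inside~$G$ rather than merely inside~$\tilde G$. First, using the hypothesis $(V^i,h|_{V^i})\cong(V'^{\zeta(i)},h|_{V'^{\zeta(i)}})$ and the orthogonality of the two splittings of~$V$, I would piece the individual hermitian isometries together into a single $g_1\in G$ with $g_1 V'^{\zeta(i)}=V^i$ for all $i\in I$. After replacing $(\Lambda',\beta')$ by $(g_1\Lambda',g_1\beta' g_1^{-1})$, both skew-semisimple strata share the orthogonal splitting $V=\bigoplus_{i\in I}V^i$ and still intertwine in~$G$.

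Next, for each $i\in I$, the self-dual stratum $[\Lambda^i\oplus\Lambda'^{\zeta(i)},\max\{q_i,q'_{\zeta(i)}\},r,\beta_i+\beta'_{\zeta(i)}]$ in the external orthogonal direct sum is equivalent to a simple stratum by Proposition~\ref{propMatching}. Applying the skew version of Theorem~\ref{thmDiagonalizedSimple} produces skew elements $\gamma_i\in\End_F(V^i)$ and $\gamma'_{\zeta(i)}\in\End_F(V'^{\zeta(i)})$ with a common irreducible minimal polynomial; the equality of $F$-dimensions from Proposition~\ref{propMatching} then forces their characteristic polynomials to coincide. The off-diagonal non-simplicity conditions carry over exactly as in Lemma~\ref{lemConjblocksSemisimple}, so with $\gamma=\sum_i\gamma_i$ and $\gamma'=\sum_i\gamma'_{\zeta(i)}$ the strata $[\Lambda,q,r,\gamma]$ and $[\Lambda',q,r,\gamma']$ are skew-semisimple with the original splittings and equivalent to $[\Lambda,q,r,\beta]$ and $[\Lambda',q,r,\beta']$ respectively.

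All that remains is to find a block-diagonal $g_2\in G$ with $g_2\gamma' g_2^{-1}=\gamma$; then $g:=g_2 g_1$ and $\gamma$ fulfil the conclusion. Block by block this is a skew Skolem--Noether question: find an isometry $u_i$ of $(V^i,h|_{V^i})$ conjugating the embedding of $E_i=F[\gamma_i]=F[\gamma'_{\zeta(i)}]$ given by $\gamma'_{\zeta(i)}$, transported along the chosen identification $V'^{\zeta(i)}\simeq V^i$, to the one given by $\gamma_i$. When $\rho'$ is non-trivial or $\epsilon=1$, equality of characteristic polynomials together with Corollary~\ref{corSkolemNoether1} delivers $u_i$ immediately.

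The hard part is the symplectic case ($F=F_0$, $\epsilon=-1$), where Corollary~\ref{corSkolemNoether1} no longer applies and one must instead invoke Theorem~\ref{thmSkolemNoether}, whose hypothesis demands that the pure skew strata attached to $\gamma_i$ and $\gamma'_{\zeta(i)}$ intertwine inside the isometry group of~$V^i$. To extract this block-level intertwining from the ambient $G$-intertwining of the full semisimple strata, I plan to decompose an intertwining element via Corollary~\ref{corIntertwiningtwosemisimplestrataSameLevel} as a product of two unipotent factors and a block-diagonal middle factor in $\prod_i\Aut_F(V^i)$, and then apply a cohomology argument along the lines of Corollary~\ref{corCohomologyArgument} and~\cite[Corollary~4.14]{stevens:05} to promote the middle factor to an element of $\prod_i\U(V^i,h|_{V^i})$; the self-dual symmetry supplied by the hypothesis $(V^i,h|_{V^i})\cong(V'^{\zeta(i)},h|_{V'^{\zeta(i)}})$ is precisely what makes this cohomological lift available. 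Assembling the resulting $u_i$ across $i\in I$ then produces $g_2\in G$ and completes the proof.
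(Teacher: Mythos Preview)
Your proposal is correct and follows essentially the same route as the paper. The paper's proof is terser: it simply says to repeat the proof of Lemma~\ref{lemConjblocksSemisimple}, uses the hermitian isomorphism hypothesis to find~$g\in G$ with~$V^i=gV'^{\zeta(i)}$, and then appeals to ``Remark~\ref{rempropIntertwiningtwoSkewsemisimplestrataSameLevelifollowsii} and Theorem~\ref{thmSkolemNoether}'' for the block-level conjugation of~$g\gamma'_{\zeta(i)}g^{-1}$ to~$\gamma_i$ inside~$G_i$. Your unpacking of this step---decomposing an intertwining element via Corollary~\ref{corIntertwiningtwosemisimplestrataSameLevel} and applying the cohomology argument of~\cite[Corollary~4.14]{stevens:05} to extract a block-diagonal isometry---is exactly what that citation to Remark~\ref{rempropIntertwiningtwoSkewsemisimplestrataSameLevelifollowsii} encodes. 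The only cosmetic differences are that you align the splittings before producing~$\gamma$ (the paper does it after) and that you separate the easy case where Corollary~\ref{corSkolemNoether1} applies, whereas the paper absorbs this into the single citation to Theorem~\ref{thmSkolemNoether}.
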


\begin{proof}
The proof is the same as that of Lemma~\ref{lemConjblocksSemisimple}. We only need to note that, once we have found~$\gamma_i$ and~$\gamma'_{\zeta(i)}$ with the same irreducible minimal polynomial then there is an element~$g\in G$ such that~$V^i=gV'^{\zeta(i)}$, since~$(V^i,h|_{V^i})\cong (V'^{\zeta(i)},h|_{V'^{\zeta(i)}})$, and then the elements~~$g\gamma'_{\zeta(i)} g^{-1}$ and $\gamma_i$ are conjugate in~$G_i=G\cap A^{i,i}$ by Remark~\ref{rempropIntertwiningtwoSkewsemisimplestrataSameLevelifollowsii} and Theorem~\ref{thmSkolemNoether}.
\end{proof}

\begin{proof}[Proof of Proposition~\ref{propIntertwiningtwoSkewsemisimplestrataSameLevel}]
It is sufficient to prove~(i) by Remark~\ref{rempropIntertwiningtwoSkewsemisimplestrataSameLevelifollowsii}.
We prove~(i) by strata induction, giving first the inductive step. Suppose that~$\Delta=[\Lambda,q,r,\beta]$ and~$\Delta'=[\Lambda',q,r,\beta']$ are skew-semisimple strata as in the proposition which intertwine in~$G$. Then the stratum~$[\Lambda,q,r+1,\beta]$ is equivalent to a skew-semisimple stratum~$\Delta_\gamma=[\Lambda,q,r+1,\gamma]$ whose splitting is a coarsening of that of~$\Delta$, by Theorem~\ref{thmDiagonalizedSimple}; similarly we have a skew-semisimple stratum~$\Delta'_\gamma=[\Lambda',q,r+1,\gamma']$. Since the strata~$\Delta_\gamma$,~$\Delta'_\gamma$ intertwine, we may apply Lemma~\ref{lemConjblocksSkewSemisimple} and, replacing~$\Delta'$ by its conjugate~$g\Delta'$, we may assume~$\gamma'=\gamma$.

Now, if~$h\in G$ intertwines the strata~$\Delta_\gamma$ and~$\Delta'_\gamma$ then Theorem~\ref{thmIntertwiningTwoSemisimpleStratumSameBeta} allows us to write~$h=xby$, with~$b\in B\cap G$ (and~$x,y$ in certain compact subgroups). Then Proposition~\ref{propS2}, applied as in the proof of Proposition~\ref{propMatching}, implies that the derived strata~$[\Lambda,r+1,r,s_\gamma(\beta-\gamma)]$ and~$[\Lambda',r+1,r,s_\gamma(\beta'-\gamma)]$ intertwine. On the other hand, these derived strata are equivalent to skew-semisimple strata so the base step (below) now implies that the bijection~$\zeta: I\ra I'$ has the property that~$(V^i,h_i^{\phi_i})\cong (V'^{\zeta(i)},h'^{\phi'_{\zeta(i)}}_{\zeta(i)})$, where~$\phi_i:F[\gamma_i]\to A^{i,i}$ is the embedding given by the splitting, and~$h_i^{\phi_i}$ is such that~$h_i=\lambda_i\circ h_i^{\phi_i}$ and~$h_i=h|_{V^i}$. 
But then we also have~$(V^i,h_i)\cong (V'^{\zeta(i)},h'_{\zeta(i)})$, as required.

It remains to show the base case~$r=q-1$. The lattice sequences have the same level, and so the same period, by Proposition~\ref{propFundamental}. Since they intertwine, the strata have the same characteristic polynomial. If two minimal strata with the same level intertwine and one of them is zero, then the other is zero, by Proposition~\ref{propFundamental}, and both have the trivial associated splitting. Thus we need only consider non-zero semisimple strata.

Replacing~$[\Lambda',q,q-1,\beta']$ be a conjugate if necessary, we may assume that the strata are intertwined by~$1$ so that 
\[
(\beta+\mf{a}_{1-q})\cap (\beta'+\mf{a}'_{1-q}) \neq \emptyset.
\]
Thus there is are elements~$a\in\mfa_1$ and~$a'\in\mfa'_1$ such that 
\[
z:=y_{\beta}+a=y_{\beta'}+a'.
\] 
By the bijectivity of~$\zeta$ we can assume that~$I=I'$ and~$\zeta$ is the identity. Let~$i\in I$.

We show that there is an idempotent~$e$ such that~$e\equiv 1^i\pmod{\mfa_1}$ and~$e\equiv 1'^i\pmod{\mfa'_1}$: There is a polynomial~$Q\in o_F[X]$ such that~$Q(y_{\beta})\equiv 1^i\pmod{\mf{a}_1}$. Moreover, by replacing~$Q(X)$ by~$\frac 12(Q(X)+\sigma(Q)(\pm X))$, we can choose~$Q$ such that, 
for all~$j$, the coefficient of~$X^j$ is symmetric (resp. skew-symmetric) if and only if~$y^j_\beta$ is symmetric (resp. skew-symmetric). We have a canonical isomorphism from~$\kappa[\bar{y}_{\beta}]$ to~$\kappa[\bar{y}_{\beta'}]$ (mapping~$\bar{y}_{\beta}$ to~$\bar{y}_{\beta'}$) so~$Q(y_{\beta'})$ is congruent to some idempotent modulo~$\mfa'_1$, and indeed~$Q(y_{\beta'})\equiv 1'^i\pmod{\mf{a}'_1}$ since the matching~$\zeta$ is given by matching minimal polynomials. By Proposition~\ref{lemFindIdempotents} applied with~$\mf{b}_0=\mfa_0$ and~$\mf{k}_r=\mfa_r\cap\mfa'_r$, there is a symmetric idempotent~$e\in \mf{a}_0\cap \mf{a}'_0$ congruent to~$Q(z)$ modulo both radicals.

The idempotent~$e$ gives a new splitting~$V=\tilde{V}^i\oplus (\tilde{V}^i)^\perp$ for \emph{both} lattice sequences. 

Finally, we show that~$V^i$ and~$\tilde{V^i}$ are isomorphic signed hermitian spaces.  
We define the map~$\psi: V^i\ra \tilde{V}^i$ to be the restriction of~$e$ to~$V^i$. We first show that the map is injective. If~$v$ is a non-zero element of its kernel, then there is an integer~$l$ such that~$v\in \Lambda^i_l\setminus \Lambda^i_{l+1}$. But then
\[
0\not\equiv v\equiv 1^iv\equiv ev\equiv 0 \pmod{\Lambda^i_{l+1}}, 
\] 
where the third congruence uses that~$e\equiv 1^i\pmod{\mf{a}_1}$. Similarly, the restriction of~$1^i$ to~$\tilde{V}^i$ is injective and these maps induce pairwise inverse~$\kappa_F$-isomorphisms between~$\Lambda^i_l/\Lambda^i_{l+1}$ and~$\tilde{\Lambda}^i_l/\tilde{\Lambda^i_{l+1}}$ where 
$\tilde{\Lambda}^i_l$ is the intersection of~$\Lambda_l$ with~$\tilde{V}^i$. Thus~$\psi(\Lambda^i)$ is equal to~$\tilde{\Lambda}^i$. 

We now compare the hermitian structures. For~$v\in \Lambda^i_l$ and~$w\in (\Lambda^i_{l+1})^\#$ we have 
\[
\overline{h(v,w)}
=\overline{h(v,1^iw)}
=\overline{h(v,ew)}
=\overline{h(v,e^2w)}
=\overline{h(ev,ew)}
=\overline{h(\psi(v),\psi(w))}
\]
By Proposition~\ref{propliftResidualIsometries} there is an~$F$-linear isometry 
\[
\tilde{\psi}:(V^i,h|_{V^i})\ra (\tilde{V}^i,h|_{\tilde{V}^i})
\] 
such that
\bi
\item $\tilde{\psi}(\Lambda^i)=\tilde{\Lambda}^i$ and 
\item $\tilde{\psi}$ and~$\psi$ induce the same isomorphism on~$\Lambda^i_l/\Lambda^i_{l+1}$, for all integers~$l$.
\ei
Thus,~$V^i$,~$\tilde{V}^i$, and similarly~$V'^i$, are isomorphic signed hermitian spaces. 
\end{proof}

\subsection{Matching for equivalent strata}
We need to understand the matching between two equivalent strata, and for that reason we have the following three results. 

\begin{lemma}\label{lemCentralIdempoents}
Let~$[\Lambda,q,m,\beta]$ be a semisimple sratum and assume that~$e$ is an idempotent in~$\prod_iA^{i,i}$ such that every non-zero element~$x$ of~$\prod_iA^{i,i}$ satisfies~$\nu_\Lambda(ex-xe)>\nu_\Lambda(a)$. Then~$e$ is a central idempotent of~$\prod_iA^{i,i}$, i.e. commutes with all elements of~$\prod_iA^{i,i}$. 
\end{lemma}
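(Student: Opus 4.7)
The plan is to argue by contradiction using the Peirce decomposition of $\prod_i A^{i,i}$ with respect to the idempotent $e$. I will interpret the hypothesis as the natural one: $\nu_\Lambda(ex-xe)>\nu_\Lambda(x)$ for every non-zero $x\in\prod_iA^{i,i}$. Write $e=\sum_{i\in I}e_i$ with $e_i\in A^{i,i}$ an idempotent (the decomposition is forced because $e$ already lies in $\prod_iA^{i,i}$). Since multiplication in $\prod_iA^{i,i}$ is block-diagonal, centrality of $e$ is equivalent to centrality of each $e_i$ in $A^{i,i}$, so it suffices to work in a single block.

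Fix an index $i$ and suppose for contradiction that $e_i$ is not central in $A^{i,i}$. Then there exists $x\in A^{i,i}$ with $e_ix-xe_i\neq 0$. From the Peirce decomposition
\[
x=e_ixe_i+e_ix(1-e_i)+(1-e_i)xe_i+(1-e_i)x(1-e_i),
\]
I get $e_ix-xe_i=e_ix(1-e_i)-(1-e_i)xe_i$, so at least one of $y:=e_ix(1-e_i)$ and $z:=(1-e_i)xe_i$ is non-zero. A direct computation using $e_i^2=e_i$ gives $e_iy=y$, $ye_i=0$, and symmetrically $e_iz=0$, $ze_i=z$, hence
\[
e_iy-ye_i=y,\qquad e_iz-ze_i=-z.
\]
Since $y,z\in A^{i,i}$ and the other $e_j$'s act trivially on this block from either side, $ey-ye=e_iy-ye_i=y$ in $\prod_jA^{j,j}$, and similarly for $z$. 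Thus $\nu_\Lambda(ey-ye)=\nu_\Lambda(y)$ (resp.\ $\nu_\Lambda(ez-ze)=\nu_\Lambda(z)$), contradicting the strict inequality in the hypothesis as soon as the element in question is non-zero.

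Consequently $e_ix(1-e_i)=0$ and $(1-e_i)xe_i=0$ for every $x\in A^{i,i}$, which forces $e_ix=e_ixe_i=xe_i$; that is, $e_i$ is central in $A^{i,i}$. Since this holds for every $i\in I$, the idempotent $e=\sum_ie_i$ commutes with every element of $\prod_iA^{i,i}$, as required. There is no real obstacle here: the argument is a purely formal Peirce-decomposition calculation, and the role of the semisimple stratum is only to provide the ambient decomposition $V=\bigoplus_iV^i$ and the valuation $\nu_\Lambda$ used to measure the commutators.
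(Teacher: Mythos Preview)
Your proof is correct and is essentially the same argument as the paper's: both exploit the Peirce decomposition with respect to~$e$ to produce a non-zero element on which the commutator~$y\mapsto ey-ye$ acts by~$\pm 1$, contradicting the strict-valuation hypothesis. The paper reaches this by iterating~$a_e$ on~$x'=ex-xe$ (after noting~$ex'e=0$), whereas you go straight to the off-diagonal Peirce pieces~$ex(1-e)$ and~$(1-e)xe$; your route is marginally more direct, but the content is identical.
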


\begin{proof}
Let~$x$ be an element of~$\prod_iA^{i,i}$ and put~$x'=a_e(x)=ex-xe$. Then~$ex'e=0$ and one checks that~$a_e(a_e(x'))=a_e(x')$. The condition on~$e$ now implies that~$a_e(x')=0$ is zero and thus~$ex'=ea_e(x')=0=a_e(x')e=x'e$. This implies that~$ex=exe=xe$ and, since~$x$ was arbitrary,~$e$ is central. 
\end{proof}

\begin{lemma}\label{lemComparingIntertwinings}
Let~$[\Lambda,q,m,\beta]$ and~$[\Lambda,q,m,\beta']$ be two semisimple strata, such that  
\[ 
\mf{a}_1B_{\beta'} \mf{a}_1=\mf{a}_1B_\beta \mf{a}_1
\] 
then there are a bijection~$\tau: I\ra I'$ and an element~$g$ of~$\tilde{U}^1(\mf{a})$ such that 
\begin{enumerate}
 \item~$1^i\equiv 1^{\tau(i)} \pmod{\mf{a}_1}$, and 
 \item~$g1^ig^{-1}=1^{\tau(i)}$
\end{enumerate}
for all indices~$i\in I$. Moreover, the bijection~$\tau$ satisfies
\[
\dim_{\kappa_F}(\Lambda^i_j/\Lambda^i_{j+1})=\dim_{\kappa_F}(\Lambda'^{\tau(i)}_j/\Lambda'^{\tau(i)}_{j+1}),\ \text{ for all }i\in I,\ j\in\bbZ.
\]
\end{lemma}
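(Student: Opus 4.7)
The plan is a three-step argument: first produce the bijection $\tau$ together with (i) at the level of $\mf{a}_0/\mf{a}_1$; next assemble a single element $g\in\tilde{U}^1(\mf{a})$ realising (ii); and finally read off the dimension equality from $g$.

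For Step~1 I would extract from the bimodule hypothesis $\mf{a}_1 B_\beta \mf{a}_1=\mf{a}_1 B_{\beta'}\mf{a}_1$ the residual equality
\[
(B_\beta\cap\mf{a}_0)+\mf{a}_1 \;=\; (B_{\beta'}\cap\mf{a}_0)+\mf{a}_1
\]
of subsets of $\mf{a}_0$. This is the main obstacle: it requires a direct bimodule manipulation exploiting the filtration structure and the form of $B_\beta,B_{\beta'}$ as direct sums of matrix algebras over the field extensions $E_i$. Granting it, each $1^i\in B_\beta\cap\mf{a}_0$ admits a representative $b'_i\in B_{\beta'}\cap\mf{a}_0$ with $b'_i\equiv 1^i\pmod{\mf{a}_1}$; since $(b'_i)^2-b'_i\equiv 0\pmod{\mf{a}_1}$, Lemma~\ref{lemFindIdempotents}, applied inside the subring $B_{\beta'}\cap\mf{a}_0$ with the filtration $(\mf{a}_r\cap B_{\beta'})_{r\ge 1}$, upgrades $b'_i$ to a genuine idempotent $\tilde{b}'_i\in B_{\beta'}$ congruent to $1^i$ modulo $\mf{a}_1$. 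An orthogonalisation step, iterating Lemma~\ref{lemFindIdempotents} corner by corner in the Peirce decomposition of $B_{\beta'}$, refines $\{\tilde{b}'_i\}_{i\in I}$ into an orthogonal family of idempotents of $B_{\beta'}$ summing to $1$ and still satisfying $\tilde{b}'_i\equiv 1^i\pmod{\mf{a}_1}$. Finally, Lemma~\ref{lemCentralIdempoents} combined with the symmetric argument starting from $B_{\beta'}$ forces each $\tilde{b}'_i$ to coincide with exactly one $1'^{\tau(i)}$, giving the bijection $\tau$ and part~(i).

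For Step~2, once (i) is in hand, set $g:=\sum_{i\in I}1'^{\tau(i)}\,1^i$. Writing $1'^{\tau(i)}=1^i+a_i$ with $a_i\in\mf{a}_1$ yields
\[
g \;=\; \sum_i 1^i+\sum_i a_i\,1^i \;=\; 1+\sum_i a_i\,1^i \;\in\; 1+\mf{a}_1 \;=\; \tilde{U}^1(\mf{a}),
\]
and the orthogonality relations $1^i 1^j=\delta_{ij}1^j$ and $1'^{\tau(i)}1'^{\tau(j)}=\delta_{ij}1'^{\tau(j)}$ give $g\cdot 1^j=1'^{\tau(j)}1^j=1'^{\tau(j)}\cdot g$, hence $g\,1^j g^{-1}=1'^{\tau(j)}$, proving~(ii). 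For Step~3, the element $g\in\tilde{U}^1(\mf{a})$ fixes each lattice $\Lambda_s$ and induces the identity on $\Lambda_s/\Lambda_{s+1}$; together with $gV^i=V'^{\tau(i)}$ this gives $g\Lambda^i_j=V'^{\tau(i)}\cap g\Lambda_j=V'^{\tau(i)}\cap\Lambda_j=\Lambda'^{\tau(i)}_j$, so $g$ induces a $\kappa_F$-linear isomorphism $\Lambda^i_j/\Lambda^i_{j+1}\xrightarrow{\sim}\Lambda'^{\tau(i)}_j/\Lambda'^{\tau(i)}_{j+1}$ and the dimension equality follows. The hard part is thus firmly concentrated in the opening reduction of Step~1; everything downstream is routine from Lemmas~\ref{lemFindIdempotents} and~\ref{lemCentralIdempoents} together with the explicit formula for $g$.
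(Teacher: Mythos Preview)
Your proposal is essentially correct and follows the paper's own approach closely. The paper's proof is very terse: it invokes Lemmas~\ref{lemFindIdempotents} and~\ref{lemCentralIdempoents} together with the hypothesis to conclude that each primitive central idempotent of~$B_\beta$ is congruent modulo~$\mf{a}_1$ to a sum of primitive central idempotents of~$B_{\beta'}$ and vice versa, which gives the bijection~$\tau$ and~(i); then it takes precisely your element $g=\sum_i 1^{\tau(i)}1^i$ for~(ii); and for the dimension equality it observes that $v\mapsto 1^i v$ and $v\mapsto 1^{\tau(i)} v$ are mutually inverse maps on the successive quotients.

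The only organisational difference lies in Step~1. The paper does not pass through your intermediate equality $(B_\beta\cap\mf{a}_0)+\mf{a}_1=(B_{\beta'}\cap\mf{a}_0)+\mf{a}_1$; it argues directly that each~$1^i$ is congruent to a \emph{sum} of the~$1'^{j'}$ (and symmetrically), from which the bijection follows by comparing both directions. This avoids your orthogonalisation step. Your appeal to Lemma~\ref{lemCentralIdempoents} also needs a little care: its hypothesis asks for approximate commutation with all of~$\prod_{i'}A^{i',i'}$, not just with~$B_{\beta'}$, and there is no a~priori reason the lifted idempotent (being congruent to~$1^i$) should approximately commute with that larger algebra. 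A cleaner route, once you have your residual equality, is simply to note that $\mf{b}_{\beta,0}/\mf{b}_{\beta,1}$ and $\mf{b}_{\beta',0}/\mf{b}_{\beta',1}$ are then the \emph{same} $\kappa_F$-subalgebra of $\mf{a}_0/\mf{a}_1$, hence have the same centre and the same primitive central idempotents; this immediately matches the images of the~$1^i$ and the~$1'^{j'}$. Your Steps~2 and~3 are correct as written and coincide with the paper (your use of~$g$ in Step~3 is equivalent to the paper's mutual-inverse formulation).
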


\begin{proof}
By the equality of the two sets and Lemmas~\ref{lemFindIdempotents} and~\ref{lemCentralIdempoents}, every  primitive central idempotent of~$B_\beta$ has to be congruent modulo~$\mf{a}_1(\Lambda)$ to a sum of primitive central idempotents of~$B_{\beta'}$, and vice versa. The first part follows from this. For the second part, take the map~$g$ which sends~$v\in V$ to~$\sum_i1^{\tau(i)}1^iv$. Finally, the map~$v\mapsto 1^iv$ induces, for each~$j\in\bbZ$, a linear map~$\Lambda'^{\tau(i)}_j/\Lambda'^{\tau(i)}_{j+1}\ra \Lambda^i_j/\Lambda^i_{j+1}$ whose inverse is induced by~$v\mapsto 1^{\tau(i)} v$.
\end{proof}

\begin{lemma}\label{lemEqSesiStrata}
Let~$[\Lambda,q,m,\beta]$ and~$[\Lambda,q,m,\beta']$ be equivalent semisimple strata. Then there is an element~$g$ of~$1+\mf{m}_{-(k_0+m)}(\beta,\Lambda)$ such that~$[\Lambda,q,m,\beta]$ and~$[\Lambda,q,m,g\beta'g^{-1}]$ have the same associated splitting. 
\end{lemma}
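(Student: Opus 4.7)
The plan is strata induction on $q-m$, paralleling the framework set up for Proposition~\ref{propMatching}. Let $\zeta:I\to I'$ be the matching supplied by Proposition~\ref{propMatching}, which applies because equivalent strata intertwine via the identity.

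\textbf{Base case} ($m=q-1$, minimal). Since $\beta\equiv\beta'\pmod{\mfa_{1-q}}$ we have $y_\beta\equiv y_{\beta'}\pmod{\mfa_1}$, so both strata share the characteristic polynomial $\phi=\prod_i\phi_i$, and the primary factorisation realises $\zeta$. Hensel-lifting each $\bar\phi_i$ to a factor of the characteristic polynomial of $y_\beta$ (respectively $y_{\beta'}$) expresses the primitive central idempotents as $1^i=P_i(y_\beta)$ and $1'^{\zeta(i)}=P_i(y_{\beta'})$ for the \emph{same} polynomial $P_i\in o_F[X]$. Thus $1^i\equiv 1'^{\zeta(i)}\pmod{\mfa_1}$. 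A successive application of Lemma~\ref{lemFindIdempotents} (with $\mf{k}_r=\mfa_r$) lifts the family $\{1'^{\zeta(i)}\}$ to an orthogonal family of idempotents equal to $\{1^i\}$, producing a conjugator $g\in 1+\mfa_1$ with $g 1'^{\zeta(i)}g^{-1}=1^i$ for all $i$. Since $k_0(\beta,\Lambda)=-q$ in the minimal case, one checks directly that $\mf{m}_{-(k_0+m)}=\mfa_1$, so $g$ is in the required subgroup.

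\textbf{Inductive step} ($m<q-1$). Using Theorem~\ref{thmDiagonalizedSimple}, pick defining-sequence first elements $[\Lambda,q,m+1,\gamma]$ for $\beta$ and $[\Lambda,q,m+1,\gamma']$ for $\beta'$; these are equivalent semisimple strata with coarser splittings $V=\bigoplus_j W^j$, $V=\bigoplus_j W'^j$. The induction hypothesis supplies $g_1\in 1+\mf{m}_{-(k_0(\gamma,\Lambda)+m+1)}(\gamma,\Lambda)$ aligning these coarse splittings; after replacing $(\beta',\gamma')$ by their $g_1$-conjugates we may assume $W^j=W'^j$ for all $j$. The refinement of each block $W^j$ into the blocks of $\beta_{|W^j}$ (respectively $\beta'_{|W^j}$) is, by Corollary~\ref{corDerivedSplitting}, governed by the equivalent derived strata $[\Lambda\cap W^j,m+1,m,s_{\gamma_j}((\beta-\gamma)_j)]$ and $[\Lambda\cap W^j,m+1,m,s_{\gamma_j}((\beta'-\gamma)_j)]$ in $B_{\gamma_j}$. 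These are level-one (hence minimal) semisimple strata in $B_{\gamma_j}$, so the base-case argument applied block-wise produces a block-diagonal conjugator $g_2$, and the composite $g=g_2g_1$ is the desired element.

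\textbf{Main obstacle.} The principal difficulty is bookkeeping at the level of the subgroups $1+\mf{m}_{-(k_0+m)}(\cdot,\Lambda)$: one must verify the containment $1+\mf{m}_{-(k_0(\gamma,\Lambda)+m+1)}(\gamma,\Lambda)\subseteq 1+\mf{m}_{-(k_0(\beta,\Lambda)+m)}(\beta,\Lambda)$ and that the block-wise $g_2$ also sits in the target subgroup. This reduces to comparing the filtrations $\mf{n}_l(\gamma,\Lambda)$ and $\mf{n}_l(\beta,\Lambda)$, exploiting that $\beta-\gamma\in\mfa_{-m}$ so that for $x\in\mfa_s$ the commutators satisfy $a_\beta(x)-a_\gamma(x)\in\mfa_{s-m}$; this is exactly what is needed to translate the $\gamma$-estimates to $\beta$-estimates. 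A secondary subtlety is that the block-wise base-case conjugators, each a priori living in $1+\mfa_1$ of the respective centraliser $B_{\gamma_j}$, assemble to a global element of $1+\mf{m}_{-(k_0+m)}(\beta,\Lambda)$---but this follows from the block-diagonal structure inherited from the aligned coarse splitting together with Lemma~\ref{lemDescrmfn}(ii).
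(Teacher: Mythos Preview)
Your approach via strata induction is genuinely different from the paper's, which gives a direct construction with no induction at all: the paper replaces~$\beta,\beta'$ (keeping the same splittings) so that they have the same characteristic polynomial, produces an explicit~$xz\in\tilde{\U}(\Lambda)$ with~$(xz)\beta(xz)^{-1}=\beta'$, observes that then~$a_\beta(xz)=(\beta'-\beta)(xz)\in\mfa_{-m}$ so~$xz\in\mf{n}_{-m}(\beta,\Lambda)\cap\tilde{\U}(\Lambda)$, and finally invokes Lemma~\ref{lemDescrmfn}(ii) to write~$xz=gb$ with~$b\in\mf{b}_0^\times$ and~$g\in 1+\mf{m}_{-(k_0+m)}$. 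Since~$b$ centralises~$\beta$, the element~$g$ does the job. The heavy lifting (matching~$\zeta$, the congruence~$1^i\equiv 1'^{\zeta(i)}\pmod{\mfa_1}$, and the dimension comparison needed to build~$z$) is outsourced to Lemmas~\ref{lemComparingIntertwinings} and Corollary~\ref{corIntertwiningtwosemisimplestrataSameLevel}.

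Your inductive step has a genuine gap that is more serious than the bookkeeping you flag. When you conjugate~$(\beta',\gamma')$ by the element~$g_1\in 1+\mf{m}_{-(k_0(\gamma)+m+1)}(\gamma,\Lambda)$ supplied by the induction hypothesis, you lose equivalence at level~$m$: writing~$g_1=1+u$ with~$u\in\mf{n}_{-m-1}(\gamma,\Lambda)$, one has~$g_1\beta'g_1^{-1}-\beta'=-a_{\beta'}(u)g_1^{-1}$, and~$a_{\beta'}(u)=a_\gamma(u)+a_{\beta'-\gamma}(u)$ lies only in~$\mfa_{-m-1}$, not in~$\mfa_{-m}$, since~$a_\gamma(u)\in\mfa_{-m-1}$ is the best one gets from~$u\in\mf{n}_{-m-1}(\gamma)$. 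Consequently~$\beta-g_1\beta'g_1^{-1}$ need not lie in~$\mfa_{-m}$, so the block-wise derived strata you form are not known to be equivalent, and your base-case idempotent argument (which needs~$y_\beta\equiv y_{\beta'}\pmod{\mfa_1}$, hence genuine equivalence of the minimal strata) does not apply. A related issue is that you form both derived strata with respect to the \emph{same}~$\gamma$, but~$\gamma$ was chosen in~$\prod_i A^{i,i}$ for the~$\beta$-splitting and has no reason to respect the (conjugated)~$\beta'$-splitting, so Corollary~\ref{corDerivedSplitting} does not identify its splitting as you claim. Both problems disappear in the paper's direct argument precisely because nothing is conjugated until the very end, and the single decomposition~$\mf{n}_{-m}=\mf{b}_0+\mf{m}_{-(k_0+m)}$ absorbs all the filtration-level accounting in one step.
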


\begin{proof}
Note that we may replace~$[\Lambda,q,m,\beta]$ by an equivalent stratum with the same splitting (and likewise for~$[\Lambda,q,m,\beta']$). Thus, by applying Corollary~\ref{corSameCharPolynomialForBetas}, we may assume that~$\beta$ and~$\beta'$ have the same characteristic polynomial and thus there is an element~$x$ of~$\tilde{G}$ such that~$x\beta x^{-1}=\beta'$. Note that this implies that~$xV^i=V'^{\zeta(i)}$.

Since the strata intertwine, Proposition~\ref{propMatching} gives us a matching~$\zeta:I\ra I'$ such that the minimal polynomials satisfy~$\mu_{\beta_i}=\mu_{\beta'_{\zeta(i)}}$ and~$\dim_FV^i= \dim_F V^{\zeta(i)}$, for each~$i\in I$. We can also compare the intertwining sets of the strata (which are equal) and then Lemma~\ref{lemComparingIntertwinings} gives us a map~$\tau: I\ra I'$ such that~$1^i\equiv 1^{\tau(i)}\pmod{\mf{a}_1}$, for all~$i\in I$. Since the identity intertwines the two strata, Corollary~\ref{corIntertwiningtwosemisimplestrataSameLevel} implies that we can write the identity as~$uyv$, with~$u,v\in\tilde U^1(\Lambda)$ and~$y=\prod_{i\in I} y_i$ such that~$y_iV^i=V'^{\zeta(i)}$. Moreover, we have~$y=u^{-1}v^{-1}\in\tilde U^1(\Lambda)$ so that~$y_i\Lambda^i=\Lambda'^{\zeta(i)}$. Thus
\[
1^i \equiv y 1^i y^{-1}=1^{\zeta(i)} \pmod{\mfa_1}.
\]
In particular, we get~$1^{\tau(i)}\equiv 1^{\zeta(i)} \pmod{\mfa_1}$ so that~$\zeta=\tau$, and then Lemma~\ref{lemComparingIntertwinings} also implies that~$\zeta$ satisfies the extra condition
\[
\dim_{\kappa_F}(\Lambda^i_j/\Lambda^i_{j+1})=\dim_{\kappa_F}(\Lambda'^{\zeta(i)}_j/\Lambda'^{\zeta(i)}_{j+1}),\ \text{ for all }i\in I,\ j\in\bbZ.
\]
Now~$\Lambda^i$ and~$x^{-1}\Lambda'^{\zeta(i)}$ are~$o_{F[\beta_i]}$-lattice sequences in~$V^i$ with successive quotients of the same dimensions so there is an element~$z_i$ of~$B_{\beta_i}^\times$ such that~$z_i\Lambda^i=x^{-1}\Lambda'^{\zeta(i)}$. In particular, writing~$z=\prod_{i\in I}z_i$, the element~$xz$ conjugates~$\beta$ to~$\beta'$ and lies in~$\tilde{U}(\Lambda)$.

Finally, since the strata~$[\Lambda,q,m,\beta]$ and~$[\Lambda,q,m,\beta']$ be equivalent, the element~$xz$ also lies in~$\mf{n}_{-m}(\beta,\Lambda)\cap\tilde{U}(\Lambda)$ which, by Lemma~\ref{lemDescrmfn}, is~$(1+\mf{m}_{-(k_0+m)}(\beta,\Lambda))\mf{b}_0^\times$. Hence we can write~$xz=gb$, with~$b\in \mf{b}_0^\times$ and~$g\in 1+\mf{m}_{-(k_0+m)}(\beta,\Lambda)$. 
\end{proof}

\section{Intertwining and conjugacy for semisimple strata}

In the case of simple strata on a fixed lattice \emph{chain}, intertwining implies conjugacy (see~\cite[2.6.1]{bushnellKutzko:93}). The same result is true for arbitrary lattice sequences and, as we prove here, for simple skew strata (that is,~$G$-intertwining implies~$G$-conjugacy). However, the analogous result is no longer true for semisimple strata. As well giving some examples to illustrate this, we give a useful sufficient additional condition to guarantee that the strata are indeed conjugate.

\subsection{For general linear groups}

\begin{theorem}[{cf.~\cite[2.6.1]{bushnellKutzko:93}}]\label{thmIntConSimple}
Suppose~$[\Lambda,q,r,\beta]$ and~$[\Lambda,q,r,\beta']$ are simple strata which intertwine. Then, they are conjugate by an element of~$\tilde{\U}(\Lambda)$.
\end{theorem}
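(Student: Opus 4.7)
My approach would be a strata induction on $q-r$, following the template of the proof of Proposition~\ref{propMatching}. First I apply Corollary~\ref{corSameCharPolynomialForBetas} to replace $[\Lambda,q,r,\beta]$ and $[\Lambda,q,r,\beta']$ by equivalent simple strata $[\Lambda,q,r,\tilde\beta]$ and $[\Lambda,q,r,\tilde\beta']$ whose elements have the same characteristic polynomial; this is harmless because equivalence alters neither the intertwining set nor the $\tilde{\U}(\Lambda)$-conjugacy class. The strata being simple, $F[\tilde\beta]$ and $F[\tilde\beta']$ are fields sharing a minimal polynomial and both normalise $\Lambda$, so it suffices to produce $g\in\tilde{\U}(\Lambda)$ with $g\tilde\beta g^{-1}\equiv\tilde\beta'\pmod{\mfa_{-r}(\Lambda)}$.

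For the inductive step, I would apply the inductive hypothesis to the coarser strata $[\Lambda,q,r+1,\tilde\beta]$ and $[\Lambda,q,r+1,\tilde\beta']$ (which intertwine, since their cosets contain those at level $r$) to conjugate $\tilde\beta$ by an element of $\tilde{\U}(\Lambda)$ so that the two level-$(r+1)$ strata become equivalent. By Theorem~\ref{thmDiagonalizedSimple} they then share a common simple approximation $[\Lambda,q,r+1,\gamma]$. Proposition~\ref{propS2}, applied to the intertwining of the level-$r$ strata, implies that the derived strata $[\Lambda,r+1,r,s_\gamma(\tilde\beta-\gamma)]$ and $[\Lambda,r+1,r,s_\gamma(\tilde\beta'-\gamma)]$ intertwine in $B_\gamma^\times$. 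These are minimal, and applying the base case inside $B_\gamma$ brings their tame-corestrictions into the same coset modulo $\mf{b}_{1-r}$; Proposition~\ref{propS3} then produces $1+v,\,1+w'\in\tilde{\U}(\Lambda)$ realising the conjugation of the level-$r$ cosets, and Theorem~\ref{thmSimplederived} upgrades this to the required $\tilde{\U}(\Lambda)$-conjugation of $[\Lambda,q,r,\tilde\beta]$ and $[\Lambda,q,r,\tilde\beta']$.

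For the base case $r=q-1$, the residues $\bar y_{\tilde\beta},\bar y_{\tilde\beta'}\in\mfa_0/\mfa_1$ have a common characteristic polynomial which, by simplicity, is a power of an irreducible polynomial. A finite-dimensional Skolem--Noether argument in the semisimple $\kappa_F$-algebra $\mfa_0/\mfa_1$ produces a conjugator lifting to $g\in\tilde{\U}(\Lambda)$ with $gy_{\tilde\beta}g^{-1}\equiv y_{\tilde\beta'}\pmod{\mfa_1}$; minimality -- namely the coprimality $\gcd(\nu_E(\tilde\beta), e(E|F))=1$ together with $\bar y_{\tilde\beta}$ generating $\kappa_E$ -- then promotes this to $g\tilde\beta g^{-1}\equiv\tilde\beta'\pmod{\mfa_{1-q}(\Lambda)}$. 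The main obstacle is precisely this base-case promotion -- from a congruence on the ``rescaled'' $y$ modulo $\mfa_1$ to a congruence on $\tilde\beta$ itself modulo $\mfa_{1-q}$ -- which requires careful use of the minimality conditions, exactly where the classical argument in \cite[(2.6.1),(2.4.1)]{bushnellKutzko:93} is subtle; it extends to lattice sequences here either directly or via the $\dag$-construction.
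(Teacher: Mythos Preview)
Your proof and the paper's share the same opening move---apply Corollary~\ref{corSameCharPolynomialForBetas} to reduce to the case where~$\beta$ and~$\beta'$ have the same characteristic polynomial---but then diverge sharply. The paper's argument is two lines: once the characteristic polynomials agree, it invokes~\cite[Lemma~1.6]{bushnellHenniart:96}, an integral Skolem--Noether statement which directly furnishes an element of~$\tilde{\U}(\Lambda)$ conjugating~$\beta$ to~$\beta'$ (not merely the cosets). No strata induction is needed.

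Your route---a full strata induction using Propositions~\ref{propS2} and~\ref{propS3} and the base case for minimal strata---is essentially a reconstruction of the classical argument in~\cite[(2.6.1)]{bushnellKutzko:93}, and it is valid in outline. A couple of remarks: first, the reference to Theorem~\ref{thmSimplederived} in your inductive step is superfluous---Proposition~\ref{propS3} already delivers the congruence~$(1+w')\tilde\beta'(1+w')^{-1}\equiv(1+v)\tilde\beta(1+v)^{-1}\pmod{\mfa_{-r}}$, which \emph{is} the conjugacy of the level-$r$ strata; Theorem~\ref{thmSimplederived} concerns simplicity of derived strata, not conjugacy. Second, as you yourself flag, the base-case promotion from conjugacy of~$\bar y_{\tilde\beta},\bar y_{\tilde\beta'}$ in~$\mfa_0/\mfa_1$ to conjugacy of the level-$(q-1)$ cosets is the genuinely delicate step; it works, but one must check that the conjugator in~$(\mfa_0/\mfa_1)^\times$ can be chosen compatibly across the simple factors of~$\mfa_0/\mfa_1$, using that~$\bar y_{\tilde\beta}$ is semisimple with irreducible minimal polynomial. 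The paper sidesteps all of this by citing the packaged result from~\cite{bushnellHenniart:96}, which buys brevity at the cost of a black box; your approach is self-contained but substantially longer.
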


\begin{proof}
By Corollary~\ref{corSameCharPolynomialForBetas}, we can assume that~$\beta$ and~$\beta'$ have the same characteristic polynomial. By~\cite[Lemma~1.6]{bushnellHenniart:96}, there is an element of~$\tilde{\U}(\Lambda)$ which conjugates~$\beta$ to~$\beta'$. 
\end{proof}

In contrast to simple strata we cannot achieve intertwining implies conjugacy for semisimple strata.

\begin{example}
Let~$V$ be a~$4$-dimensional vector space over~$F$ with basis~$v_1,\ldots, v_4$ and let~$\Lambda$ be the lattice chain of period~$2$ such that 
\[
\Lambda_0=v_1o_F+v_2o_F+v_3o_F+v_4o_F,\qquad \Lambda_1=v_1o_F+v_2o_F+v_3o_F+v_4\mf{p}_F.
\]
Then, with respect to the basis,~$\mf{a}_0(\Lambda)$ is 
\[
\begin{pmatrix}
o_F&o_F&o_F&o_F\\
o_F&o_F&o_F&o_F\\
o_F&o_F&o_F&o_F\\
\mf{p}_F&\mf{p}_F&\mf{p}_F&o_F\\
\end{pmatrix}
\]
The two elements:
\[
b:=\diag(\pi^{-1},\pi^{-1},-\pi^{-1},-\pi^{-1}),\qquad 
b':=\diag(-\pi^{-1},-\pi^{-1},\pi^{-1},\pi^{-1})
\] 
give two semisimple strata~$[\Lambda,2,1,b]$ and~$[\Lambda,2,1,b']$ which intertwine but are not conjugate over~$\Aut_F(V)$. Indeed, suppose for contradiction that the strata are conjugate under an element of~$\tilde{G}$; then this element has to be an element of the normalizer of~$\Lambda$ and thus by Lemma~\ref{lemEqSesiStrata} we can assume after conjugation that the associated splittings, which are the same for both strata, are conjugated to each other. Note that this splitting is given by~$V^1=v_1F+v_2F$ and~$V^2=v_3F+v_4F$. The minimal polynomials of the strata force that the matching has to be given by exchanging the two blocks~$V^1$ and~$V^2$. But this is not possible, because the image of~$\Lambda^1=\Lambda\cap V^1$ contains only one homothety class of lattices, while the image of~$\Lambda^2=\Lambda\cap V^2$ contains two. 
\end{example}

Thus we impose an extra condition in the following Theorem.

\begin{theorem}\label{thmintConjSemisimple}
Suppose that~$[\Lambda,q,r,\beta]$ and~$[\Lambda,q,r,\beta']$ are two non-zero semisimple strata which intertwine and let~$\zeta$ be the matching between their index sets. Suppose moreover that 
\beq\label{eqConLambdaLambdaPrime}
\dim_{\kappa_F}(\Lambda^i_j/\Lambda^i_{j+1})=\dim_{\kappa_F}(\Lambda'^{\zeta(i)}_j/\Lambda'^{\zeta(i)}_{j+1}),\ \text{ for all }i\in I,\ j\in\bbZ.
\eeq
Then the strata are conjugate by an element of~$\tilde{U}(\Lambda)$. 
\end{theorem}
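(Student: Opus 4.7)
We proceed by strata induction on $q-r$, as described before Proposition~\ref{propS2}. For the base case $r=q-1$, Corollary~\ref{corSameCharPolynomialForBetas} lets us replace the two minimal semisimple strata by equivalent ones, with the same associated splittings, such that $\beta_i$ and $\beta'_{\zeta(i)}$ have the same characteristic polynomial for every $i\in I$. Skolem--Noether inside each $\Hom_F(V^i,V'^{\zeta(i)})$ then produces an isomorphism $g_i:V^i\to V'^{\zeta(i)}$ with $g_i\beta_ig_i^{-1}=\beta'_{\zeta(i)}$. Condition~\eqref{eqConLambdaLambdaPrime} guarantees that $g_i\Lambda^i$ and $\Lambda'^{\zeta(i)}$ are two $o_F$-lattice sequences in $V'^{\zeta(i)}$ with identical successive quotient dimensions, both normalised by $\beta'_{\zeta(i)}$, and hence conjugate by an element of the centraliser of $\beta'_{\zeta(i)}$ in $\End_F(V'^{\zeta(i)})$. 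After correcting each $g_i$ by such an element, the sum $g=\sum_ig_i$ lies in $\tilde{\U}(\Lambda)$ and conjugates $\beta$ to $\beta'$.

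\textbf{Inductive step.} We first apply the inductive hypothesis to $[\Lambda,q,r+1,\beta]$ and $[\Lambda,q,r+1,\beta']$, which are semisimple with possibly coarser splittings $V=\bigoplus_jV^{I_j}$ and $V=\bigoplus_{j'}V'^{I'_{j'}}$ to which~\eqref{eqConLambdaLambdaPrime} descends by summation; this yields $u\in\tilde{\U}(\Lambda)$ so that, after replacing $\beta'$ by $u\beta'u^{-1}$, the two level-$(r+1)$ strata become equivalent. Lemma~\ref{lemEqSesiStrata} then provides a further conjugation inside $\tilde{\U}^1(\Lambda)$ aligning their coarser splittings. Within each matched coarser block $V^{I_j}$, the simple strata $[\Lambda^{I_j},q_j,r+1,\beta_{I_j}]$ and $[\Lambda^{I_j},q_j,r+1,\beta'_{I_j}]$ are equivalent; invoking Theorem~\ref{thmIntConSimple} blockwise, and then applying Theorem~\ref{thmDiagonalizedSimple} to both finer splittings simultaneously, we conjugate $\beta'$ further (still inside $\tilde{\U}(\Lambda)$) to produce a single $\gamma$ that respects both finer splittings and such that $[\Lambda,q,r+1,\beta]$ and $[\Lambda,q,r+1,\beta']$ are both equivalent to $[\Lambda,q,r+1,\gamma]$. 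Corollary~\ref{corDerivedSplitting}, applied blockwise on each coarser block where $\gamma$ is simple, then shows that the derived strata $[\Lambda,r+1,r,s_\gamma(\beta-\gamma)]$ and $[\Lambda,r+1,r,s_\gamma(\beta'-\gamma)]$ in $B_\gamma$ are equivalent to semisimple strata with the finer splittings. Proposition~\ref{propS2} shows these intertwine, the condition~\eqref{eqConLambdaLambdaPrime} is inherited from $\Lambda$, and the base case applied inside $B_\gamma$ produces $v\in\tilde{\U}(\Lambda)\cap B_\gamma^\times$ conjugating them. A blockwise application of Proposition~\ref{propS3} (valid because $\gamma$ is simple on each coarser block) finally lifts $v$ to an element of $\tilde{\U}(\Lambda)$ conjugating $[\Lambda,q,r,\beta]$ to $[\Lambda,q,r,\beta']$ up to equivalence.

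\textbf{Main obstacle.} The delicate step is producing the common $\gamma$ respecting \emph{both} finer splittings: after the inductive hypothesis and Lemma~\ref{lemEqSesiStrata} the coarser splittings at level $r+1$ coincide, but within each coarser block $V^{I_j}$ the finer splittings of $\beta_{I_j}$ and $\beta'_{I_j}$ can genuinely differ, and aligning them requires intertwining-implies-conjugacy for the simple strata $[\Lambda^{I_j},q_j,r+1,\beta_{I_j}]$ and $[\Lambda^{I_j},q_j,r+1,\beta'_{I_j}]$ (Theorem~\ref{thmIntConSimple}) combined with a careful simultaneous choice via Theorem~\ref{thmDiagonalizedSimple}, so that $\gamma$ can be chosen block-diagonally for both splittings at once before the derived-strata argument can proceed. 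A secondary technical point is that Proposition~\ref{propS3} is stated only for simple strata, so the final lift is carried out blockwise on each coarser block where $\gamma$ is simple; this causes no further difficulty once the common $\gamma$ is in place.
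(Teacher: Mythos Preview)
Your strata-induction plan is considerably more elaborate than what is needed, and the step you flag as the ``main obstacle'' is a genuine gap that you have not closed. After aligning the coarser splittings at level~$r+1$, you want a single simple~$\gamma_j$ on each coarser block~$V^{I_j}$ that is block-diagonal for \emph{both} finer decompositions~$\bigoplus_{i\in I_j}V^i$ and~$\bigoplus_{i'\in I'_j}V'^{i'}$ simultaneously. Theorem~\ref{thmDiagonalizedSimple} only produces~$\gamma_j$ adapted to one splitting at a time, and Theorem~\ref{thmIntConSimple} applied to the level-$(r+1)$ simple strata gives you nothing new (they are already equivalent, hence trivially conjugate by the identity). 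Aligning the two finer splittings within~$V^{I_j}$ is precisely the statement you are trying to prove, restricted to that block and at the same level~$r$, so your induction does not actually reduce the problem. Without such a common~$\gamma$, Corollary~\ref{corDerivedSplitting} does not give you control over the splitting of the second derived stratum, and you cannot verify condition~\eqref{eqConLambdaLambdaPrime} for the derived strata.

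The paper's argument avoids all of this by observing that your base-case strategy works at every level, not just for~$r=q-1$. By Corollary~\ref{corIntertwiningtwosemisimplestrataSameLevel} the intertwining set decomposes as~$(1+\mf{m}')(\prod_i I_i)(1+\mf{m})$, so there exist~$g_i:V^i\to V'^{\zeta(i)}$ intertwining the simple blocks~$[\Lambda^i,q,r,\beta_i]$ and~$[\Lambda'^{\zeta(i)},q,r,\beta'_{\zeta(i)}]$. Condition~\eqref{eqConLambdaLambdaPrime} lets you adjust each~$g_i$ so that~$g_i\Lambda^i=\Lambda'^{\zeta(i)}$, and then Theorem~\ref{thmIntConSimple} (intertwining implies conjugacy for \emph{simple} strata, valid at every level) gives a further translate making the block strata equivalent. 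The product~$\prod_i g_i$ then lies in~$\tilde{\U}(\Lambda)$ and conjugates the whole stratum. No strata induction, no common~$\gamma$, no derived strata.
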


\begin{proof}
Fix an index~$i$ of~$I$. By Proposition~\ref{propMatching} and Corollary~\ref{corIntertwiningtwosemisimplestrataSameLevel} there is an~$F$-linear isomorphism~$g_i: V^i\ra V^{\zeta(i)}$ such that~$[g_i\Lambda^i,q,r,g_i\beta^ig_i^{-1}]$ intertwines~$[\Lambda'^{\zeta(i)},q,r,\beta'^{\zeta(i)}]$. Now condition~\eqref{eqConLambdaLambdaPrime} implies that the lattice sequences~$g_i\Lambda^i$ and~$\Lambda'^{\zeta(i)}$ are conjugate so, modifying~$g_i$ if necessary, we may assume~$g_i\Lambda^i_j=\Lambda'^{\zeta(i)}_j$, for each~$j\in\bbZ$. Now we can apply Theorem~\ref{thmIntConSimple} so that, replacing~$g_i$ by a translate by an element of~$\tilde U(\Lambda'^{\zeta(i)})$, we can assume that~$[g_i\Lambda^i,q,r,g_i\beta^ig_i^{-1}]$ is equivalent to~$[\Lambda'^{\zeta(i)},q,r,\beta'^{\zeta(i)}]$. Then
\[
\left(\prod_{i\in I} g_i\right) \Lambda_j = \bigoplus_{i\in I} g_i\Lambda^i_j = \bigoplus_{i\in I} \Lambda'^{\zeta(i)}_j = \Lambda_j.
\]
so that~$\prod_{i\in I} g_i\in\tilde U(\Lambda)$ conjugates the first to the second stratum.
\end{proof}

\subsection{For classical groups}

We give here the similar ``intertwining implies conjugacy'' statements for skew-semisimple strata, beginning with the simple case.

\begin{theorem}\label{thmIntConSkewSimple}
Suppose~$[\Lambda,q,r,\beta]$ and~$[\Lambda,q,r,\beta']$ are two skew-simple strata which intertwine in~$G$. Then they are conjugate over~$U(\Lambda)$.
\end{theorem}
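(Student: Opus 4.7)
The plan is to combine the general linear case (Theorem~\ref{thmIntConSimple}) with the two Skolem--Noether theorems already available in the paper -- the global one for semisimple strata (Theorem~\ref{thmSkolemNoetherSemisimple}) and the integral one (Proposition~\ref{propConjug}). First I would apply Lemma~\ref{lemConjblocksSkewSemisimple} to the pair $[\Lambda,q,r,\beta]$, $[\Lambda,q,r,\beta']$; in the simple case there is a single block, so its hypothesis on isomorphic hermitian summands is trivially satisfied (and in any event is guaranteed by Proposition~\ref{propIntertwiningtwoSkewsemisimplestrataSameLevel}(i)). The lemma then produces, after replacing $\beta$ and $\beta'$ by equivalent skew elements, an element $h\in G$ satisfying $h\beta h^{-1}=\beta'$ exactly. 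Crucially, this reduction keeps the lattice sequence~$\Lambda$ itself unchanged.

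Next I would set up the data for Proposition~\ref{propConjug}. Let $E$ be the abstract field defined by the common minimal polynomial of~$\beta$ and~$\beta'$, and let $\phi,\phi'\colon E\hookrightarrow A$ denote the $\rho'$-$\sigma$-equivariant embeddings sending the distinguished generator to~$\beta$ and~$\beta'$ respectively. Both $\phi(E)^\times$ and $\phi'(E)^\times$ normalise~$\Lambda$, since $\Lambda$ is the (common) lattice sequence of the two simple strata. The relation $h\phi(x)h^{-1}=\phi'(x)$ makes $h$ into an $E$-linear map between $V$ equipped with the two $E$-structures. A brief uniqueness argument using the characterisation $h_F=\lambda\circ h^{\phi_i}$ of the hermitian $E$-forms from~\cite{broussousStevens:09} then shows that $h^{\phi'}(hv,hw)=h^\phi(v,w)$ for all $v,w\in V$, so that $(V,h^\phi)\cong(V,h^{\phi'})$ as hermitian $E$-spaces.

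I would then apply Proposition~\ref{propConjug} to $\phi,\phi'$ to obtain $g\in U(\Lambda)$ with $g\phi g^{-1}=\phi'$; evaluating at the distinguished generator of~$E$ gives $g\beta g^{-1}=\beta'$ and, since $g\in U(\Lambda)\subseteq G$ fixes~$\Lambda$, it conjugates $[\Lambda,q,r,\beta]$ to $[\Lambda,q,r,\beta']$ as required. The main obstacle I foresee is that Proposition~\ref{propConjug} is stated for a self-dual lattice \emph{chain}, whereas~$\Lambda$ is only a self-dual lattice sequence; this is exactly the situation the $\dagger$-construction recalled in Section~2 is designed to address, but it needs to be executed carefully so that the resulting~$g$ actually lies in $U(\Lambda)$ and not merely in the unitary group of the auxiliary chain.
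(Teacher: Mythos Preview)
Your proposal is correct and follows essentially the same route as the paper: the paper applies Corollary~\ref{corSameCharPolynomialForBetas} and then the simple Skolem--Noether Theorem~\ref{thmSkolemNoether} directly, whereas you invoke Lemma~\ref{lemConjblocksSkewSemisimple}, which in the single-block case amounts to exactly that combination, and both then finish with Proposition~\ref{propConjug}. Your explicit verification of the hermitian~$E$-space isomorphism needed for Proposition~\ref{propConjug}, and your flag about the chain-versus-sequence hypothesis there, are points the paper's one-line proof leaves implicit.
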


\begin{proof}
The proof is the mutatis mutandis that of Theorem~\ref{thmIntConSimple}: we apply Corollary~\ref{corSameCharPolynomialForBetas}, then Theorem~\ref{thmSkolemNoether}, and then 
Proposition~\ref{propConjug}.
\end{proof}

As in the non-skew case, this is no longer true if one replaces simple by semisimple.

\begin{example}
Consider a ramified quadratic field extension~$F|F_0$ and a skew-hermitian form on~$V=F^4$ whose Gram matrix~$(h_{ij})$ with respect to the standard basis is the anti-diagonal matrix with entries
\[
h_{41}=h_{32}=-1=-h_{23}=-h_{14},
\]
and write~$G$ for the isometry group of this form. Let~$\pi$ be  a skew-symmetric uniformizer of~$F$ and let~$z$ be a non-square in~$F_0$. Let~$\Lambda$ be the self-dual lattice chain corresponding to the hereditary order
\[
\begin{pmatrix}
o_F&\mf{p}_F&\mf{p}_F&\mf{p}_F\\
o_F&o_F&\mf{p}_F&\mf{p}_F\\
o_F&o_F&o_F&\mf{p}_F\\
o_F&o_F&o_F&o_F\\
\end{pmatrix}
\]
We define the skew-symmetric elements:
\[
b:=\diag(\pi^{-1}z,\pi^{-1},\pi^{-1},z\pi^{-1}),\qquad 
b':=\diag(\pi^{-1},\pi^{-1}z,\pi^{-1}z,\pi^{-1}).
\] 
The minimal skew-semisimple strata~$[\Lambda,4,3,b]$ and~$[\Lambda,4,3,b']$ intertwine over~$G$  because~$b$ is conjugate to~$b'$ under~$\U(h)$, but the strata are not conjugate under~$G$ because they are not conjugate under~$\tilde U(\Lambda)$.
\end{example}

As an immediate consequence of Proposition~\ref{propIntertwiningtwoSkewsemisimplestrataSameLevel} and  Theorem~\ref{thmIntConSkewSimple} (as in the non-skew case above) we have:

\begin{theorem}\label{thmintConjSkewSemisimple}
Suppose that~$[\Lambda,q,r,\beta]$ and~$[\Lambda,q,r,\beta']$ are two non-zero skew-semisimple strata which intertwine in~$G$, with matching~$\zeta$, such that~\eqref{eqConLambdaLambdaPrime} holds. Then the strata are conjugate by an element of~$U(\Lambda)$. 
\end{theorem}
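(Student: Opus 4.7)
The plan is to follow the template of Theorem~\ref{thmintConjSemisimple}, substituting Proposition~\ref{propIntertwiningtwoSkewsemisimplestrataSameLevel} for the pair Proposition~\ref{propMatching}/Corollary~\ref{corIntertwiningtwosemisimplestrataSameLevel}, and Theorem~\ref{thmIntConSkewSimple} for Theorem~\ref{thmIntConSimple}. As in the non-skew case, the goal is to reduce from the semisimple problem to the simple one on each piece of the matching.

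First, for each~$i\in I$ I would apply Proposition~\ref{propIntertwiningtwoSkewsemisimplestrataSameLevel}(i) to obtain an isomorphism of hermitian spaces~$\phi_i:(V^i,h|_{V^i})\to(V'^{\zeta(i)},h|_{V'^{\zeta(i)}})$. The two self-dual~$o_F$-lattice sequences~$\phi_i(\Lambda^i)$ and~$\Lambda'^{\zeta(i)}$ in~$(V'^{\zeta(i)},h|_{V'^{\zeta(i)}})$ have the same residual dimensions by hypothesis~\eqref{eqConLambdaLambdaPrime}, so they lie in the same orbit under the isometry group of~$V'^{\zeta(i)}$; this transitivity follows from Proposition~\ref{propliftResidualIsometries} applied to any~$F$-linear bijection lifting a matching of the residual data. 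Modifying~$\phi_i$ by such an isometry on the right, I may assume~$\phi_i(\Lambda^i_l)=\Lambda'^{\zeta(i)}_l$ for every~$l\in\bbZ$. The orthogonal direct sum~$\phi=\bigoplus_{i\in I}\phi_i$ is then an element of~$G$ sending~$\Lambda$ to itself, so~$\phi\in U(\Lambda)$.

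Replacing the stratum~$[\Lambda,q,r,\beta]$ by its~$\phi$-conjugate, I may assume that the two skew-semisimple strata share the associated splitting~$V=\bigoplus_i V^i$. On each summand there are now two skew-simple strata~$[\Lambda^i,q_i,r,\beta_i]$ and~$[\Lambda^i,q_i,r,\beta'_i]$ on the same self-dual lattice sequence~$\Lambda^i$. By Proposition~\ref{propIntertwiningtwoSkewsemisimplestrataSameLevel}(ii), any element of~$I_G([\Lambda,q,r,\beta],[\Lambda,q,r,\beta'])$ decomposes as~$u'xu$ with~$x\in G\cap\prod_i I_i$; since~$x\in G$ preserves the orthogonal splitting, its restriction~$x|_{V^i}$ lies in the isometry group of~$V^i$ and intertwines the two simple strata there. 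Theorem~\ref{thmIntConSkewSimple} then produces, for each~$i$, an element~$u_i\in U(\Lambda^i)$ conjugating~$\beta_i$ to~$\beta'_i$, and assembling yields~$u=\prod_i u_i\in U(\Lambda)$ conjugating~$\beta$ to~$\beta'$, as required.

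The main technical delicacy lies in the first step, where the numerical condition~\eqref{eqConLambdaLambdaPrime} must be converted into a transitivity statement for the action of the isometry group on self-dual lattice sequences of matching type; once that is in hand, the remainder is just assembly, with Theorem~\ref{thmIntConSkewSimple} providing the conjugacy on each individual block.
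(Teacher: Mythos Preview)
Your proof is correct and follows essentially the same approach as the paper, which simply states that the result is ``an immediate consequence of Proposition~\ref{propIntertwiningtwoSkewsemisimplestrataSameLevel} and Theorem~\ref{thmIntConSkewSimple} (as in the non-skew case above)''. One very minor imprecision: Theorem~\ref{thmIntConSkewSimple} gives conjugacy of the \emph{strata} (up to equivalence), not literal conjugacy of~$\beta_i$ to~$\beta'_i$, but this is exactly what is needed and your argument goes through.
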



\section{Semisimple characters}

Associated to the semisimple strata studied in the previous sections, we have sets of characters of certain compact open subgroups, which are call \emph{semisimple characters}. The purpose of this section is both briefly to recall their definitions and properties (from~\cite{bushnellKutzko:93} and~\cite{stevens:05}) and to ensure that all the results we need are available for arbitrary lattice sequences.

\subsection{Semisimple characters for~$\tilde{G}$}

For fix a semisimple stratum~$[\Lambda,q,0,\beta]$. 
Define~$r:=-k_0(\beta,\Lambda)$ and let~$[\Lambda,q,r,\gamma]$ be a semisimple stratum equivalent to~$[\Lambda,q,r,\beta]$ such that~$\gamma$ commutes with the projections~$1^i$ of the associated splitting of~$\beta$. If~$[\Lambda,q,r,\beta]$ is minimal then we take~$\gamma$ to be zero. 

\subsubsection{The rings of a semisimple stratum (cf.~\cite[3.1]{bushnellKutzko:93})}
We start with the orders~$\mf{h}(\beta,\Lambda)\subseteq \mf{j}(\beta,\Lambda)$ defined inductively by 
\begin{itemize}
 \item~$\mf{h}(\beta,\Lambda)=\mf{b}_{\beta,0}+\mf{h}(\gamma,\Lambda)\cap \mfa_{\floor{\frac{r}{2}}+1}$,
 \item~$\mf{j}(\beta,\Lambda)=\mf{b}_{\beta,0}+\mf{j}(\gamma,\Lambda)\cap \mfa_{\floor{\frac{r+1}{2}}}$,
\end{itemize}
with~$\mf{h}(0,\Lambda)=\mf{j}(0,\Lambda)=\mfa_0$. 
We define now the groups
\[
H^{m+1}(\beta,\Lambda):=\mf{h}(\beta,\Lambda)\cap \tilde{\U}^{m+1}(\Lambda),\qquad 
J^{m+1}(\beta,\Lambda):=\mf{j}(\beta,\Lambda)\cap \tilde{\U}^{m+1}(\Lambda),
\] 
for~$m\geq -1$, and write~$H$ and~$J$ instead of~$H^0$ and~$J^0$. 

We now begin the proofs of the statements in~\cite[section~3.1]{bushnellKutzko:93} for semisimple strata. (Note that some of these are already in~\cite{stevens:05}.)

\begin{proposition}[{cf.~\cite[(3.1.9)]{bushnellKutzko:93}}]\label{propBK3.1.9}
\begin{itemize}
 \item[(i)] For all~$-1\leq t\leq r$, the lattice~$\mf{h}^{\floor{\frac{t}{2}}}(\beta,\Lambda)$ is a bimodule over the ring~$\mf{n}_{-t}(\beta,\Lambda)$.
 \item[(ii)] If~$r<n$, we have~$\mf{h}^{k}(\beta,\Lambda)$ is equal~$\mf{h}^{k}(\gamma,\Lambda)$ for~$k\geq \floor{\frac{r}{2}}+1$.
 \item[(iii)] For~$k\geq 0$,~$\mf{h}^{k}(\beta,\Lambda)$ is a~$\mf{b}_\beta$-bimodule.
 \item[(iv)]~$\mf{h}(\beta,\Lambda)$ is a ring and in particular an~$o_F$-order in~$A$ and~$\mf{h}^k(\beta,\Lambda)$ is a two-sided ideal of~$\mf{h}(\beta,\Lambda)$, for all non-negative integers~$k$.
 \item[(v)] Let~$t\leq r-1$ and let~$[\Lambda,q,t,\beta']$ be a semisimple stratum equivalent to~$[\Lambda,q,t,\beta]$. Then~$\mf{h}^k(\beta,\Lambda)$ is equal to~$\mf{h}^k(\beta',\Lambda)$, for all non-negative integers~$k>t-\floor{\frac{r+1}{2}}$.
\end{itemize}
\end{proposition}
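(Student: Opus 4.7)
The plan is to proceed by strata induction along a defining sequence for $[\Lambda,q,0,\beta]$, mirroring the proof of the simple case in~\cite[(3.1.9)]{bushnellKutzko:93}. The base case is that of a minimal semisimple stratum (or a zero stratum), where the ``deeper'' element is $\gamma=0$ and $\mf{h}(0,\Lambda)=\mfa_0$; in that situation $\mf{h}(\beta,\Lambda)=\mf{b}_{\beta,0}+\mfa_{\floor{r/2}+1}$, and all five assertions reduce to elementary manipulations of the filtration $(\mfa_j)$ and the centraliser order $\mf{b}_{\beta}$, combined with Lemma~\ref{lemDescrmfn} for~(i).

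\textbf{Inductive step for (i)--(iv).} For the inductive step I would assume the proposition for the semisimple stratum $[\Lambda,q,r,\gamma]$, whose critical exponent strictly decreases (in absolute value), so the induction terminates. For~(i), decompose
\[
\mf{n}_{-t}(\beta,\Lambda)=\mf{b}_{\beta}+\bigl(\mf{n}_{-t}(\beta,\Lambda)\cap\mfa_{r-t}\bigr)
\]
via Lemma~\ref{lemDescrmfn}(ii) and check both summands separately: the action of $\mf{b}_{\beta}$ preserves each piece of the defining identity, since $\mf{b}_{\beta}\subseteq\mf{n}_0(\gamma,\Lambda)$ (as $\beta$ commutes with $\gamma$ up to a term of low filtration), and the inductive hypothesis~(i) applied to $\gamma$ gives stability of $\mf{h}(\gamma,\Lambda)\cap\mfa_{\floor{r/2}+1}$ under $\mf{n}_0(\gamma,\Lambda)$; the action of $\mfa_{r-t}$ shifts filtration indices favourably. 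From~(i) with $t=0$ one immediately obtains~(iv), since $\mf{h}(\beta,\Lambda)\subseteq\mf{n}_0(\beta,\Lambda)=\mfa_0$; statement~(iii) is the case $t\le 0$ of~(i); and~(ii) follows from the inductive definition together with the containment $\mf{b}_{\beta,0}\cap\mfa_k\subseteq\mf{h}^k(\gamma,\Lambda)$ for $k\ge\floor{r/2}+1$, which in turn uses~(iii) applied to~$\gamma$ and the fact that $\mf{b}_{\beta}\subseteq\mf{n}_{-r}(\gamma,\Lambda)$.

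\textbf{The main obstacle: (v).} This asserts that $\mf{h}^k(\beta,\Lambda)$ depends only on the coset $\beta+\mfa_{-t}$ once $k>t-\floor{(r+1)/2}$. I would again proceed by strata induction, but now with the additional preparation that $\beta$ and $\beta'$ admit a common element $\gamma$ in the next level of their defining sequences. This is arranged by combining Theorem~\ref{thmDiagonalizedSimple} (to split a given $\gamma$ along the associated decomposition) with Lemma~\ref{lemEqSesiStrata}, which for equivalent semisimple strata produces a conjugator in $1+\mf{m}_{-(k_0+t)}(\beta,\Lambda)$. Once a common $\gamma$ is fixed, the identity $\mf{h}(\beta,\Lambda)=\mf{b}_{\beta,0}+\mf{h}^{\floor{r/2}+1}(\gamma,\Lambda)$ and its analogue for $\beta'$ reduce matters to the equality $\mf{b}_{\beta,0}+\mfa_k=\mf{b}_{\beta',0}+\mfa_k$ in the specified range.

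\textbf{Expected difficulty.} The delicate step is precisely the control of the centraliser orders $\mf{b}_{\beta}$ versus $\mf{b}_{\beta'}$: one must absorb the difference into $\mfa_k$ using the bound $k>t-\floor{(r+1)/2}$, and verify that the conjugation provided by Lemma~\ref{lemEqSesiStrata} preserves the filtration piece $\mf{h}^k(\gamma,\Lambda)$ since it lies in $1+\mf{m}_{-(k_0+t)}(\beta,\Lambda)\subseteq\tilde{\U}^{1}(\Lambda)$ and acts on $\mf{h}^{k}(\gamma,\Lambda)$ via~(i) applied to the intermediate stratum. All other parts of the argument are direct translations of the simple-case calculations, with the block-diagonal structure furnished by the splitting $A=\bigoplus_{i,j}A^{i,j}$ handled blockwise via Lemma~\ref{lemKernels}.
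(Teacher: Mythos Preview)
Your overall architecture is right and matches the paper: induct along a defining sequence, reduce the simple case to~\cite[(3.1.9)]{bushnellKutzko:93} via the~$\dag$-construction, and for~(v) align the splittings of~$\beta$ and~$\beta'$ via Lemma~\ref{lemEqSesiStrata}, then work blockwise. Two points, however, are genuine gaps.

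\medskip

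\textbf{The conjugation step in~(v) cannot be handled by~(i) alone.} You propose to show that conjugation by the element~$g\in 1+\mf{m}_{-(k_0+t)}(\beta,\Lambda)\subseteq\mf{n}_{-t}(\beta,\Lambda)$ preserves~$\mf{h}^k$ by invoking~(i). But~(i) only tells you that~$\mf{h}^{\floor{t/2}}$ is an~$\mf{n}_{-t}$-bimodule, hence (intersecting with~$\mfa_k$) that~$\mf{h}^k$ is an~$\mf{n}_{-t}$-bimodule for~$k\ge\floor{t/2}$. The range you need is~$k>t-\floor{(r+1)/2}$, and for~$t$ small relative to~$r$ this is strictly larger: for instance if~$r\ge 5$ and~$t=2$ you need~$\mf{h}^0$ to be an~$\mf{n}_{-2}$-bimodule, whereas~(i) only yields this for~$\mf{h}^1$. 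The paper fills exactly this gap with a separate sharper lemma (Lemma~\ref{lemHisNmoduleForBigN}): \emph{$\mf{h}^{\max\{0,\,1+t-\floor{(r+1)/2}\}}$ is an~$\mf{n}_{-t}(\beta,\Lambda)$-bimodule for all~$0\le t\le r$}. This is proved by its own induction on~$r$, and its key input is Lemma~\ref{lemStevens310i}, the containment~$\mf{n}_{-k}\cap\mfa_{r-k}\subseteq\mf{h}^{r-k}(\beta,\Lambda)$ for~$k<r/2$. You have identified the right ``expected difficulty'', but you have underestimated it: a new lemma is required, not just~(i).

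\medskip

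\textbf{The reduction ``to~$\mf{b}_{\beta,0}+\mfa_k=\mf{b}_{\beta',0}+\mfa_k$'' is not correct.} Even after a common~$\gamma$ and common splitting are arranged, the definition gives~$\mf{h}^k(\beta,\Lambda)=\mf{b}_{\beta,k}+\mf{h}^{\max\{k,\floor{r/2}+1\}}(\gamma,\Lambda)$, and the second summand is contained in~$\mfa_{\max\{k,\floor{r/2}+1\}}$, not equal to~$\mfa_k$; so the proposed equality would not yield~$\mf{h}^k(\beta,\Lambda)=\mf{h}^k(\beta',\Lambda)$. What the paper actually does once the splittings agree is simpler and cleaner: the off-diagonal pieces~$\mf{h}^k(\beta,\Lambda)\cap A^{i,j}$ for~$i\ne j$ depend only on~$\gamma$ (directly from the definition), while on each diagonal block~$A^{i,i}$ one is comparing two \emph{simple} strata and invokes the already-known simple case.
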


\begin{proof}
In~\cite[(3.1.9)]{bushnellKutzko:93} the statement is proven for strict simple strata. In the case of a non-strict simple stratum~$[\Lambda,q,m,\beta]$, the stratum 
\[ 
[\bigoplus_{l=0}^{e(\Lambda)-1}(\Lambda-l),q,m,\beta^{\oplus e}]
\] 
is a strict simple stratum and, using the identity 
\[
1_V\mf{h}^{k}(\beta^{\oplus e},\oplus_l (\Lambda-l))1_V=\mf{h}^{k}(\beta,\Lambda)
\] 
(where~$1_V$ denotes projection onto the first copy of~$V$ in~$\bigoplus_{l=0}^{e(\Lambda)-1}V$) we get the result for all simple strata. Thus we continue with the case of semisimple strata. 

We begin with the proof of~(v), but only for the case where the strata in~(v) have the same associated splitting; we prove the general case after the next four lemmas. We use the idea of~\cite[Lemma~3.9]{stevens:05}. Assume that~$\mf{h}^k(\beta,\Lambda)$ and~$\mf{h}^k(\beta',\Lambda)$ are defined using the same~$\gamma$ and that~$[\Lambda,q,t,\beta']$ has the same associated splitting as~$[\Lambda,q,t,\beta]$. In particular, we immediately get that~$\mf{h}^k(\beta,\Lambda)\cap A^{i,j}=\mf{h}^k(\beta',\Lambda)\cap A^{i,j}$, for~$i\ne j$, from the definition, while~$\mf{h}^k(\beta,\Lambda)\cap A^{i,i}=\mf{h}^k(\beta',\Lambda)\cap A^{i,i}$ follows from the simple case.

Before proving~(v) in general, we show how the remaining assertions follow from it. (ii) is straightforward while induction and (v) imply that the definition of~$\mf{h}(\beta,\Lambda)$ does not depend on the choice of~$\gamma$. Now~(i) follows from~\cite[3.10(ii)]{stevens:05}, (iv) follows by induction from~(i) and~(ii), and finally~(iii) follows from~(iv) and~(i).

To finish the proof of Proposition~\ref{propBK3.1.9}(v) we need the following sequence of lemmas.

\begin{lemma}\label{lemHisNmoduleForBigN}
$\mf{h}^{\max\{0,1+t-\floor{\frac{r+1}{2}}\}}$ is an~$\mf{n}_{-t}(\beta,\Lambda)$-bimodule for all~$r\geq t\geq 0$.
\end{lemma}

For this we need the analogue of~\cite[Lemma~3.10]{stevens:05} for~$\mf{h}$ instead of~$\mf{j}$ (see the sentence following \emph{loc. cit.}).

\begin{lemma}[{cf.~\cite[Lemma~3.10(i)]{stevens:05}}]\label{lemStevens310i}\ 
\begin{enumerate}
 \item For all integers~$k< \frac{r}2$, we have~$\mf{n}_{-k}\cap\mf{a}_{r-k}\subseteq \mf{h}^{r-k}(\beta,\Lambda)$.
 \item For all integers~$k\le \frac{r}2$, we have~$\mf{n}_{-k}\cap\mf{a}_{r-k}\subseteq \mf{j}^{r-k}(\beta,\Lambda)$
\end{enumerate}
\end{lemma}

\begin{proof}[Proof of Lemma~\ref{lemHisNmoduleForBigN}]
The proof is by induction on~$r=-k_0(\beta,\Lambda)$. We have the two important identities:
 \begin{align*}
\mf{h}^t(\beta,\Lambda)=\mf{b}_{\beta,t}+\mf{h}^{\max\{t,\floor{\frac{r}{2}}+1\}}(\gamma,\Lambda) 
 \end{align*}
 and 
 \begin{align*}
\mf{n}_{-t}=\mf{b}_{\beta,0}+\mf{n}_{-t}(\beta,\Lambda)\cap\mf{a}_{r-t}
\end{align*}
We write~$t_0$ for~$\max\{0,1+t-\floor{\frac{r+1}{2}}\}$ so that
\begin{equation}\label{eq1}
 t_0+r-t\geq 1+\floor{\frac{r}{2}} \text{ and }2(r-t+t_0)>r.
\end{equation}
We have to show that~$\mf{n}_{-t}\mf{h}^{t_0}(\beta,\Lambda)$ is a subset of~$\mf{h}^{t_0}(\beta,\Lambda)$. We have
\begin{itemize}
 \item $\mf{h}^{\max\{t_0,\floor{\frac{r}{2}}+1\}}(\gamma,\Lambda)=\mf{h}^{\max\{t_0,\floor{\frac{r}{2}}+1\}}(\beta,\Lambda)$ is a~$\mf{b}_{\beta,0}$-module.
 \item $(\mf{n}_{-t}(\beta,\Lambda)\cap\mf{a}_{r-t})\mf{b}_{\beta,t_0}$ is contained in~$\mf{n}_{t_0-t}(\beta,\Lambda)\cap\mf{a}_{t_0+r-t}$, which is a subset of $\mf{h}^{\max\{t_0,\floor{\frac{r}{2}}+1\}}(\gamma,\Lambda)$ by~\eqref{eq1} and Lemma~\ref{lemStevens310i}.
\end{itemize}
The last containment we need, that~$\mf{n}_{-t}(\beta,\Lambda)\cap\mf{a}_{r-t}\mf{h}^{\max\{t,\floor{\frac{r}{2}}+1\}}(\gamma,\Lambda)$ is a subset 
of~$\mf{h}^{t_0}(\beta,\Lambda)$, is proved by induction. The case of~$\gamma=0$ is trivial, while the induction step is a result of the equality~$\mf{n}_{-t}(\beta,\Lambda)\cap\mf{a}_{r-t}=\mf{n}_{-t}(\gamma,\Lambda)\cap\mf{a}_{r-t}$ and the induction hypothesis. 
\end{proof}

Finally, we see that the proof of the general case of Proposition~\ref{propBK3.1.9}(v) follows from Lemmas~\ref{lemHisNmoduleForBigN} and~\ref{lemEqSesiStrata}. 
\end{proof}

Given now the preliminary results on semisimple strata that we have obtained in previous sections and Proposition~\ref{propBK3.1.9}, we can follow the definitions and proofs of~\cite[section~3.1]{bushnellKutzko:93}, from~(3.1.3) to~(3.1.21), to see that if one makes the obvious substitution
\begin{itemize}
 \item ``replace~$\mf{b}_{{\beta},t}\mf{n}_l$ by~$\mf{a}_t\cap \mf{n}_{t+l}$'',
\end{itemize}
then everything is true except possibly for the equalities in~(3.1.9)(iii), in~(3.1.10)(iii) and in~(3.1.11). (Some of these are already described in~\cite{stevens:05}.) Thus, from now on, we will use these statements from~\cite{bushnellKutzko:93} for semisimple strata by referring to~\cite{bushnellKutzko:93} (and giving the reference to~\cite{stevens:05} if there is one).

\subsubsection{Characters (cf.~\cite[3.2]{bushnellKutzko:93})}	
Here we introduce the semisimple characters and their groups exactly the same way as it was done in~\cite[section~3.2]{bushnellKutzko:93} for simple characters. This definition is equivalent to the definition given in~\cite[section~3]{stevens:05}. We fix an additive character~$\psi_F$ of~$F$ of level one (that is, trivial on~$\mf{p}_F$ but not on~$o_F$). We define~$\psi_A:=\psi_F\circ \trace_{A|F}$ and a character
\[
\psi_{\beta}:\tilde{\U}_{\floor{\frac{n}{2}}+1}(\Lambda)\rightarrow \mathbb{C}^\times,\quad \psi_{\beta}(1+x):=\psi_A(\beta x).
\]
The kernel of~$\psi_\beta$ contains~$\tilde{\U}_{n+1}(\Lambda)$ because~$\psi_F$ has level one. 

\begin{definition}
If~$q=-k_0(\beta,\Lambda)$ then we define the set~$\C(\Lambda,m,\beta)$ to be the set of all characters~$\theta: H^{m+1}(\beta,\Lambda)\ra \mathbb{C}$ such that:
\begin{enumerate}
 \item the restriction of~$\theta$ to~$H^{m+1}(\beta,\Lambda)\cap\tilde{U}^{\floor{\frac{q}{2}+1}}(\Lambda)$ is equal to~$\psi_{\beta}$;
 \item the restriction of~$\theta$ to~$H^{m+1}\cap B^\times_{\beta}$ factors through the determinant map~$\det_{B_\beta}: B^\times_\beta\ra F[\beta]^\times$.
\end{enumerate}

If~$q>-k_0(\beta,\Lambda)$ then we define~$\C(\Lambda,m,\beta)$ inductively to be the set of all  characters~$\theta: H^{m+1}(\beta,\Lambda)\ra \mathbb{C}$ such that:
\begin{enumerate}
 \item~$\theta$ is normalized by~$\mf{n}(\Lambda)\cap B_\beta^\times$;
 \item the restriction of~$\theta$ to~$H^{m+1}\cap B^\times_{\beta}$ factors through the determinant 
map~$\det_{B_\beta}: B^\times_\beta\ra F[\beta]^\times$;
 \item if~$m'=\max(m,\floor{\frac{r}{2}})$, then~$\theta|_{H^{m'+1}(\beta,\Lambda)}$ is of the form~$\theta_0\psi_c$, for some~$\theta_0\in \C(\Lambda,m',\gamma)$ and~$c=\beta-\gamma$.
  \end{enumerate}
\end{definition}

\begin{remark}
For minimal~$\beta$ and~$m\geq\floor{\frac{q}{2}}$ we have~$\C(\Lambda,m,\beta)=\{\psi_\beta\}$
\end{remark}

We write~$I_H(\theta,\theta')$ for the intertwining of two characters in a group~$H$, i.e.~$g\in H$ is an element of~$I_H(\theta,\theta')$ if and only if~$\theta^g:x\mapsto \theta(gxg^{-1})$ and~$\theta'$ agree on the intersection of their domains. In the case~$H=\tilde{G}$ we omit the subscript.

Let us first recall the intertwining formula for a semisimple character:

\begin{proposition}[{\cite[3.22]{stevens:05},~\cite[(3.3.2)]{bushnellKutzko:93}}]\label{propIntertwiningSemisimplCharacter}
The intertwining of a semi\-simple character~$\theta\in \C(\Lambda,m,\beta)$ is given by~$S(\beta)B^\times_\beta S(\beta)$ where 
\[
S(\beta)=S(\Lambda,m,\beta)=1+\mf{m}_{-k_0-m}+\mf{j}^{\floor{\frac{-k_0+1}{2}}}(\beta,\Lambda).
\]
\end{proposition}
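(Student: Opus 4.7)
My plan is to proceed by strata induction on~$r:=-k_0(\beta,\Lambda)$, paralleling the proof of~\cite[(3.3.2)]{bushnellKutzko:93} in the simple case, but replacing each simple-case ingredient by its semisimple counterpart from the previous sections, in particular Proposition~\ref{propBK3.1.9}, Theorem~\ref{thmIntertwiningOneSemisimpleStratum}, and Proposition~\ref{propS2}.

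For the easy inclusion~$S(\beta)B_\beta^\times S(\beta)\subseteq I(\theta)$ I would check each of the three factors separately. Elements of~$B_\beta^\times$ normalize~$H^{m+1}(\beta,\Lambda)$ by Proposition~\ref{propBK3.1.9}(iii) and fix~$\theta$ because~$\theta|_{H^{m+1}\cap B_\beta^\times}$ factors through the conjugation-invariant map~$\det_{B_\beta}$, while at each level of the recursive definition the~$\psi_c$-factor is preserved by~$B_\beta^\times$-conjugation since~$B_\beta$ centralizes~$c=\beta-\gamma$ modulo the relevant order. The set~$1+\mf{j}^{\floor{(-k_0+1)/2}}(\beta,\Lambda)$ lies in the stabilizer of~$\theta$, which follows inductively from the definition of~$\mf{j}$ together with the cocycle identity~$\psi_c((1+x)(1+y)(1+x)^{-1})=\psi_c(1+y)$ when~$cx\in\mfa_1$. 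Finally,~$1+\mf{m}_{-(k_0+m)}$ intertwines~$\theta$ because~$a_\beta(\mf{m}_{-(k_0+m)})\subseteq\mfa_{-m}$, so conjugation by such an element alters~$\psi_\beta$ only by a character trivial on~$H^{m+1}(\beta,\Lambda)$, and the same goes for the lower-level~$\psi_c$-contributions via the recursive definition of~$\theta$.

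For the hard inclusion~$I(\theta)\subseteq S(\beta)B_\beta^\times S(\beta)$ I would induct on~$r$. In the base case where~$\beta$ is minimal one has~$\mf{h}(\beta,\Lambda)=\mfa_0$ and~$\theta=\psi_\beta$ on its domain, so the intertwining of~$\theta$ equals that of the underlying minimal semisimple stratum~$[\Lambda,q,m,\beta]$, as given by Theorem~\ref{thmIntertwiningOneSemisimpleStratum}; this matches~$S(\beta)B_\beta^\times S(\beta)$ since for minimal~$\beta$ one has~$\mf{j}^{\floor{(-k_0+1)/2}}(\beta,\Lambda)\subseteq \mf{b}_{\beta,0}+\mf{m}_{-(k_0+m)}$. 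For the inductive step, set~$m'=\max(m,\floor{r/2})$ so that~$\theta|_{H^{m'+1}(\beta,\Lambda)}=\theta_0\psi_c$ for some~$\theta_0\in\C(\Lambda,m',\gamma)$ and~$c=\beta-\gamma$. Given~$g\in I(\theta)$, the element~$g$ intertwines~$\theta_0\psi_c$; applying the induction hypothesis to~$\theta_0$ writes~$g=u'bv$ with~$u',v\in S(\gamma)$ and~$b\in B_\gamma^\times$. The intertwining of the~$\psi_c$ factor combined with Proposition~\ref{propS2} then forces~$b$ to intertwine the derived stratum~$[\Lambda,r,m,s_\gamma(c)]$ in~$B_\gamma$, which is semisimple by (the~$B_\gamma$-analogue of) Corollary~\ref{corDerivedSplitting}, so Theorem~\ref{thmIntertwiningOneSemisimpleStratum} applied inside~$B_\gamma$ factorises~$b$ into the desired form, and reassembling everything places~$g$ in~$S(\beta)B_\beta^\times S(\beta)$.

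The main obstacle will be the bookkeeping at the inductive step: one must compare~$S(\beta)$ with~$S(\gamma)$ at the relevant level, verify that the~$B_\gamma^\times$-part produced by induction is absorbed into~$S(\beta)B_\beta^\times S(\beta)$ via the derived-stratum intertwining inside~$B_\gamma$, and ensure that the final factorisation is independent of the choice of the approximating~$\gamma$. The independence is handled by Proposition~\ref{propBK3.1.9}(v), and where a change of~$\gamma$ forces a change of splitting one may invoke Lemma~\ref{lemEqSesiStrata}; these two points are where the semisimple argument genuinely departs from the simple case of~\cite{bushnellKutzko:93}, and they are precisely the places where the matching theorem (Proposition~\ref{propMatching}) implicitly enters.
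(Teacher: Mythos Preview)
The paper does not give a proof of this proposition: it is stated as a recall of~\cite[3.22]{stevens:05} and~\cite[(3.3.2)]{bushnellKutzko:93}, and the paper's only further comment is the blanket assertion that ``reading the proofs of~\cite{bushnellKutzko:93} from~(3.2.1) to~(3.5.10) we see that all statements are true for semisimple characters (after replacing~$\mf{b}_{\beta,t}\mf{n}_l$ by~$\mf{a}_t\cap\mf{n}_{t+l}$ throughout)''. Your overall strategy---strata induction along a defining sequence, reducing via a tame corestriction to the intertwining of a derived stratum inside~$B_\gamma$---is exactly the strategy of those references, so in that sense you are doing precisely what the paper does.

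However, your base case is wrong as written. For minimal nonzero~$\beta$ one has~$r=-k_0(\beta,\Lambda)=q$ and hence
\[
\mf{h}(\beta,\Lambda)=\mf{b}_{\beta,0}+\mfa_{\floor{q/2}+1},
\]
which is \emph{not}~$\mfa_0$. Correspondingly, for~$m<\floor{q/2}$ a character~$\theta\in\C(\Lambda,m,\beta)$ is not simply~$\psi_\beta$: by definition it restricts to~$\psi_\beta$ only on~$H^{m+1}\cap\tilde{\U}^{\floor{q/2}+1}(\Lambda)$, while on~$H^{m+1}\cap B_\beta^\times$ it factors through~$\det_{B_\beta}$ and is otherwise unconstrained. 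So your sentence ``$\theta=\psi_\beta$ on its domain, so the intertwining of~$\theta$ equals that of the underlying minimal semisimple stratum'' does not stand; the actual base-case argument in~\cite[(3.3.2)]{bushnellKutzko:93} requires separating the contribution of the~$\det$-part (which is intertwined by all of~$B_\beta^\times$) from the~$\psi_\beta$-part, and it is the latter that links to the stratum intertwining. The remainder of your sketch (the easy inclusion and the inductive step via~$\gamma$, Proposition~\ref{propS2} and Theorem~\ref{thmIntertwiningOneSemisimpleStratum} inside~$B_\gamma$) is the correct shape, though your remark that ``$B_\beta$ centralizes~$c=\beta-\gamma$ modulo the relevant order'' is also not literally true and should be replaced by the genuine argument that~$B_\beta^\times$ normalizes~$\theta$ via the recursive definition.
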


Reading the proofs of~\cite{bushnellKutzko:93} from~(3.2.1) to~(3.5.10) we see that all statements are true for semisimple characters (after replacing~$\mf{b}_{{\beta},t}\mf{n}_l$ by~$\mf{a}_t\cap \mf{n}_{t+l}$ throughout) except the statement~(3.5.1). However, there is an obvious modification of~(3.5.1) which is still true:

\begin{proposition}[{see~\cite[(3.5.1)]{bushnellKutzko:93}}]\label{propBK3.5.1}
Let~$[\Lambda,q,m,\beta]$ and~$[\Lambda,q,m,\beta']$ be semisimple strata such that~$\C(\Lambda,m,\beta)\cap \C(\Lambda,m,\beta')$ is non-empty. Then there is a bijection~$\tau: I\ra I'$ such that~$1^i\equiv 1^{\tau(i)}\pmod{\mf{a}_{1}}$, and:
\begin{enumerate}
 \item~$k_0(\beta,\Lambda)=k_0(\beta',\Lambda)$;
 \item the field extensions~$E_i|F$ and~$E'_{\tau(i)}|F$ have the same inertia degree and the same ramification index;
 \item the dimensions of~$V^i$ and~$V'^{\tau(i)}$ as~$F$-vector spaces coincide;
 \item there is an element~$g$ of~$S(\beta)$ such that~$gV^i$ is equal to~$V'^{\tau(i)}$. 
\end{enumerate}
\end{proposition}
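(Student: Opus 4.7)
The plan is to exploit the twofold description of the intertwining set of $\theta$ coming from Proposition~\ref{propIntertwiningSemisimplCharacter}:
\[
S(\beta)\, B_\beta^\times\, S(\beta) \;=\; I(\theta) \;=\; S(\beta')\, B_{\beta'}^\times\, S(\beta').
\]
For a genuine semisimple character both $S(\beta)$ and $S(\beta')$ lie in $\tilde\U^1(\Lambda)$ (since $-k_0(\beta,\Lambda),-k_0(\beta',\Lambda)\geq 1$, and the case $m\geq -k_0$ can be reduced to the smaller case by further restricting). Intersecting with $\tilde\U(\Lambda)$ forces the middle factor in any decomposition $s_1 b s_2\in\tilde\U(\Lambda)$ to lie in $B_\beta\cap\mfa_0^\times=\mf{b}_{\beta,0}^\times$, so that
\[
I(\theta)\cap\tilde\U(\Lambda)\;=\;S(\beta)\,\mf{b}_{\beta,0}^\times\, S(\beta)\;=\;S(\beta')\,\mf{b}_{\beta',0}^\times\, S(\beta').
\]
Reducing this equality modulo $\tilde\U^1(\Lambda)$ shows that $\mf{b}_{\beta,0}^\times$ and $\mf{b}_{\beta',0}^\times$ have the same image in $(\mfa_0/\mfa_1)^\times$; since a semisimple $\kappa_F$-subalgebra of the matrix algebra $\mfa_0/\mfa_1$ is recovered as the $\kappa_F$-span of its unit group, we get $\mf{b}_{\beta,0}+\mfa_1=\mf{b}_{\beta',0}+\mfa_1$. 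Multiplying by $\mfa_1$ on each side and using $\mfa_n\subseteq\mfa_1 B_{\beta'}\mfa_1$ for $n\geq 2$ together with the centrality of $F\subseteq B_\beta\cap B_{\beta'}$ in $A$ (so that the passage from $\mf{b}_{\beta,0}$ to its $F$-span $B_\beta$ is harmless), I conclude $\mfa_1 B_\beta\mfa_1=\mfa_1 B_{\beta'}\mfa_1$.

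Lemma~\ref{lemComparingIntertwinings} then directly supplies the bijection $\tau:I\to I'$ satisfying $1^i\equiv 1^{\tau(i)}\pmod{\mfa_1}$ together with $\dim_{\kappa_F}(\Lambda^i_j/\Lambda^i_{j+1})=\dim_{\kappa_F}(\Lambda'^{\tau(i)}_j/\Lambda'^{\tau(i)}_{j+1})$ for all $j\in\bbZ$, which yields (iii) on summing over $j$. For (iv), the lemma also provides $g\in\tilde\U^1(\Lambda)$ with $g 1^i g^{-1}=1^{\tau(i)}$; because $g\in I(\theta)\cap\tilde\U(\Lambda)=S(\beta)\mf{b}_{\beta,0}^\times S(\beta)$, I can write $g=s_1 b s_2$ with $s_i\in S(\beta)$ and $b\in\mf{b}_{\beta,0}^\times\subseteq B_\beta^\times$, and since $b$ centralises each $1^i$ the factor $s_1\in S(\beta)$ already realises the conjugation $s_1 1^i s_1^{-1}=1^{\tau(i)}$.

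For (ii), the identification $\mf{b}_{\beta,0}+\mfa_1=\mf{b}_{\beta',0}+\mfa_1$ makes $\mf{b}_{\beta,0}/\mf{b}_{\beta,1}$ and $\mf{b}_{\beta',0}/\mf{b}_{\beta',1}$ the same semisimple $\kappa_F$-subalgebra of $\mfa_0/\mfa_1$; under $\tau$ the simple factors correspond, and their centers being $\kappa_{E^i}$ and $\kappa_{E'^{\tau(i)}}$ respectively gives $\kappa_{E^i}\cong\kappa_{E'^{\tau(i)}}$ and hence equality of residue degrees. Combining the formula $\dim_F B_\beta^{i,i}=(\dim_F V^i)^2/[E^i:F]$ with (iii) and the equality of $F$-dimensions of the matched blocks (immediate from the splitting of $\mfa_0$), I obtain $[E^i:F]=[E'^{\tau(i)}:F]$, and equality of ramification indices then follows from $ef=[E^i:F]$.

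Finally, (i) will require strata induction. The base case of minimal semisimple strata gives $k_0=-q=k_0'$ from Proposition~\ref{propFundamental}. For the inductive step I set $r=-k_0(\beta,\Lambda)$, pick defining sequences with first elements $[\Lambda,q,r,\gamma]$ and $[\Lambda,q,r',\gamma']$, and use the element $g$ from (iv) to replace $\beta'$ by a conjugate so that the associated splittings of these first elements coincide; restricting $\theta$ to the common filtration step $H^{\max(m,\floor{r/2})+1}$ and peeling off $\psi_{\beta-\gamma}$ and $\psi_{\beta'-\gamma'}$ produces characters in $\C(\Lambda,\cdot,\gamma)\cap\C(\Lambda,\cdot,\gamma')$ to which the inductive hypothesis applies. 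The hard part will be this last step: the descent must be set up so that the intertwining formula used to establish (iv) genuinely applies to each successive reduction without circular dependence on the very equality of $k_0$-values one is trying to prove, which requires careful tracking of what is known at each inductive stage and verification that the derived characters genuinely lie in the requisite intersection of character sets.
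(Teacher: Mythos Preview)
Your setup through Lemma~\ref{lemComparingIntertwinings} is essentially the paper's, and your treatment of~(ii) and~(iii) is fine. The genuine gap is in your argument for~(iv).

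You take the element~$g=\sum_i 1^{\tau(i)}1^i\in\tilde\U^1(\Lambda)$ supplied by Lemma~\ref{lemComparingIntertwinings} and assert~$g\in I(\theta)\cap\tilde\U(\Lambda)$. But nothing you have established gives~$\tilde\U^1(\Lambda)\subseteq I(\theta)$; the intertwining set~$S(\beta)B_\beta^\times S(\beta)$ is typically much smaller than this. Worse, even granting a decomposition~$g=s_1 b s_2$ with~$b\in B_\beta^\times$ centralising each~$1^i$, your inference that~$s_1 1^i s_1^{-1}=1^{\tau(i)}$ is invalid: from~$g1^ig^{-1}=1^{\tau(i)}$ you obtain~$s_1\bigl(b\,s_2 1^i s_2^{-1}\,b^{-1}\bigr)s_1^{-1}=1^{\tau(i)}$, and for this to collapse to~$s_1 1^i s_1^{-1}$ you would need~$s_2$ to commute with~$1^i$, which it need not.

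The paper instead works with the \emph{additive} closure of~$I(\theta)\cap\tilde\U(\Lambda)$ and decomposes not~$g$ but the idempotent~$1^{\tau(i)}$ itself as~$(1+u)b(1+v)$ with~$b\in B_\beta$ and~$1+u,1+v\in S(\beta)$. It then applies Lemma~\ref{lemFindIdempotents} (with~$\mf{k}_r=(S(\beta)-1)\cap\mfa_r$) to replace~$b$ by an idempotent~$e\in B_\beta$ congruent to~$b$ modulo~$S(\beta)-1$; Lemma~\ref{lemCentralIdempoents} forces~$e$ to be central in~$B_\beta$, and since~$e\equiv 1^i\pmod{\mfa_1}$ one gets~$e=1^i$. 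This yields~$1^i\equiv 1^{\tau(i)}\pmod{S(\beta)-1}$ directly, whence~$g=\sum_i 1^{\tau(i)}1^i\in S(\beta)$. The point is that membership of~$g$ in~$S(\beta)$ is the \emph{conclusion}, obtained via idempotent lifting, not an input.

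On~(i): your proposed strata induction is unnecessarily heavy, and you yourself flag it as incomplete. In the paper this follows, as in~\cite[(3.5.1)]{bushnellKutzko:93}, from the numerical invariants already extracted from the isomorphism~$\mf{b}_{\beta,0}/\mf{b}_{\beta,1}\cong\mf{b}_{\beta',0}/\mf{b}_{\beta',1}$ together with~\cite[(2.1.4)]{bushnellKutzko:93}; no descent through a defining sequence is needed.
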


\begin{proof}
The existence of~$\tau$ follows from the two descriptions of the intertwining of an element~$\theta\in \C(\Lambda,m,\beta)\cap \C(\Lambda,m,\beta')$,
\[
I(\theta)=S(\beta)B_\beta S(\beta)=S(\beta')B_{\beta'} S(\beta'),
\]
together with Lemma~\ref{lemComparingIntertwinings}. We now follow the proof of~\cite[(3.5.1)]{bushnellKutzko:93} to get that~$\mf{b}_{\beta,0}/\mf{b}_{\beta,1}$ is isomorphic to~$\mf{b}_{\beta',0}/\mf{b}_{\beta',1}$, by an isomorphism of~$\kappa_F$-algebras which maps~$1^i$ to~$1^{\tau(i)}$. We also have that~$(1^i\mf{a}_0 1^i)/\mf{a}_1=(1^{\tau(i)}\mf{a}_0 1^{\tau(i)})/\mf{a}_1$ and thus, as in the proof of~\cite[(2.1.4)]{bushnellKutzko:93}, we get the desired equalities. 

The equality of the additive closures of the intertwining set~$I(\theta)\cap\tilde U(\Lambda)$ in terms of~$\beta$ and~$\beta'$ 
implies that, for each~$i\in I$, we can write~$1^{\tau(i)}=(1+u)b(1+v)$ with~$(1+u),(1+v)\in S(\beta)$ and~$b\in B_{\beta}$. By Lemma~\ref{lemFindIdempotents} applied with~$\mf{k}_0=\mfa_0\cap B_\beta$ and~$\mf{k}_r=(S(\beta)-1)\cap\mfa_{r}$, there is an idempotent~$e$ in~$B_\beta$ which is congruent to~$b\pmod{S(\beta)-1}$. Since, in particular,~$e\equiv b\equiv 1^{\tau(i)}\equiv 1^i \pmod{\mfa_1}$, Lemma~\ref{lemCentralIdempoents} implies that~$e$ is a central idempotent in~$B_\beta$, in particular a sum of primitive central idempotents of~$B_\beta$. Since~$e\equiv 1^i\pmod{\mf{a}_1}$, we see that in fact~$e=1^i$. Thus in fact~$1^i\equiv 1^{\tau(i)}\pmod{S(\beta)-1}$ and we deduce that~$g=\sum_i 1^{\tau(i)}1^i$ is an element of~$S(\beta)$ with the required property.
\end{proof}

\begin{remark}
Recall that a semisimple character~$\theta$ is called \emph{simple} if there is a simple stratum~$[\Lambda,q,m,\beta]$ such that~$\theta\in \C(\Lambda,m,\beta)$; then, by Proposition~\ref{propBK3.5.1}, every semisimple stratum~$[\Lambda,q,m,\beta']$ such that~$\theta\in \C(\Lambda,m,\beta')$ has to be simple.  
\end{remark}

%
 

We would like to be able to get an analogue of strata induction for semisimple characters, for which we need 
\begin{itemize}
 \item the ``translation principle'' initially introduced for simple characters in~\cite[2.11]{bushnellKutzko:94}, and
 \item a result on ``derived characters'' (see Proposition~\ref{propDerivedCharacters} below).
\end{itemize}

\begin{theorem}\label{thmTranslationPrinciple}
Let~$[\Lambda,q,m+1,\gamma]$ and~$[\Lambda,q,m+1,\gamma']$ be semisimple strata with the same associated splitting such that 
\[
\C(\Lambda,m+1,\gamma)=\C(\Lambda,m+1,\gamma').
\]
Let~$[\Lambda,q,m,\beta]$ be a semisimple stratum with splitting~$V=\bigoplus_{i\in I}V^i$ such that~$[\Lambda,q,m+1,\beta]$ is equivalent to~$[\Lambda,q,m+1,\gamma]$ and~$\gamma$ is an element of~$\prod_{i\in I} A^{i,i}$. Then there exists a semisimple stratum~$[\Lambda,q,m,\beta']$ with splitting~$V=\bigoplus_{i'\in I'}V'^{i'}$ such that~$[\Lambda,q,m+1,\beta']$ is equivalent to~$[\Lambda,q,m+1,\gamma']$, with~$\gamma'\in \prod_{i'\in I'}A^{i',i'}$ and 
\[
\C(\Lambda,m,\beta)=\C(\Lambda,m,\beta').
\]
\end{theorem}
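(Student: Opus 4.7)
The natural candidate is $\beta' := \beta - \gamma + \gamma'$. Setting $c := \beta - \gamma \in \mfa_{-m-1}$, one has $\beta' - \gamma' = c$, so $[\Lambda,q,m+1,\beta'] \sim [\Lambda,q,m+1,\gamma']$ is automatic. Let $V = \bigoplus_{j \in J} W^j$ denote the common splitting of $\gamma$ and $\gamma'$. Since the $I$-splitting of $\beta$ refines $J$, both $c$ and $\gamma'$ preserve the $W^j$, and hence so does $\beta'$.

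The first main task is to show $[\Lambda,q,m,\beta']$ is equivalent to a semisimple stratum and to produce the splitting $I'$. I would proceed block-by-block in $J$. Inside $W^j$, the stratum $[\Lambda^j,q_j,m+1,\beta'|_{W^j}]$ is equivalent to the simple stratum $[\Lambda^j,q_j,m+1,\gamma'|_{W^j}]$, so by Corollary~\ref{corDerivedSplitting} its equivalence to a semisimple stratum is controlled by the derived stratum $[\Lambda^j,m+1,m,s_{\gamma'}(c|_{W^j})]$ in the centralizer of $\gamma'|_{W^j}$. The semisimplicity of $[\Lambda,q,m,\beta]$ gives the analogous statement for $s_\gamma$ via the same Corollary; to transfer it across, I would use the hypothesis $\C(\Lambda,m+1,\gamma) = \C(\Lambda,m+1,\gamma')$ together with Proposition~\ref{propBK3.1.9}(v) to identify the relevant deeper orders, $\mf{h}^k(\gamma,\Lambda) = \mf{h}^k(\gamma',\Lambda)$ for $k \geq m+2 - \floor{(r+1)/2}$ with $r := -k_0(\beta,\Lambda)$. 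Theorem~\ref{thmDiagonalizedSimple} applied block-wise then produces an equivalent stratum $[\Lambda^j,q_j,m,\tilde\beta'_j]$ with $\tilde\beta'_j$ block-diagonal for some decomposition $W^j = \bigoplus_{i' \in I'_j} V'^{i'}$; setting $I' := \bigsqcup_{j\in J} I'_j$ and replacing $\beta'$ by $\sum_j \tilde\beta'_j$ within its equivalence class gives the required semisimple stratum.

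For the character identity $\C(\Lambda,m,\beta) = \C(\Lambda,m,\beta')$ I would unwind the inductive definition. The same Proposition~\ref{propBK3.1.9}(v), applied to the pairs $(\beta,\gamma)$ and $(\beta',\gamma')$ and combined with the hypothesis at level $m+1$, yields $H^{m+1}(\beta,\Lambda) = H^{m+1}(\beta',\Lambda)$ and $\C(\Lambda,m',\gamma) = \C(\Lambda,m',\gamma')$ at the inductive level $m' := \max\{m,\floor{r/2}\}$. A character on this common domain lies in $\C(\Lambda,m,\beta)$ iff its restriction to $H^{m'+1}$ has the form $\theta_0\psi_c$ with $\theta_0 \in \C(\Lambda,m',\gamma) = \C(\Lambda,m',\gamma')$, iff it lies in $\C(\Lambda,m,\beta')$, provided the normalization condition and the factorization through $\det_{B_\beta}$ transfer to $B_{\beta'}$. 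The latter transfer follows from the intertwining description of Proposition~\ref{propIntertwiningSemisimplCharacter}, comparing $S(\beta),B_\beta^\times$ with $S(\beta'),B_{\beta'}^\times$ at this level.

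The main obstacle is the block-wise semisimplicity argument of the second step, since $s_\gamma$ and $s_{\gamma'}$ a priori land in different subalgebras of $A$ and cannot be directly compared. The way through is to avoid comparing the corestrictions globally and instead exploit that $\gamma'|_{W^j}$ is simple, using Theorem~\ref{thmDiagonalizedSimple} inside each block and extracting from $\C(\Lambda,m+1,\gamma) = \C(\Lambda,m+1,\gamma')$ only equalities of orders of the form $\mf{h}^k$ and $\mf{j}^k$. A secondary subtlety, handled by induction on the depth of a defining sequence (equivalently, on $-k_0(\beta,\Lambda)$) in parallel with strata induction, is that when $m < \floor{r/2}$ the inductive step descends to a deeper level $m'$ at which the hypothesis must propagate; Proposition~\ref{propBK3.1.9}(v) is exactly the tool that makes this propagation possible.
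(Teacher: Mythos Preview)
Your candidate $\beta'=\gamma'+c$ with $c=\beta-\gamma$ and the block-by-block reduction over the common splitting $V=\bigoplus_{j\in J}W^j$ are exactly what the paper does. The genuine gap is in the step where, inside a single block $W^j$, you must show that the derived stratum $[\Lambda^j,m+1,m,s_{\gamma'_j}(c_j)]$ is equivalent to a semisimple stratum. Your proposed tools do not deliver this. Proposition~\ref{propBK3.1.9}(v) only gives equalities of the orders $\mf{h}^k(\gamma,\Lambda)=\mf{h}^k(\gamma',\Lambda)$ for suitable $k$; it tells you nothing about how $s_\gamma$ and $s_{\gamma'}$ relate on $\mfa_{-m-1}$, and hence nothing about the equivalence class of the derived stratum. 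Likewise Theorem~\ref{thmDiagonalizedSimple} presupposes a splitting into pure strata; it does not manufacture one, so it cannot be used to establish semisimplicity of $[\Lambda^j,q_j,m,\beta'_j]$ from scratch.

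What the paper actually uses here is a comparison of tame corestrictions: from the hypothesis $\C(\Lambda,m+1,\gamma)=\C(\Lambda,m+1,\gamma')$ one can choose $s=s_\gamma$ and $s'=s_{\gamma'}$ so that $s(x)\equiv s'(x)\pmod{\mfa_{t+1}}$ for every $x\in\mfa_t$ and every integer $t$ (this is \cite[5.2(iii)]{bushnellKutzko:94}). These induce $\kappa_F$-linear isomorphisms $\phi_t:\mf{b}_{\gamma,t}/\mf{b}_{\gamma,t+1}\to\mf{b}_{\gamma',t}/\mf{b}_{\gamma',t+1}$ intertwining the multiplication maps $m_{n,m+1,s(c)}$ with $m_{n,m+1,s'(c)}$ and carrying $\mathcal R([\Lambda,m+1,m,s(c)])$ isomorphically onto $\mathcal R([\Lambda,m+1,m,s'(c)])$. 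One then invokes the semisimplicity criterion of Proposition~\ref{propSemiSimplCritForFundStratum} (replacing the simple criterion used in~\cite{bushnellKutzko:94}) to conclude that $[\Lambda,m+1,m,s'(c)]$ is equivalent to a semisimple stratum, whence so is $[\Lambda,q,m,\gamma'+c]$. The character identity then comes in one line as $\C(\Lambda,m,\beta)=\psi_c\,\C(\Lambda,m,\gamma)=\psi_c\,\C(\Lambda,m,\gamma')=\C(\Lambda,m,\beta')$, without any separate verification of the normalization or factorization conditions. Your inductive scheme for the character identity, via Proposition~\ref{propBK3.1.9}(v) and a transfer of the $\det_{B_\beta}$-factorization through Proposition~\ref{propIntertwiningSemisimplCharacter}, is both more complicated and not obviously complete; the missing key input throughout is the corestriction comparison.
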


\begin{proof}
In~\cite[2.11]{bushnellKutzko:94} the statement and its proof are formulated for strict simple strata but, if we follow line by line the proof, we see that the result is valid for arbitrary simple strata. Moreover, the proof is also valid if~$[\Lambda,q,m+1,\gamma]$ is simple but we allow~$[\Lambda,q,m,\beta]$ to be a \emph{semisimple} stratum, provided we make one small change: we substitute the criterion of~\cite[5.5]{bushnellKutzko:94} for a fundamental stratum to be simple by the criterion of Proposition~\ref{propSemiSimplCritForFundStratum} for a fundamental stratum to be semisimple. Let us repeat the steps of the proof to point out the changes:
\begin{enumerate}
\item\label{proofTranslationPrinciple.i} As in~\cite[2.11]{bushnellKutzko:94}, we take two tame corestrictions~$s$ and~$s'$ for~$\gamma$ and~$\gamma'$ such that~$s(x)$ is congruent to~$s'(x)$ mod~$\mf{a}_{t+1}$ for all elements~$x$ of~$\mf{a}_t$ and all integers~$t$, see~\cite[5.2(iii)]{bushnellKutzko:94}.  
 \item We put~$c=\beta-\gamma$ and show, as in~\cite[5.3]{bushnellKutzko:94}, that~$[\Lambda,m+1,m,s(c)]$ is fundamental implies that~$[\Lambda,m+1,m,s'(c)]$ is fundamental too; however, we need that the latter stratum also satisfies the criterion on the maps~$m_{n,m+1,s'(c)}$ of Proposition~\ref{propSemiSimplCritForFundStratum}. Note that the same proposition implies that the maps~$m_{n,m+1,s(c)}$ do satisfy this criterion. 
 
 The tame corestrictions~$s$ and~$s'$ are surjective as maps from~$\mf{a}_t$ to~$\mf{b}_{\gamma,t}$
 and to~$\mf{b}_{\gamma',t}$, respectively, and thus we obtain an isomorphism of~$\kappa_F$-vector spaces~$\phi_t$ from~$\mf{b}_{\gamma,t}/\mf{b}_{\gamma,t+1}$ to~$\mf{b}_{\gamma',t}/\mf{b}_{\gamma',t+1}$, for all integers~$t$, by sending the class of~$s(x)$ to that of~$s'(x)$; note that this is well defined by~\ref{proofTranslationPrinciple.i}. Then~$m_{n,m+1,s'(c)}$ is equal to~$\phi_{n-m-1}\circ m_{n,m+1,s(c)}\circ\phi_n^{-1}$ and thus, varying~$n$, the maps~$m_{n,m+1,s'(c)}$ satisfy the additional criterion of Proposition~\ref{propSemiSimplCritForFundStratum}.
\item As in the part after (5.4) in~\cite[2.11]{bushnellKutzko:94}, we show that the algebras~$\mathcal{R}([\Lambda,m+1,m,s(c)])$ and~$\mathcal{R}([\Lambda,m+1,m,s'(c)])$ are isomorphic, which implies, by Proposition~\ref{propSemiSimplCritForFundStratum}, that~$[\Lambda,m+1,m,s'(c)]$ is equivalent to a semisimple stratum; therefore~$[\Lambda,q,m,\gamma'+c]$ is equivalent to a semisimple stratum~$[\Lambda,q,m,\beta']$ and we have 
\[
\C(\Lambda,m,\beta)=\psi_c\C(\Lambda,m,\gamma)=\psi_c\C(\Lambda,m,\gamma')=\C(\Lambda,m,\beta').
\]
\end{enumerate}

Let us now consider the general case. Since~$\gamma$ and~$\gamma'$ have the same associated splitting~$V=\bigoplus_{j\in J}V^j$, the previous case implies that we can find, for every~$j\in J$, an element~$\beta'^j$ satisfying the assertions, but for the~$j$th block. Now we put~$\beta'=\sum_{j\in J}\beta'^j$ to obtain 
\[
\C(\Lambda,m,\beta)=\C(\Lambda,m,\beta')
\]
directly from the definition of the set of semisimple characters, because 
\begin{itemize}
 \item the equality is true blockwise for the blocks for~$\gamma$, or, which is the same, the blocks for~$\gamma'$;
 \item~$H^{m+1}(\beta,\Lambda)=H^{m+1}(\beta',\Lambda)$ by~\cite[(3.5.9)]{bushnellKutzko:93};
 \item~$\C(\Lambda,m+1,\beta)=\C(\Lambda,m+1,\beta')$.
\end{itemize}
\end{proof}

\begin{proposition}\label{propDerivedCharacters}
Suppose~$m<q-1$ and let~$[\Lambda,q,m,\beta]$ and~$[\Lambda,q,m,\beta']$ be semisimple strata which have defining sequences with a common first element~$[\Lambda,q,m+1,\gamma]$. Suppose~$\theta\in \C(\Lambda,m,\beta)$ and~$\theta'\in \C(\Lambda,m,\beta')$ are semisimple characters which agree on restriction to~$H^{m+2}(\gamma,\Lambda)$, so that we can write~$\theta'=\theta_0\psi_{\beta'-\gamma}$ and~$\theta=\theta_0\psi_{\beta-\gamma+c}$, for some~$\theta_0\in \C(\Lambda,m,\gamma)$ and~$c\in \mf{a}_{-(m+1)}$. Let~$s_\gamma$ be a tame corestriction with respect to~$\gamma$.
\begin{enumerate} 
 \item For any~$g\in I(\theta,\theta')$ there are elements~$x,y\in S(\gamma)$ and~$g'\in B_\gamma$ such that~$g=xg'y$; moreover,~$g'$ intertwines~$\psi_{s_\gamma(\beta-\gamma+c)}$ with~$\psi_{s_\gamma(\beta'-\gamma)}$.
 \item For any~$g'\in I_{B_\gamma^\times}(\psi_{s_\gamma(\beta-\gamma+c)},\psi_{s_\gamma(\beta'-\gamma)})$, there are elements~$x,y$ of~$1+\mf{m}_{-k_0(\gamma,\Lambda)-m-1}$ such that~$xg'y$ intertwines~$\theta$ with~$\theta'$. \label{propDerivedCharacters.ii}
 \item\label{propDerivedCharacters.iii} If the characters~$\psi_{s(\beta-\gamma+c)}$ and~$\psi_{s(\beta'-\gamma)}$ are equal, then  there is~$z\in 1+\mf{m}_{-k_0(\gamma,\Lambda)-m-1}$ such that~$\theta^{z}=\theta'$.
\end{enumerate}
\end{proposition}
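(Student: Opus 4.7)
My plan is to mirror the calculations for strata in Propositions~\ref{propS2} and~\ref{propS3}, lifting them from strata to characters.

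For part~(i), first I would observe that since $\theta$ and $\theta'$ agree on $H^{m+2}(\gamma,\Lambda)$, the element $g$ intertwines this common restriction, which is (essentially) a semisimple character in $\mathcal C(\Lambda,m+1,\gamma)$. By Proposition~\ref{propIntertwiningSemisimplCharacter} applied at level $m+1$, $g$ lies in $S(\Lambda,m+1,\gamma)B_\gamma^\times S(\Lambda,m+1,\gamma)$, which is contained in $S(\gamma)B_\gamma^\times S(\gamma)$. Write $g=xg'y$ with $x,y\in S(\gamma)$ and $g'\in B_\gamma^\times$. Since $x,y$ intertwine $\theta_0$ with itself (on the relevant overlap), the identity $\theta^g=\theta'$ collapses to the requirement that $g'$ intertwines $\psi_{\beta-\gamma+c}$ with $\psi_{\beta'-\gamma}$ on a suitable subgroup. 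Unravelling the character identity yields
\[
g'(\beta-\gamma+c) \equiv (\beta'-\gamma)g' \pmod{g'\mathfrak{a}_{-m}+\mathfrak{a}_{-m}g'},
\]
exactly the congruence~\eqref{eqMiddleIntertwining} from the proof of Proposition~\ref{propS2}. Applying the tame corestriction $s_\gamma$ to both sides then gives the desired derived intertwining.

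For part~(ii), I would reverse the above argument, following the template of Proposition~\ref{propS3}. Starting from $g'$ intertwining the derived characters, the exactness of the tame corestriction sequence (Lemma~\ref{lemKernels}) together with Lemma~\ref{lemDescrmfn}(ii) produces elements $w', v\in\mathfrak{m}_{-k_0(\gamma,\Lambda)-m-1}$ such that, modulo the appropriate ideal,
\[
(1+w')(\beta'-\gamma)(1+w')^{-1} \equiv g'(1+v)(\beta-\gamma+c)(1+v)^{-1}(g')^{-1}.
\]
Reading the calculation of Proposition~\ref{propS2} backwards from this congruence up to the character level shows that $x=1+w'$ and $y=(1+v)^{-1}$ do the job.

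Part~(iii) then follows quickly from~(ii) applied with $g'=1$: it produces $z_0=xy\in 1+\mathfrak{m}_{-k_0(\gamma,\Lambda)-m-1}$ that intertwines $\theta$ with $\theta'$. The equality of the derived characters, combined with Proposition~\ref{propBK3.1.9}(v) applied to the semisimple strata $[\Lambda,q,m,\gamma+(\beta-\gamma+c)]$ and $[\Lambda,q,m,\gamma+(\beta'-\gamma)]$, forces $H^{m+1}(\beta,\Lambda)=H^{m+1}(\beta',\Lambda)$; since $z_0$ normalizes this common group, the intertwining relation promotes to a genuine conjugation $\theta^{z_0}=\theta'$.

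The hardest step will be part~(ii): the delicate module-theoretic bookkeeping must place $x,y$ \emph{precisely} in $1+\mathfrak{m}_{-k_0(\gamma,\Lambda)-m-1}$ rather than in some larger group, which requires the sharp form of Lemma~\ref{lemDescrmfn} together with a semisimple analogue of~\cite[1.3.17]{bushnellKutzko:93}. A secondary difficulty lies in~(iii), where one must check that the two $H$-groups agree once the derived characters coincide---this again reduces to a careful inductive comparison via Proposition~\ref{propBK3.1.9}(v), using that the equality of derived characters $\psi_{s_\gamma(\beta-\gamma+c)}=\psi_{s_\gamma(\beta'-\gamma)}$ means the two strata produce the same character data at one step down in the defining sequence.
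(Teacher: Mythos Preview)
Your overall shape is right---use the intertwining formula for~$\theta_0$, pass to derived data via~$s_\gamma$, and reverse via Proposition~\ref{propS3}---and this is essentially the paper's approach. But there are two genuine gaps in your sketch that the paper handles differently.

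\textbf{Part~(i).} Your claim that ``since~$x,y$ intertwine~$\theta_0$ with itself, the identity~$\theta^g=\theta'$ collapses to the requirement that~$g'$ intertwines~$\psi_{\beta-\gamma+c}$ with~$\psi_{\beta'-\gamma}$'' does not follow: intertwining of a product of characters does not factor this way, because the subgroups on which the various intertwining relations hold need not match up. The paper's argument is sharper. It first uses that~$x,y\in S(\gamma)$ actually \emph{normalize}~$H^{m+1}$ (by~\cite[(3.6.2),(3.1.15)(ii)]{bushnellKutzko:93}), so that~$\theta^x$ and~$\theta'^{y^{-1}}$ are still characters of~$H^{m+1}$ and~$g'$ genuinely intertwines them. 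Then, using~\cite[(3.3.9)]{bushnellKutzko:93}, one has~$\theta^x=\theta_0\psi_{x^{-1}\gamma x-\gamma}\psi_{\beta-\gamma+c}$, and the correction term~$\psi_{x^{-1}\gamma x-\gamma}=\psi_{a_\gamma(x)}$ is trivial on~$\tilde{U}^{m+1}(\Lambda)\cap B_\gamma^\times$. Restricting to this subgroup and using that~$g'\in B_\gamma^\times$ intertwines~$\theta_0$ with itself gives the derived intertwining directly, without ever descending to a stratum congruence of the form~\eqref{eqMiddleIntertwining}.

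\textbf{Part~(iii).} Your plan to deduce~$H^{m+1}(\beta,\Lambda)=H^{m+1}(\beta',\Lambda)$ from the equality of derived characters via Proposition~\ref{propBK3.1.9}(v) is unnecessary and somewhat circular. The equality holds immediately from the hypothesis: since both~$[\Lambda,q,m+1,\beta]$ and~$[\Lambda,q,m+1,\beta']$ are equivalent to~$[\Lambda,q,m+1,\gamma]$, we have~$H^{m+1}(\beta,\Lambda)=H^{m+1}(\gamma,\Lambda)=H^{m+1}(\beta',\Lambda)$ by~\cite[(3.1.9)(ii)]{bushnellKutzko:93}. The paper states this at the very start of the proof. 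Once this is in hand, applying~(ii) with~$g'=1$ and setting~$z=xy$ gives an element normalizing~$H^{m+1}$, so the intertwining is a conjugation.

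Your part~(ii) is essentially the paper's argument: pass from derived-character intertwining to derived-stratum intertwining, invoke Proposition~\ref{propS3} to get~$x,y\in 1+\mf{m}_{-k_0(\gamma,\Lambda)-m-1}$, and then rebuild the full characters using~$\psi_{x^{-1}(\beta+c)x}=\psi_{\beta+c-\gamma}\psi_{x^{-1}\gamma x-\gamma}\psi_\gamma$ together with the fact that~$g'\in B_\gamma^\times$ intertwines each of~$\theta_0$ and~$\psi_\gamma$ with itself.
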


\begin{remark}\label{remPropDerivedCharacters}
The strategy of the proof of Proposition~\ref{propDerivedCharacters}\ref{propDerivedCharacters.ii}  is as follows: we  take~$x$ and~$y$ such that~$xg'y$ intertwines the stratum~$[\Lambda,q,m,\beta']$ with~$[\Lambda,q,m,\beta+c]$ (see also Proposition~\ref{propS3}) 
and prove that $xg'y$ intertwines $\theta$ with $\theta'$. Thus, if~$c$ is an element of~$\prod_i A^{i,i}$ and~$g'$ maps 
the splitting associated to~$\beta'$ to that of~$\beta$, then we can choose~$x\in \prod_i A^{i,i}$ and~$y\in \prod_{i'}A^{i'i'}$ which satisfy the assertions of Proposition~\ref{propDerivedCharacters}\ref{propDerivedCharacters.ii}. 
\end{remark}

\begin{proof}
We have~$H^{m+1}(\beta,\Lambda)=H^{m+1}(\beta',\Lambda)=H^{m+1}(\gamma,\Lambda)$ by~\cite[(3.1.9)]{bushnellKutzko:93}, so we just write~$H^{m+1}$.
\begin{enumerate}
 \item The decomposition~$g=xg'y$ follows directly from Proposition~\ref{propIntertwiningSemisimplCharacter}. We remark also that, by~\cite[(3.6.2) and~(3.1.15)(ii)]{bushnellKutzko:93}, the elements~$x$ and~$y$ normalize~$H^{m+1}$. Thus~$g'\in I(\theta^x,\theta'^{y^{-1}})$. By~\cite[(3.3.9)]{bushnellKutzko:93} we have 
\[
 \theta^x=\theta_0\psi_{x^{-1}\gamma x-\gamma}\psi_{\beta-\gamma+c},
\qquad\text{and}\qquad
  \theta'^{y^{-1}}=\theta_0\psi_{y\gamma y^{-1}-\gamma}\psi_{\beta'-\gamma}. 
\]
We have~$\psi_{x^{-1}\gamma x-\gamma}=\psi_{a_\gamma(x)}$ and~$\psi_{y\gamma y^{-1}-\gamma}=\psi_{-a_\gamma(y)}$ (as characters of~$H^{m+1}$) and thus their restrictions to~$\tilde{U}^{m+1}(\Lambda)\cap B_\gamma^\times$ are trivial. Thus, on~$\tilde{U}^{m+1}(\Lambda)\cap B_\gamma^\times$, we have 
\[
\theta^x=\theta=\theta_0\psi_{\beta-\gamma+c}=\theta_0\psi_{s_\gamma(\beta-\gamma+c)},
\]
and analogously for~$\theta^{y^{-1}}$. Since~$g'$ intertwines~$\theta^x$ with~$\theta^{y^{-1}}$ and~$\theta_0$ with itself, it also intertwines~$\psi_{s_\gamma(\beta-\gamma+c)}$ with~$\psi_{s_\gamma(\beta'-\gamma)}$.
\item If some element~$g'\in B_\gamma^\times$ intertwines~$\psi_{s_\gamma(\beta-\gamma+c)}$ with~$\psi_{s_\gamma(\beta'-\gamma)}$ then it intertwines the stratum~$[\Lambda,m+1,m,s_\gamma(\beta'-\gamma)]$ with~$[\Lambda,m+1,m,s_\gamma(\beta-\gamma+c)]$ and thus, by Proposition~\ref{propS3}, there are elements~$x,y$ of~$1+\mf{m}_{-k_0(\gamma,\Lambda)-m-1}$ such that~$g'$ intertwines~$[\Lambda,q,m,y\beta'y^{-1}]$ with~$[\Lambda,q,m,x^{-1}(\beta+c)x]$; that is,~$g'$ is an element of~$I(\psi_{x^{-1}(\beta+c)x},\psi_{y\beta'y^{-1}})$. Now we have 
\[
\psi_{x^{-1}(\beta+c)x}=\psi_{\beta+c-\gamma}\psi_{x^{-1}\gamma x-\gamma}\psi_\gamma,
\]
and an analogous equation for~$\psi_{y\beta'y^{-1}}$. Since~$g'$ intertwines each of~$\theta_0$ and~$\psi_\gamma|_{H^{m+1}}$ with themselves, we deduce that~$g'$ intertwines~$\theta^x$ with~$\theta'^{y^{-1}}$. 
\item This follows immediately from~(ii) applied to the identity element by putting~$z=xy$, which normalizes~$H^{m+1}$. 
\end{enumerate}
\end{proof}

\subsection{Semisimple characters for~$G$}

Suppose now that~$[\Lambda,q,m,\beta]$ is a skew-semisimple stratum and continue with the notation of the previous subsection. The adjoint anti-involution~$\sigma$ of the signed hermitian form~$h$ acts on~$\C(\Lambda,m,\beta)$ via 
\[
(\sigma\cdot\theta)(g):=\theta(\sigma(g^{-1})),\ g\in H^{m+1}(\beta,\Lambda).
\]

\begin{definition}
We define the set of semisimple characters~$\C_-(\Lambda,m,\beta)$ to be the set of all restrictions~$\theta|_{H^{m+1}(\Lambda,\beta)\cap G}$ where~$\theta$ run through all elements of~$\C(\Lambda,m,\beta)^{\sigma}$, the set of~$\sigma$-fixed points.

We call an element of~$\C_-(\Lambda,m,\beta)$ a \emph{semisimple character for~$G$}.
\end{definition}

\begin{remark}[{\cite[3.6]{stevens:05},~\cite[2.5]{stevens:01-2}}]\ 
\begin{enumerate}
 \item  The restriction map from~$\C(\Lambda,m,\beta)^{\sigma}$ to~$\C_-(\Lambda,m,\beta)$ is bijective, in particular injective.
 \item For two skew-semisimple strata~$[\Lambda,q,m,\beta]$ and~$[\Lambda,q,m,\beta']$, characters~$\theta\in \C(\Lambda,m,\beta)$ and~$\theta'\in \C(\Lambda,m,\beta')$, and~$g\in G$, the following conditions are equivalent:
 \begin{itemize}
  \item~$g\in I_G(\theta,\theta')$;
  \item~$g\in I_G(\theta|_{H^{m+1}(\Lambda,\beta)\cap G},\theta'|_{H^{m+1}(\Lambda,\beta)\cap G})$.
 \end{itemize}
\end{enumerate}
\end{remark}

We have an analogous description to that of Proposition~\ref{propIntertwiningSemisimplCharacter} of the intertwining of a semisimple character for~$G$.

\begin{proposition}[{\cite[3.27]{stevens:05}}]\label{propIntertwiningSemisimplCharacterOfG}
For~$\theta_-\in \C_-(\Lambda,m,\beta)$ a semisimple character of~$G$, we have
\[
I_G(\theta_-)=(S(\beta)\cap G)(B_\beta\cap G)(S(\beta)\cap G).
\]
\end{proposition}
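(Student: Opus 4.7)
My plan is to deduce this from the $\tilde{G}$-intertwining formula of Proposition~\ref{propIntertwiningSemisimplCharacter} by a cohomology argument of the kind used in the proof of Corollary~\ref{corCohomologyArgument}, exploiting that the residual characteristic is odd.

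The inclusion $\supseteq$ is immediate: any element of $(S(\beta)\cap G)(B_\beta\cap G)(S(\beta)\cap G)$ lies in $I_{\tilde{G}}(\theta)\cap G$, where $\theta\in\C(\Lambda,m,\beta)^{\sigma}$ is the unique $\sigma$-fixed lift of $\theta_-$, and by the remark preceding the proposition such an element of $G$ intertwines $\theta_-$.

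For the reverse inclusion, fix $g\in I_G(\theta_-)\subseteq I_{\tilde{G}}(\theta)$ and use Proposition~\ref{propIntertwiningSemisimplCharacter} to write $g=s_1 b s_2$ with $s_1,s_2\in S(\beta)$ and $b\in B_\beta^{\times}$. Because $\beta\in A_-$ and $\Lambda$ is self-dual, both $S(\beta)$ and $B_\beta^{\times}$ are stable under the involution $\tau(x):=\sigma(x^{-1})$, whose fixed-point set is $G$. Applying $\tau$ to the identity $g=s_1 b s_2$ and using $\tau(g)=g$ shows that the double coset $S(\beta)\, b\, S(\beta)$ is $\tau$-stable. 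A cohomological lifting argument along the lines of the proof of Corollary~\ref{corCohomologyArgument}, based on the Cayley transform (which requires $p\neq 2$) and applied to the reduction of $b$ modulo the pro-$p$ part of $S(\beta)\cap B_\beta^{\times}$, then produces a $\tau$-fixed representative $b'\in B_\beta\cap G$ of the same double coset. Substituting back we obtain $g=s_1' b' s_2'$ with $s_1',s_2'\in S(\beta)$ and $b'\in B_\beta\cap G$.

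It remains to arrange that $s_1',s_2'$ also lie in $G$. The ambiguity in such a factorisation is exactly the free action $(s_1',s_2')\mapsto(s_1' u,\,u^{-1}s_2')$ of $u\in S(\beta)\cap B_\beta^{\times}$. The constraint that $g,b'\in G$ translates into a $\tau$-cocycle condition on $(s_1',s_2')$ with values in this ambiguity group, and the main obstacle is to show that this cocycle is a coboundary. I would handle this by another application of \cite[Lemma~2.2]{stevens:01} to the pro-$p$ part of $S(\beta)\cap B_\beta^{\times}$ (split off via the reduction modulo higher filtration levels), using again the Cayley transform to produce $\sigma$-fixed correctors. Iterating this correction over the filtration steps of $S(\beta)$ then yields $s_1'',s_2''\in S(\beta)\cap G$ with $g=s_1'' b' s_2''$, completing the inclusion.
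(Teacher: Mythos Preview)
The paper does not give its own proof of this proposition; it simply records it as a citation of~\cite[3.27]{stevens:05}. Your overall strategy---deduce the $G$-formula from Proposition~\ref{propIntertwiningSemisimplCharacter} by a cohomology/descent argument using that $S(\beta)$ is a $\sigma$-stable pro-$p$ group with $p$ odd---is exactly the strategy used in the cited reference, so in that sense you are on the right track.

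However, your sketch has a genuine gap at the crucial step. From $g=s_1bs_2$ and $\tau(g)=g$ you correctly deduce that the $(S(\beta),S(\beta))$-double coset of $b$ is $\tau$-stable. But applying~\cite[Lemma~2.2]{stevens:01} to that double coset only produces some $\tau$-fixed representative in $S(\beta)bS(\beta)$; indeed $g$ itself is already such a representative, and there is no reason it should lie in $B_\beta^{\times}$. What you actually need is a $\tau$-fixed element of $B_\beta^{\times}$ in this double coset, and for that you must first know that
\[
S(\beta)\,b\,S(\beta)\ \cap\ B_\beta^{\times}\ =\ (S(\beta)\cap B_\beta^{\times})\,b\,(S(\beta)\cap B_\beta^{\times}),
\]
so that $\tau(b)$ lies in the $(S(\beta)\cap B_\beta^{\times})$-double coset of $b$ inside $B_\beta^{\times}$; only then can one legitimately apply the pro-$p$ cohomology lemma in $B_\beta^{\times}$ to extract $b'\in B_\beta^{\times}\cap G$. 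This ``uniqueness of the $B_\beta^{\times}$-part modulo $S(\beta)\cap B_\beta^{\times}$'' is a nontrivial structural fact about $S(\beta)$ (it uses the explicit description $S(\beta)=1+\mf{m}_{-k_0-m}+\mf{j}^{\floor{(-k_0+1)/2}}(\beta,\Lambda)$ together with the exact sequences of Lemma~\ref{lemDescrmfn}), and it is precisely the technical input supplied in~\cite{stevens:05} that your sketch is missing. The same remark applies to your second step: once $b'\in G$ is in hand, the remaining freedom is by $S(\beta)\cap B_\beta^{\times}$ on each side, and showing that one can absorb the defect of $s_1',s_2'$ from $G$ into this freedom again uses the displayed intersection property, not merely a generic cocycle argument.
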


For two skew-semisimple strata giving the same set of semisimple characters we have a stronger version of Proposition~\ref{propBK3.5.1}. 

\begin{corollary}
Let~$[\Lambda,q,m,\beta]$ and~$[\Lambda,q,m,\beta']$ be skew-semisimple strata such that the intersection~$\C_-(\Lambda,m,\beta)\cap \C_-(\Lambda,m,\beta')$ is non-empty, and let~$\tau: I\ra I'$ be the bijection given by Proposition~\ref{propBK3.5.1}, such that~$1^i\equiv 1^{\tau(i)}\pmod{\mf{a}_{1}}$. Then the spaces~$V^i$ and~$V'^{\tau(i)}$ are isomorphic as hermitian spaces, for all indices~$i\in I$. 
\end{corollary}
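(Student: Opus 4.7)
The plan is to reduce the claim to Proposition~\ref{propIntertwiningtwoSkewsemisimplestrataSameLevel}(i), which already delivers hermitian isomorphisms between matched blocks once we know the strata intertwine in~$G$, and then to identify its matching with the given matching~$\tau$.

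First I would lift the common character~$\theta_-$ to a character in~$\C(\Lambda,m,\beta)\cap\C(\Lambda,m,\beta')$. Using the bijection between $\C(\Lambda,m,\beta)^{\sigma}$ and~$\C_-(\Lambda,m,\beta)$ recalled in the remark above, $\theta_-$ lifts to unique $\sigma$-fixed characters $\theta\in\C(\Lambda,m,\beta)$ and~$\theta'\in\C(\Lambda,m,\beta')$. These are characters on the common group $H^{m+1}(\beta,\Lambda)=H^{m+1}(\beta',\Lambda)$ (an equality that follows in the semisimple setting from the shared intertwining data via Proposition~\ref{propBK3.1.9}(v) and Proposition~\ref{propBK3.5.1}) and agree on its $\sigma$-fixed subgroup $H^{m+1}\cap G$. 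Since $H^{m+1}$ is a pro-$p$ group and~$\sigma$ has order~$2$, a standard averaging argument forces $\theta=\theta'$. Consequently the matching~$\tau$ in the statement is precisely the one attached to~$\theta$ by Proposition~\ref{propBK3.5.1}.

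Second I would promote character-level intertwining to stratum-level $G$-intertwining: since $1\in I_G(\theta_-)$, comparing the top pieces $\psi_\beta$ and $\psi_{\beta'}$ on the common domain $H^{m+1}\cap\tilde{\U}^{\floor{q/2}+1}(\Lambda)$ forces $\beta-\beta'\in\mf{a}_{-m}$, and together with skewness this shows the identity intertwines the two skew-semisimple strata in~$G$. Applying Proposition~\ref{propIntertwiningtwoSkewsemisimplestrataSameLevel}(i) then yields a matching $\zeta:I\to I'$ together with hermitian isomorphisms $(V^i,h|_{V^i})\cong (V'^{\zeta(i)},h|_{V'^{\zeta(i)}})$. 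It remains to verify $\zeta=\tau$: both matchings are unique, $\zeta$ via the simple-stratum characterization of Proposition~\ref{propMatching} and~$\tau$ via the congruence $1^i\equiv 1^{\tau(i)}\pmod{\mf{a}_1}$. A comparison analogous to the one in the proof of Lemma~\ref{lemEqSesiStrata}, invoking Lemma~\ref{lemComparingIntertwinings} to extract the idempotent congruence from the equality of intertwining sets, shows these two characterizations agree.

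The main obstacle will be the middle step: extracting stratum-level $G$-intertwining from the mere existence of a common semisimple character for~$G$. This requires carefully tracking the precise sense in which the character data determines the coset $\beta+\mf{a}_{-m}$ (and not only some finer or coarser approximation), while respecting $\sigma$-invariance throughout; once this is in hand, Proposition~\ref{propIntertwiningtwoSkewsemisimplestrataSameLevel} does the heavy lifting and the remaining matching identification is formal.
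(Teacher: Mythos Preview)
Your second step contains a genuine gap. You claim that comparing the restrictions of~$\theta$ and~$\theta'$ to~$H^{m+1}\cap\tilde{\U}^{\floor{q/2}+1}(\Lambda)$ forces~$\beta-\beta'\in\mf{a}_{-m}$. It is true (by unwinding the inductive definition) that~$\theta|_{\tilde{\U}^{\floor{q/2}+1}}=\psi_\beta$ and similarly for~$\theta'$, so equality of the characters there yields~$\psi_{\beta-\beta'}$ trivial on~$\tilde{\U}^{\floor{q/2}+1}$, hence~$\beta-\beta'\in\mf{a}_{-\floor{q/2}}$. But for~$m<\floor{q/2}$ this is strictly weaker than~$\beta-\beta'\in\mf{a}_{-m}$, and there is no reason for the two strata~$[\Lambda,q,m,\beta]$ and~$[\Lambda,q,m,\beta']$ to be equivalent, or even to intertwine, at level~$m$. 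Indeed the very purpose of Proposition~\ref{propBK3.5.1} is to record what one \emph{can} extract from a common character without any such stratum-level intertwining; the conclusion that~$k_0$, inertia and ramification degrees agree is much weaker than conjugacy or intertwining of the strata. Consequently Proposition~\ref{propIntertwiningtwoSkewsemisimplestrataSameLevel} is not available to you, and the route through it collapses.

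The paper's argument bypasses strata entirely and is essentially a one-liner: the congruence~$1^i\equiv 1^{\tau(i)}\pmod{\mf{a}_1}$ already furnished by Proposition~\ref{propBK3.5.1}, together with the symmetry of the idempotents (the splittings are orthogonal), means that the map~$V^i\to V'^{\tau(i)}$, $v\mapsto 1^{\tau(i)}v$, satisfies all the hypotheses of Proposition~\ref{propliftResidualIsometries} (it respects the lattice filtrations and is a residual isometry), which then produces the desired hermitian isometry directly. This is precisely the mechanism already used in the base case of the proof of Proposition~\ref{propIntertwiningtwoSkewsemisimplestrataSameLevel}, so no further stratum-theoretic input is required.
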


\begin{proof}
This follows directly from Proposition~\ref{propliftResidualIsometries} applied to the map~$v\mapsto \sum_{i\in I}1^{\tau(i)}1^iv$.
\end{proof}

We also get an analogue to~\cite[3.5.9]{bushnellKutzko:93} for semisimple characters for~$G$.

\begin{proposition}\label{propAnalogueBK359}
Suppose~$[\Lambda,q,m,\beta']$ and~$[\Lambda,q,m,\beta]$ are skew-semisimple strata with the same associated splitting, such that~$m>0$ and 
\[
\C(\Lambda,m,\beta)=\C(\Lambda,m,\beta').
\]
Then~$H^m(\beta)=H^m(\beta')$ and there is a skew-semisimple stratum~$[\Lambda,q,m,\beta'']$ equivalent to~$[\Lambda,q,m,\beta]$, with the same associated splitting, such that 
\[
\C(\Lambda,m-1,\beta'')=\C(\Lambda,m-1,\beta').
\]
\end{proposition}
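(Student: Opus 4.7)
The plan is as follows. The first assertion, $H^m(\beta)=H^m(\beta')$, will be an immediate consequence of the semisimple analogue of \cite[(3.5.9)]{bushnellKutzko:93} that was already extended in the discussion preceding Proposition~\ref{propBK3.5.1}: the domains on which the characters in $\C(\Lambda,m,\beta)$ and $\C(\Lambda,m,\beta')$ live must coincide, and Proposition~\ref{propBK3.1.9}(v) then propagates this equality one step deeper to $H^m$.

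For the existence of $\beta''$, my plan is first to apply the $\tilde G$-version of the present proposition --- again part of the same semisimple extension of \cite[(3.5.9)]{bushnellKutzko:93} --- to produce a (not necessarily skew) semisimple stratum $[\Lambda,q,m,\tilde\beta]$, equivalent to $[\Lambda,q,m,\beta]$ and with the same splitting $V=\bigoplus_{i\in I}V^i$, such that $\C(\Lambda,m-1,\tilde\beta)=\C(\Lambda,m-1,\beta')$. I will then symmetrize: because the original strata are skew-semisimple, the splitting is orthogonal for $h$, so $\sigma(1^i)=1^i$ and $\sigma(\tilde\beta)$ also commutes with the $1^i$. Setting $\beta'':=\tfrac12(\tilde\beta-\sigma(\tilde\beta))$ yields an element that is skew, commutes with every $1^i$, and (using $\sigma(\tilde\beta)\equiv\sigma(\beta)=-\beta\pmod{\mfa_{-m}}$) satisfies $\beta''\equiv\beta\pmod{\mfa_{-m}}$. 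Verifying that $[\Lambda,q,m,\beta'']$ is skew-semisimple with the prescribed splitting, equivalent to $[\Lambda,q,m,\beta]$, is then a direct check from the definition, since each block-equivalence and each non-block non-simplicity condition is inherited from that of $[\Lambda,q,m,\beta]$.

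What remains is to show $\C(\Lambda,m-1,\beta'')=\C(\Lambda,m-1,\beta')$. The key observation will be that $\C(\Lambda,m-1,\beta')$, and hence $\C(\Lambda,m-1,\tilde\beta)$, is stable under $\theta\mapsto\sigma\cdot\theta$, because $\beta'$ is skew. Combining the elementary identity $\sigma\cdot\psi_c=\psi_{-\sigma(c)}$ with the inductive definition of semisimple characters, this $\sigma$-stability translates into the equality $\C(\Lambda,m-1,\tilde\beta)=\C(\Lambda,m-1,-\sigma(\tilde\beta))$. Writing $c^+:=\tfrac12(\tilde\beta+\sigma(\tilde\beta))\in\mfa_{-m}\cap A_+$, so that $\tilde\beta=\beta''+c^+$ and $-\sigma(\tilde\beta)=\beta''-c^+$, I will reduce the desired identity to the assertion that $\psi_{2c^+}$ is trivial on $H^{m'+1}(\beta'',\Lambda)$ with $m'=\max(m,\floor{r/2})$; since the residue characteristic is odd, halving then forces $\psi_{c^+}$ to be trivial on the same group, and this gives $\C(\Lambda,m-1,\beta'')=\C(\Lambda,m-1,\tilde\beta)=\C(\Lambda,m-1,\beta')$.

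The hard part will be this final step, i.e.\ converting the abstract $\sigma$-stability of the character set at level $m-1$ into the concrete vanishing of $\psi_{2c^+}$ on $H^{m'+1}(\beta'',\Lambda)$. This requires tracking carefully how the inductive formula $\theta|_{H^{m'+1}}=\theta_0\psi_{\beta''-\gamma}$ is altered when $\beta''$ is replaced by $\beta''\pm c^+$, and is likely to rely on Proposition~\ref{propBK3.5.1} and the translation principle (Theorem~\ref{thmTranslationPrinciple}) to compare $\tilde\beta$ and $-\sigma(\tilde\beta)$ at the level of their defining sequences; in particular, one must keep the splittings aligned throughout the induction so that the symmetric perturbation $c^+$ respects the block decomposition.
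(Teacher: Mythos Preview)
Your approach differs from the paper's, and it has a genuine gap at a point you label a ``direct check''.

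The paper proceeds more directly: it picks a $\sigma$-fixed character $\theta\in\C(\Lambda,m-1,\beta)^\sigma$ and a \emph{skew-symmetric} element $b\in\mfa_{-m}\cap\prod_i A^{i,i}$ such that $\theta\psi_b\in\C(\Lambda,m-1,\beta')$ (such a skew $b$ exists because both $\theta$ and a suitable extension on the $\beta'$-side can be taken $\sigma$-fixed). The $\tilde G$-version of \cite[(3.5.9)]{bushnellKutzko:93} then produces a semisimple stratum $[\Lambda,q,m-1,\beta'']$ equivalent to $[\Lambda,q,m-1,\beta+b]$ with $\beta''\in\prod_i A^{i,i}$; since $\beta+b$ is already skew, \cite[1.10]{stevens:01} allows one to choose $\beta''$ skew as well. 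Now $\C(\Lambda,m-1,\beta'')=\psi_b\,\C(\Lambda,m-1,\beta)$ meets $\C(\Lambda,m-1,\beta')$ (it contains $\theta\psi_b$), so equality follows from the analogue of \cite[(3.5.8)]{bushnellKutzko:93}. The key point is that skewness is arranged \emph{before} invoking the result that manufactures a genuinely semisimple stratum.

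Your route---first produce a non-skew $\tilde\beta$ via the $\tilde G$-version, then symmetrize to $\beta'':=\tfrac12(\tilde\beta-\sigma(\tilde\beta))$---breaks down at the assertion that $[\Lambda,q,m,\beta'']$ is skew-semisimple with splitting $\bigoplus V^i$. You argue that ``each block-equivalence and each non-block non-simplicity condition is inherited from $[\Lambda,q,m,\beta]$'', but semisimplicity is a property of the element $\beta''$ itself, not of its coset $\beta''+\mfa_{-m}$: one needs each $F[\beta''_i]$ to be a field normalizing $\Lambda^i$, with $[F[\beta''_i]:F]$ minimal among equivalent pure strata. Nothing in the averaging $\beta''_i=\tfrac12(\tilde\beta_i-\sigma(\tilde\beta_i))$ guarantees this; the skew part of an element generating a field need not generate a field at all, since $\sigma$ need not stabilize $F[\tilde\beta_i]$ inside $A^{i,i}$. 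Without semisimplicity of $\beta''$ at level $m-1$, the set $\C(\Lambda,m-1,\beta'')$ is not defined, and your subsequent comparison of $\C(\Lambda,m-1,\beta''\pm c^+)$ (together with the reduction to triviality of $\psi_{2c^+}$) has no object to work with. The ``hard part'' you identify is thus preceded by a harder one that the symmetrization strategy does not address.
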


\begin{proof}
The same proof as in the first part of~\cite[3.5.9]{bushnellKutzko:93} shows that~$H^{m}(\beta)=H^{m}(\beta')$. Now we take a character~$\theta$ in~$\C(\Lambda,m-1,\beta)^\sigma$ and a skew-symmetric element~$b$ of~$\mf{a}_{-m}(\Lambda)$ in~$\prod_i A^{i,i}$ such that~$\theta\psi_b$ is an element of~$\C(\Lambda,m-1,\beta')$. The same proof as in the second part of~\cite[3.5.9]{bushnellKutzko:93} shows that there is a semisimple stratum~$[\Lambda,n,m-1,\beta'']$ equivalent to~$[\Lambda,n,m-1,\beta+b]$ such that~$\beta''\in\prod_i A^{i,i}$. Since~$\beta+b$ is skew-symmetric~$\beta''$ can be chosen skew-symmetric, by~\cite[1.10]{stevens:01}. Then
\[
\C(\Lambda,m-1,\beta'')=\C(\Lambda,m-1,\beta)\psi_{b}
\]
has a non-trivial intersection with~$\C(\Lambda,m-1,\beta')$, and thus they equal by the analogue of~\cite[3.5.8]{bushnellKutzko:93}. 
\end{proof}

Next we obtain an analogue of the translation principle, Theorem~\ref{thmTranslationPrinciple}.

\begin{theorem}\label{thmTranslationPrincipleForG}
Let~$[\Lambda,q,m+1,\gamma]$ and~$[\Lambda,q,m+1,\gamma']$ be skew-semisimple strata with the same associated splitting such that 
\[
\C(\Lambda,m+1,\gamma)=\C(\Lambda,m+1,\gamma').
\]
Let~$[\Lambda,q,m,\beta]$ be a skew-semisimple stratum, with splitting~$V=\bigoplus_{i\in I}V^i$, such that~$[\Lambda,q,m+1,\beta]$ is equivalent to~$[\Lambda,q,m+1,\gamma]$ and~$\gamma$ is an element of~$\prod_{i\in I} A^{i,i}$. Then, there exists a skew-semisimple stratum~$[\Lambda,q,m,\beta']$, with splitting~$V=\bigoplus_{i'\in I'}V'^{i'}$, such that~$[\Lambda,q,m+1,\beta']$ is equivalent to~$[\Lambda,q,m+1,\gamma']$, with~$\gamma'\in \prod_{i'\in I'}A^{i'i'}$ and
\[
\C(\Lambda,m,\beta)=\C(\Lambda,m,\beta').
\]
\end{theorem}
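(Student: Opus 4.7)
The plan is to reduce the statement to its already-proved non-skew analogue, Theorem~\ref{thmTranslationPrinciple}, and then to replace the resulting element by a skew-symmetric one using Stevens' theorem~\cite[1.10]{stevens:01}, in exactly the spirit of the proof of Proposition~\ref{propAnalogueBK359}.

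First, since $[\Lambda,q,m+1,\beta]$ is equivalent to $[\Lambda,q,m+1,\gamma]$, I can write $c:=\beta-\gamma\in\mf{a}_{-(m+1)}$; as both $\beta$ and $\gamma$ are skew-symmetric, so is $c$, and hence so is $\gamma'+c$. Next, applying Theorem~\ref{thmTranslationPrinciple} to the data $[\Lambda,q,m+1,\gamma]$, $[\Lambda,q,m+1,\gamma']$, and $[\Lambda,q,m,\beta]$, I obtain a (not-necessarily-skew) semisimple stratum $[\Lambda,q,m,\beta'_0]$ with some splitting $V=\bigoplus_{j\in J}W^{j}$ such that $\gamma'\in\prod_{j\in J}A^{j,j}$ with respect to this splitting, with $[\Lambda,q,m+1,\beta'_0]$ equivalent to $[\Lambda,q,m+1,\gamma']$, and with $\C(\Lambda,m,\beta)=\C(\Lambda,m,\beta'_0)$. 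An inspection of the last step of the proof of that theorem shows moreover that $\beta'_0$ may be chosen as a semisimple representative of the equivalence class of the stratum $[\Lambda,q,m,\gamma'+c]$, whose defining element $\gamma'+c$ is skew-symmetric.

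Since this equivalence class contains the skew element $\gamma'+c$, Stevens' theorem~\cite[1.10]{stevens:01} (used just as it is in the proof of Proposition~\ref{propAnalogueBK359}) produces a skew-symmetric element $\beta'\in\gamma'+c+\mf{a}_{-m}$ such that $[\Lambda,q,m,\beta']$ is a skew-semisimple stratum equivalent to $[\Lambda,q,m,\beta'_0]$, whose splitting $V=\bigoplus_{i'\in I'}V'^{i'}$ refines the orthogonal splitting of $\gamma'$; in particular, $\gamma'\in\prod_{i'\in I'}A^{i',i'}$. The congruence $\beta'-\gamma'\equiv c\pmod{\mf{a}_{-m}}$ together with $c\in\mf{a}_{-(m+1)}$ then forces $[\Lambda,q,m+1,\beta']$ to be equivalent to $[\Lambda,q,m+1,\gamma']$; and because equivalent semisimple strata carry identical sets of semisimple characters, I finally conclude that $\C(\Lambda,m,\beta')=\C(\Lambda,m,\beta'_0)=\C(\Lambda,m,\beta)$.

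The only delicate point is to verify that Stevens' skew-semisimplification is compatible with the orthogonal splitting of $\gamma'$, namely that the skew representative $\beta'$ it produces has a splitting refining the $\gamma'$-splitting. This is precisely the compatibility that is invoked when~\cite[1.10]{stevens:01} is applied at the end of the proof of Proposition~\ref{propAnalogueBK359}, and it should transfer identically to the present setting by running the skew-semisimplification blockwise with respect to the orthogonal splitting already determined by the skew element $\gamma'$.
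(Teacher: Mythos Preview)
Your strategy --- black-box Theorem~\ref{thmTranslationPrinciple} and then skew-symmetrize the output via~\cite[1.10]{stevens:01} --- is different from the paper's, which instead re-runs the entire proof of~\cite[2.11]{bushnellKutzko:94} in the skew category, replacing each application of~\cite[3.5.9]{bushnellKutzko:93} by its skew analogue Proposition~\ref{propAnalogueBK359}. Your route is more economical in principle, since you avoid re-verifying the iterative reduction, and your observation that the output~$\beta'_0$ of Theorem~\ref{thmTranslationPrinciple} is equivalent to~$\gamma'+c$ with~$c=\beta-\gamma$ skew is correct.

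There is, however, a genuine gap in your final paragraph. You need the splitting~$\{V'^{i'}\}$ of the skew~$\beta'$ to satisfy~$\gamma'\in\prod_{i'}A^{i',i'}$, i.e.\ each~$V'^{i'}$ must be~$F[\gamma']$-stable. Running~\cite[1.10]{stevens:01} blockwise with respect to the~$\gamma'$-splitting~$\{V^j\}$ only ensures that~$\beta'$ lies in~$\prod_j A^{j,j}$; it says nothing about whether the \emph{finer} splitting of~$\beta'^j$ inside each~$V^j$ is~$\gamma'_j$-stable. The analogy with Proposition~\ref{propAnalogueBK359} breaks down precisely here: in that proposition the new stratum~$\beta''$ has the \emph{same} splitting as~$\beta$, so no finer compatibility arises. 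To close the gap you must construct~$\beta'^j$ via the derived stratum~$[\Lambda^j,m+1,m,s_{\gamma'_j}(c_j)]$ in~$B_{\gamma'_j}$: this stratum is skew (for~$\sigma$-equivariant~$s_{\gamma'_j}$), and its skew-semisimple representative has an orthogonal splitting by~$E'_j$-subspaces, which then lifts to an~$E'_j$-stable splitting for~$\beta'^j$ by Corollary~\ref{corDerivedSplitting}. But carrying this out is exactly re-doing step~(iii) of the proof of Theorem~\ref{thmTranslationPrinciple} in the skew setting --- which is what the paper's approach does from the start.
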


\begin{proof}
The proof of~\cite[2.11]{bushnellKutzko:94} starts with an iterative application of~\cite[3.5.9]{bushnellKutzko:93}, which we can modify as in the proof of~Proposition~\ref{propAnalogueBK359} to obtain a skew analogue. But now the proof of Theorem~\ref{thmTranslationPrinciple} goes through on noting that all strata which occur are, or can be chosen to be, skew. 
\end{proof}

Finally, we get an analogue of Proposition~\ref{propDerivedCharacters}, with the same proof (replacing the reference to Proposition~\ref{propIntertwiningSemisimplCharacter} by Proposition~\ref{propIntertwiningSemisimplCharacterOfG}). 

\begin{proposition}\label{propDerivedCharactersForG}
Suppose~$m<q-1$ and let~$[\Lambda,q,m,\beta]$ and~$[\Lambda,q,m,\beta']$ be skew-semisimple strata which have defining sequences with a common first element~$[\Lambda,q,m+1,\gamma]$. Let~$\theta\in \C(\Lambda,m,\beta)^\sigma$ and~$\theta'\in \C(\Lambda,m,\beta')^\sigma$ be semisimple characters 
which agree on~$H^{m+2}(\Lambda,\gamma)$, so that we can write~$\theta'=\theta_0\psi_{\beta'-\gamma}$ and~$\theta=\theta_0\psi_{\beta-\gamma+c}$, for some~$\theta_0\in \C(\Lambda,m,\gamma)^\sigma$ and~$c\in \mf{a}_{-(m+1),-}$. Let~$s_\gamma$ be a~$\sigma$-equivariant tame corestriction with respect to~$\gamma$.
\begin{enumerate} 
 \item For any~$g\in I_G(\theta,\theta')$ there are elements~$x,y\in S(\gamma)\cap G$ and~$g'\in B_\gamma\cap G$ such that~$g=xg'y$; moreover,~$g'$ intertwines~$\psi_{s_\gamma(\beta-\gamma+c)}$ with~$\psi_{s_\gamma(\beta'-\gamma)}$.
 \item For any~$g'\in I_{B_\gamma\cap G}(\psi_{s_\gamma(\beta-\gamma+c)},\psi_{s_\gamma(\beta'-\gamma)})$, there are~$x,y\in (1+\mf{m}_{-k_0(\gamma,\Lambda)-m-1})\cap G$ such that~$xg'y$ intertwines~$\theta$ with~$\theta'$. 
 \item If~$\psi_{s_\gamma(\beta-\gamma+c)}=\psi_{s_\gamma(\beta'-\gamma)}$ then there is~$z\in (1+\mf{m}_{-k_0(\gamma,\Lambda)-m-1})\cap G$ such that~$\theta^{z}=\theta'$.
\end{enumerate}
\end{proposition}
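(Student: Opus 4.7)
The plan is to mirror the proof of Proposition~\ref{propDerivedCharacters} verbatim, replacing each ingredient by its ``fixed-point'' counterpart. The key observation is that every tool used in the non-skew case has already been shown in the paper to admit a $G$-equivariant version: the intertwining description of a semisimple character (Proposition~\ref{propIntertwiningSemisimplCharacterOfG}), the approximation lemma for simple strata (the skew-symmetric addendum of Proposition~\ref{propS3}), and the existence of a $\sigma$-equivariant tame corestriction (Remark~\ref{rmk:tamecor}(iii)). In particular, since~$s_\gamma$ is $\sigma$-equivariant and~$\beta-\gamma+c, \beta'-\gamma\in A_-$, the derived strata~$[\Lambda,m+1,m,s_\gamma(\beta-\gamma+c)]$ and~$[\Lambda,m+1,m,s_\gamma(\beta'-\gamma)]$ are skew, so Theorem~\ref{thmIntertwiningTwoSemisimpleStratumSameBeta}(ii) and Proposition~\ref{propS3} (skew form) are available in~$B_\gamma\cap G$.

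For (i), given $g\in I_G(\theta,\theta')$, Proposition~\ref{propIntertwiningSemisimplCharacterOfG} (applied to, say,~$\theta$) gives a decomposition $g=xg'y$ with~$x,y\in S(\gamma)\cap G$ and~$g'\in B_\gamma\cap G$. Note that~$x,y$ normalize~$H^{m+1}(\gamma,\Lambda)=H^{m+1}(\beta,\Lambda)=H^{m+1}(\beta',\Lambda)$ by~\cite[(3.6.2), (3.1.15)(ii)]{bushnellKutzko:93}. Exactly as in the proof of Proposition~\ref{propDerivedCharacters}(i), one uses $\theta^x=\theta_0\psi_{a_\gamma(x)}\psi_{\beta-\gamma+c}$ and $\theta'^{y^{-1}}=\theta_0\psi_{-a_\gamma(y)}\psi_{\beta'-\gamma}$, together with the fact that~$\psi_{a_\gamma(x)}$ and~$\psi_{a_\gamma(y)}$ are trivial on~$\tilde{\U}^{m+1}(\Lambda)\cap B_\gamma^\times$, to conclude that~$g'$ intertwines~$\theta_0\psi_{s_\gamma(\beta-\gamma+c)}$ with~$\theta_0\psi_{s_\gamma(\beta'-\gamma)}$ over~$B_\gamma\cap G$ and hence (since it centralizes~$\theta_0|_{B_\gamma\cap G}$) intertwines the two $\psi_{s_\gamma(\cdot)}$.

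For (ii), suppose $g'\in B_\gamma\cap G$ intertwines $\psi_{s_\gamma(\beta-\gamma+c)}$ with $\psi_{s_\gamma(\beta'-\gamma)}$. Then $g'$ intertwines the skew-simple(-semisimple) strata $[\Lambda,m+1,m,s_\gamma(\beta'-\gamma)]$ and $[\Lambda,m+1,m,s_\gamma(\beta-\gamma+c)]$ in~$G\cap B_\gamma^\times$. By the skew statement of Proposition~\ref{propS3} (applied blockwise in the centralizer of~$\gamma$), there exist skew-symmetric~$\tilde v,\tilde w'$ whose Cayley transforms $x,y\in(1+\mf{m}_{-k_0(\gamma,\Lambda)-m-1})\cap G$ satisfy
\[
(1+w')(\beta+c)(1+w')^{-1}\equiv(1+v)\beta'(1+v)^{-1}\pmod{\mfa_{1-m}+\mfa'_{1-m}}.
\]
The same calculation as in the proof of Proposition~\ref{propDerivedCharacters}(ii) (expanding both $\psi_{x^{-1}(\beta+c)x}$ and $\psi_{y\beta'y^{-1}}$ in terms of $\psi_\gamma$ and the appropriate $\psi_{a_\gamma(\cdot)}$ pieces, and using that~$g'$ centralizes $\theta_0$ and $\psi_\gamma|_{H^{m+1}}$) shows that~$xg'y$ intertwines~$\theta$ with~$\theta'$. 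Part (iii) then follows immediately by applying (ii) with~$g'=1$ and setting~$z=xy$, which lies in~$(1+\mf{m}_{-k_0(\gamma,\Lambda)-m-1})\cap G$ and normalizes $H^{m+1}$.

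The only non-bookkeeping obstacle is arranging skew-symmetric lifts at each stage: one has to check that every application of Proposition~\ref{propS3} really produces elements of~$G$ (which is why we need the Cayley-transform addendum there), and that the tame corestriction $s_\gamma$ and the decompositions coming from the idempotents~$1^i$ respect~$\sigma$, for which we invoke Remark~\ref{rmk:tamecor}(iii). With that in hand every congruence in the general linear proof remains valid, now with each factor lying in~$G$.
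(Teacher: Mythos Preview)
Your proposal is correct and follows exactly the paper's own approach: the paper proves this proposition in a single sentence, stating that the proof of Proposition~\ref{propDerivedCharacters} goes through verbatim once one replaces the reference to Proposition~\ref{propIntertwiningSemisimplCharacter} by Proposition~\ref{propIntertwiningSemisimplCharacterOfG}. Your write-up simply makes explicit the skew ingredients (the $\sigma$-equivariant tame corestriction from Remark~\ref{rmk:tamecor}(iii) and the Cayley-transform clause of Proposition~\ref{propS3}) that the paper leaves implicit. One small slip: in part~(i) you say Proposition~\ref{propIntertwiningSemisimplCharacterOfG} is ``applied to, say,~$\theta$'', but since~$g$ intertwines~$\theta$ with~$\theta'$ rather than with itself, the formula should be applied to their common restriction to~$H^{m+2}(\gamma,\Lambda)$, which lies in~$\C_-(\Lambda,m+1,\gamma)$ and is self-intertwined by~$g$.
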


%
%
%
%
%


\section{Matching and conjugacy for semisimple characters}

In this final section we prove that there is an analogue of the matching Proposition~\ref{propMatching} for semisimple characters which intertwine. One might think that this matching could just come from that for the underlying semisimple strata, but these do not necessarily intertwine so this is not possible. Then the sufficient condition~\eqref{eqConLambdaLambdaPrime} for an ``intertwining implies conjugacy'' result for semisimple strata is also sufficient for semisimple characters, also in the case of semisimple characters for~$G$. 

\subsection{For general linear groups}
For a semisimple character~$\theta\in \C(\Lambda,m,\beta)$, with decomposition~$V=\bigoplus_{i\in I}V^i$ associated to~$[\Lambda,q,0,\beta]$, we write~$\theta_i$ for the restriction of~$\theta$ to~$H^{m+1}(\Lambda^i,\beta_i)=H^{m+1}(\beta,\Lambda)\cap A^{i,i}$, for each index~$i\in I$.

\begin{theorem}\label{thmMatchingForChar}
Let~$\theta\in \C(\Lambda,m,\beta)$ and~$\theta'\in \C(\Lambda',m,\beta')$ be semisimple characters which intertwine. Then there is a unique bijection~$\zeta:I\ra I'$ such that there is an element~$g\in
\tilde{G}$ with
\begin{enumerate}
 \item\label{thmMatchingForChar.i} $g V^i=V'^{\zeta(i)}$, for all~$i\in I$; 
 \item\label{thmMatchingForChar.ii} $\theta_i^{g^{-1}}$ and~$\theta'_{\zeta(i)}$ intertwine, for all~$i\in I$. 
\end{enumerate}
Moreover, all elements~$g'\in\tilde{G}$ which satisfy the first property also satisfy the second. 
\end{theorem}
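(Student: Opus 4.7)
The plan is to prove the theorem by strata induction on the defining sequence of $[\Lambda,q,m,\beta]$, in close analogy with the proof of Proposition~\ref{propMatching} for semisimple strata. After scaling the lattice sequences, we may assume $\Lambda$ and $\Lambda'$ have the same period. For the base case, both strata are (equivalent to) minimal semisimple strata; the characters $\theta$ and $\theta'$ are determined by $\psi_\beta$ and $\psi_{\beta'}$ on the deep subgroups $\tilde{\U}^{\lfloor q/2\rfloor+1}(\Lambda)$ and $\tilde{\U}^{\lfloor q'/2\rfloor+1}(\Lambda')$, together with the determinantal condition on the centralizers. Any intertwiner $g\in I(\theta,\theta')$ therefore intertwines the underlying minimal semisimple strata, so Proposition~\ref{propMatching} produces the bijection $\zeta$ together with a compatible element $g_0\in\tilde G$ satisfying~(i), and Proposition~\ref{propIntertwiningSemisimplCharacter} together with Corollary~\ref{corIntertwiningtwosemisimplestrataSameLevel} identifies $g_0$ as inducing intertwiners of the blockwise simple characters, giving~(ii).

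For the inductive step, choose defining sequences with first elements $[\Lambda,q,m+1,\gamma]$ and $[\Lambda',q',m+1,\gamma']$, where $\gamma$ commutes with the block projections $1^i$ of $\beta$, and similarly $\gamma'$ with the $1'^{i'}$. Since $H^{m+2}(\beta,\Lambda)=H^{m+2}(\gamma,\Lambda)$, the restriction $\theta|_{H^{m+2}(\gamma,\Lambda)}$ lies in $\C(\Lambda,m+1,\gamma)$, and similarly for $\theta'$; any $g\in I(\theta,\theta')$ intertwines these restrictions. The inductive hypothesis yields a matching $\zeta_0$ of the coarser $\gamma$-splittings, realized by some $g_0\in\tilde G$. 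Conjugating and applying the translation principle (Theorem~\ref{thmTranslationPrinciple}), we may assume the coarse splittings are aligned and $\gamma'=\gamma$. Within each coarse $\gamma$-block, Proposition~\ref{propDerivedCharacters}(i) translates the intertwining of $\theta,\theta'$ into an intertwining, inside $B_\gamma^\times$, of derived semisimple strata $[\Lambda,m+1,m,s_\gamma(\beta-\gamma)]$ and $[\Lambda,m+1,m,s_\gamma(\beta'-\gamma)]$, whose splittings refine the coarse block by the $\beta$- and $\beta'$-splittings. Applying Proposition~\ref{propMatching} inside $B_\gamma^\times$ refines the matching to $\zeta$; assembling blockwise produces the required $g\in\tilde G$, and Proposition~\ref{propDerivedCharacters}(ii) run in reverse recovers~(ii).

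Uniqueness of $\zeta$ will follow from uniqueness in Proposition~\ref{propMatching} applied to equivalent semisimple strata obtained via Proposition~\ref{propBK3.5.1} together with Theorem~\ref{thmTranslationPrinciple}. For the moreover claim, if $g'$ also satisfies~(i) then $g(g')^{-1}$ preserves each $V'^{\zeta(i)}$ and acts block-diagonally via some $h\in\prod_i\Aut_F(V'^{\zeta(i)})$; the blockwise components $h_i$ carry $\theta_i^{g^{-1}}$ to $\theta_i^{(g')^{-1}}$, so the intertwining of $\theta_i^{(g')^{-1}}$ with $\theta'_{\zeta(i)}$ follows from that of $\theta_i^{g^{-1}}$ with $\theta'_{\zeta(i)}$.

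The main obstacle will be the refinement step in the inductive argument: once the coarse $\gamma$-matching is in place, one must carefully arrange the translation so that the derived strata in the centralizer $B_\gamma^\times$ are genuinely semisimple with the expected splittings, and then push the conclusion back up through Proposition~\ref{propDerivedCharacters} to produce a single global intertwiner in $\tilde G$ simultaneously on all coarse blocks. The bookkeeping needed to control the compact-group factors in $S(\gamma)$ when assembling the global $g$ from blockwise pieces, and to preserve the matching obtained at the derived level, is the principal technical difficulty.
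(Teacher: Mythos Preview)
Your overall strategy---induction on~$m$, reduce to a common~$\gamma$, pass to derived strata in~$B_\gamma$, and apply Proposition~\ref{propMatching} there---matches the paper's approach. However, there is a genuine gap in your inductive step, and it is precisely the step you flag as the ``principal technical difficulty'' without resolving.

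After the inductive hypothesis gives you a coarse matching~$\zeta_\gamma$ and some~$g_0$, you write: ``Conjugating and applying the translation principle, we may assume the coarse splittings are aligned and~$\gamma'=\gamma$.'' This is too fast. The translation principle (Theorem~\ref{thmTranslationPrinciple}) requires as input that~$\C(\Lambda,m+1,\gamma)=\C(\Lambda,m+1,\gamma')$---same lattice sequence~$\Lambda$, and equal sets of characters. Your inductive hypothesis only gives that~$\theta|_{H^{m+2}}$ and~$\theta'|_{H^{m+2}}$ \emph{intertwine}, and you have two a priori different lattice sequences~$\Lambda,\Lambda'$. Two things are missing. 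First, you need~$g_0\Lambda^j$ and~$\Lambda'^{\zeta_\gamma(j)}$ to be conjugate so that you can arrange~$\Lambda=\Lambda'$ blockwise; in general they need not be, and the paper handles this by a preliminary reduction (via the~$\dagger$-construction and endo-equivalence of simple characters) to the case where every~$\Lambda^i$ is a block-wise \emph{principal} lattice chain, so that any two of the same period in the same space are automatically conjugate. Second, once~$\Lambda=\Lambda'$ on a coarse block, you still only know the simple characters~$\theta_{\gamma,j}$ and~$\theta'_{\gamma',j}$ intertwine; the paper invokes intertwining-implies-conjugacy for \emph{simple} characters (\cite[3.5.11]{bushnellKutzko:93}) to conjugate them to be equal, and only then does~\cite[3.5.8]{bushnellKutzko:93} give~$\C(\Lambda,m+1,\gamma)=\C(\Lambda,m+1,\gamma')$, enabling the translation principle. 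Without these two steps your sentence ``we may assume~$\gamma'=\gamma$'' is unjustified.

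Your uniqueness argument is also different from the paper's and, as stated, incomplete: the underlying strata~$[\Lambda,q,m,\beta]$ and~$[\Lambda',q,m,\beta']$ need not themselves intertwine, so you cannot directly invoke the uniqueness clause of Proposition~\ref{propMatching}. The paper instead argues directly: if two distinct bijections existed, some~$\theta_{i_1},\theta_{i_2}$ would both intertwine~$\theta'_{i'}$, and then~$\theta'_{i'}\otimes\theta'_{i'}$ (a simple character) would intertwine~$\theta_{i_1}\otimes\theta_{i_2}$ (with two blocks), contradicting the already-established existence part applied to these.
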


\begin{proof}
We remark first that the final sentence of the statement is clear. We continue with the uniqueness of~$\zeta$ under the assumption that the existence statement is proven. If there are two bijections from~$I$ to~$I'$ satisfying the assertions of the theorem then there are indices~$i_1,i_2\in I$ and~$i'\in I'$ such that~$\theta_{i_1}$ and~$\theta_{i_2}$ intertwine with~$\theta'_{i'}$. By~\ref{thmMatchingForChar.i}, we can conjugate~$\theta'_{i'}$ to~$V^{i_1}$ and to~$V^{i_2}$, and afterwards~$\theta'_{i'}\otimes \theta'_{i'}$ is the Levi-part (under an Iwahori decomposition) of a simple character, which intertwines with~$\theta_{i_1}\otimes\theta_{i_2}$. The index set of the latter two semisimple characters have different cardinalities and we obtain a contradiction. 

We now turn to the existence proof. First we reduce to the case of lattice chains, in fact to the case where both lattice chains are block-wise principal lattice chains -- that is, for each index~$i$ the dimension~$\dim_{\kappa_F}\Lambda^i_k/\Lambda^i_{k+1}$ is independent of~$k$. For that we repeat the~$\dagger$-construction~$\Lambda^{\dagger}:=\oplus_{j=0}^{e-1}(\Lambda-j)$, where~$e$ is the period of~$\Lambda$ (which we assume coincides with that or~$\Lambda'$), and~$\Lambda'^{\dagger}$ similarly. Let us remark that~$\Lambda^\dagger$ is the direct sum of the~$(\Lambda^i)^\dagger$. We will also need to use the notion of \emph{endo-equivalence} of simple characters, for which we refer the reader to~\cite{bushnellHenniart:96} and~\cite{broussousSecherreStevens:12}. 

By assumption,~$\theta$ and~$\theta'$ intertwine and thus~$\theta^\dagger$ and~$\theta'^\dagger$ intertwine. Assume that we have proven the existence of~$\zeta$ for the case of block-wise principal lattice chains. In particular we find an element~$g$ which maps, for each index~$i$, the vector space~$(V^{i})^\dagger$ to~$(V'^{\zeta(i)})^\dagger$, and then~$(\theta_i^\dagger)^{g^{-1}}$ and~$\theta_{\zeta(i)}^{' \dagger}$ intertwine. In particular, this implies that~$V^{\zeta(i)}$ and~$V^i$ have the same dimension and that~$\theta_i^\dagger$ and~$\theta_{\zeta(i)}^{'\dagger}$ are endo-equivalent. (More precisely, they are realizations of endo-equivalent ps-characters.) Thus there is an isomorphism~$g_i:V^i\to V'^{\zeta(i)}$ and, for any such, the simple characters~$\theta_i^{g_i^{-1}}$ and~$\theta'_{\zeta(i)}$ intertwine, since they are realizations of endo-equivalent ps-characters on the same space. Thus the element~$\sum_{i\in I} g_i$ has all the required properties. This finishes the proof of the reduction to the block-wise principal case. 

Now we assume we are in the block-wise principal case and prove the existence of~$\zeta$. We proceed via induction on~$m$, with the case~$m\geq \floor{\frac{q}{2}}$ following directly from Proposition~\ref{propMatching}. For~$m<\floor{\frac{q}{2}}$, let~$[\Lambda,q,m+1,\gamma]$ be a semisimple stratum equivalent to~$[\Lambda,q,m+1,\beta]$ with~$\gamma\in \prod_iA^{i,i}$, and similarly for~$[\Lambda',q,m+1,\gamma']$. We write~$J$ for the index set of the splitting of~$[\Lambda,q,m+1,\gamma]$, and similarly~$J'$. We have the character~$\theta_\gamma=\theta|_{H^{m+2}(\gamma,\Lambda)}\in\C(\Lambda,m+1,\gamma)$, and similarly~$\theta'_{\gamma'}$, and these characters intertwine. In particular, by induction, there are a bijection~$\zeta_\gamma:J\to J'$ and~$g\in\tilde{G}$ such that~$gV^j=V'^{\zeta_\gamma(j)}$ and~$\theta_{\gamma,j}^{g^{-1}}$ intertwines~$\theta'_{\gamma',\zeta_\gamma(j)}$ for all~$j\in J$, where~$\theta_{\gamma,j}=\theta_\gamma|_{H^{m+2}(\gamma_j,\Lambda^j)}$. Since~$g\Lambda^j$ and~$\Lambda'^{\zeta_\gamma(j)}$ are then principal lattice chains of the same period in the same space, they are conjugate so, changing~$g$, we may assume they are equal; that is,~$g\in\tilde U(\Lambda)$. 

In particular, conjugating everything by~$g$, we may assume that the strata~$[\Lambda,q,m+1,\gamma]$ and~$[\Lambda,q,m+1,\gamma']$ have the same splitting and~$\theta_{\gamma,j}$ intertwines~$\theta'_{\gamma',j}$. Moreover, since we can then prove the existence of~$\zeta$ separately for each block~$V^j$, we may assume that~$\theta_\gamma$ and~$\theta'_{\gamma'}$ are simple (and intertwine). Since~$\Lambda$ and~$\Lambda'$ are again principal lattice chains of the same period in the same space, they are conjugate, and we can assume~$\Lambda=\Lambda'$. Then~\cite[Theorem~3.5.11]{bushnellKutzko:93} implies that~$\theta_\gamma$ and~$\theta'_{\gamma'}$ are conjugate by an element of~$\tilde{U}(\Lambda)$ so, by conjugating, we can assume they are equal. By~\cite[3.5.8]{bushnellKutzko:93} we then have that
\[
H^{m+1}(\beta,\Lambda)=H^{m+1}(\gamma,\Lambda)=H^{m+1}(\gamma',\Lambda)=H^{m+1}(\beta',\Lambda).
\]
Thus we abbreviate~$H^{m+1}$, and similarly~$H^{m+2}$. By the translation principle Theorem~\ref{thmTranslationPrinciple}, we can find a semisimple stratum~$[\Lambda,q,m,\beta'']$ with splitting~$V=\bigoplus_{i\in I''}V''^{i''}$ such that 
\begin{itemize}
 \item~$[\Lambda,q,m+1,\beta'']$ equivalent to~$[\Lambda,q,m+1,\gamma']$;
 \item~$\C(\Lambda,m,\beta'')=\C(\Lambda,m,\beta)$ and 
 \item~$\gamma'\in\prod_{i''}A^{i''i''}$.
\end{itemize}
Now Proposition~\ref{propBK3.5.1} implies that we have a bijection~$\tau:I\to I''$ and~$h\in S(\beta)$ such that~$hV^i=V''^{i''}$. The element~$h$ normalizes~$\theta$, thus~$\theta^{h^{-1}}_i=\theta_{\tau(i)}$. In particular, we may replace~$\beta$ by~$\beta''$, since we can then compose the bijection~$\zeta:I''\to I$ that we find with~$\tau$ (and left multiply the~$g$ with~$h$); that is, we can assume that~$\gamma=\gamma'$.

Now let~$s_\gamma$ be a tame correstriction with respect to~$\gamma$. We write~$\theta$ and~$\theta'$ as in Proposition~\ref{propDerivedCharacters},
\[
 \theta=\theta_0\psi_{\beta-\gamma+c}, \qquad\text{and}\qquad
  \theta'=\theta_0\psi_{\beta'-\gamma},
\]
with~$\theta_0\in\C(\Lambda,m,\gamma)$ and~$c\in\mf{a}_{-(m+1)}$. Moreover, by Remark~\ref{remPropDerivedCharacters}, we can assume that~$c$ is decomposed by the splitting~$V=\bigoplus_{i\in I}V^i$. Since~$\theta_0$ and~$\psi_c\theta_0$ are both elements of~$\C(\Lambda,m,\gamma)$, both are intertwined by every element of~$B_\gamma^\times$; in particular, we deduce that the derived stratum~$[\Lambda,m+1,m,s_\gamma(c)]$ is intertwined by every element of~$B_\gamma^\times$ and thus~$s_\gamma(c)$ is an element of~$F[\gamma]+\mf{b}_{\gamma,-m}$, by~\cite[Lemma~2.4.11]{bushnellKutzko:93}. Then, since~$c,\beta$ are both decomposed by the splitting~$V=\bigoplus_{i\in I}V^i$, there is a semisimple stratum~$[\Lambda,m+1,m,\delta]$ equivalent to~$[\Lambda,m+1,m,s(\beta-\gamma+c)]$ with splitting~$V=\bigoplus_{i\in I}V^i$. Similarly, there is a semisimple stratum~$[\Lambda,m+1,m,\delta']$ equivalent to~$[\Lambda,m+1,m,s(\beta'-\gamma)]$ with splitting~$V=\bigoplus_{i\in I'}V^{i'}$.

By Proposition~\ref{propDerivedCharacters} there is an element of~$B_\gamma$ which intertwines~$[\Lambda,m+1,m,s(\beta'-\gamma)]$ with~$[\Lambda,m+1,m,s(\beta-\gamma+c)]$, so intertwines the semisimple strata~$[\Lambda,m+1,m,\delta]$ and~$[\Lambda,m+1,m,\delta']$. Then the matching for semisimple strata, Proposition~\ref{propMatching}, implies that there is~$g\in B_\gamma^\times$ which matches their splittings; indeed, since we are in the block-wise principal case, we may choose such~$g\in \tilde U(\Lambda)\cap B_\gamma$. In particular, conjugating by this element (which centralizes~$\gamma$), we may assume that~$I=I'$ and the strata~$[\Lambda,m+1,m,\delta]$ and~$[\Lambda,m+1,m,\delta']$ are intertwined by an element of~$B_\gamma\cap \prod_iA^{i,i}$. But then, by Proposition~\ref{propDerivedCharacters} again,~$\theta_i$ intertwines with~$\theta'_i$ for all~$i\in I$, which finishes the proof. 
\end{proof}

\begin{theorem}\label{thmintConjSemisimpleCharacters}
Let~$\theta\in \C(\Lambda,m,\beta)$ and~$\theta'\in \C(\Lambda,m,\beta')$ be semisimple characters which intertwine, let~$\zeta: I\ra I'$ be the matching given by Theorem~\ref{thmMatchingForChar}, and suppose that the condition~\eqref{eqConLambdaLambdaPrime} holds. Then~$\theta$ is conjugate to~$\theta'$~by an element of~$\tilde{U}(\Lambda)$.
\end{theorem}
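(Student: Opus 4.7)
The plan is to combine a geometric alignment step with a strata-type induction on $q - m$, reducing the problem to the intertwining-implies-conjugacy statement for semisimple strata (Theorem \ref{thmintConjSemisimple}) via the tame corestriction formalism of Proposition \ref{propDerivedCharacters}. First, by Theorem \ref{thmMatchingForChar} there is $g_0 \in \tilde G$ with $g_0V^i = V'^{\zeta(i)}$ for each $i$. The two lattice sequences $g_0\Lambda^i$ and $\Lambda^{\zeta(i)}$ lie in the same $F$-space, have the same period, and -- by \eqref{eqConLambdaLambdaPrime} -- the same sequence of successive-quotient dimensions, so they are conjugate under $\Aut_F(V^{\zeta(i)})$. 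Composing $g_0$ on the left with the direct sum of such block conjugators produces $g \in \tilde{\U}(\Lambda)$ with $gV^i = V^{\zeta(i)}$ and $g\Lambda^i = \Lambda^{\zeta(i)}$ for each $i$. Replacing $(\theta', \beta')$ by their $g^{-1}$-conjugates, we may assume that both characters are attached to the same splitting $V = \bigoplus_i V^i$ (with $\zeta = \id$ after relabeling).

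The induction is on $q - m$, the case $m \geq q$ being trivial. Choose, via Theorem \ref{thmDiagonalizedSimple}, semisimple strata $[\Lambda, q, m+1, \gamma]$ and $[\Lambda, q, m+1, \gamma']$ with $\gamma, \gamma'$ block-diagonal for the common splitting, equivalent at level $m+1$ to $[\Lambda, q, m+1, \beta]$ and $[\Lambda, q, m+1, \beta']$ respectively. The restrictions $\theta_\gamma := \theta|_{H^{m+2}}$ and $\theta'_{\gamma'} := \theta'|_{H^{m+2}}$ lie in $\C(\Lambda, m+1, \gamma)$ and $\C(\Lambda, m+1, \gamma')$, intertwine, and inherit the hypothesis \eqref{eqConLambdaLambdaPrime} for the coarser splittings obtained by summing the block-wise dimension equalities over the clusters that merge in the passage $\beta \leadsto \gamma$. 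The inductive hypothesis supplies $u \in \tilde{\U}(\Lambda)$ conjugating $\theta_\gamma$ to $\theta'_{\gamma'}$; after replacing $\theta'$ by $\theta'^u$ and invoking the translation principle Theorem \ref{thmTranslationPrinciple} to replace the defining stratum of $\theta$ without changing the character set, we may arrange $\gamma = \gamma'$ and write $\theta = \theta_0 \psi_{\beta - \gamma + c}$, $\theta' = \theta_0 \psi_{\beta' - \gamma}$ with $\theta_0 \in \C(\Lambda, m, \gamma)$ and $c \in \mf{a}_{-(m+1)}$ block-diagonal (Remark \ref{remPropDerivedCharacters}).

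Fix a tame corestriction $s_\gamma$. Proposition \ref{propDerivedCharacters}(i), applied to an intertwiner of $\theta$ with $\theta'$, produces an element of $B_\gamma^\times$ that intertwines $\psi_{s_\gamma(\beta - \gamma + c)}$ with $\psi_{s_\gamma(\beta' - \gamma)}$. By a block-wise application of Corollary \ref{corDerivedSplitting}, these are characters attached to semisimple strata on $B_\gamma$ sharing the common splitting of $\beta, \beta'$ and the same ambient lattice sequence $\Lambda$; hence condition \eqref{eqConLambdaLambdaPrime} is trivially satisfied for them. Theorem \ref{thmintConjSemisimple}, applied inside $B_\gamma^\times$, therefore yields $v \in B_\gamma^\times \cap \tilde{\U}(\Lambda)$ conjugating one derived stratum to the other, and Proposition \ref{propDerivedCharacters}(iii) lifts $v$ to an element of $\tilde{\U}(\Lambda)$ conjugating $\theta$ to $\theta'$.

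The main technical obstacle will lie in the induction step: one has to verify that the matching $\zeta$ between the splittings of $\beta$ and $\beta'$ is compatible with the coarser matching coming from $\gamma$ and $\gamma'$ at level $m+1$, so that \eqref{eqConLambdaLambdaPrime} can indeed be passed down the defining sequence. Such compatibility is plausible from the uniqueness clause of Theorem \ref{thmMatchingForChar}, but must be cross-checked against the translation principle, which may permute the blocks of $\gamma'$ and therefore requires extra bookkeeping to ensure that the final conjugator remains in $\tilde{\U}(\Lambda)$ rather than merely in the normalizer $\mf{n}(\Lambda)$.
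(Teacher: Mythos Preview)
Your skeleton matches the paper's --- induction on~$m$, translation principle, Proposition~\ref{propDerivedCharacters} to pass to derived strata, then Theorem~\ref{thmintConjSemisimple} --- but you introduce complications that the paper avoids, and the ``technical obstacle'' you flag at the end is genuine in your formulation.

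The key difference: you pass to the coarser approximations~$\gamma,\gamma'$ with splittings~$J,J'$ \emph{before} applying the inductive hypothesis, and then need to check that the matching~$\zeta_\gamma:J\to J'$ inherits~\eqref{eqConLambdaLambdaPrime} from~$\zeta$. The paper simply observes that the restrictions~$\theta|_{H^{m+2}}$ and~$\theta'|_{H^{m+2}}$ already lie in~$\C(\Lambda,m+1,\beta)$ and~$\C(\Lambda,m+1,\beta')$, with the \emph{same} index sets~$I,I'$ as at level~$m$; by the uniqueness clause of Theorem~\ref{thmMatchingForChar} their matching is again~$\zeta$, so~\eqref{eqConLambdaLambdaPrime} descends for free. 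Invoke the inductive hypothesis in this form, conjugate so the restrictions agree on~$H^{m+2}$, and only \emph{then} introduce the translation principle to arrange a common~$\gamma$. With this reordering your initial geometric alignment step is unnecessary (though harmless), and the bookkeeping worry about compatibility of matchings disappears.

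A second, smaller gap: Corollary~\ref{corDerivedSplitting} applies to~$s_\gamma(\beta-\gamma)$, not to~$s_\gamma(\beta-\gamma+c)$. You need the extra observation (used in the proof of Theorem~\ref{thmMatchingForChar}) that since both~$\theta_0$ and~$\theta_0\psi_c$ lie in~$\C(\Lambda,m,\gamma)$, every element of~$B_\gamma^\times$ intertwines~$[\Lambda,m+1,m,s_\gamma(c)]$ with itself, forcing~$s_\gamma(c)\in F[\gamma]+\mf{b}_{\gamma,-m}$; hence adding~$s_\gamma(c)$ only moves the derived stratum within its equivalence class, and semisimplicity with the splitting~$\bigoplus_i V^i$ is preserved.
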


\begin{proof}
We prove the theorem by induction on~$m$, with the case~$m=q-1$ following from Theorem~\ref{thmintConjSemisimple}. Thus assume~$m$ to be strictly smaller than~$q-1$ and then, by the induction hypothesis and conjugating, we may assume that~$H^{m+2}(\beta,\Lambda)=H^{m+2}(\beta',\Lambda)=:H^{m+2}$ and that the two characters agree on~$H^{m+2}$. We apply the translation principle Theorem~\ref{thmTranslationPrinciple}, to reduce to the situation where the strata have defining sequences with a common first element~$[\Lambda,n,m+1,\gamma]$. Now writing~$\theta,\theta'$ as in Proposition~\ref{propDerivedCharacters}, we get that the derived strata intertwine so, by Theorem~\ref{thmintConjSemisimple}, are conjugate by elements of~$\tilde U(\Lambda)\cap B_\gamma$. But then Proposition~\ref{propDerivedCharacters}\ref{propDerivedCharacters.iii} gives us an element of~$\tilde{U}(\Lambda)$ which conjugates~$\theta$ to~$\theta'$.
\end{proof}

\subsection{For classical groups}
If two characters~$\theta_-\in \C_-(\Lambda,m,\beta)$ and~$\theta'_-\in \C_-(\Lambda,m,\beta')$ intertwine then their lifts~$\theta\in \C(\Lambda,m,\beta)^\sigma$ and 
$\theta'\in \C(\Lambda,m,\beta')^\sigma$ intertwine and we get a matching~$\zeta:I\ra I'$ from Theorem~\ref{thmMatchingForChar}.
Let us state the main theorem:

\begin{theorem}\label{thmintConjSemisimpleCharactersForG}
Let~$\theta_-\in \C_-(\Lambda,m,\beta)$ and~$\theta'_-\in \C_-(\Lambda,m,\beta')$ be two semi\-simple characters of~$G$, which intertwine over~$G$, and assume that their matching satisfies~\eqref{eqConLambdaLambdaPrime}. Then,~$\theta_-$ and~$\theta'_-$ are~$\U(\Lambda)$-conjugate. 
\end{theorem}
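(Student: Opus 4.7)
The plan is to mirror the proof of Theorem~\ref{thmintConjSemisimpleCharacters}, replacing each ingredient by its $G$-equivariant counterpart. I would proceed by downward induction on~$m$, starting from $m=q-1$. For the base case, note that at this level the definition of a semisimple character forces intertwining of $\theta_-$ and $\theta'_-$ in~$G$ to descend to an intertwining of $\psi_\beta$ and $\psi_{\beta'}$ on an appropriate subgroup, hence to an intertwining of the skew-semisimple strata $[\Lambda,q,q-1,\beta]$ and $[\Lambda,q,q-1,\beta']$ in~$G$. The matching hypothesis \eqref{eqConLambdaLambdaPrime} then lets me invoke Theorem~\ref{thmintConjSkewSemisimple} to produce $u\in U(\Lambda)$ conjugating one stratum to (one equivalent to) the other, and the same $u$ conjugates $\theta_-$ to $\theta'_-$.

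For the inductive step, assume the conclusion for $m+1$. The restrictions of $\theta_-,\theta'_-$ to $H^{m+2}\cap G$ lie in $\C_-(\Lambda,m+1,\beta)$ and $\C_-(\Lambda,m+1,\beta')$, still intertwine in~$G$, and have the same matching by the uniqueness in Theorem~\ref{thmMatchingForChar}; the dimension condition survives any coarsening of the splittings. By the induction hypothesis, after replacing $\theta'_-$ by a $U(\Lambda)$-conjugate we may assume the two restrictions coincide, equivalently that the lifts $\theta,\theta'$ agree on $H^{m+2}$. The skew translation principle, Theorem~\ref{thmTranslationPrincipleForG}, then permits us to replace $\beta,\beta'$ (without disturbing $\theta,\theta'$) so that both $[\Lambda,q,m+1,\beta]$ and $[\Lambda,q,m+1,\beta']$ have a common first element $[\Lambda,q,m+1,\gamma]$ in their defining sequences, for some skew-semisimple~$\gamma$ block-diagonal under both the $\beta$- and the $\beta'$-splittings.

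Writing $\theta=\theta_0\psi_{\beta-\gamma+c}$ and $\theta'=\theta_0\psi_{\beta'-\gamma}$ as in Proposition~\ref{propDerivedCharactersForG}, with $\theta_0\in \C(\Lambda,m,\gamma)^\sigma$ and $c$ a skew element which, by the skew analogue of Remark~\ref{remPropDerivedCharacters}, can be taken block-diagonal under the $\beta$-splitting, the derived strata
\[
[\Lambda,m+1,m,s_\gamma(\beta-\gamma+c)],\qquad [\Lambda,m+1,m,s_\gamma(\beta'-\gamma)]
\]
in $B_\gamma$ are skew-semisimple (by Corollary~\ref{corDerivedSplitting} and the skew choice of tame corestriction $s_\gamma$ from Remark~\ref{rmk:tamecor}(iii)), their splittings are the refinements given by the original $\beta$- and $\beta'$-splittings, they are intertwined in $G\cap B_\gamma^\times$ by Proposition~\ref{propDerivedCharactersForG}(i), and the dimension condition \eqref{eqConLambdaLambdaPrime} is inherited. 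Applying Theorem~\ref{thmintConjSkewSemisimple} inside the classical group over $F[\gamma]$ to these derived strata produces an element of $U(\Lambda)\cap B_\gamma^\times$ that conjugates $\psi_{s_\gamma(\beta-\gamma+c)}$ to $\psi_{s_\gamma(\beta'-\gamma)}$. Finally, Proposition~\ref{propDerivedCharactersForG}(iii) upgrades this, after absorbing the $B_\gamma\cap G$-element, to an element of $(1+\mf{m}_{-k_0(\gamma,\Lambda)-m-1})\cap G\subseteq U(\Lambda)$ conjugating $\theta$ to $\theta'$, and hence $\theta_-$ to $\theta'_-$.

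The main obstacle will be tracking the matching condition \eqref{eqConLambdaLambdaPrime} through these reductions. Precisely, one must check that it is preserved both when passing to the truncation at level $m+1$ (where the associated splitting may be coarser, so the condition needs to be summed appropriately) and when moving into the centralizer $B_\gamma$ (where the relevant splitting is the finer one of $\beta,\beta'$ as a refinement of that of $\gamma$, and the ``ambient field'' changes from $F$ to $F[\gamma]$). The second delicate point is to verify that each conjugating element can be kept in $G$, which uses the $\sigma$-equivariant tame corestriction and the $G$-versions Theorem~\ref{thmTranslationPrincipleForG} and Proposition~\ref{propDerivedCharactersForG} in place of their $\tilde G$-analogues.
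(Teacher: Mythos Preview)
Your proposal is correct and follows essentially the same approach as the paper, which simply says to repeat the proof of Theorem~\ref{thmintConjSemisimpleCharacters} with Theorems~\ref{thmintConjSkewSemisimple}, \ref{thmTranslationPrincipleForG} and Proposition~\ref{propDerivedCharactersForG} substituted for their~$\tilde G$-analogues. Your additional care about tracking condition~\eqref{eqConLambdaLambdaPrime} through the coarsening at level~$m+1$ and into~$B_\gamma$ fills in details the paper leaves implicit, but the skeleton of the argument is identical.
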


\begin{proof}
The proof is completely the same as for Theorem~\ref{thmintConjSemisimpleCharacters}, using Theorems~\ref{thmintConjSkewSemisimple} and~\ref{thmTranslationPrincipleForG} and Proposition~\ref{propDerivedCharactersForG} in place of the equivalent statements for~$\tilde{G}$.
\end{proof}

We also conjecture a more natural version of the Matching Theorem~\ref{thmMatchingForChar} for~$G$. 

\begin{conjecture}
Let~$[\Lambda,q,m\beta]$ and~$[\Lambda',q,m\beta']$ be skew-semisimple strata and~$\theta\in \C(\Lambda,m,\beta)^\sigma$ and~$\theta'\in \C(\Lambda',m,\beta')^\sigma$ two semisimple characters which are intertwined by an element of~$G$. Let~$\zeta:I\ra I'$ be the matching from Theorem~\ref{thmMatchingForChar}. Then, there is an element~$g\in G$ such that~$gV^i=V'^{\zeta(i)}$, for all~$i\in I$. 
\end{conjecture}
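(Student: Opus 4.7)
The plan is to reduce the conjecture to showing that $(V^i, h|_{V^i}) \cong (V'^{\zeta(i)}, h|_{V'^{\zeta(i)}})$ as $\epsilon$-hermitian spaces for every $i \in I$. Granted this, I choose for each $i$ an isometry $\phi_i : (V^i, h|_{V^i}) \to (V'^{\zeta(i)}, h|_{V'^{\zeta(i)}})$; then $g = \sum_{i \in I} \phi_i$ lies in $G$ and satisfies $g V^i = V'^{\zeta(i)}$ for all $i$, as required. Thus the whole content of the conjecture is the block-wise hermitian isomorphism.

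To establish this isomorphism I proceed by strata induction on $m$, running downwards from $q-1$. For the base case $m = q-1$, the strata $[\Lambda,q,q-1,\beta]$ and $[\Lambda',q,q-1,\beta']$ are minimal skew-semisimple; here $H^{q}(\beta,\Lambda) = \tilde{\U}^{q}(\Lambda)$ (and similarly for $\beta'$), and $\theta = \psi_\beta$, $\theta' = \psi_{\beta'}$ on these groups. Therefore every $G$-intertwiner of $\theta$ and $\theta'$ is a $G$-intertwiner of the two strata, and Proposition~\ref{propIntertwiningtwoSkewsemisimplestrataSameLevel}(i) immediately supplies the desired hermitian isomorphism of each matched block pair.

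For the inductive step, choose skew-semisimple $[\Lambda, q, m+1, \gamma]$ equivalent to $[\Lambda,q,m+1,\beta]$ with $\gamma \in \prod_i A^{i,i}$, and similarly $\gamma'$ for $\beta'$. The restrictions $\theta|_{H^{m+2}(\gamma,\Lambda)}$ and $\theta'|_{H^{m+2}(\gamma',\Lambda')}$ lie in $\C(\Lambda,m+1,\gamma)^\sigma$ and $\C(\Lambda',m+1,\gamma')^\sigma$ respectively and are still $G$-intertwined. By induction, the block decompositions for $\gamma$ and $\gamma'$ can be matched by some $h \in G$; after replacing the primed data by their $h$-conjugates and applying the $G$-version Theorem~\ref{thmTranslationPrincipleForG} of the translation principle (together with Proposition~\ref{propAnalogueBK359} to replace $\gamma'$), I may assume $\gamma = \gamma'$ and write $\theta = \theta_0 \psi_{\beta-\gamma+c}$, $\theta' = \theta_0 \psi_{\beta'-\gamma}$ as in Proposition~\ref{propDerivedCharactersForG}. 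That proposition then shows that the derived strata $[\Lambda,m+1,m,s_\gamma(\beta-\gamma+c)]$ and $[\Lambda',m+1,m,s_\gamma(\beta'-\gamma)]$ are skew-semisimple strata in $B_\gamma$ which intertwine over $B_\gamma \cap G$. Applying Proposition~\ref{propIntertwiningtwoSkewsemisimplestrataSameLevel}(i) now inside $B_\gamma$ refines the hermitian isomorphism already known for the coarse splitting into one compatible with the finer matching $\zeta$.

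The main obstacle will be the bookkeeping at the reduction to $\gamma = \gamma'$: one needs to make each intermediate conjugation (coming from the induction hypothesis, the translation principle, and an analogue of Lemma~\ref{lemConjblocksSkewSemisimple} for characters) within $G$ while simultaneously preserving the hermitian forms on every block. A secondary difficulty is that Proposition~\ref{propIntertwiningtwoSkewsemisimplestrataSameLevel}(i) is applied in the centralizer $B_\gamma$, viewed as the endomorphism ring of an $F[\gamma]$-module equipped with the hermitian form $h^{\gamma}$ of Section~\ref{secWittGroups}; one must check that the isometries of $h^{\gamma}$-blocks obtained there yield genuine isometries of the original $h$-blocks, which amounts to the compatibility $h = \lambda \circ h^{\gamma}$ for the chosen trace-like map and will use the Skolem--Noether results of Section~\ref{secWittGroups} in an essential way.
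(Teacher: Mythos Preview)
The statement you are attempting to prove is explicitly a \emph{conjecture} in the paper: the authors do not provide a proof. So there is no ``paper's own proof'' to compare against. The relevant question is whether your argument actually closes the gap that led the authors to leave this open.

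Your base case is fine: for~$m\ge\floor{q/2}$ one has~$H^{m+1}(\beta,\Lambda)=\tilde{\U}^{m+1}(\Lambda)$ and~$\theta=\psi_\beta$, so~$G$-intertwining of the characters is exactly~$G$-intertwining of the minimal skew strata, and Proposition~\ref{propIntertwiningtwoSkewsemisimplestrataSameLevel}(i) applies directly.

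The inductive step, however, has a genuine gap. Both tools you invoke --- the skew translation principle (Theorem~\ref{thmTranslationPrincipleForG}) and the derived-character result (Proposition~\ref{propDerivedCharactersForG}) --- are stated and proved only for two strata living on the \emph{same} lattice sequence~$\Lambda$, with~$\C(\Lambda,m+1,\gamma)=\C(\Lambda,m+1,\gamma')$. In the~$\tilde{G}$-proof of Theorem~\ref{thmMatchingForChar} this is achieved by first passing to the~$\dagger$-construction (block-wise principal lattice chains), then using that any two such chains of the same period are~$\tilde{G}$-conjugate, and finally invoking~\cite[3.5.11]{bushnellKutzko:93} to make the restricted characters equal. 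None of these three steps has a~$G$-analogue available: two self-dual lattice sequences that are~$\tilde{G}$-conjugate need not be~$G$-conjugate, and the paper gives no~$G$-version of ``intertwining simple characters on conjugate chains are conjugate'' for possibly different~$\Lambda,\Lambda'$. Your sentence ``after replacing the primed data by their~$h$-conjugates and applying \ldots\ I may assume~$\gamma=\gamma'$'' hides precisely this missing reduction: the element~$h\in G$ coming from the inductive hypothesis matches the~$\gamma$-blocks as hermitian spaces but does nothing to align~$\Lambda$ with~$\Lambda'$, so you are not in a position to apply either Theorem~\ref{thmTranslationPrincipleForG} or Proposition~\ref{propDerivedCharactersForG}.

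This is not a bookkeeping issue but the actual obstruction; it is almost certainly why the authors state the result as a conjecture rather than a theorem. If you want to push the argument through, you would need either a two-lattice-sequence version of the skew translation principle and of Proposition~\ref{propDerivedCharactersForG}, or an independent way to compare the hermitian classes of the~$\beta$-blocks without first forcing~$\Lambda=\Lambda'$.
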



\begin{thebibliography}{Ste01b}

\bibitem[BH96]{bushnellHenniart:96}
C.J. Bushnell and G.~Henniart.
\newblock Local tame lifting for {${\rm GL}(N)$}. {I}. {S}imple characters.
\newblock {\em Inst. Hautes \'Etudes Sci. Publ. Math.}, (83):105--233, 1996.

\bibitem[BH03]{bushnellHenniart:03}
Colin~J. Bushnell and Guy Henniart.
\newblock Local tame lifting for {${\rm GL}(n)$}. {IV}. {S}imple characters and
  base change.
\newblock {\em Proc. London Math. Soc. (3)}, 87(2):337--362, 2003.

\bibitem[BH16]{bushnellHenniart:16}
Colin~J. Bushnell and Guy Henniart.
\newblock Higher ramification and the local langlands correspondence.
\newblock Preprint, 2016.

\bibitem[BK93]{bushnellKutzko:93}
C.J. Bushnell and P.C. Kutzko.
\newblock {\em The admissible dual of {${\rm GL}(N)$} via compact open
  subgroups}, volume 129.
\newblock Princeton University Press, Princeton, NJ, 1993.

\bibitem[BK94]{bushnellKutzko:94}
Colin~J. Bushnell and Philip~C. Kutzko.
\newblock Simple types in {${\rm GL}(N)$}: computing conjugacy classes.
\newblock In {\em Representation theory and analysis on homogeneous spaces
  ({N}ew {B}runswick, {NJ}, 1993)}, volume 177 of {\em Contemp. Math.}, pages
  107--135. Amer. Math. Soc., Providence, RI, 1994.

\bibitem[BK99]{bushnellKutzko:99}
C.J. Bushnell and P.C. Kutzko.
\newblock Semisimple types in {${\rm GL}_n$}.
\newblock {\em Compositio Math.}, 119(1):53--97, 1999.

\bibitem[BL02]{broussousLemaire:02}
P.~Broussous and B.~Lemaire.
\newblock Building of {${\rm GL}(m,D)$} and centralizers.
\newblock {\em Transform. Groups}, 7(1):15--50, 2002.

\bibitem[BS09]{broussousStevens:09}
P.~Broussous and S.~Stevens.
\newblock Buildings of classical groups and centralizers of {L}ie algebra
  elements.
\newblock {\em J. Lie Theory}, 19(1):55--78, 2009.

\bibitem[BSS12]{broussousSecherreStevens:12}
P.~Broussous, V.~S{\'e}cherre, and S.~Stevens.
\newblock Smooth representations of {${\rm GL}_m(D)$} {V}: {E}ndo-classes.
\newblock {\em Doc. Math.}, 17:23--77, 2012.

\bibitem[BT87]{bruhatTitsIV:87}
F.~Bruhat and J.~Tits.
\newblock Sch\'emas en groupes et immeubles des groupes classiques sur un corps
  local. {II}. {G}roupes unitaires.
\newblock {\em Bull. Soc. Math. France}, 115(2):141--195, 1987.

\bibitem[Bus87]{bushnell:87}
Colin~J. Bushnell.
\newblock Hereditary orders, {G}auss sums and supercuspidal representations of
  {${\rm GL}_N$}.
\newblock {\em J. Reine Angew. Math.}, 375/376:184--210, 1987.

\bibitem[Dat09]{dat:09}
Jean-Francois Dat.
\newblock Finitude pour les repr\'esentations lisses de groupes {$p$}-adiques.
\newblock {\em J. Inst. Math. Jussieu}, 8(2):261--333, 2009.

\bibitem[KS15]{kurinczukstevens:15}
R.~Kurinczuk and S.~Stevens.
\newblock Cuspidal {$\ell$-}modular representations of {$p$-}adic classical
  groups.
\newblock arXiv:1509.02212, 2015.

\bibitem[KSS16]{kurinczukskodlerackstevens:16}
R.~Kurinczuk, D.~Skodlerack, and S.~Stevens.
\newblock Endoclasses for {$p$-}adic classical groups.
\newblock Preprint, 2016.

\bibitem[Rei03]{reiner:03}
I.~Reiner.
\newblock {\em Maximal orders}, volume~28.
\newblock The Clarendon Press Oxford University Press, Oxford, 2003.

\bibitem[Sko14]{skodlerack:14}
Daniel Skodlerack.
\newblock Field embeddings which are conjugate under a p-adic classical group.
\newblock {\em Manuscripta Mathematica}, pages 1--25, 2014.

\bibitem[Ste01a]{stevens:01}
Shaun Stevens.
\newblock Double coset decompositions and intertwining.
\newblock {\em manuscripta mathematica}, 106(3):349--364, 2001.

\bibitem[Ste01b]{stevens:01-2}
Shaun Stevens.
\newblock Intertwining and supercuspidal types for {$p$}-adic classical groups.
\newblock {\em Proc. London Math. Soc. (3)}, 83(1):120--140, 2001.

\bibitem[Ste02]{stevens:02}
S.~Stevens.
\newblock Semisimple strata for {$p$}-adic classical groups.
\newblock {\em Ann. Sci. \'Ecole Norm. Sup. (4)}, 35(3):423--435, 2002.

\bibitem[Ste05]{stevens:05}
S.~Stevens.
\newblock Semisimple characters for {$p$}-adic classical groups.
\newblock {\em Duke Math. J.}, 127(1):123--173, 2005.

\bibitem[Ste08]{stevens:08}
S.~Stevens.
\newblock The supercuspidal representations of {$p$}-adic classical groups.
\newblock {\em Invent. Math.}, 172(2):289--352, 2008.

\end{thebibliography}
\end{document}